\newtheorem{theorem}{Theorem}[section]
\newtheorem{lemma}[theorem]{Lemma}
\newtheorem{proposition}[theorem]{Proposition}
\newtheorem{corollary}[theorem]{Corollary}
\newtheorem{conjecture}[theorem]{Conjecture}
\theoremstyle{definition}
\newtheorem{definition}[theorem]{Definition}
\newtheorem{remark}[theorem]{Remark}
\numberwithin{equation}{section}
\newtheorem{example}[theorem]{Example}
\newtheorem{assumption}[theorem]{Assumption}
\newtheorem{setting}[theorem]{Setting}
\begin{document}

\normalfont

\title{Analytic Geometry and Hodge-Frobenius Structure}
\author{Xin Tong}

\maketitle

\begin{abstract}
\rm  In this paper, we study Frobenius structures in higher dimensional $p$-adic analytic geometry and the corresponding $p$-adic functional analysis. This will build up foundations for further study on some generalized cohomology of Frobenius modules and the corresponding generalized Iwasawa theory and generalized noncommutative Tamagawa number conjectures in the spirit of Burns-Flach-Fukaya-Kato and Nakamura (as well as certainly the original noncommutative Tamagawa number conjectures as observed by Pal-Z\'abr\'adi). We will work in the program proposed by Carter-Kedlaya-Z\'abr\'adi and after Pal-Z\'abr\'adi, and we will follow closely the approach from Kedlaya-Pottharst-Xiao to investigate the corresponding deformation of the generalized $p$-adic Hodge structures. 
\end{abstract}

\newpage

\tableofcontents

\newpage

\section{Introduction}

\subsection{Higher Dimensional Analytic Geometry}

Higher dimensional analytic geometry in the nonarchimedean setting and corresponding $p$-adic analysis received very rapid development in the past decades, dated back to the last century to some point. Its complex analog, which is to say the complex analysis in higher dimension of several variables or the study of complex analytic varieties could be dated back even earlier. For instance searching for the automorphism groups of higher dimensional domains is still attracting extensive attention. On the other hand in the $p$-adic setting, considering higher dimensional analytic domains will be a very natural generalization (especially in the rigid analytic geometry, or more general analytic geometry). However, the approaches do not generalize very smoothly to the higher dimension, even, one will immediately need to find new approaches. This is actually already clear in the complex setting, for instance one may believe that complex analysis of several variables over complex polydiscs is natural generalization of one variable situation, for instance the Cauchy integration formula and maximal value issues. However in the $p$-adic setting, this might be more complicated as discussed in the following.\\

\indent The famous $p$-adic local monodromy theorem, proved by Tsuzuki in the unit-root setting, and then proved by Andr\'e-Kedlaya-Mebkhout \cite{Ked1}, \cite{And1} and \cite{Me} in the full general setting, is already a very difficult theorem over one-dimensional $p$-adic annulus, which roughly speaks that over such annulus, each finite locally free sheaf with connection and Frobenius (not necessarily absolute) structure could be made simpler with respect to the connection structure. For instance in the unit-root setting, the picture is even simpler, since then we only need to consider Galois representations with finite monodromy which is to say potentially unramified somewhat: we have an equivalence between the category of unit-root overconvergent isocrystals and the one of all the potentially unramified \'etale $\mathbb{Q}_p$-local systems, over some space. Actually Kedlaya's approach established in \cite{Ked1} used very deep theorem on the slope filtration over one-dimensional $p$-adic annulus to reduce to Tsuzuki's approach.\\

\indent To generalize to the higher dimensional situation, we have many ways to do so which are encoded in the following projects. First one could consider the picture in the relative $p$-adic Hodge theory, for instance established in \cite{KL1} and \cite{KL2}, or the multidimensional program of \cite{CKZ18} and \cite{PZ19}. Also we would like to mention some relativization programs in \cite{BC1}, \cite{Ked2}, \cite{Liu1} where some attempt to generalize the slope filtration theorem and the local monodromy theorem to the relative Frobenius situation had been successful. The applications from \cite{Ked1} to the rigid cohomology and isocrystals have already considered some sort of multidimensional construction as in \cite{Ked3},\cite{Ked4},\cite{Ked5},\cite{Ked6},\cite{Ked7},\cite{Ked8}. However, one could still only get information deduced from the monodromy theorem or the slope filtration theorem essentially from the one mentioned above. The theory of partial differential equations have been regarded as non-trivial and significant generalization of the story relevant above. To be more precise, these are reflected in the following work: \cite{Xiao1}, \cite{Xiao2} and \cite{KX1}, \cite{Ked10}, \cite{Ked11}, \cite{Ked12}.\\

\indent That being all said, we will only focus on mixed-characteristic situation in this paper. Here let us do some summarization in a uniform way on the known pictures which one might want to follow being local or global, being \'etale or non-\'etale:

\begin{setting}
Mixed-characteristic local \'etale: Berger's theorem for potentially semi-stable Galois representations \cite{Ber1}.	
\end{setting}

\begin{setting}
Mixed-characteristic local non-\'etale: Again Berger's theorem for $(\varphi,\Gamma)$-modules.
\end{setting}

\begin{setting}
Mixed-characteristic global \'etale: Relative $p$-adic Hodge theory after Faltings, Andreatta, Iovita, Brinon, Scholze, Kedlaya, Liu and so on, in \cite{Fal}, \cite{A1}, \cite{AI1}, \cite{AI2}, \cite{AB1}, \cite{AB2}, \cite{AB3}, \cite{Sch1}, \cite{KL1} and \cite{KL2}.
\end{setting}

\begin{setting}
Mixed-characteristic global non-\'etale: Relative $p$-adic Hodge theory in the style of Kedlaya-Liu \cite{KL1} and \cite{KL2}.	
\end{setting}

\indent In this paper, we first consider the program of \cite{CKZ18} and \cite{PZ19}, and we will work in the generality of \cite{KPX}.  The generalization itself is already interesting enough and actually we need to address some fundamental issues around. The multidimensionalization we studied here represents some similarity to the globalization and multidimensionalization mentioned above in the relative $p$-adic Hodge theory and some globalization of the study of isocrystals in the style of Kedlaya's theorem on Shiho's conjecture. Actually we consider slightly different actions within the module structures, which is to say that we will have multi-Frobenius actions, multi-Lie group actions, multi-differentials. 

\subsection{Results and Applications}

\indent Here is our first main result, while the corresponding proof will be given in \cref{section4.1}:

\begin{theorem}
With the same assumptions as in \cref{assumption1} as below, we set $M$ to be the global section of a locally free coherent sheaf of module over the sheaf of the Robba ring $\Pi_{\mathrm{an},\mathrm{con},I,X}(\pi_{K_I})$ over an affinoid domain $X$ attached to an affinoid algebra $A$ in the rigid analytic geometry, carrying mutually commuting actions of $(\varphi_I,\Gamma_{K_I})$. Then we have the corresponding Herr complex $C^\bullet_{\varphi_I,\Gamma_{K_I}}(M)$ lives then in the derived category $\mathbb{D}^\flat_{\mathrm{perf}}(A)$\footnote{One should be able to work in the $(\infty,1)$-derived categories as well, see \cref{definition4.5}, though we just use the regular derived categories to formulate our results without the enhancement.}, and we have that the corresponding complex $C^\bullet_{\psi_I}(M)$ has all the cohomology groups which are coadmissible over $\Pi_{\mathrm{an},\infty,I,A}(\Gamma_{K_I})$. Moreover when $A=\mathbb{Q}_p$, $C^\bullet_{\psi_I}(M)$ lives in $\mathbb{D}^\flat_{\mathrm{perf}}(\Pi_{\mathrm{an},\infty,I,A}(\Gamma_{K_I}))$.
\end{theorem}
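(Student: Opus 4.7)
The plan is to proceed by induction on the cardinality of the index set $I$, using the structure of the multidimensional Herr complex as an iterated Koszul-type total complex to reduce to the one-variable Kedlaya--Pottharst--Xiao cohomology finiteness theorem \cite{KPX}. The base case $|I|=1$ is precisely the content of that theorem: the Herr complex of a $(\varphi,\Gamma_K)$-module over the relative Robba ring over $A$ lies in $\mathbb{D}^\flat_{\mathrm{perf}}(A)$, the associated $\psi$-cohomology is coadmissible over $\Pi_{\mathrm{an},\infty,A}(\Gamma_K)$, and the latter becomes perfect when $A=\mathbb{Q}_p$. Mutual commutativity of the $\varphi_i$ and the $\Gamma_{K_i}$ is what guarantees that this reduction is well-posed at every stage of the induction.

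For the inductive step, fix some $i\in I$ and decompose $C^\bullet_{\varphi_I,\Gamma_{K_I}}(M)$ as a bicomplex, with one direction carrying the $(\varphi_i,\Gamma_{K_i})$-data and the other carrying the $(\varphi_{I\setminus\{i\}},\Gamma_{K_{I\setminus\{i\}}})$-data. Viewing $M$ as a $(\varphi_i,\Gamma_{K_i})$-module over the slice relative Robba ring in the remaining variables, I would apply the one-variable KPX theorem in the $i$-th direction to obtain perfect complexes over that slice ring. These intermediate cohomology objects inherit the residual $(\varphi_{I\setminus\{i\}},\Gamma_{K_{I\setminus\{i\}}})$-module structure, to which the inductive hypothesis applies, yielding perfectness of the total complex over $A$ via the resulting K\"unneth-type spectral sequence. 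The $\psi$-statement is obtained by the parallel iteration: in each coordinate one uses the standard comparison between the $\varphi_i$- and $\psi_i$-halves of the one-variable Herr complex, together with the stability of coadmissibility over the multivariable Fr\'echet--Stein algebra $\Pi_{\mathrm{an},\infty,I,A}(\Gamma_{K_I})$ under iterated completed tensor products and extensions by coadmissible modules; when $A=\mathbb{Q}_p$, the one-variable perfectness propagates through the induction in the same way.

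The main obstacle is verifying that the intermediate cohomology objects obtained after invoking the one-variable theorem in the $i$-th direction genuinely inherit the structure of locally free coherent sheaves over the slice multivariable Robba ring, compatibly with the remaining $(\varphi,\Gamma)$-actions, so that the inductive hypothesis actually applies. This is a nontrivial coherence and flatness issue arising from the interplay of the overconvergence condition with the slicing of $I$, and will require an ascent argument exploiting the Fr\'echet--Stein structure of the relative Robba ring in each variable. A secondary difficulty lies in checking that the multivariable trace operator $\psi_I$ decomposes correctly along the slicing, i.e.\ that each $\psi_i$ is compatible with tensor decompositions of $\Pi_{\mathrm{an},\mathrm{con},I,X}(\pi_{K_I})$ and with the residual actions, so that the coadmissibility (resp.\ perfectness when $A=\mathbb{Q}_p$) descends cleanly through each stage of the induction.
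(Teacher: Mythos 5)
Your proposed induction on $|I|$ via slicing off one coordinate and invoking the one-variable Kedlaya--Pottharst--Xiao theorem is not the paper's approach, and in fact the paper explicitly warns that this route does not work. The obstacle you flag in your last paragraph is not merely a technical point to be cleared; it is fatal to the strategy as stated, for two reasons. First, to apply the one-variable KPX theorem in the $i$-th direction you would need the coefficient ring to be an affinoid algebra, but the slice relative Robba ring $\Pi_{\mathrm{an},\mathrm{con},I\setminus\{i\},A}(\pi_{K_{I\setminus\{i\}}})$ is a Fr\'echet--Stein algebra, not an affinoid. KPX's finiteness theorem rests on slope filtration theory and a d\'evissage to the \'etale case, which requires evaluating at maximal ideals of an affinoid base and has no analogue when the base is itself a Robba ring. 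Second, even granting such an extension, the cohomology objects $H^\bullet_{\varphi_i,\Gamma_i}$ would not in general be finite projective (nor would they arise from locally free coherent sheaves) over the slice Robba ring, so the inductive hypothesis would not apply to them. The paper says this explicitly: ``there is no d\'evissage possibility transparent for us to use in order to reduce all the proof in the non-\'etale setting to the main results of [CKZ18] and [PZ19],'' and again before the key propositions: ``The methods in [KPX] require some deep results on the slope filtrations for modules over the one-dimensional Robba rings\dots It looks like we have no chance to apply the ideas in [KPX].''

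The paper's actual argument (Propositions 4.7--4.9) circumvents the inductive reduction entirely. After reducing by a (legitimate) induction in the \emph{base field} direction to $K_I = \mathbb{Q}_p$, it realizes the Herr and Iwasawa complexes on nested multi-intervals $[s''_I,r''_I]\subset [s'_I,r'_I]$, checks (via Yoneda-extension comparisons) that the transition maps are quasi-isomorphisms, and then invokes the Cartan--Serre finiteness criterion in the form of Kiehl's Endlichkeitssatz \cite[Satz~5.2]{Kie1} and Kedlaya--Liu's \cite[Lemma~1.10]{KL3}: a quasi-isomorphism of complexes of Banach spaces given by completely continuous maps has finite-dimensional cohomology. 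Perfectness in $\mathbb{D}^\flat_{\mathrm{perf}}$ and coadmissibility over the Fr\'echet--Stein algebra $\Pi_{\mathrm{an},\infty,I,A}(\Gamma_{K_I})$ then follow by assembling these local finiteness statements across the filtration, using Z\'abr\'adi's result on projective coadmissible modules. If you want to retain your plan, you would need either a version of KPX with Fr\'echet--Stein coefficients (which would itself require the Cartan--Serre machinery, at which point you might as well run the multivariable argument directly) or a flatness/coherence theorem for the intermediate $H^\bullet_{\varphi_i,\Gamma_i}$ over the slice Robba ring; neither is available.
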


\indent Another motivation in our situation comes from the equivariant Iwasawa theory proposed by Kedlaya-Pottharst in \cite{KP1}. Here the idea will be working over more general distribution algebra after Berthelot, Schneider and Teitelbaum in the following sense. The idea is from the following observation. Now suppose $G$ is a nice $p$-adic Lie group for instance as above we considered the product of $\mathbb{Z}_p$ or $\mathbb{Z}_p^\times$. Then we consider the corresponding distribution algebra which we will use the notation $\Pi_{\mathrm{an},\infty,A}(G)$ to denote this ring (note that here we have already considered the even more general relative setting). Now consider $L/K$ a $p$-adic Lie extension. Then we have that one could consider the projection $G_{K}\rightarrow G(L/K)$ to define the corresponding Galois representation with coefficients in $\Pi_{\mathrm{an},\infty,A}(G)$, which then gives rise to a $(\varphi,\Gamma_K)$-module over $\Pi_{\mathrm{an},\infty,A}(G)$. Then we take the external tensor product of any $(\varphi,\Gamma)$-module $M$ with this $p$-adic Lie deformation $\mathbf{DfmLie}$ we will get a module $M\boxtimes \mathbf{DfmLie}$ which is actually $(\varphi,\Gamma)$-module over $\Pi_{\mathrm{an},\infty,A}(G)$. Then we are going to define the Herr complex of this module, which will be called the $G$-equivariant Iwasawa cohomology. It is natural to investigate the fundamental properties of this cohomology which is over $\mathrm{Max}A\times \mathrm{Max}\Pi_{\mathrm{an},\infty}(G)$. For instance over $\mathbb{Z}_p^{\times,d}$ as above, we will have to study the cohomology of the big $(\varphi,\Gamma)$-module over the product of the ring $\Pi_{\mathrm{an},\mathrm{con},A}(\pi_K)$ and the ring $\Pi_{\mathrm{an},\infty}(G)$. Therefore through the previous theorem we have the following coherence over more general Fr\'echet-Stein algebras, while the corresponding proof will be given in \cref{section4.1}:

\begin{theorem}
Let $X_\infty(G)$ be the quasi-Stein rigid analytic space attached to the $p$-adic Lie group $G:=\Gamma_{K_J}$ for some set $J$ possibly different from $I$ as in \cref{setting3.1}. Let $A$ be an affinoid algebra over $\mathbb{Q}_p$. Suppose $\mathcal{F}$ is a locally free coherent sheaf over the sheaf of Robba ring $\Pi_{\mathrm{an},\mathrm{con},I,X_\infty(G)\widehat{\otimes} A}(\pi_{K_I})$, where we use the same notation to denote the global section and we assume that the global section is finite projective, carrying mutually commuting actions of $(\varphi_I,\Gamma_{K_I})$. Then we have that the cohomology groups of the Herr complex $C^\bullet_{\varphi_I,\Gamma_I}(\mathcal{F})$ are coadmissible over $X_\infty(G)\widehat{\otimes}A$, and the corresponding cohomology groups of Iwasawa complex $C^\bullet_{\psi_I} (\mathcal{F})$ are coadmissible over $\Pi_{\mathrm{an},\infty,I,X_\infty(G)\widehat{\otimes} A}(\Gamma_{K_I})$. Moreover when $A=\mathbb{Q}_p$, $C^\bullet_{\psi_I}(\mathcal{F})$ lives in $\mathbb{D}^\flat_{\mathrm{perf}}(\Pi_{\mathrm{an},\infty,I,X_\infty(G)\widehat{\otimes} A}(\Gamma_{K_I}))$ and $C^\bullet_{\varphi_I,\Gamma_I}(\mathcal{F})$ lives in $\mathbb{D}^\flat_{\mathrm{perf}}(X_\infty(G)\widehat{\otimes} A)$.
\end{theorem}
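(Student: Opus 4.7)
The plan is to reduce the statement to the previous theorem through a Fr\'echet-Stein limit argument, exploiting the quasi-Stein structure of $X_\infty(G)$. First I would fix a presentation $X_\infty(G)=\bigcup_n X_n$ as an increasing union of affinoid subdomains with compact transition maps, so that the global sections $\mathcal{O}(X_\infty(G))\widehat{\otimes}A$ become Fr\'echet-Stein with defining sequence $A_n:=\mathcal{O}(X_n)\widehat{\otimes}A$. Each $A_n$ is an affinoid $\mathbb{Q}_p$-algebra, and restricting $\mathcal{F}$ to $X_n\widehat{\otimes}A$ yields by hypothesis a finite projective module $M_n$ over $\Pi_{\mathrm{an},\mathrm{con},I,A_n}(\pi_{K_I})$ equipped with commuting $(\varphi_I,\Gamma_{K_I})$-actions that are compatible along the base change $A_{n+1}\to A_n$.

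Next I would invoke the previous theorem at each level $n$: the Herr complex $C^\bullet_{\varphi_I,\Gamma_{K_I}}(M_n)$ lies in $\mathbb{D}^\flat_{\mathrm{perf}}(A_n)$, and the cohomology of $C^\bullet_{\psi_I}(M_n)$ is coadmissible over $\Pi_{\mathrm{an},\infty,I,A_n}(\Gamma_{K_I})$. The key compatibility to verify is that both complexes commute with the restriction from level $n+1$ to level $n$; this should follow from the finite projectivity of $M_n$, the flatness of the relevant transition maps, and the explicit Koszul-type description of the Herr and $\psi_I$-complexes. The upshot is an inverse system of perfect complexes over $(A_n)$ together with a compatible system of coadmissible cohomology modules over $(\Pi_{\mathrm{an},\infty,I,A_n}(\Gamma_{K_I}))$, satisfying exactly the axioms demanded by the Schneider-Teitelbaum Fr\'echet-Stein formalism.

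In the last step the patching is essentially formal. Coadmissibility of the Herr cohomology over $X_\infty(G)\widehat{\otimes}A$ and of the $\psi_I$-cohomology over $\Pi_{\mathrm{an},\infty,I,X_\infty(G)\widehat{\otimes} A}(\Gamma_{K_I})$ amounts to producing a compatible collection of coadmissible modules at each finite level of the defining sequence, which has just been arranged. When $A=\mathbb{Q}_p$ the additional perfectness statements follow by noting that perfect complexes are stable under passage along the compact-type Fr\'echet-Stein presentation: the levelwise perfectness over each $A_n$ ascends to perfectness over $X_\infty(G)\widehat{\otimes}\mathbb{Q}_p$, and likewise on the $\psi_I$ side over $\Pi_{\mathrm{an},\infty,I,X_\infty(G)\widehat{\otimes}\mathbb{Q}_p}(\Gamma_{K_I})$.

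The hardest step I expect to be the base-change compatibility used in the second paragraph: one must verify that the Herr and $\psi_I$ complexes at level $n+1$ derive-base-change cleanly onto those at level $n$ without producing spurious higher $\mathrm{Tor}$-terms. This reduces to a flatness statement for the Robba-type and distribution-type rings along the tower $(A_n)$, and it is here that the finite projectivity hypothesis on $\mathcal{F}$ and the quasi-Stein structure on $X_\infty(G)$ must be used in tandem. Once that flatness is secured, the remainder of the argument is a routine application of the previous theorem combined with standard facts about coadmissible modules over Fr\'echet-Stein algebras.
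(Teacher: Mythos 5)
Your proposal follows essentially the same architecture as the paper's proof: write the Fr\'echet-Stein coefficient ring $X_\infty(G)\widehat{\otimes}A$ (resp.\ $\Pi_{\mathrm{an},\infty,I,X_\infty(G)\widehat{\otimes}A}(\Gamma_{K_I})$) as a projective limit of noetherian Banach algebras, restrict $\mathcal{F}$ to each level to obtain a finite projective $(\varphi_I,\Gamma_I)$-module to which the first theorem applies, check base-change compatibility along the tower, and patch the resulting coadmissible cohomology modules into a coherent sheaf. This is exactly the shape of the paper's argument. However, your handling of the perfectness claim when $A=\mathbb{Q}_p$ is too coarse. The assertion that ``perfect complexes are stable under passage along the compact-type Fr\'echet-Stein presentation'' is not a general fact one can invoke: an inverse limit of perfect complexes over a Fr\'echet-Stein tower has no reason to be perfect without further structure. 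The paper closes this gap by a specific argument: the limit complex is quasi-isomorphic to one that is coadmissible in each degree, one checks degree-wise projectivity in the category of coadmissible modules by verifying projectivity of each affinoid section, and then one invokes Z\'abr\'adi's result (\cite[Theorem 3.10]{Zab1}) that a coadmissible projective module over such a Fr\'echet-Stein ring is automatically finitely generated. That theorem is the crucial ingredient your patching argument is implicitly relying on and should be cited; without it the ascent of perfectness is not justified. The rest of your outline, including identifying derived base-change compatibility of the Herr and $\psi_I$ complexes along the tower as the delicate point, is aligned with what the paper does.
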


\begin{remark}
The two theorems are around commutative period rings. Actually we conjectured that the corresponding analogs in the noncommutative setting should hold. In other words not only we would expect that one can follow the ideas in this work to establish some finiteness theorems when we have the coefficients that are noetherian noncommutative Banach rings as those discussed in \cref{section5.1}, but also we should as well have some analogs of the results on the (noncommutative) equivariant Iwasawa cohomology. For more detail, see \cref{conjecture5.22}, \cref{conjecture5.23}, \cref{conjecture5.24}. 
\end{remark}

\indent It is actually not expected (due to some difficulty in high dimensional $p$-adic analysis) that one could derive the finiteness or perfectness in some ambient derived categories by using analogue of Kedlaya's key ingredient to prove the theorem mentioned in setting 1.2 above, which is to say there is no d\'evissage possibility transparent for us to use in order to reduce all the proof in the non-\'etale setting to the main results of \cite{CKZ18} and \cite{PZ19}, in the \'etale setting. The coherence of the cohomology is significant in our theory, for instance if one would like to study the global triangulation of $(\varphi,\Gamma)$-modules over any rigid analytic spaces, for more application see the discussion below.\\

\indent Now we discuss the applications of our results. The multidimensionalization represented in the program of \cite{PZ19} and \cite{CKZ18} is aimed at some motivation from $p$-adic Local Langlands program and relative Fukaya-Kato-Nakamura's local Tamagawa number conjecture or the $\varepsilon$-isomorphism conjecture. One could somehow regard the two conjectures as closely related to each other to some extent. Roughly speaking we have the following conjecture:

\begin{conjecture} \mbox{\bf (Nakamura \cite{Nak2})} In the determinant category attached to any $(\varphi,\Gamma)$-modules over relative Robba rings, for each such $M$ there exists an isomorphism between the object $\mathbf{1}$ with the fundamental line attached to $M$. Moreover this isomorphism is compatible with algebraic functional equation, duality and the de Rham isomorphism.
\end{conjecture}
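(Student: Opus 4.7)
The strategy is to adapt Nakamura's approach to the local $\varepsilon$-isomorphism conjecture to the multidimensional $(\varphi_I,\Gamma_{K_I})$-setting, relying crucially on the perfectness and coadmissibility results just established. Given $M$ a $(\varphi_I,\Gamma_{K_I})$-module over $\Pi_{\mathrm{an},\mathrm{con},I,X}(\pi_{K_I})$, the Herr complex $C^\bullet_{\varphi_I,\Gamma_{K_I}}(M)$ lies in $\mathbb{D}^\flat_{\mathrm{perf}}(A)$ by the first main theorem, so its determinant in the Picard groupoid of graded invertible $A$-modules is defined. The fundamental line $\Delta_A(M)$ is then built as the alternating tensor product of $\det R\Gamma$ against the de Rham contribution $\det_A M_{\mathrm{dR}}$, where the latter is assembled across the several Frobenius directions indexed by $I$. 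The claim of the conjecture is then the existence of a canonical trivialization $\mathbf{1}\xrightarrow{\sim} \Delta_A(M)$.

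First I would reduce to the rank one case by d\'evissage. In the multidimensional setting the natural notion of triangulation is a successive filtration of $M$ by $(\varphi_I,\Gamma_{K_I})$-stable sub-objects whose graded pieces are of rank one, and one expects the trianguline locus to be Zariski-dense in the appropriate moduli. The rank-one case itself should be treated via a multidimensional Perrin-Riou regulator, constructed out of Coleman-type power series in the ring $\Pi_{\mathrm{an},\mathrm{con},I,X}(\pi_{K_I})$ and interpolating the Bloch-Kato exponential and dual-exponential maps as one twists by the cyclotomic characters in each of the $I$ directions. The coadmissibility over $\Pi_{\mathrm{an},\infty,I,A}(\Gamma_{K_I})$ coming from the $\psi_I$-complex then provides the analytic framework to interpolate these special values into a single $\varepsilon$-element.

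The compatibility with duality and the algebraic functional equation would be derived by matching the Iwasawa-theoretic duality on $C^\bullet_{\psi_I}$ against the global cup-product on $C^\bullet_{\varphi_I,\Gamma_{K_I}}$; since both complexes are perfect, this is a statement about maps in a well-behaved derived category. Compatibility with the de Rham isomorphism would reduce, via the usual comparison between overconvergent $(\varphi,\Gamma)$-modules and filtered $(\varphi,N)$-modules applied coordinate-by-coordinate, to the classical local $\varepsilon$-factor calculation for each of the $K_i$.

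The main obstacle, as flagged in the remark following the statement of the first main theorem, is the absence of a transparent d\'evissage from the non-\'etale to the \'etale setting of \cite{CKZ18} and \cite{PZ19}, so one cannot simply import existing $\varepsilon$-isomorphism results for $\mathbb{Z}_p$-sheaves and interpolate them. A genuinely non-\'etale construction must be carried out, and the presence of several mutually commuting Frobenii makes the compatibility of the various duality pairings (one for each Frobenius direction) substantially more delicate than in the one-variable case. I expect this compatibility --- rather than the bare existence of a trivializing isomorphism --- to be the bulk of the difficulty, and to require first proving the noncommutative analogs conjectured in \cref{conjecture5.22}, \cref{conjecture5.23}, \cref{conjecture5.24} before the full statement can be established.
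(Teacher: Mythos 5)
The statement you have been asked to prove is a \emph{conjecture}, not a theorem of this paper: the paper attributes it to Nakamura \cite{Nak2} and leaves it open, citing it only as motivation for the program being developed. There is no proof in the paper to compare your attempt against, and the author explicitly states that the multidimensional $p$-adic Hodge-theoretic ingredients (the analogs of \cite{Ked1}, \cite{And1}, \cite{Me}, \cite{Ber1}) remain to be investigated. So your submission cannot be judged as ``matching'' or ``diverging from'' the paper's argument --- there is none.

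That said, let me review your plan on its own terms, since you present it as a proof plan rather than a completed proof. You are honest that the outline has gaps, but the gaps go deeper than you acknowledge. The most fundamental one is that the object you are asked to trivialize, the ``fundamental line'' $\Delta_A(M)$, is not constructed anywhere in the paper. You describe it as ``the alternating tensor product of $\det R\Gamma$ against the de Rham contribution $\det_A M_{\mathrm{dR}}$,'' but the de Rham functor for multidimensional $(\varphi_I,\Gamma_{K_I})$-modules over $\Pi_{\mathrm{an},\mathrm{con},I,X}(\pi_{K_I})$ does not yet exist: there is no multivariate analog of Berger's dictionary between $(\varphi,\Gamma)$-modules and filtered $(\varphi,N,G_K)$-modules, and indeed the paper's ``What's Next'' section identifies precisely this as open. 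Without a definition of $M_{\mathrm{dR}}$ the conjecture's ``compatibility with the de Rham isomorphism'' is not even a well-posed statement, and your coordinate-by-coordinate reduction presupposes a tensor decomposition of the de Rham side that would itself require proof.

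Several other steps in your outline are hypothetical constructions rather than available tools. The d\'evissage to rank one requires knowing that the trianguline locus is Zariski-dense, but Section 4.2 of the paper only proves a partial spreading-out result for triangulations (\cref{triangulation}) and explicitly does not establish a classification of rank-one objects or a density statement --- the Remark after the main triangulation proposition even flags the expected properties A and B as open. Your multivariate Perrin--Riou regulator, interpolating Bloch--Kato exponentials across the $I$ cyclotomic twists, is a reasonable thing to hope for, but no such map is constructed in the paper or its references in the non-\'etale multidimensional setting. Finally, the duality step you invoke --- matching Iwasawa duality on $C^\bullet_{\psi_I}$ against a cup-product pairing on $C^\bullet_{\varphi_I,\Gamma_{K_I}}$ --- presupposes a Tate local duality for the multivariate Herr complex, which is also absent from the paper; the residue pairing of Section 2.2 is the likely ingredient, but the duality theorem built from it would need to be proved, and the subtlety you correctly flag (compatibility of $|I|$ separate Frobenius pairings) is where the real work lies. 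In short, your plan is a sensible research roadmap, but nearly every box in it is a theorem still waiting to be proved, not an appeal to a result already available, so it should not be presented as a proof of the conjecture.
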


The program initiated in \cite{PZ19} and \cite{CKZ18} gives rise to some possible approach to this conjecture, by first generalizing this to higher dimensional situation, and then consider some sort of diagonal embedding in the group theoretic consideration. Again there should be a corresponding relative version of the conjectures of Pal-Z\'abr\'adi \cite{PZ19}. \\

%
%
%
%
%
%
%
%
%
%
%
%
%

\indent Another application will be deformation theory and the variation of the triangulation property represented already in the one-dimensional situation, see \cite{KPX}. As mentioned above, local triangulation or global triangulation are significant properties in the study of Galois theoretic eigenvarieties. We should mention that actually our approach which is parallel to \cite{KPX} is different from the one in \cite{Liu1}. The latter used essentially some local spreading properties of the slope filtrations, which is tested by considering the sensibility of the neighbourhood around a good point on the slope filtrations. All of these are not directly expected in the context of \cite{KPX} and the one we are considering in this paper. For the extended and detailed exposition, see the corresponding \cref{triangulation}.\\

\subsection{What's Next}

The framework we studied here is just some partial aspects of the whole story in the higher dimensional analytic geometry. One can definitely study more general analytic geometry even in the archimedean setting where one can search for more very typical multi-Hodge structures such as those coming from the hyper-K\"ahler geometry. We have only worked over polydisks and polyannuli in the strictly affinoid setting in regular rigid analytic geometry which we believe of course should be the most similar setting to the classical complex analysis of several variables. But that being said, these restrictions are not necessary at all. One can obviously ask how general the spaces which we could handle are. We certainly believe in some very general sense that the spaces we could potentially handle are those allowing us to do the suitable $p$-adic functional analysis and $p$-adic complex analysis of several variables.\\

The corresponding construction we established here targets directly at the noncommutative Tamagawa number conjectures. We should then follow \cite{Nak1}, \cite{Nak2}, \cite{Zah1}, \cite{PZ19} to amplify our programs in our mind from the philosophy in \cite{BF1}, \cite{BF2} and \cite{FK}. And we should believe that our approach could be also  applied in many other contexts where \cite{Kie1} and \cite{KL3} will provide insights and applications, which certainly might not have to be Iwasawa theoretic. Certainly in the context of \cite{PZ19} one still has to investigate in more detail the corresponding $p$-adic Hodge theoretic properties of objects studied in this paper, such as the corresponding possible analogs of the main results in \cite{Ked1}, \cite{And1}, \cite{Me} and \cite{Ber1}.\\

\newpage 

\begin{center}
\begin{tabularx}{\linewidth}{lX}
Notation & Description (These are mainly starting participating in the discussion in Chapter 2 in the commutative setting, while in Chapter 5 in the noncommutative setting)\\
\hline
$p$    & A prime number which is $>2$.\\
$I$    & A finite set. \\
$K_\alpha$   & Finite extension of $\mathbb{Q}_p$ for each $\alpha\in I$, with chosen $\pi_{K,\alpha}$.\\
$G_{K_{\alpha}}$ & The corresponding Galois group of $\mathrm{Gal}(\overline{\mathbb{Q}}_p/K_\alpha)$ for each $\alpha\in I$.\\
$\Gamma_{K_{\alpha}}$ & The corresponding $\Gamma$ group of $K_\alpha$ for each $\alpha\in I$.\\
$s_I,r_I$ & Multidimensional radii $0<s_I\leq r_I$ with coordinates which are rational numbers.\\
$A$  & An affinoid algebra (which could be noncommutative).\\
$\Pi_{r_I,I}$ & Multidimensional convergent ring with respect to the radius $r_I$.\\
$\Pi_{[s_I,r_I],I}$ & Multidimensional analytic ring with respect to the radii in $[s_I,r_I]$.\\
$\Pi_{\mathrm{an},r_I,I}$ & Multidimensional analytic ring with respect to the radius $r_{I}$.\\
$\Pi_{\mathrm{an},\mathrm{con},I}$ & Multidimensional Robba rings.\\
$\Pi_{r_I,I,A}$ & Multidimensional convergent ring with respect to the radius $r_I$ with coefficients in an affinoid algebra $A$.\\
$\Pi_{[s_I,r_I],I,A}$ & Multidimensional analytic ring with respect to the radii in $[s_I,r_I]$ with coefficients in an affinoid algebra $A$.\\
$\Pi_{\mathrm{an},r_I,I,A}$ & Multidimensional analytic ring with respect to the radius $r_{I}$ with coefficients in an affinoid algebra $A$.\\
$\Pi_{\mathrm{an},\mathrm{con},I,A}$ & Multidimensional Robba rings with coefficients in an affinoid algebra $A$.\\

$(C_\alpha,l_\alpha)_{\alpha\in I}$  & Collections of pair $(C_\alpha,l_\alpha)$ each of which consists of a multiplicative topological group which is abelian with finite torsion part and free part isomorphic to $\mathbb{Z}_p$, and a continuous homomorphism from $C_\alpha$ to $\mathbb{Q}_p$ for each $\alpha\in I$.  \\
$\Pi_{\mathrm{an},r_I,I}(C_I)$ & Multidimensional analytic ring with respect to the radius $r_{I}$, with new variables from $C_I$.\\
$\Pi_{\mathrm{an},\mathrm{con},I}(C_I)$ & Multidimensional Robba rings, with new variables from $C_I$.\\ 	
$\Pi_{\mathrm{an},r_I,I,A}(C_I)$ & Multidimensional analytic ring with respect to the radius $r_{I}$ with coefficients in an affinoid algebra $A$, with new variables from $C_I$.\\
$\Pi_{\mathrm{an},\mathrm{con},I,A}(C_I)$ & Multidimensional Robba rings with coefficients in an affinoid algebra $A$, with new variables from $C_I$.\\ 	
$\Pi_{\mathrm{an},\infty,I}(C_I)$ & Multidimensional analytic ring with respect to the radius $r_{I}$, with new variables from $C_I$.\\
$\Pi_{\mathrm{an},\infty,I,A}(C_I)$ & Multidimensional analytic ring with respect to the radius $r_{I}$ with coefficients in an affinoid algebra $A$, with new variables from $C_I$.\\

\end{tabularx}
\end{center}

\begin{center}
\begin{tabularx}{\linewidth}{lX}
Notation & Description (These are mainly starting participating in the discussion in Chapter 3 and 4 in the commutative setting, while in Chapter 5 in the noncommutative setting)\\
\hline

$\Pi_{\mathrm{an},r_I,I}(\pi_{K,I})$ & Multidimensional analytic ring with respect to the radius $r_{I}$, with new variables in $\pi_{K,I}$.\\
$\Pi_{[s_I,r_I],I}(\pi_{K,I})$ & Multidimensional analytic ring with respect to the radii in $[s_I,r_{I}]$, with new variables in $\pi_{K,I}$.\\
$\Pi_{\mathrm{an},\mathrm{con},I}(\pi_{K,I})$ & Multidimensional Robba rings, with new variables in $\pi_{K,I}$.\\ 	
$\Pi_{\mathrm{an},r_I,I,A}(\pi_{K,I})$ & Multidimensional analytic ring with respect to the radius $r_{I}$ with coefficients in an affinoid algebra $A$, with new variables in $\pi_{K,I}$.\\
$\Pi_{[s_I,r_I],I,A}(\pi_{K,I})$ & Multidimensional analytic ring with respect to the radii in $[s_I,r_{I}]$ with coefficients in an affinoid algebra $A$, with new variables in $\pi_{K,I}$.\\
$\Pi_{\mathrm{an},\mathrm{con},I,A}(\pi_{K,I})$ & Multidimensional Robba rings with coefficients in an affinoid algebra $A$, with new variables in $\pi_{K,I}$.\\

$\Pi_{\mathrm{an},\infty,I}(\Gamma_{K,I})$ & Multidimensional analytic ring with respect to the radius $r_{I}$, with new variables from $\Gamma_K$.\\

$\Pi_{\mathrm{an},\infty,I,A}(\Gamma_{K,I})$ & Multidimensional analytic ring with respect to the radius $r_{I}$ with coefficients in an affinoid algebra $A$, with new variables from $\Gamma_K$.\\

\end{tabularx}
\end{center}

\newpage

\section{Fr\'echet Rings and Sheaves in the Commutative Case}

\subsection{Fr\'echet Objects}

\indent In this subsection we are going to define the main objects we are going to study ring theoretically. Everything will be modeling the one-dimensional situation. As indicated in the title of the section, we will consider Fr\'echet-Stein objects in the sense of \cite{ST1}. We first proceed by defining the following rings:

\begin{definition} \mbox{\bf{(After KPX \cite[Notation 2.1.1]{KPX})}}
We consider the multi radii as mentioned in the notation list, where $0<s_I\leq r_I$ are all rational numbers or $\infty$. Under this assumption first let $I$ is a finite set. Then we consider the multidimensional polynomial rings generalizing the one-dimensional situation with variables $T_1^{\pm 1},..,T_I^{\pm 1}$ which is to say the ring:
\begin{displaymath}
\mathbb{Q}_p[T_1^{\pm 1},..,T_I^{\pm 1}].	
\end{displaymath}
Now we endow this rings with some specific Gauss valuations and norms as in the following way for any multi radii $t_I>0$:
\begin{displaymath}
v_{t_I}(f)=v_{t_I}(\sum_{n_I}a_{n_I}T_I^{i_I}):= \inf_{n_I}\left\{v_p(a_{n_I})+\sum_{\alpha\in I}t_\alpha i_\alpha\right\},
\end{displaymath}
\begin{displaymath}
\left\|f\right\|_{t_I}=\left\|\sum_{n_I}a_{n_I}T_I^{i_I}\right\|_{t_I}:= \sup_{n_I}\left\{|a_{n_I}|p^{-\sum_{\alpha\in I}t_\alpha i_\alpha/(p-1)}\right\}.
\end{displaymath}
We are going to use the Gauss valuations and norms with respect to the multi-interval $[s_I,r_I]$ in the following sense:
\begin{displaymath}
v_{[s_I,r_I]}(f)=v_{[s_I,r_I]}(\sum_{n_I}a_{n_I}T_I^{i_I}):= \min_{t_\alpha\in\{s_I,r_I\}}\left\{v_p(a_{n_I})+\sum_{\alpha\in I}t_\alpha i_\alpha\right\},
\end{displaymath}
\begin{displaymath}
\left\|f\right\|_{[s_I,r_I]}=\left\|\sum_{n_I}a_{n_I}T_I^{i_I}\right\|_{[s_I,r_I]}:= \max_{t_\alpha\in\{s_I,r_I\}}\left\|\sum_{n_I}a_{n_I}T_I^{i_I}\right\|_{t_\alpha}.	
\end{displaymath}
The latter will give rise to the following definition:
\begin{displaymath}
\Pi_{[s_I,r_I],I}	
\end{displaymath}
which is defined to the completion of the ring $\mathbb{Q}_p[T_1^{\pm 1},..,T_I^{\pm 1}]$ with respect to the norms defined above with respect to the multi-interval $[s_I,r_I]$. For the polynomial rings:
\begin{displaymath}
\mathbb{Q}_p[T_1,..,T_I]	
\end{displaymath}
we then instead define for some $t_I>0$:

\begin{displaymath}
v_{t_I}(f)=v_{t_I}(\sum_{n_I}a_{n_I}T_I^{i_I}):= \inf_{n_I}\left\{v_p(a_{n_I})+\sum_{\alpha\in I}t_\alpha i_\alpha\right\},
\end{displaymath}
\begin{displaymath}
\left\|f\right\|_{t_I}=\left\|\sum_{n_I}a_{n_I}T_I^{i_I}\right\|_{t_I}:= \sup_{n_I}\left\{|a_{n_I}|p^{-\sum_{\alpha\in I}t_\alpha i_\alpha/(p-1)}\right\}.
\end{displaymath}
This will then give rise to the rings corresponding to the closed polydiscs and open polydiscs:
\begin{displaymath}
\Pi_{[t_I,\infty_I],I}, \Pi_{\mathrm{an},\infty_I,I}:=\bigcap_{t_I>0}\Pi_{[t_I,\infty_I],I}.	
\end{displaymath}
Then correspondingly we define the following analytic rings corresponding to the full polyannuli:
\begin{displaymath}
\Pi_{\mathrm{an},r_I,I}:=\bigcap_{0<s_I\leq r_I} \Pi_{[s_I,r_I],I},\Pi_{\mathrm{an},\mathrm{con},I}:=\bigcup_{r_I>0}\bigcap_{0<s_I\leq r_I} \Pi_{[s_I,r_I],I}. 
\end{displaymath}

\end{definition}

\begin{remark}
One could actually define the rings by considering all the (rigid) analytic functions over the corresponding open polydiscs, closed polydiscs, polyannuli and full polyannuli, which will be more transparent.
\end{remark}

\begin{remark}
Actually one could see that in the definition, one could find that especially in the multi-interval situation, the rightmost radii could be mixed-type with respect to the infinity which is to say that one could have a genuine proper subset of indexes where the rightmost radii are exactly infinite. We will mainly focus on the situation where such proper subset does not emerge in our discussion later, if not specified otherwise.	
\end{remark}

\indent Now we consider the relative version of the definition as in the one dimensional situation in \cite[Notation 2.1.1]{KPX}.

\begin{definition} \mbox{\bf{(After KPX \cite[Notation 2.1.1]{KPX})}}
Let $A$ be an affinoid algebra over $\mathbb{Q}_p$ in the sense of rigid analytic spaces. Then we are going to define the relative version of the rings defined as above in the absolute situation:
\begin{displaymath}
\Pi_{[s_I,r_I],I,A}, 
\end{displaymath}
as the analytic functions relative to $A$ over the products:
\begin{displaymath}
\mathrm{Max}A\times A^I[s_I,r_I],0<s_I\leq r_I.
\end{displaymath}
Then we define:
\begin{displaymath}
\Pi_{\mathrm{an},r_I,I,A}:=\bigcap_{0<s_I\leq r_I} \Pi_{[s_I,r_I],I,A},\Pi_{\mathrm{an},\mathrm{con},I,A}:=\bigcup_{r_I>0}\bigcap_{0<s_I\leq r_I} \Pi_{[s_I,r_I],I,A}.	
\end{displaymath}
Correspondingly almost in the parallel and similar way one could define the following ring of analytic functions relative to the space $A$:
\begin{displaymath}
\Pi_{[s_I,\infty_I],I,A},\Pi_{\mathrm{an},\infty_I,I,A}:=\bigcap_{s_I>0}\Pi_{[s_I,\infty_I],I,A}.	
\end{displaymath}
\end{definition}

\indent Eventually we are going to work within the framework of Fr\'echet-Stein algebras over rigid analytic spaces. Now we are going to consider the construction over the affinoid algebras. First let $r_{I,0}$ be a fixed radii (where we do not consider the situation that there is a finite proper set of $I$ where the radii approach the infinity). Then we consider a decreasing sequence of multi radii $\{r_{I,n}\}_{n\geq 0}$ which is to say that for each $\alpha\in I$, $r_{\alpha,n+1}\leq r_{\alpha,n}$, and when $n=0$ the radii is the one fixed above. And we require that for each $\alpha\in I$, $r_{\alpha,1}< r_{\alpha,0}$.

\begin{remark}
Actually this looks a little bit strange, but one could eventually choose in the following way that for each $n\geq 1$ one just set $r_{\alpha,n}=r_n$ for each $\alpha$.	
\end{remark}

\indent Then we consider the following result which in some non-trivial way generalizes the one dimensional situation:

\begin{proposition}
For all $n\geq 1$, the rings $\Pi_{[r_{I,n},r_{I,0}],I,A}$ and $\Pi_{[r_{I,n},\infty_I],I,A}$ are noetherian as Banach $A$-algebras.	
\end{proposition}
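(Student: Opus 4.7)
The plan is to recognize both rings as (rescaled) affinoid algebras over $A$, and then invoke Tate's theorem that affinoid algebras over an affinoid base are noetherian, exactly paralleling the one-dimensional argument of \cite[Notation 2.1.1]{KPX}.

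First, I will unwind the definitions. By construction $\Pi_{[r_{I,n},r_{I,0}],I,A}$ consists of analytic functions on the relative polyannulus $\mathrm{Max}\,A\times A^I[r_{I,n},r_{I,0}]$, which is a product of $\mathrm{Max}\,A$ with $|I|$ one-dimensional closed annuli. Likewise $\Pi_{[r_{I,n},\infty_I],I,A}$ is the ring of analytic functions on $\mathrm{Max}\,A$ times a closed polydisc of outer radii determined by $r_{I,n}$, the upper $\infty_I$ just reflecting that we retain the $T_\alpha$ but drop the $T_\alpha^{-1}$, so the inner radius collapses to zero. In either case the second factor is an affinoid rigid-analytic space over $\mathbb{Q}_p$, the product is affinoid over $\mathbb{Q}_p$, and the ring of analytic functions is naturally identified with the completed tensor product
\[
A\,\widehat{\otimes}_{\mathbb{Q}_p}\,\Pi_{[r_{I,n},r_{I,0}],I}, \qquad A\,\widehat{\otimes}_{\mathbb{Q}_p}\,\Pi_{[r_{I,n},\infty_I],I},
\]
compatibility of the norms on the two sides being an immediate check from the Gauss norms introduced above.

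Second, I will give explicit affinoid presentations of the absolute factors. The ring $\Pi_{[r_{I,n},\infty_I],I}$ is, up to rescaling the variables $T_\alpha$ by suitable powers of $p$ (possibly after a finite unramified extension of scalars to absorb the rational exponents $r_{\alpha,n}/(p-1)$), nothing but the standard Tate algebra $\mathbb{Q}_p\{T_1,\ldots,T_{|I|}\}$. The ring $\Pi_{[r_{I,n},r_{I,0}],I}$ is obtained from a Tate algebra in $2|I|$ variables $T_\alpha,U_\alpha$ by imposing the $|I|$ relations $T_\alpha U_\alpha-c_\alpha=0$ for suitable constants $c_\alpha$, realizing it as a quotient of a Tate algebra by a finitely generated ideal. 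Consequently both absolute rings are affinoid $\mathbb{Q}_p$-algebras. Completed tensor product with the affinoid algebra $A$ then yields affinoid algebras over $A$ in the usual sense, presented as quotients of some Tate algebra $A\{T_1,\ldots,T_N\}$ by a finitely generated ideal.

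The conclusion follows by invoking the relative form of Tate's theorem: any Tate algebra $A\{T_1,\ldots,T_N\}$ over an affinoid base $A$ is noetherian, and any quotient of a noetherian ring is noetherian. The Banach $A$-algebra structure is the standard one attached to such an affinoid presentation, and the $A$-algebra structure is built into the completed tensor product. The only step where any care is needed is the rescaling identifying the Gauss norm $\|\cdot\|_{r_{I,n}}$ with the supremum norm on a unit-radius Tate algebra, because the rational exponents $r_{\alpha,n}/(p-1)$ need not be integers; this is the same minor bookkeeping already encountered in the one-dimensional case of \cite{KPX}, handled either by allowing generalized Tate algebras with non-unit generator radii, or by base change to an extension where the rescaling becomes integral, neither of which affects noetherianity. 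I anticipate this rescaling check, not any deep functional analysis, to be the only real nuisance in writing the proof.
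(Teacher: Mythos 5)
Your proposal is correct, but it takes a genuinely different route from the paper's. The paper (following \cite{PZ19} and \cite{Ked9}) realizes $\Pi_{[r_{I,n},r_{I,0}],I,A}$ as a weakly complete finitely generated (Monsky--Washnitzer) algebra over the $p$-adic completion of $\mathbb{Z}_p[T_\alpha, \alpha\in I]$ and then applies Fulton's theorem \cite{Ful1}; the crux of that argument is the degree estimate $\partial_d \leq (p-1)\log_p c_2/r_{\alpha,0} + (d-1)(p-1)/r_{\alpha,0}$ verifying the weak-completeness condition. You instead recognize the closed polyannulus and closed polydisc as affinoid rigid spaces over $\mathbb{Q}_p$, present the corresponding rings as (quotients of) Tate algebras via $T_\alpha U_\alpha - c_\alpha = 0$, take completed tensor product with $A$, and invoke Tate's noetherianity theorem for affinoid algebras. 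This is sound, more direct, and closer to the standard rigid-analytic toolkit; the paper's Monsky--Washnitzer route is the one traditional in the overconvergent/isocrystal literature and is perhaps better adapted to settings where the rings are not literally affinoid. One small simplification to your argument: since the $r_{\alpha,n}$ are rational by hypothesis, the radii $p^{-r_{\alpha,n}/(p-1)}$ lie in $p^{\mathbb{Q}} = \sqrt{|\mathbb{Q}_p^\times|}$, so the polyannulus is already a rational subdomain over $\mathbb{Q}_p$ and no extension of scalars is needed to absorb the exponents.
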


\begin{proof}
We are going to follow ideas in \cite{PZ19} and \cite{Ked9} to prove this. This will be some application of a theorem due to Fulton in \cite{Ful1}. We are going to show the statement in the situation where $A$ is just the field $\mathbb{Q}_p$, and leave the general situation to the reader. We are going to only choose to prove the statement for the ring $\Pi_{[r_{I,n},r_{I,0}],I,A}$, and we leave the check for the other ring to the readers. We are going to show that the ring $\Pi_{[r_{I,n},r_{I,0}],I,A}$ is weakly complete algebras over the corresponding adic completion of the ring $\mathbb{Z_p}[T_\alpha,\alpha\in I]$ with the ideal $(p)$ generated by the element $p$ , and a theorem due to Fulton will imply that the ring $\Pi_{[r_{I,n},r_{I,0}],I,A}$ is actually noetherian.\\
\indent Following Monsky-Washnitzer, we recall as in the following the basic definition of a ring $R$ being relatively weakly complete algebras over $A$. This means that for any element $f\in R$ and any degree $d\geq 0$, one could find $k$ elements $t_1,...,t_k$ and a polynomial $p_d$ with $k$-variables with the coefficients in $(p)^d$, such that there exists a constant $C>0$ with $\mathrm{deg}p_d\leq C(d+1)$ and with that:
\begin{displaymath}
f=\sum_{d\geq 0}p_d(t_1,...,t_k).	
\end{displaymath}
Therefore we are going to find some essential estimate over the degree of the polynomial with respect to some given element $f$. To get start for any such given $f$ and any given $d$ in the condition recalled as above, we consider a degree $\partial_d$ positive which is maximal in the sense that $f$ is inside $T_I^{-\partial_d}\mathbb{Z}_p/(p)^d$ and the index is the maximal throughout all such indexes. Then we are going to use the notation $\partial_I$ to denote the corresponding multi-index (which is possibly not the same as $-\partial_d^I$). Now we take a constant $c_1>1$ such that $\left\|f\right\|_{[r_{I,n},r_{I,0}]}\leq c_1$. Then we compute:
\begin{displaymath}
c_1\geq \left\|f\right\|_{[r_{I,n},r_{I,0}]} \geq \left\|f\right\|_{r_{I,0}} \geq |\mathrm{coeff}_{\partial_I}(f)|p^{\partial_d r_{\alpha,0}/(p-1)}\prod_{\beta\in I\backslash \{\alpha\}}p^{-r_{\beta,0} i_\beta/(p-1)}
\end{displaymath}
which gives rise to that for some $c_2>0$:
\begin{displaymath}
|\mathrm{coeff}_{\partial_I}(f)|p^{\partial_d r_{\alpha,0}/(p-1)}\leq c_2.	
\end{displaymath}
Then we compute as in the following:
\begin{align}
|\mathrm{coeff}_{\partial_I}(f)|p^{\partial_d r_{\alpha,0}/(p-1)}&\leq c_2\\	
\mathrm{log}_p|\mathrm{coeff}_{\partial_I}(f)|+\partial_d r_{\alpha,0}/(p-1)&\leq \mathrm{log}_pc_2\\
(1-d)+\partial_d r_{\alpha,0}/(p-1)&\leq \mathrm{log}_pc_2\\
\partial_d r_{\alpha,0}&\leq (p-1)\mathrm{log}_pc_2+(d-1)(p-1)\\
\partial_d &\leq (p-1)\mathrm{log}_pc_2/r_{\alpha,0}+(d-1)(p-1)/r_{\alpha,0}
\end{align}
which gives rise to the desired estimate, which then further finishes the proof by considering Fulton's theorem in \cite{Ful1} which says that the desired relative weakly complete algebra is now noetherian.
\end{proof}

 \indent Then we consider the following statement which sends us to the framework of \cite{ST1}, which generalizes the one dimensional situation in \cite[Section 2.1]{KPX}:
 
\begin{proposition}
For all $n\geq 1$, the rings $\Pi_{\mathrm{an},r_{I,0},I,A}$ and $\Pi_{\mathrm{an},\infty_I,I,A}$ are Fr\'echet-Stein in the sense of \cite{ST1}.
\end{proposition}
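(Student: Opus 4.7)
The plan is to verify the three defining axioms of a Fr\'echet-Stein algebra in the sense of \cite{ST1}. I treat the case of $\Pi_{\mathrm{an},r_{I,0},I,A}$ in detail; the case of $\Pi_{\mathrm{an},\infty_I,I,A}$ is entirely parallel, replacing polyannuli by polydiscs.

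First, by the very definition of $\Pi_{\mathrm{an},r_{I,0},I,A}$ as the intersection $\bigcap_{0<s_I\leq r_{I,0}} \Pi_{[s_I,r_{I,0}],I,A}$, one obtains a Fr\'echet presentation
\[
\Pi_{\mathrm{an},r_{I,0},I,A} \;=\; \varprojlim_{n\geq 1} \Pi_{[r_{I,n},r_{I,0}],I,A},
\]
endowed with the topology defined by the family of Gauss norms $\|\cdot\|_{[r_{I,n},r_{I,0}]}$, where $\{r_{I,n}\}_{n\geq 1}$ is any strictly decreasing sequence with $r_{\alpha,n}\searrow 0$ for each $\alpha\in I$. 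The cofinality of such sequences among all admissible sub-intervals ensures that this countable inverse limit recovers the full intersection topologically.

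Second, the previous proposition tells us that each Banach algebra $\Pi_{[r_{I,n},r_{I,0}],I,A}$ in this tower is Noetherian. Third, density of the image of the transition map $\Pi_{[r_{I,n+1},r_{I,0}],I,A}\to\Pi_{[r_{I,n},r_{I,0}],I,A}$ is immediate, since the Laurent polynomial ring $A[T_\alpha^{\pm 1}:\alpha\in I]$ sits as a dense subring inside every layer with respect to the corresponding Gauss norm.

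The only substantive step is flatness of the transition maps, which I would handle by reinterpreting them geometrically. The affinoid domain $\mathrm{Max}A\times A^I[r_{I,n+1},r_{I,0}]$ contains $\mathrm{Max}A\times A^I[r_{I,n},r_{I,0}]$ as a rational subdomain, since the latter is cut out of the former by finitely many Laurent-monomial inequalities of the form $p^{-r_{\alpha,n}/(p-1)} \geq |T_\alpha|$ for $\alpha\in I$ (with the outer radii unchanged), each individually defining a Weierstrass- or Laurent-type subdomain whose intersection remains rational. Restriction of analytic functions to a rational subdomain induces a flat ring homomorphism by the classical theorem of Bosch-G\"untzer-Remmert, and this is precisely the content of our transition map. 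Hence each $\Pi_{[r_{I,n},r_{I,0}],I,A}$ is flat over $\Pi_{[r_{I,n+1},r_{I,0}],I,A}$, and the three Fr\'echet-Stein axioms are verified. The same reasoning with $A^I[r_{I,n},\infty_I]$ in place of the polyannulus handles $\Pi_{\mathrm{an},\infty_I,I,A}$.

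The main potential obstacle is the careful verification of the rational-subdomain structure in the multidimensional polyannulus setting: one must check that the defining inequalities genuinely realize the smaller polyannulus as a rational subdomain of the larger one, and that the completions with respect to the Gauss norms $\|\cdot\|_{[s_I,r_I]}$ really compute the coordinate rings of these affinoid products. Once this identification is in place, the Fr\'echet-Stein property reduces to the Noetherianity established in the previous proposition together with the classical flatness of affinoid restrictions.
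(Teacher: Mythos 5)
Your proof is correct and largely parallels the paper's, but you handle the one substantive step --- flatness of the transition maps --- in a cleaner and more clearly justified way. The paper simply asserts that the restriction map $\Pi_{[r_{I,n+1},r_{I,0}],I,A}\to\Pi_{[r_{I,n},r_{I,0}],I,A}$ is flat ``since it is \'etale by the production construction and the base change properties,'' which is at best an allusion to the one-dimensional case combined with products. You instead recognize the smaller polyannulus $\mathrm{Max}A\times A^I[r_{I,n},r_{I,0}]$ as an affinoid subdomain of $\mathrm{Max}A\times A^I[r_{I,n+1},r_{I,0}]$ cut out by the Weierstrass-type conditions $|T_\alpha|\leq p^{-r_{\alpha,n}/(p-1)}$, and then appeal to the classical Bosch--G\"untzer--Remmert theorem that restriction to an affinoid subdomain is flat. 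This is the standard argument and removes the vagueness; the only point to be careful about, which you correctly flag, is that the bounds $p^{-r_{\alpha,n}/(p-1)}$ are typically not norms of elements of $\mathbb{Q}_p$, so one must raise the defining inequalities to suitable integer powers (using rationality of the $r_{\alpha,n}$) to realize the subdomain as an honest rational/Weierstrass subdomain. With that caveat addressed, both arguments establish the same three Fr\'echet--Stein axioms --- countable cofinal Noetherian Banach layers (from the preceding proposition), flat transition maps, dense images --- and your version is the one I would keep.
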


\begin{proof}
We keep the notations simple so we just consider the proof for the first ring and leave the task for checking the second case to the reader. Now indeed, we first note that $\Pi_{[r_{I,n},r_{I,0}],I,A}$ is Fr\'echet algebra in the sense that it is the ring of the analytic functions over a union of the relative annuli as in the following:
\begin{displaymath}
\bigcup_{0<r_{I,n}\leq r_{I,0}} \mathrm{Max}A\times A^I[r_{I,n},r_{I,0}]. 
\end{displaymath}
Furthermore we note that this is also the increasing union which is of the form of:
\begin{displaymath}
\bigcup_{0<r_{I,n}\leq r_{I,0}} \mathrm{Max}A\times A^I[r_{I,n},r_{I,0}],
\end{displaymath}
where $ \mathrm{Max}A\times A^I[r_{I,n},r_{I,0}]\subset  \mathrm{Max}A\times A^I[r_{I,n+1},r_{I,0}]$. The corresponding transition map over the algebras then with respect to this increasing union is actually flat since it is \'etale by the production construction and the base change properties. Finally the transition map is of topologically dense image again by the product construction.	
\end{proof}

\indent Before we proceed to study more in more detail on the representation and structure of the algebras defined above, we first define the vector bundles over polyannuli generalizing the one dimensional result in \cite[Definition 2.1.3]{KPX}. We mimick the definition, which gives rise to the following:

\begin{definition} \mbox{\bf{(After KPX \cite[Definition 2.1.3]{KPX})}}
We define coherent sheaves of modules and the global sections over the ring $\Pi_{\mathrm{an},r_{I_0},I,A}$ in the following way. First a coherent sheaf $\mathcal{M}$ over the relative multidimensional Robba ring $\Pi_{\mathrm{an},r_{I_0},I,A}$ is defined to be a family $(M_{[s_I,r_I]})_{[s_I,r_I]\subset (0,r_{I,0}]}$	of modules of finite type over each relative $\Pi_{[s_I,r_I],I,A}$ satisfying the following two conditions as in the more classical situation:
\begin{displaymath}
M_{[s_I',r_I']}\otimes_{\Pi_{[s'_I,r'_I],I,A}}\Pi_{[s_I'',r_I''],I,A}\overset{\sim}{\rightarrow} M_{[s''_I,r''_I]}	
\end{displaymath}
for any multi radii satisfying $0<s_I'\leq s_I''\leq r_I''\leq r_I'\leq r_{I,0}$, with furthermore 
\begin{displaymath}
(M_{[s'_I,r'_I]}\otimes_{\Pi_{[s'_I,r'_I],I,A}}\Pi_{[s''_I,r''_I],I,A}	)\otimes_{\Pi_{[s''_I,r''_I],I,A}}\Pi_{[s'''_I,r'''_I],I,A}\overset{\sim}{\rightarrow}\Pi_{[s'''_I,r'''_I],I,A},
\end{displaymath}
for any multi radii satisfying the following condition:
\begin{displaymath}
0<s_I'\leq s_I''\leq s_I'''\leq r_I'''\leq r_I''\leq r_I'\leq r_{I,0}.	
\end{displaymath}
Then we define the corresponding global section of the coherent sheaf $\mathcal{M}$, which is usually denoted by $M$ which is defined to be the following inverse limit:
\begin{displaymath}
M:=\varprojlim_{s_I\rightarrow 0_I^+} M_{[s_I,r_{I,0}]}.	
\end{displaymath}
We are going to follow \cite[Definition 2.1.3]{KPX} to call any module defined over $\Pi_{\mathrm{an},r_{I_0},I,A}$ coadmissible it comes from a global section of a coherent sheaf $\mathcal{M}$ in the sense defined above. 
\end{definition}

\indent Then we could collect the following properties form more general theory from \cite{ST1} as in \cite[Lemma 2.1.4]{KPX} in the one dimensional situation.

\begin{proposition} \mbox{\bf{(After KPX \cite[Lemma 2.1.4]{KPX})}} \label{prop2.9}
We have the following properties for the coherent sheaves defined above over the relative multi-dimensional Robba rings $\Pi_{\mathrm{an},r_{I_0},I,A}$:\\
I.	For each multi radii $0< s_I\leq r_I\leq r_{I,0}$ as above, we have that the global section $M$ is dense in the corresponding section $M_{[s_I,r_I]}$ where $r_{I,0}$ is finite, where the statement is also true for $M[1/T_I]$ for $r_{I,0}$ infinite but with $r_I$ not;\\
II. For each multi radii $0< s_I\leq r_I\leq r_{I,0}$ as above, we have the following comparison:
\begin{displaymath}
M\otimes_{\Pi_{\mathrm{an},r_{I_0},I,A}}\Pi_{[s_I,r_I],I,A}\overset{\sim}{\rightarrow}	M_{[s_I,r_I]};\\
\end{displaymath}
III. For each multi radii $0< s_I \leq r_{I,0}$ we have the following higher vanishing result:
\begin{displaymath}
R^1\varprojlim_{s_I\rightarrow 0^+}M_{[s_I,r_{I,0}]}=0;\\	
\end{displaymath}
IV. The kernel or cokernel of a morphism between coadmissible modules over $\Pi_{\mathrm{an},r_{I_0},I,A}$ is again coadmissible;\\
V. Any finite presented module over $\Pi_{\mathrm{an},r_{I_0},I,A}$ is coadmissible;\\
VI. Any submodule of finite type of any coadmissible  $\Pi_{\mathrm{an},r_{I_0},I,A}$-module is again coadmissible;\\ 
VII. For each multi radii $0< s_I\leq r_I\leq r_{I,0} $, and for the global section defined above, we have that $M\otimes_{\Pi_{\mathrm{an},r_{I_0},I,A}}\Pi_{[s_I,r_I],I,A}$ is flat over $\Pi_{[s_I,r_I],I,A}$. 

\end{proposition}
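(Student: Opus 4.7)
The plan is to reduce every one of the seven statements to the general Fréchet--Stein formalism of Schneider--Teitelbaum, using the previous proposition, which shows that $\Pi_{\mathrm{an},r_{I,0},I,A}$ is Fréchet--Stein with noetherian Banach pieces $\Pi_{[r_{I,n},r_{I,0}],I,A}$ and flat transition maps of dense image. The first step is to choose a cofinal countable system of multi-radii $(s_{I,n})_{n\geq 1}$ with $s_{I,n+1}\leq s_{I,n}$ and $s_{I,n}\to 0_I^+$; this is possible because the partially ordered set of multi-intervals $[s_I,r_I]\subset (0,r_{I,0}]$ admits such a cofinal descending chain by diagonalizing coordinate-wise. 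Having fixed this, a coadmissible sheaf $\mathcal{M}$ as in the definition is literally a coherent sheaf in the Schneider--Teitelbaum sense over this countable presentation, and the global section $M$ is the usual inverse limit along this chain.

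With this in hand, Parts I, II, III, VII are formal consequences of the Fréchet--Stein structure. For I, density of $M$ inside each $M_{[s_I,r_I]}$ follows from the fact that the transition maps in a Fréchet--Stein presentation have dense image, as established in the previous proposition; in the case $r_{I,0}=\infty$ for a subset of indices one must invert the corresponding $T_\alpha$ to restore density, matching the formulation of the proposition. For II, the base change isomorphism $M\otimes_{\Pi_{\mathrm{an},r_{I,0},I,A}}\Pi_{[s_I,r_I],I,A}\xrightarrow{\sim}M_{[s_I,r_I]}$ is exactly the content of Schneider--Teitelbaum's Theorem B for coadmissible modules (Theorem A gives I and II together). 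For III, the $R^1\varprojlim$ vanishing is the Mittag--Leffler property, which is automatic from the dense-image condition on transition maps combined with Banach surjectivity (again standard from [ST1]). For VII, the claimed flatness follows because $\Pi_{\mathrm{an},r_{I,0},I,A}\to\Pi_{[s_I,r_I],I,A}$ is the composite of a transition map in the Fréchet--Stein presentation (flat by definition) followed by a further localization/restriction to a smaller polyannulus, which is flat since it is the analytic restriction between noetherian affinoid algebras.

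Parts IV, V, VI use the abelian-category theory of coadmissible modules. For IV, one sets $M_{[s_I,r_I]}:=(\ker f)_{[s_I,r_I]}$ (respectively the cokernel) and uses exactness of the functors $(-)_{[s_I,r_I]}$, plus the fact that kernels and cokernels of maps between finite modules over the noetherian ring $\Pi_{[s_I,r_I],I,A}$ are again finite and satisfy the coherence compatibilities. For V, any finite presentation $\Pi_{\mathrm{an},r_{I,0},I,A}^a\to\Pi_{\mathrm{an},r_{I,0},I,A}^b\to N\to 0$ gives rise to the family $N_{[s_I,r_I]}:=N\otimes_{\Pi_{\mathrm{an},r_{I,0},I,A}}\Pi_{[s_I,r_I],I,A}$, and the coherence conditions are verified using right-exactness of tensor product plus II. For VI, if $N\subset M$ is finitely generated, one sets $N_{[s_I,r_I]}$ to be the image of $N\otimes \Pi_{[s_I,r_I],I,A}\to M_{[s_I,r_I]}$; the compatibilities then follow from the noetherian property of each $\Pi_{[s_I,r_I],I,A}$ together with flatness of the transition maps (i.e.\ from part VII applied to the various inclusions).

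I expect the main technical obstacle to be the bookkeeping around the multi-radii parameter space, rather than any one deep step: the original [KPX, Lemma 2.1.4] is stated for a single interval $[s,r]$ where cofinality is obvious, whereas here one must argue that the filtration by multi-intervals $[s_I,r_I]$ is equivalent to a countable cofinal chain for the purposes of defining and manipulating coadmissible modules. A secondary subtlety is that the rightmost radius $r_{I,0}$ may contain coordinates equal to $\infty$; this is ruled out by the remark following the first definition in our running setting, but in the mixed case one must reformulate density in I after inverting the appropriate subset of $T_\alpha$'s, as noted in the statement.
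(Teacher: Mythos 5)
Your proposal is correct and follows essentially the same route the paper takes: the paper's own proof is a one-line citation of \cite[Lemma 2.1.4]{KPX} and \cite{ST1}, with a single substantive remark that the Fr\'echet--Stein formalism initially gives the statements only for intervals of the form $[s_I,r_{I,0}]$ (right endpoint fixed at $r_{I,0}$) and that one must extend to general $[s_I,r_I]$ by the same device as in the one-dimensional KPX argument. You unpack exactly this: you set up a countable cofinal chain of multi-radii so that Schneider--Teitelbaum's Theorems A and B apply verbatim for the fixed outer radius, derive I--III and VII from them, and then reduce IV--VI to the abelian-category machinery of coadmissible modules; your observation in VII (that restriction from $[s_I,r_{I,0}]$ to $[s_I,r_I]$ is an additional flat affinoid restriction) is precisely the ``same method as the one dimensional situation'' step the paper alludes to. One small caution: in IV you invoke exactness of $(-)_{[s_I,r_I]}$ on coadmissible modules, which in \cite{ST1} is itself deduced from the noetherian/flatness structure; stated baldly it risks circularity with VII, so the order of deduction should be VII (or the underlying transition-map flatness) before IV, as is done in \cite{ST1} and \cite[Lemma 2.1.4]{KPX}.
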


\begin{proof}
See \cite[Lemma 2.1.4]{KPX} and \cite{ST1}. Again let us mention that actually in this setting initially we only have the corresponding results for $r_I$ fixed which could for instance be $r_{I,0}$, however this could be overcome by the same method as in the one dimensional situation in \cite[Lemma 2.1.4]{KPX}.
\end{proof}

\indent We then follow the ideas and approaches from \cite{KPX} to 
study further the structure theory of the coherent sheaves defined above. The following is then a generalization of the corresponding result in the one-dimensional situation in \cite[Corollary 2.1.5]{KPX}.

\begin{lemma} \mbox{\bf{(After KPX \cite[Corollary 2.1.5]{KPX})}}
The rings $\Pi_{\mathrm{an},r_{I_0},I,A}$, and $\Pi_{\mathrm{an},\mathrm{con},I,A}$ are flat over $A$ as in \cite[Corollary 2.1.5]{KPX}.	
\end{lemma}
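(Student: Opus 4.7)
The plan is to first establish flatness of each slab algebra $\Pi_{[s_I,r_I],I,A}$ over $A$, then promote this to the Fréchet--Stein algebra $\Pi_{\mathrm{an},r_{I,0},I,A}$ using the coadmissibility package of \cref{prop2.9}, and finally derive the Robba ring case by a filtered colimit argument. For the first step, I would note that for any $0<s_I\leq r_I\leq r_{I,0}$, the ring $\Pi_{[s_I,r_I],I,A}$ is by construction the affinoid algebra of analytic functions on the product $\mathrm{Max}A\times A^I[s_I,r_I]$; it is therefore the completed base change of the flat $\mathbb{Q}_p$-algebra $\Pi_{[s_I,r_I],I}$ along $\mathbb{Q}_p\to A$. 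Standard flatness results for completed tensor products of affinoid algebras (in the style of Bosch--Güntzer--Remmert) then yield flatness of $\Pi_{[s_I,r_I],I,A}$ over $A$. The analogous statement for $\Pi_{[s_I,\infty_I],I,A}$ is identical.

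For the second step, set $\Pi:=\Pi_{\mathrm{an},r_{I,0},I,A}$ and let $0\to N'\to N\to N''\to 0$ be a short exact sequence of finitely generated $A$-modules; since $A$ is noetherian and $\mathrm{Tor}$ commutes with filtered colimits, flatness reduces to preservation of such sequences. By the first step each slab sequence $0\to \Pi_{[s_I,r_I],I,A}\otimes_A N'\to \Pi_{[s_I,r_I],I,A}\otimes_A N\to \Pi_{[s_I,r_I],I,A}\otimes_A N''\to 0$ is exact. Each of $\Pi\otimes_A N',\Pi\otimes_A N,\Pi\otimes_A N''$ is finitely presented over $\Pi$, hence coadmissible by \cref{prop2.9}(V); by \cref{prop2.9}(II) the $[s_I,r_I]$-section of the associated coherent sheaf is canonically $\Pi_{[s_I,r_I],I,A}\otimes_A N$ (and similarly for $N',N''$), and by the definition of the global section together with the vanishing of the derived inverse limit in \cref{prop2.9}(III), one recovers $\Pi\otimes_A N=\varprojlim_{s_I\to 0_I^+}(\Pi_{[s_I,r_{I,0}],I,A}\otimes_A N)$. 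Applying $\varprojlim$ termwise to the exact slab sequences, using left exactness of inverse limits for the injectivity on the left and \cref{prop2.9}(III) applied to $\Pi\otimes_A N'$ for the surjectivity on the right, yields the exact sequence $0\to \Pi\otimes_A N'\to \Pi\otimes_A N\to \Pi\otimes_A N''\to 0$, proving flatness.

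For the third step, by definition $\Pi_{\mathrm{an},\mathrm{con},I,A}=\bigcup_{r_I>0}\Pi_{\mathrm{an},r_I,I,A}$ is a filtered union of the flat $A$-algebras just constructed, and a filtered colimit of flat modules is flat, so flatness over $A$ is immediate.

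The main obstacle is the middle step, since the multidimensional passage is precisely where the slab-compatible tensor identification and the $R^1\varprojlim$-vanishing need to behave as in the one-dimensional case. Both ingredients, however, are already packaged into \cref{prop2.9}(II) and (III), so once those are invoked the argument proceeds in complete parallel with \cite[Corollary 2.1.5]{KPX}.
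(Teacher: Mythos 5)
Your proof is correct and takes essentially the same route as the paper, which simply adapts KPX Corollary 2.1.5: the paper's telegraphic proof only records the multivariate Schauder-basis structure $\Pi_{\mathrm{an},r_{I_0},I,\mathbb{Q}_p}\cong\widehat{\oplus}_\eta\mathbb{Q}_p e_\eta$ and defers the rest to KPX, while your three steps (slab-level flatness, promotion via the coadmissibility package of \cref{prop2.9}, filtered colimit for the full Robba ring) unwind exactly the content of that deferred argument. The ``standard flatness results for completed tensor products'' you invoke at the slab level are ultimately the same potential-orthonormalizability observation the paper emphasizes with its displayed completed direct sum.
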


\begin{proof}
We adapt the corresponding method of proof in \cite[Corollary 2.1.5]{KPX} to prove this generalization. Indeed one could then prove this by the same method of \cite[Corollary 2.1.5]{KPX} since then by taking a suitable basis we will have $\Pi_{\mathrm{an},r_{I_0},I,\mathbb{Q}_p}\overset{\simeq}{\rightarrow}\widehat{\oplus}_{\eta}\mathbb{Q}_pe_\eta$. Then one could just finish as in \cite[Corollary 2.1.5]{KPX}. 	
\end{proof}

\begin{definition} \mbox{\bf{(After KPX \cite[Definition 2.1.3]{KPX})}}
Consider a coherent sheaf $M_{r_{I,0}}$ over $\Pi_{\mathrm{an},r_{I_0},I,A}$ defined as above, then we are going to call this sheaf a vector bundle if for each $[s_I,r_I]$ suitably located multi interval we have $M_{[s_I,r_I]}$ is flat over $\Pi_{\mathrm{an},[s_I,r_I],I,A}$.	
\end{definition}

\indent This will give us a chance to consider the following result:

\begin{proposition} \mbox{\bf{(After KPX \cite[Lemma 2.1.6]{KPX})}} \label{prop2.12}
For any coherent sheaf $M_{r_{I,0}}$ over $\Pi_{\mathrm{an},r_{I_0},I,A}$ which admits a vector bundle structure in our situation, we then have that the global section $M$ of it is finitely projective if and only if it is finitely presented. (Actually one can even derive the corresponding sufficient and necessary condition to be just being finitely generated by applying \cite[Corollary 2.6.8]{KL2}). 	
\end{proposition}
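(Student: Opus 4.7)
The forward implication is standard: a finitely projective module sits as a direct summand of a finite free module, and the complementary projection realizes it as the image of an idempotent between finite free modules, in particular giving a finite presentation.

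For the substantive direction, assume $M$ is finitely presented, so we have an exact sequence $\Pi_{\mathrm{an},r_{I,0},I,A}^m \xrightarrow{\phi} \Pi_{\mathrm{an},r_{I,0},I,A}^n \to M \to 0$. The plan is to construct a global idempotent $e \in \mathrm{Mat}_n(\Pi_{\mathrm{an},r_{I,0},I,A})$ whose image is $M$, thereby exhibiting $M$ as a direct summand of a finite free module. First I would reduce to the multi-interval level. For each admissible $[s_I,r_{I,0}]$, tensoring the presentation with $\Pi_{[s_I,r_{I,0}],I,A}$ yields a presentation of $M_{[s_I,r_{I,0}]}$ by \cref{prop2.9}(II). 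The vector bundle hypothesis combined with \cref{prop2.9}(VII) ensures $M_{[s_I,r_{I,0}]}$ is finitely generated and flat over the noetherian Banach algebra $\Pi_{[s_I,r_{I,0}],I,A}$; since finitely generated flat modules over noetherian rings are projective, the localized presentation splits via an idempotent $e_{[s_I,r_{I,0}]} \in \mathrm{Mat}_n(\Pi_{[s_I,r_{I,0}],I,A})$ with image isomorphic to $M_{[s_I,r_{I,0}]}$.

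The crux is to assemble these level-wise splittings into a compatible global one. Following the Newton-type successive approximation used in the one-dimensional case \cite[Lemma 2.1.6]{KPX}, I would fix a cofinal decreasing sequence of multi-intervals and inductively perturb each chosen idempotent so that its restriction to a smaller interval matches the next idempotent modulo a correction of controlled norm. The density statement \cref{prop2.9}(I) supplies approximants in the global ring, while the Fr\'echet-Stein structure of $\Pi_{\mathrm{an},r_{I,0},I,A}$ established above controls convergence of the iterated corrections, producing an idempotent $e \in \mathrm{Mat}_n(\Pi_{\mathrm{an},r_{I,0},I,A})$ whose localizations recover the chosen family. Its image is a coadmissible module with the same localizations as $M$, and by the equivalence of categories between coadmissible modules and compatible systems embodied in \cref{prop2.9}(II)--(III), this image is canonically $M$, completing the proof.

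The main obstacle is precisely the execution of the Newton iteration across multiple radii simultaneously: one must check that the norm estimates controlling each successive approximation remain uniform across all coordinates $\alpha \in I$, so the corrections converge in the Fr\'echet topology built from the norms $\|\cdot\|_{[s_I,r_{I,0}]}$. This is bookkeeping rather than a new idea, but is where genuine care is required; the noetherian and Fr\'echet-Stein properties established earlier in this subsection were set up exactly to make this iteration go through as in the one-dimensional model. For the stronger parenthetical statement (dropping the presentation hypothesis to mere finite generation) I would instead invoke \cite[Corollary 2.6.8]{KL2} directly, which under the vector bundle condition promotes finite generation to finite presentation and reduces to the case just handled.
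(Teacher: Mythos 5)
Your argument takes a genuinely different route from the paper, and while the underlying idea is plausible, it is substantially more delicate than the paper's proof and you have understated the difficulties.

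The paper (following \cite[Lemma 2.1.6]{KPX}) does not glue idempotents at all. It uses the elementary characterization that a finitely presented module is projective iff it is flat, and then tests flatness by the standard criterion: show $I\otimes_{\Pi_{\mathrm{an},r_{I,0},I,A}} M\rightarrow M$ is injective for every finitely generated ideal $I$. The key observation is that once $M$ has an $(m,n)$-presentation, $I\otimes M$ is also coadmissible (from \cref{prop2.9}), so injectivity of the map can be checked after base change to each $\Pi_{[s_I,r_I],I,A}$, where it follows from the flatness built into the vector-bundle hypothesis and \cref{prop2.9}(VII). This keeps the argument purely module-theoretic and avoids any attempt to produce a global splitting.

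Your route instead tries to realize $M$ as the image of a single idempotent $e\in\mathrm{Mat}_n(\Pi_{\mathrm{an},r_{I,0},I,A})$ by a Newton-type iteration that patches the local idempotents $e_{[s_I,r_{I,0}]}$. This is where the proposal has a genuine gap. The local idempotents are not a priori compatible: on an overlap two such $e_{[s'_I,r_{I,0}]}$ and $e_{[s''_I,r_{I,0}]}$ have isomorphic images but are only conjugate, by a matrix $g$ defined on the overlap, and there is no canonical choice. To run the iteration you must first show that $g$ can be taken close to the identity (so that the correction contracts) and then that the approximating elements can be lifted to the global ring with uniform control across all coordinates $\alpha\in I$; neither step is ``bookkeeping.'' This is essentially a descent-for-locally-free-sheaves problem, which is exactly what the flatness criterion is designed to circumvent. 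If you want to salvage the idempotent route, you would need to explicitly control the conjugating cocycles and show the iteration converges in the Fr\'echet topology, which would amount to reproving a gluing lemma. The paper's flatness argument is both shorter and avoids these convergence issues entirely, so I would recommend switching to that strategy.
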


\begin{proof}
Basic representation theory tells us that the module involved is finitely projective if and only if it is finitely presented and flat. Therefore back to our proposition one direction of the implications is trivial, which is to say that we could now assume that the corresponding module is finitely presented, and it suffices to show that it is flat. Therefore as in \cite[Lemma 2.1.6]{KPX} we consider the map $I\otimes_{\Pi_{\mathrm{an},r_{I_0},I,A}} M\rightarrow M$ for some finitely generated ideal $I$. Then we are going to show that this is injective. First choose some presentation of $M$ which is of type $(m,n)$ over the base ring $\Pi_{\mathrm{an},r_{I_0},I,A}$ then translate this to the corresponding statement for the ideal $I$ which is to say $I^m\rightarrow I^n$. This actually gives us a chance to derive the statement that $I\otimes_{\Pi_{\mathrm{an},r_{I_0},I,A}} M$ is also coadmissible by the \cref{prop2.9}. Now again by \cref{prop2.9} we have that it suffices to show that:
\begin{displaymath}
I\otimes_{\Pi_{\mathrm{an},r_{I_0},I,A}} M\rightarrow M	
\end{displaymath}
is injective over some base change to some $\Pi_{\mathrm{an},[s_I,r_I],I,A} $. Then as in \cite[Lemma 2.1.6]{KPX} one could finish the proof since this is nothing but the following map:
\begin{displaymath}
(I\otimes_{\Pi_{\mathrm{an},r_{I_0},I,A}} \Pi_{\mathrm{an},[s_I,r_I],I,A})\otimes  M_{[s_I,r_I]} \rightarrow M_{[s_I,r_I]}	
\end{displaymath}
which is injective by the flatness from \cite[Lemma 2.1.6]{KPX}.
\end{proof}

\indent We then have the following corollary which is a direct analog of the corresponding results in \cite[Corollary 2.1.7]{KPX}:

\begin{corollary}
Let $A$ be an affinoid algebra which is reduced. Now consider a module $M$ over the Robba ring with respect to a specific radius $r_I$, defined over $\mathrm{Max}(A)\times \mathbb{A}^n(0,r_I]$ and we assume that for any maximal ideal $\mathfrak{m}_\eta$ of $A$ the fiber $M_{\mathfrak{m}_\eta}$ has the same rank. Then we have that $M$ is finite projective if we further assume that $M$ is finitely presented (as mentioned in the previous proposition one can even assume that it is finitely generated).	
\end{corollary}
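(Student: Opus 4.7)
The plan is to reduce the statement to \cref{prop2.12} by verifying that $M$ admits the structure of a vector bundle. Since we assume $M$ is finitely presented, by that proposition it suffices to check that for every sub-multi-interval $[s_I,r_I]\subset (0,r_{I,0}]$, the module $M_{[s_I,r_I]}=M\otimes_{\Pi_{\mathrm{an},r_{I,0}, I,A}}\Pi_{[s_I,r_I],I,A}$ is flat over the base ring $B_{[s_I,r_I]}:=\Pi_{[s_I,r_I],I,A}$. Flatness is a local property, and $B_{[s_I,r_I]}$ is noetherian as a Banach $A$-algebra by the proposition established above, so we can invoke the usual fact that for a finitely presented module over a noetherian ring, flatness is equivalent to local freeness, which in turn is equivalent to the fiber-dimension function $\mathfrak{p}\mapsto \dim_{k(\mathfrak{p})}M_{[s_I,r_I]}\otimes_{B_{[s_I,r_I]}}k(\mathfrak{p})$ being locally constant on $\mathrm{Spec}(B_{[s_I,r_I]})$. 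Note that the reducedness of $A$ passes to $B_{[s_I,r_I]}$ because the latter is an affinoid algebra over $A$ (a closed polyannulus over an affinoid base is reduced whenever the base is), which is exactly the hypothesis needed to apply this criterion.

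Next I would reduce the verification of the constant-rank condition to the hypothesis on the fibers over $\mathrm{Max}(A)$. Via the structure morphism $A\to B_{[s_I,r_I]}$, any closed point $x$ of $\mathrm{Max}(B_{[s_I,r_I]})$ sits over a unique maximal ideal $\mathfrak{m}_\eta\in \mathrm{Max}(A)$ with fiber isomorphic to the reduced noetherian ring $\Pi_{[s_I,r_I],I,k(\eta)}$. The pullback $M_{[s_I,r_I]}\otimes_A k(\eta)$ is a finitely presented $\Pi_{[s_I,r_I],I,k(\eta)}$-module, and it is the analogous sub-interval specialization of $M_{\mathfrak{m}_\eta}$ over the Robba ring relative to $k(\eta)$. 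Over the base field $k(\eta)$, the hypothesis that $M_{\mathfrak{m}_\eta}$ has rank $r$ (independent of $\eta$) means exactly that the generic-fiber dimension on every subannulus is $r$; by upper semicontinuity, combined with the Jacobson property of the affinoid $\Pi_{[s_I,r_I],I,k(\eta)}$ (so that closed points are dense), this forces the dimension at every closed point of the fiber over $\mathfrak{m}_\eta$ to be $\leq r$, while the constant rank on the global generic fiber forces it to be $\geq r$. Hence the dimension at each $x\in \mathrm{Max}(B_{[s_I,r_I]})$ is exactly $r$, and by density of closed points and upper semicontinuity the fiber-dimension function is identically $r$ on $\mathrm{Spec}(B_{[s_I,r_I]})$.

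The hard part of this plan is the last step of the previous paragraph, which packages the subtle passage from the rank hypothesis over $\mathrm{Max}(A)$ — where the rank is formulated on the whole multidimensional Robba ring $\Pi_{\mathrm{an},r_I,I,k(\eta)}$ and is essentially the generic rank — to constant rank at every closed point of the closed polyannulus $\mathrm{Max}(\Pi_{[s_I,r_I],I,k(\eta)})$. In the one-dimensional case of \cite[Corollary 2.1.7]{KPX} this is essentially automatic from the Bézout nature of the one-dimensional Robba ring, but in the multidimensional case the Robba ring is no longer Bézout and one has to leverage the descent of the constant-rank property along the flat ring maps $\Pi_{\mathrm{an},r_I,I,k(\eta)}\to \Pi_{[s_I,r_I],I,k(\eta)}$ together with the density of the former in the latter coming from \cref{prop2.9}(I). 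Once this density-plus-semicontinuity argument is carried out, feeding the resulting constant-rank statement back into the reduced noetherian local-freeness criterion produces the desired flatness on each sub-interval, which by \cref{prop2.12} promotes $M$ to a finitely projective module over $\Pi_{\mathrm{an},r_{I,0},I,A}$.
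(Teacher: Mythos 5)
Your overall strategy---reduce to \cref{prop2.12}, invoke noetherianity of $\Pi_{[s_I,r_I],I,A}$, pass to fibers over $\mathrm{Max}(A)$, and apply the constant-fiber-rank criterion for projectivity over a reduced noetherian ring---matches the route the paper indicates by citing \cite[Corollary 2.1.7]{KPX}, and you are right to flag that multidimensional Robba rings are not B\'ezout, so the automatic freeness of the fibers $M_{\mathfrak{m}_\eta}$ that powers the one-dimensional proof is lost. However, the argument you use to close that gap does not work, and it contains a direction error. For a finitely presented module over a noetherian ring the fiber-dimension function $\mathfrak{p}\mapsto\dim_{k(\mathfrak{p})}M\otimes k(\mathfrak{p})$ is \emph{upper} semicontinuous: the locus $\{\dim\geq n\}$ is closed, so the dimension can only \emph{increase} under specialization. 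If the generic fiber dimension on a subannulus is $r$, this forces the dimension at closed points to be $\geq r$, not $\leq r$ as you assert; the Jacobson property and density of closed points do nothing to reverse that inequality, and nowhere do you produce an upper bound on the fiber dimension at a closed point. Likewise, invoking ``descent of constant rank along the flat map $\Pi_{\mathrm{an},r_I,I,k(\eta)}\to\Pi_{[s_I,r_I],I,k(\eta)}$ together with density'' does not supply such a bound---density of global sections in a section over a subinterval (Part~I of \cref{prop2.9}) is an assertion about elements, not about fiber dimensions at maximal ideals, and flat base change only transports a constant-rank statement that you would already need to have.

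The way the paper's cited one-dimensional argument actually closes this is by the automatic freeness of $M_{\mathfrak{m}_\eta}$, so the only reading of ``$M_{\mathfrak{m}_\eta}$ has the same rank $r$'' that makes the multidimensional statement go through is the strong one: each fiber $M_{\mathfrak{m}_\eta}$ is finite projective of constant rank $r$ over $\Pi_{\mathrm{an},r_I,I,k(\eta)}$. With that reading the ``hard part'' you isolate disappears entirely: $M_{[s_I,r_I]}\otimes_A k(\eta)$ is then projective of rank $r$ over $\Pi_{[s_I,r_I],I,k(\eta)}$ by base change, so its fiber dimension at every maximal ideal of $\Pi_{[s_I,r_I],I,A}$ is exactly $r$, and the reduced noetherian criterion applies directly; no semicontinuity detour is needed. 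As written, your middle paragraph neither supplies this hypothesis nor delivers the required cap $\leq r$, so the proof has a genuine gap precisely at the point you yourself identified as delicate.
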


\begin{proof}
This is a corollary of the previous proposition and the corresponding noetherian property as in \cite[Corollary 2.1.7]{KPX}.	
\end{proof}

\indent Before we study the relationship between the finiteness of the global sections and the coherent sheaves, we generalize the uniform finiteness in the one dimensional situation which is established in \cite[Definition 2.1.9]{KPX}.

\begin{definition} \mbox{\bf{(After KPX \cite[Definition 2.1.9]{KPX})}} 
We first generalize the notion of admissible covering in our setting, which is higher dimensional generalization of the situation established in \cite[Definition 2.1.9]{KPX}. For any covering $\{[s_I,r_I]\}$ of $(0,r_{I,0}]$, we are going to specify those admissible coverings if the given covering admits refinement finite locally (and each corresponding member in the covering has the corresponding interior part which is assumed to be not empty with respect to each $\alpha\in I$). Then it is very natural in our setting that we have the corresponding notations of $(m,n)$-finitely presentedness and $n$-finitely generatedness for any $m,n$ positive integers. Then based on these definitions we have the following generalization of the corresponding uniform finiteness. First we are going to call a coherent sheaf $\{M_{s_I,r_I}\}_{[s_I,r_I]}$ over $\Pi_{\mathrm{an},r_{I,0},I,A}$ uniformly $(m,n)$-finitely presented if there exists an admissible covering $\{[s_I,r_I]\}$ and a pair of positive integers $(m,n)$ such that each module $M_{[s_I,r_I]}$ defined over $\Pi_{\mathrm{an},[s_I,r_I],I,A}$ is $(m,n)$-finitely presented. Also we have the notion of uniformly $n$-finitely generated for any positive integer $n$ by defining that to mean under the existence of an admissible covering we have that each member $M_{[s_I,r_I]}$ in the family defined over $\Pi_{\mathrm{an},[s_I,r_I],I,A}$ is $n$-finitely generated. Sometimes we are also going to use the notions of being uniformly finitely generated and being uniformly finitely presented to mean the corresponding objects when the corresponding uniform numbers of the generators are well-understood.
\end{definition}

\indent After defining these notions one could consider the following generalization of the corresponding results in \cite[Lemma 2.1.10]{KPX} specifically in the one-dimensional situation. Before establishing this kind of generalization we first consider the following:

\begin{lemma} \mbox{\bf{(After KPX \cite[Lemma 2.1.10]{KPX})}} \label{lemma2.15}
Consider now the following short exact sequence of coherent sheaves over $\Pi_{\mathrm{an},r_{I,0},I,A}$:
\[
\xymatrix@R+2pc@C+0pc{
0\ar[r] \ar[r] &(M^1_{[s_I,r_I]})_{\{[s_I,r_I]\}} \ar[r] \ar[r] &(M_{[s_I,r_I]})_{\{[s_I,r_I]\}} \ar[r] \ar[r] &(M^2_{[s_I,r_I]})_{\{[s_I,r_I]\}} \ar[r] \ar[r] &0.} 
\]
Then we have that if $(M^1_{[s_I,r_I]})_{\{[s_I,r_I]\}}$ and one of the rest two sheaves are uniformly finitely presented so is the third, moreover we have that if $(M_{[s_I,r_I]})_{\{[s_I,r_I]\}},(M^2_{[s_I,r_I]})_{\{[s_I,r_I]\}}$ are uniformly finitely presented then the third one will be uniformly finitely generated.
\end{lemma}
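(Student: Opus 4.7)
The plan is to reduce the statement over sheaves to pointwise module-theoretic statements over each noetherian Banach algebra $\Pi_{\mathrm{an},[s_I,r_I],I,A}$, and then extract the needed uniformity by being careful about constants in the standard homological arguments. First I would take a common refinement of the admissible coverings witnessing the hypothesized uniform finite presentations; since the family of admissible coverings is closed under refinement and the uniform constants $(m,n)$ only get larger under restriction to a subinterval, after this step we may assume a single admissible covering $\{[s_I,r_I]\}$ works for all three sheaves simultaneously, so the question becomes a uniform version of ``two-out-of-three'' for short exact sequences of finitely presented modules over $\Pi_{\mathrm{an},[s_I,r_I],I,A}$.

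For the first assertion with $M^1$ and $M^2$ uniformly finitely presented, I would argue pointwise by the standard horseshoe-type construction: given presentations $\Pi^{n_1} \to \Pi^{m_1} \twoheadrightarrow M^1_{[s_I,r_I]}$ and $\Pi^{n_2} \to \Pi^{m_2} \twoheadrightarrow M^2_{[s_I,r_I]}$ with uniform $(m_i,n_i)$, one lifts the surjection onto $M^2_{[s_I,r_I]}$ to a map $\Pi^{m_2} \to M_{[s_I,r_I]}$, assembles with the generators of $M^1_{[s_I,r_I]}$ to obtain a surjection $\Pi^{m_1+m_2} \twoheadrightarrow M_{[s_I,r_I]}$, and the snake lemma produces relations bounded by $n_1+n_2$ (plus the lifted relations from $M^2$). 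All the constants are produced by the same combinatorial recipe on every piece, so they are uniform. The case where $M^1$ and $M$ are uniformly finitely presented is similar: the image of a uniform system of generators of $M$ gives uniform generators of $M^2_{[s_I,r_I]}$, and its relations are generated by the image in $M$ of the uniform generators of $M^1_{[s_I,r_I]}$ together with the uniform relations of $M$, yielding a uniform presentation of $M^2$.

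For the last assertion, with $M$ and $M^2$ uniformly $(m_i,n_i)$-finitely presented, I would produce $M^1_{[s_I,r_I]}$ as the kernel of a surjection between uniformly finitely presented modules using a pullback construction. Concretely, fix a uniform presentation $\Pi^{n_2} \xrightarrow{\delta} \Pi^{m_2} \twoheadrightarrow M^2_{[s_I,r_I]}$; lift $\Pi^{m_2} \to M^2_{[s_I,r_I]}$ to a map $\Pi^{m_2} \to M_{[s_I,r_I]}$ and combine with a uniform surjection $\Pi^{m} \twoheadrightarrow M_{[s_I,r_I]}$ to form a surjection $\Pi^{m+m_2} \twoheadrightarrow M_{[s_I,r_I]}$, whose pullback against $\Pi^{m_2} \twoheadrightarrow M^2_{[s_I,r_I]}$ contains $M^1_{[s_I,r_I]}$ as a direct summand up to the image of $\delta$. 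Noetherianness of $\Pi_{\mathrm{an},[s_I,r_I],I,A}$ (from Proposition 2.4) guarantees $M^1_{[s_I,r_I]}$ is finitely generated pointwise, and the key observation is that the number of generators one needs is bounded by a combination of $m, m_2, n_2$, hence uniform across the covering.

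The main obstacle is precisely extracting the uniform bounds in the last case: a naive appeal to noetherianness at each piece gives finite generation pointwise but not uniformly, so one must check that the pullback-plus-snake-lemma diagram provides generators whose cardinality is explicitly bounded by the hypothesized uniform constants. I would use \cref{prop2.9}(VI) (submodules of coadmissible modules are coadmissible) together with \cref{prop2.9}(II) (compatibility of global sections with base change) to ensure the local pictures glue correctly, and then verify that the explicit presentation above depends only on the uniform constants, not on $[s_I,r_I]$.
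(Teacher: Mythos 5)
The paper's proof here is a bare citation of \cite[Lemma 2.1.10]{KPX}, so your argument is a reconstruction of the omitted details, and it is correct in substance: the uniform two-out-of-three claims follow from the standard horseshoe and snake-lemma bookkeeping, with all bounds assembled additively from the hypothesized uniform constants. Two small points. Your phrase ``the uniform constants $(m,n)$ only get larger under restriction to a subinterval'' is backwards --- restriction preserves a given presentation via the base-change isomorphism built into the definition of a coherent sheaf, so the constants persist (or only improve); that is what makes passing to a common refinement legitimate. And for the last assertion you can avoid invoking noetherianness of $\Pi_{\mathrm{an},[s_I,r_I],I,A}$ altogether: apply Schanuel's lemma to the two presentations of $M^2_{[s_I,r_I]}$, one from $\Pi^{m_2}\twoheadrightarrow M^2_{[s_I,r_I]}$ with kernel $K$ generated by $n_2$ elements, the other from the composite $\Pi^{m}\twoheadrightarrow M_{[s_I,r_I]}\twoheadrightarrow M^2_{[s_I,r_I]}$ with kernel $L$. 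This gives $K\oplus\Pi^{m}\cong L\oplus\Pi^{m_2}$, so $L$ is generated by $n_2+m$ elements; since $L$ is the preimage of $M^1_{[s_I,r_I]}$ under $\Pi^m\twoheadrightarrow M_{[s_I,r_I]}$ it surjects onto $M^1_{[s_I,r_I]}$, giving the uniform bound $n_2+m$ directly and rendering moot the concern you raise about extracting uniformity from pointwise noetherianness.
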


\begin{proof}
See \cite[Lemma 2.1.10]{KPX}. 	
\end{proof}


\begin{lemma}  \mbox{\bf{(After KPX \cite[Lemma 2.1.11]{KPX})}}
Let $\{M_{[s_I,r_I]}\}$	be a coherent sheaf over $\Pi_{\mathrm{an},r_{I,0},I,A}$ with global section which we will denote it by $M$ (with respect to an admissible covering in our context). Suppose that we have a set of generators $\{\mathbf{e}_1,...,\mathbf{e}_n\}$ which generates $M_{[s_I,r_I]}$ for each $[s_I,r_I]$ involved, then we have that this set of generators will generate $M$ as well.

\end{lemma}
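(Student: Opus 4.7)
The plan is to establish surjectivity of the evaluation map
\[
\phi : \Pi_{\mathrm{an},r_{I,0},I,A}^{\oplus n} \longrightarrow M,\qquad (f_1,\dots,f_n) \longmapsto \sum_j f_j\mathbf{e}_j,
\]
which would immediately give the assertion. For each admissible $[s_I,r_I]\subset(0,r_{I,0}]$ in a chosen covering, the base change $\phi_{[s_I,r_I]}$ is surjective by hypothesis, with target precisely $M_{[s_I,r_I]}$ by the comparison isomorphism $M\otimes_{\Pi_{\mathrm{an},r_{I,0},I,A}}\Pi_{[s_I,r_I],I,A}\cong M_{[s_I,r_I]}$ of \cref{prop2.9}(II). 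The approach is then a standard Fr\'echet-Stein style inverse limit argument: assemble the family of termwise short exact sequences, recover $\phi$ as an inverse limit, and exploit the vanishing of $R^1\varprojlim$ on coherent sheaves to pass surjectivity through to the global section.

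Concretely, first I would form the kernel family $K_{[s_I,r_I]}:=\ker(\phi_{[s_I,r_I]})$, giving
\[
0\longrightarrow K_{[s_I,r_I]}\longrightarrow \Pi_{[s_I,r_I],I,A}^{\oplus n}\longrightarrow M_{[s_I,r_I]}\longrightarrow 0
\]
for each such interval. The next step is to verify that $\{K_{[s_I,r_I]}\}$ is itself a coherent sheaf over $\Pi_{\mathrm{an},r_{I,0},I,A}$ in the sense of the definition above: the subinterval transition maps $\Pi_{[s_I,r_I],I,A}\to\Pi_{[s'_I,r'_I],I,A}$ are flat (as already invoked in the Fr\'echet-Stein structure proof for these rings), so tensoring with the smaller ring preserves kernels and furnishes the required base change isomorphism; and each $K_{[s_I,r_I]}$ is automatically finitely generated since the rings $\Pi_{[s_I,r_I],I,A}$ are noetherian by the proposition established earlier.

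Then I would pass to the inverse limit of these short exact sequences over a cofinal chain of shrinking admissible intervals. The middle term gives $\Pi_{\mathrm{an},r_{I,0},I,A}^{\oplus n}$ and the right term gives $M$ by the very definition of the global section. By \cref{prop2.9}(III) applied to the coherent sheaf $\{K_{[s_I,r_I]}\}$ one has $R^1\varprojlim K_{[s_I,r_I]}=0$, so the limit sequence remains right exact and $\phi$ is surjective, which is exactly the desired conclusion.

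The main obstacle I anticipate is the coherence check for $\{K_{[s_I,r_I]}\}$ — specifically, the flatness of the subinterval restriction maps and the preservation of kernels under this base change — since this is where multi-dimensional subtleties over the polyannuli could in principle enter compared with the one-dimensional situation. Once these structural ingredients are confirmed, the $R^1\varprojlim$ vanishing from \cref{prop2.9} delivers surjectivity essentially at no further cost, mirroring the one-dimensional argument of \cite[Lemma 2.1.11]{KPX}.
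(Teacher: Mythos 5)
Your proposal is correct and follows essentially the same argument as the paper, which is the one from \cite[Lemma 2.1.11]{KPX}: form the evaluation map $\Pi_{\mathrm{an},r_{I,0},I,A}^{\oplus n}\to M$, observe it is surjective after base change to each $\Pi_{[s_I,r_I],I,A}$, and then deduce surjectivity on global sections via the vanishing of $R^1\varprojlim$. The only stylistic difference is that you verify coherence of the kernel family directly via flatness of transition maps plus the noetherian property, whereas one can simply invoke \cref{prop2.9}(IV) (kernel of a morphism of coadmissible modules is coadmissible) and then \cref{prop2.9}(III) for the $R^1\varprojlim$ vanishing — either route lands in the same place.
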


\begin{proof}
As in the one dimensional situation (see \cite[Lemma 2.1.11]{KPX}) one could prove this by first exhibiting a map by using the given basis $(\Pi_{\mathrm{an},r_{I,0},I,A})^n\rightarrow M$, then arguing that this will give rise to the desired generating set by considering the corresponding projection which is again a well-defined surjection and comparison under the assumption mentioned above.
\end{proof}

\begin{proposition} \mbox{\bf{(After KPX \cite[Proposition 2.1.13]{KPX})}} \label{prop1}
Consider any arbitrary coherent sheaf $\{M_{[s_I,r_I]}\}$ over $\Pi_{\mathrm{an},r_{I,0},I,A}$ with the global section $M$. Then we have the following corresponding statements:\\
I.	The coherent sheaf $\{M_{[s_I,r_I]}\}$ is uniformly finitely generated iff the global section $M$ is finitely generated;\\
II. The coherent sheaf $\{M_{[s_I,r_I]}\}$ is uniformly finitely generated iff the global section $M$ is finitely presented;\\
III. The coherent sheaf $\{M_{[s_I,r_I]}\}$ is uniformly finitely presented vector bundle iff the global section $M$ is finite projective.
\end{proposition}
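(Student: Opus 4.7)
\emph{Proof plan.} The approach is to prove the three parts in order, bootstrapping each on its predecessor, with the base change property \cref{prop2.9}(II), the higher inverse limit vanishing \cref{prop2.9}(III), \cref{lemma2.15}, and the preceding lemma on global generators providing the key technical input. The reverse implications (from finiteness of the global section $M$ to uniform finiteness of the sheaf) are the easy halves: given a presentation
\begin{displaymath}
(\Pi_{\mathrm{an},r_{I,0},I,A})^m \longrightarrow (\Pi_{\mathrm{an},r_{I,0},I,A})^n \longrightarrow M \longrightarrow 0,
\end{displaymath}
tensoring with each $\Pi_{\mathrm{an},[s_I,r_I],I,A}$ and invoking \cref{prop2.9}(II) produces a presentation of $M_{[s_I,r_I]}$ with the same parameters $(m,n)$, valid uniformly across any admissible covering. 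The forward directions carry the substance.

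For Part I, the forward direction is essentially a direct application of the preceding lemma: if $\{\mathbf{e}_1,\ldots,\mathbf{e}_n\}$ simultaneously generate every $M_{[s_I,r_I]}$ on a given admissible covering, the same elements, promoted to global sections, generate $M$ over $\Pi_{\mathrm{an},r_{I,0},I,A}$.

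For Part II, assume uniform finite presentation. Over each member $[s_I,r_I]$ of the admissible covering we obtain a short exact sequence
\begin{displaymath}
0 \longrightarrow K_{[s_I,r_I]} \longrightarrow (\Pi_{\mathrm{an},[s_I,r_I],I,A})^n \longrightarrow M_{[s_I,r_I]} \longrightarrow 0
\end{displaymath}
in which $\{K_{[s_I,r_I]}\}$ is uniformly finitely generated by \cref{lemma2.15}. The family $\{K_{[s_I,r_I]}\}$ assembles into a coherent subsheaf of the free sheaf (the Fr\'echet--Stein transition maps are flat, so tensoring a base-change of the above sequence preserves the kernel). Passing to the inverse limit and invoking \cref{prop2.9}(III) yields a global short exact sequence
\begin{displaymath}
0 \longrightarrow K \longrightarrow (\Pi_{\mathrm{an},r_{I,0},I,A})^n \longrightarrow M \longrightarrow 0
\end{displaymath}
with $K$ coadmissible and uniformly finitely generated, so Part I applied to $K$ shows $K$ is finitely generated over $\Pi_{\mathrm{an},r_{I,0},I,A}$, whence $M$ is finitely presented.

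For Part III, combine Part II with \cref{prop2.12}: the hypothesis ``uniformly finitely presented vector bundle'' translates via Part II to ``$M$ finitely presented with vector bundle structure'', which by \cref{prop2.12} is precisely ``$M$ finite projective'', and the converse is immediate since a finite projective module is finitely presented and flat so the underlying sheaf inherits the vector bundle condition by \cref{prop2.9}(VII). The main obstacle throughout is the inverse limit step in Part II: one has to check both that the kernels $K_{[s_I,r_I]}$ glue into a genuine coherent sheaf and that the relevant $R^1\varprojlim$ vanishes. Both points rest on coadmissibility --- the first from flatness of Fr\'echet--Stein transition maps together with \cref{prop2.9}(IV) preserving kernels, and the second from \cref{prop2.9}(III) once coadmissibility of $K$ is in hand.
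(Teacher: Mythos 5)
Your treatment of Parts II and III follows the paper's route (combining \cref{lemma2.15} with \cref{prop2.12} and the easy base-change direction), but your Part I has a genuine gap, and Part I is where all the work of this proposition lives. You write that the forward direction ``is essentially a direct application of the preceding lemma: if $\{\mathbf{e}_1,\ldots,\mathbf{e}_n\}$ simultaneously generate every $M_{[s_I,r_I]}$ on a given admissible covering, the same elements \ldots generate $M$.'' But that is not what uniform finite generation says. The definition only asserts that each $M_{[s_I,r_I]}$ in the covering is $n$-finitely generated by \emph{some} $n$ elements, which may differ from interval to interval; the hypothesis of the preceding lemma (a \emph{single} set of $n$ elements generating all sections at once) is exactly the thing that has to be produced, not assumed. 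By reading ``uniformly finitely generated'' as ``simultaneously generated by a fixed $n$-tuple'' you have assumed the conclusion.

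The paper's proof of Part I is precisely the construction you skip: it decomposes the admissible covering of $(0,r_{I,0}]$ into $2^{|I|}$ subfamilies of pairwise-disjoint boxes following \cite[2.6.14--2.6.17]{KL2}, fixes for each box a local generating set, produces for each box $\gamma$ an element $x_\gamma$ that is an invertible times a topologically nilpotent element there but topologically nilpotent on the complementary region $B_\gamma$, and then runs an iterative approximation $e_{\gamma,i}:=e_{\gamma,i}+x_\gamma^\eta e_i$ (for $\eta$ large) whose limit is a single $n$-element global generating set; one then controls the error via \cite[Lemma 2.1.12]{KPX} (a perturbation lemma) to see that the limiting set still generates each $M_{[s_{I,\gamma},r_{I,\gamma}]}$. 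This $2^{|I|}$-fold decomposition is the essential new ingredient relative to the one-variable case, and without it Part I is unproved. As a secondary point, in your Part II the kernel family $\{K_{[s_I,r_I]}\}$ is only a well-defined coherent subsheaf once you already have a \emph{global} surjection $(\Pi_{\mathrm{an},r_{I,0},I,A})^n\to M$ to base-change; that surjection comes from the forward direction of Part I, so Part II really does depend on closing the gap in Part I first.
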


\begin{proof}
One could derive the the second and the third statements by using the \cref{lemma2.15} and \cref{prop2.12}. Therefore it suffices for us to prove the first statement. Indeed one could see that one direction in the statement is straightforward, therefore it suffices now to consider the other direction. So now we assume that the coherent sheaf $\{M_{[s_I,r_I]}\}$ is uniformly finitely generated for some $n$ for instance. Then we are going to show that one could find corresponding generators for the global section $M$ from the given uniform finitely generatedness. 

Now by definition we consider an admissible covering taking the form of $\{[s_I,r_I]\}$. In \cite[Proposition 2.1.13]{KPX} the corresponding argument goes by using special decomposition of the covering in some refined way, mainly by reorganizing the initial admissible covering into two single ones, where each of them will be made into the one consisting of those well-established intervals with no overlap. In our setting, we need to use instead the corresponding upgraded nice decomposition of the given admissible covering in our setting, which is extensively discussed in \cite[2.6.14-2.6.17]{KL2}. Namely as in \cite[2.6.14-2.6.17]{KL2} there is a chance for us to extract $2^{|I|}$ families $\{\{[s_{I,\delta_i},r_{I,\delta_i}]\}_{i=0,1,...}\}_{\{1_i\},\{2_i\},...,\{N_i\},}$ ($N=2^{|I|}$) of intervals, where there is a chance to have the situation where for each such family the corresponding intervals in it could be made to be disjoint in pairs. Then we are in the situation of \cite[Proposition 2.6.17]{KL2}, namely whenever we have a $2^{|I|}$-uniform covering, then the uniformly finiteness throughout all the coverings involved will guarantee the corresponding finiteness of the corresponding global section which proves the proposition. To be more precise we make the argument in \cite[Proposition 2.6.17]{KL2} more precise in our setting. First we write the corresponding space $\Lambda^I(0,r_{I,0}]_A$ into the following form:
\begin{displaymath}
\Lambda^I(0,r_{I,0}]_A:=\bigcup_{\alpha\geq 0} \Lambda^I[s_{I,\alpha},r_{I,0}]_A.	
\end{displaymath}
For $\Lambda^I[s_{I,-1},r_{I,0}]_A$ we choose that to be empty. Then for a chosen family $\{[s_{I,\delta_i},r_{I,\delta_i}]\}_{i=0,1,...}$, we use the notation $\{[s_{I,\gamma},r_{I,\gamma}]\}_{\Gamma}$ to denote this specific family where $\Gamma$ is endowed with the corresponding well ordering as in \cite[proposition 2.6.17]{KL2}. Now we consider for each $\gamma$ the corresponding index $\alpha(\gamma)$ such that the intersection between $\Lambda^I[s_{I,\alpha(\gamma)},r_{I,0}]_A$ and $\Lambda^I[s_{I,\gamma},r_{I,\gamma}]_A$ is empty and meanwhile this index is the largest among all the candidates such that the intersection like this is empty. Then we consider the corresponding subspace in the following way:
\begin{displaymath}
B_\gamma:=	\Lambda^I[s_{I,\alpha(\gamma)},r_{I,0}]_A\cup \bigcup_{\gamma'<\gamma,\gamma'\in \Gamma}\Lambda^I[s_{I,\gamma},r_{I,\gamma}]_A.
\end{displaymath}
Then we can see that one can find an element in the section $\mathcal{O}(B_\gamma)$ which by considering the corresponding restriction gives rise to some element which represents an invertible of an element which is topologically nilpotent over the section over $\Lambda^I[s_{I,\gamma},r_{I,\gamma}]_A$ but meanwhile vanishes over the section over $B_\gamma$. Then by the density of the global section one can see that one can upgrade this element to be some element in $\mathcal{O}(\Lambda^I(0,r_{I,0}]_A)$ which which represents an invertible of an element which is topologically nilpotent over the section over $\Lambda^I[s_{I,\gamma},r_{I,\gamma}]_A$ but meanwhile represents an element which is topologically nilpotent over the section over $B_\gamma$. We denote this element by $x_\gamma$. Now we can perform the corresponding iteration process in \cite[proposition 2.6.17]{KL2} to generate the following families of elements in $M$:
\begin{align}
&...,\\
&e_{\gamma,1},...,e_{\gamma,k},\\
&...	
\end{align}
whose limit will be the corresponding finite generating set of the global section. We first start from a generating set of $M_{[s_{I,\gamma},r_{I,\gamma}]}$ such $e_1,...,e_k$, then we are considering will be the sequence defined in the following way:
\begin{displaymath}
e_{\gamma,i}:=e_{\gamma,i}+x_\gamma^\eta e_i,  	
\end{displaymath}
where $\eta$ is sufficiently large integer. This process guarantee that we can have converging limit set of generators. The generators actually generate the corresponding section $M_{[s_{I,\gamma},r_{I,\gamma}]}$ since in our situation we can control the corresponding difference between the restriction of our produced set of elements and the actually generator over such section as in \cite[proposition 2.6.17]{KL2} and \cite[Lemma 2.1.12]{KPX}.

\end{proof}

\subsection{Further Discussion and Residue Pairings}

\indent We collect some further continuation of the discussion in the previous section before we consider the residue pairings in our setting, these are higher dimensional generalization of the corresponding results in \cite{KPX}.

\begin{lemma} \mbox{\bf{(After KPX \cite[Lemma 2.1.16]{KPX})}}
Consider a morphism $f:M\rightarrow N$ of modules over $\Pi_{\mathrm{an},r_{I,0},I,A}$ with kernel and cokernel of finite types. Suppose $f:M\rightarrow N$ is injective over $\Pi_{\mathrm{an},\mathrm{con},I,A}$. Then $f$ is injective over $\Pi_{\mathrm{an},r_{I},I,A}$ for some radii $r_I\leq r_{I,0}$. Furthermore if $f:M\rightarrow N$ is surjective over $\Pi_{\mathrm{an},con,I,A}$, then $f$ is surjective over $\Pi_{\mathrm{an},r_{I},I,A}$ for some radii $r_I\leq r_{I,0}$.  	
\end{lemma}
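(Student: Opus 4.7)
The plan is to reduce both assertions to a filtered-colimit compactness principle for finitely generated modules, and then apply it to $K:=\ker f$ and $C:=\mathrm{coker}\, f$, which are finitely generated by hypothesis. The structural observation is that
\begin{displaymath}
\Pi_{\mathrm{an},\mathrm{con},I,A} \;=\; \bigcup_{0<r_I\leq r_{I,0}} \Pi_{\mathrm{an},r_I,I,A}
\end{displaymath}
presents the Robba ring as a filtered union, so for any $\Pi_{\mathrm{an},r_{I,0},I,A}$-module $P$ one has $P\otimes \Pi_{\mathrm{an},\mathrm{con},I,A} = \varinjlim_{r_I} P\otimes \Pi_{\mathrm{an},r_I,I,A}$. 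The compactness principle I would use then reads: if $P$ is finitely generated and this filtered colimit vanishes, $P\otimes \Pi_{\mathrm{an},r_I,I,A}$ already vanishes for some $r_I$. This is immediate by fixing generators $p_1,\ldots,p_m$ of $P$, observing that each $p_j\otimes 1$ vanishes in the colimit hence in some $P\otimes \Pi_{\mathrm{an},r_{I,j},I,A}$, and choosing $r_I\leq r_{I,j}$ componentwise for every $j$: the resulting tensor product is generated by the zero images of the $p_j$.

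For the surjectivity assertion the principle applies directly. Right-exactness yields $C\otimes \Pi_{\mathrm{an},\mathrm{con},I,A} = \mathrm{coker}(f\otimes\Pi_{\mathrm{an},\mathrm{con},I,A}) = 0$, and the principle delivers $r_I$ with $C\otimes\Pi_{\mathrm{an},r_I,I,A}=0$, which is exactly surjectivity of $f\otimes\Pi_{\mathrm{an},r_I,I,A}$.

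For the injectivity assertion the analogous step needs $K\otimes\Pi_{\mathrm{an},\mathrm{con},I,A}=0$, which does not follow formally from injectivity over the Robba ring but requires flatness of $\Pi_{\mathrm{an},\mathrm{con},I,A}$ over $\Pi_{\mathrm{an},r_{I,0},I,A}$ in order to commute $\ker$ with base change. I would establish this flatness by realizing the Robba ring as a filtered colimit of the intermediate rings $\Pi_{[s_I,r_I],I,A}$, each of which is flat over $\Pi_{\mathrm{an},r_{I,0},I,A}$ as a restriction to a rigid subdomain (compatible with \cref{prop2.9}); filtered colimits of flat modules are flat, so flatness passes to the Robba ring. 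With $K\otimes \Pi_{\mathrm{an},\mathrm{con},I,A}=0$ in hand, the compactness principle produces $r_I\leq r_{I,0}$ with $K\otimes \Pi_{\mathrm{an},r_I,I,A}=0$, and applying flatness once more at level $r_I$ converts this vanishing to injectivity of $f\otimes\Pi_{\mathrm{an},r_I,I,A}$.

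The main obstacle is the flatness step used for the injectivity direction; the surjectivity half is entirely formal from finite generation plus right-exactness. Should flatness turn out to require more care in the multidimensional setting than in the one-variable case, one can instead argue level-by-level: at each bracketed interval $[s_I,r_I]$ the ring $\Pi_{[s_I,r_I],I,A}$ is noetherian, so kernels of finitely generated modules are well-controlled, and the injectivity over the Robba ring can be descended to a single interval by combining noetherianity with the density statement \cref{prop2.9}(I) and the comparison isomorphism \cref{prop2.9}(II), from which the desired global statement follows.
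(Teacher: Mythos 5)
Your surjectivity argument is correct: $\Pi_{\mathrm{an},\mathrm{con},I,A}$ is the filtered union of the rings $\Pi_{\mathrm{an},r_I,I,A}$, so right-exactness of tensor plus finite generation of $C=\mathrm{coker}(f)$ gives vanishing at a finite level.

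The injectivity half has a genuine gap in the flatness step. The structural description is off: $\Pi_{\mathrm{an},\mathrm{con},I,A}=\bigcup_{r_I}\bigcap_{s_I}\Pi_{[s_I,r_I],I,A}$ is a filtered colimit of the Fr\'echet rings $\Pi_{\mathrm{an},r_I,I,A}$, each of which is an \emph{inverse} limit of the Banach slices $\Pi_{[s_I,r_I],I,A}$; it is not a filtered colimit of the $\Pi_{[s_I,r_I],I,A}$ themselves. So ``filtered colimits of flat modules are flat'' does not apply as stated: you would first need each $\Pi_{\mathrm{an},r_I,I,A}$ to be flat over $\Pi_{\mathrm{an},r_{I,0},I,A}$, and flatness does not pass through the inverse limit $\varprojlim_{s_I}\Pi_{[s_I,r_I],I,A}$. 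Moreover, the parenthetical appeal to \cref{prop2.9} conflates two different statements: item (VII) there says $M\otimes\Pi_{[s_I,r_I],I,A}$ is flat over $\Pi_{[s_I,r_I],I,A}$, not that $\Pi_{[s_I,r_I],I,A}$ is flat over $\Pi_{\mathrm{an},r_{I,0},I,A}$. The correct ingredient is the Schneider--Teitelbaum flatness of each Banach level $\Pi_{[s_I,r_I],I,A}$ over the Fr\'echet--Stein ring $\Pi_{\mathrm{an},r_{I,0},I,A}$ (\cite{ST1}); one then identifies $\ker(f\otimes\Pi_{[s_I,r_I],I,A})=K\otimes\Pi_{[s_I,r_I],I,A}$ at every slice, passes to the inverse limit over $s_I$ using Prop~\ref{prop2.9}(III) to keep the sequence exact, and only then invokes the finite-generation compactness argument. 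That identification also requires $M$, $N$ (hence $K$, $C$ via Prop~\ref{prop2.9}(IV),(VI)) to be coadmissible, which you never invoke but without which the Banach-level kernel computation fails. Your ``level-by-level'' fallback gestures at exactly this route but is not written out. The paper's own proof is merely a citation to \cite[Lemma 2.1.16]{KPX}, which indeed proceeds along these Schneider--Teitelbaum lines rather than by asserting flatness of the full Robba ring.
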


\begin{proof}
See the proof of \cite[Lemma 2.1.16]{KPX} which is a reinterpretation of the properties of being of finite type of the corresponding kernel and cokernel involved.	
\end{proof}

\begin{proposition} \mbox{\bf{(After KPX \cite[Lemma 2.1.18]{KPX})}}
Suppose now that $M$ is a module defined over $\Pi_{\mathrm{an},r_{I,0},I,A}$ and is assumed to be finite over $A$. Then we have that there exists some multi radii $r_I\leq r_{I,0}$ such that the base change of $M$ to $\Pi_{\mathrm{an},r_{I},I,A}$ vanishes. 	
\end{proposition}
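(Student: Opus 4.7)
The plan is to exhibit, for a suitable choice of $r_I\le r_{I,0}$, an element of $\Pi_{\mathrm{an},r_{I,0},I,A}$ that annihilates $M$ and becomes a unit in $\Pi_{\mathrm{an},r_I,I,A}$; inverting an annihilator of a module kills the module, so this will force $M\otimes_{\Pi_{\mathrm{an},r_{I,0},I,A}}\Pi_{\mathrm{an},r_I,I,A}=0$.

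Fix any $\alpha\in I$. Since $T_\alpha$ is a unit in $\Pi_{\mathrm{an},r_{I,0},I,A}$, multiplication by $T_\alpha$ is an $A$-linear automorphism of the finitely generated $A$-module $M$. Cayley--Hamilton then produces a monic polynomial
\begin{displaymath}
\chi_\alpha(X)=X^k+c_{k-1}X^{k-1}+\cdots+c_0\in A[X]
\end{displaymath}
with $\chi_\alpha(T_\alpha)\cdot M=0$, and invertibility of $T_\alpha$ forces $c_0=(-1)^k\det_A(T_\alpha\mid_M)\in A^\times$. Thus $\chi_\alpha(T_\alpha)$ is the candidate annihilator, and the task reduces to showing that it is a unit in $\Pi_{\mathrm{an},r_I,I,A}$ for some $r_I\le r_{I,0}$.

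To see this, recall that in the $\alpha$-coordinate the polyannulus underlying $\Pi_{\mathrm{an},r_I,I,A}$ is the half-open region $p^{-r_{I,\alpha}/(p-1)}\le |T_\alpha|<1$, and an analytic function on it is a unit iff it has no zeros there. Fibrewise over $\mathrm{Max}(A)$, the zeros of $\chi_\alpha(T_\alpha)$ in the $T_\alpha$-direction are the roots $\lambda_1,\dots,\lambda_k$ of $\chi_\alpha$; monicity together with $c_0\in A^\times$ constrains each $|\lambda_j|$ between a positive constant and a finite constant via the finite list of Newton-polygon slopes. Roots with $|\lambda_j|\ge 1$ lie outside the half-open annulus automatically, and for each root with $|\lambda_j|<1$ one has $v_p(\lambda_j)>0$, so the condition $|\lambda_j|<p^{-r_{I,\alpha}/(p-1)}$ amounts to $r_{I,\alpha}<(p-1)v_p(\lambda_j)$. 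Choosing $r_{I,\alpha}$ strictly below the minimum of $(p-1)v_p(\lambda_j)$ taken over the finitely many positive-valuation roots (and also $\le r_{I,0,\alpha}$) excludes every root from the annulus, so $\chi_\alpha(T_\alpha)$ is a unit in $\Pi_{\mathrm{an},r_I,I,A}$.

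The main obstacle is ensuring that the bound on $r_{I,\alpha}$ can be chosen uniformly over $\mathrm{Max}(A)$ when $A$ is a genuine affinoid algebra rather than a field: the roots $\lambda_j$ only live in extensions of residue fields, and a priori their valuations vary with the point. I would handle this either by working at each maximal ideal $\mathfrak{m}\subset A$ and invoking compactness of $\mathrm{Max}(A)$ together with semi-continuity of the Newton polygon under specialization, or by bounding $\min_j v_p(\lambda_j)$ directly in terms of the finite sup-norms $\|c_j\|_A$ of the coefficients of $\chi_\alpha$. Either route delivers a single $r_I\le r_{I,0}$ for which $\chi_\alpha(T_\alpha)$ is a unit in $\Pi_{\mathrm{an},r_I,I,A}$, and the base change of $M$ vanishes as desired.
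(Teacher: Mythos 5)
Your plan---find a polynomial in $T_\alpha$ with coefficients in $A$ that annihilates $M$ and then shrink $r_\alpha$ until that polynomial becomes a unit---is exactly the strategy behind the proof of \cite[Lemma 2.1.18]{KPX} that the paper invokes, so the approach itself matches. However, two steps as written do not hold up, and the second one is where the real content of the lemma lives.

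First, the Cayley--Hamilton step. You assert that $c_0=(-1)^k\det_A(T_\alpha\mid_M)\in A^\times$. This is fine when $M$ is finite \emph{projective} over $A$, so that $\det_A$ is defined via $\bigwedge^k M$ and takes an automorphism to a unit. But the proposition only assumes $M$ finite over $A$. For a non-projective $M$ there is no well-defined $\det_A$, and the characteristic polynomial attached to a chosen matrix representation of $T_\alpha$ on a generating set need not have unit constant term (the matrix represents an endomorphism of $A^n$, not of $M$, and is not forced to be invertible). You can repair this by noting that the image $B$ of $\Pi_{\mathrm{an},r_{I,0},I,A}$ in $\mathrm{End}_A(M)$ is a finite $A$-algebra, so $T_\alpha$ \emph{and} $T_\alpha^{-1}$ are both integral over $A$; applying the integrality relation to $T_\alpha^{-1}$ and clearing denominators produces an annihilating polynomial with constant term $1$. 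But note this polynomial will generally not be monic, so you end up with one monic annihilator and one with unit constant term, not a single polynomial with both properties.

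Second, and more seriously, the uniformity step. Neither of your two proposed fixes closes the gap. Bounding the coefficients $c_j$ by their sup-norms $\|c_j\|_A$ does bound each $|\lambda_j|$ into some fixed interval $[\delta,\Delta]$, as you say, but it gives no upper bound $<1$: Newton-polygon slopes can come arbitrarily close to $0$ while staying negative, so a family can have roots $\lambda_j(x)$ whose valuations approach $0^+$ as $x$ varies. Likewise, compactness of $\mathrm{Max}(A)$ plus semicontinuity of the Newton polygon is the wrong compactness and the wrong semicontinuity: the function ``smallest positive root valuation'' is not lower semicontinuous (roots with valuation $\to 0^+$ can converge to a root of valuation exactly $0$, which is dropped from the minimum), so no bound propagates from fibers to the family. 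What is actually needed---and what the paper's terse reference to the copy $A\langle T_I\rangle\subset \Pi_{\mathrm{an},r_{I,0},I,A}$ and the Artin reduction is getting at---is the constraint coming from the $\Pi_{\mathrm{an},r_{I,0},I,A}$-module structure itself. Since $M$ is finite over $A$, the algebra $B=\Pi_{\mathrm{an},r_{I,0},I,A}/\mathrm{Ann}(M)$ embeds in $\mathrm{End}_A(M)$ and is thus a finite $A$-algebra, i.e.\ an affinoid algebra whose spectrum is a compact space sitting inside $\mathrm{Max}(A)\times\{p^{-r_{\alpha,0}/(p-1)}\le|T_\alpha|<1\}$. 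Compactness of $\mathrm{Max}(B)$ (or $\mathcal{M}(B)$), not of $\mathrm{Max}(A)$, then forces $\|T_\alpha\|_{\sup,B}=\rho_\alpha<1$, and choosing $r_\alpha$ with $p^{-r_\alpha/(p-1)}>\rho_\alpha$ disjoins the support of $M$ from the smaller polyannulus, which is exactly $M\otimes_{\Pi_{\mathrm{an},r_{I,0},I,A}}\Pi_{\mathrm{an},r_I,I,A}=0$. Once you have this, the polynomial annihilator is no longer even needed; and conversely, without it, the polynomial argument cannot be made uniform. So the gap is: you are trying to extract the uniform bound from the coefficients of $\chi_\alpha$ alone, whereas it really comes from the compactness of the support of $M$ inside the half-open polyannulus.
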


\begin{proof}
This is a higher dimensional generalization of the corresponding results in \cite[Lemma 2.1.18]{KPX}. The argument is parallel, we adapt the argument to our setting as in the following. First we consider the situation where $A$ is Artin, then it is easy to see that the proposition is true. Then more generally we consider the copy $A\left<T_I\right>\subset \Pi_{\mathrm{an},r_{I,0},I,A}$, which allows us to go back to the previous situation by taking the reduction. Note that we still satisfy the condition since then the module is also finite over $A\left<T_I\right>$ since it is so over $A$. Then one could finish as in the one dimensional situation, see \cite[Lemma 2.1.18]{KPX}.	
\end{proof}

\indent Then we define the following higher dimensional analog of the differential $\Omega$ over the one dimensional Robba ring relative to the ring $A$. This amounts to considering a higher dimensional residue formula for a function with $I$ variables.

\begin{definition} \mbox{\bf{(After KPX)}}
We first generalize the definition of the differentials of the Robba rings to higher dimensional situation. First consider our full Robba rings $\Pi_{\mathrm{an},\mathrm{con},I,A}$, then we are going to define the corresponding $n$-th differential of this ring relative to $A$ as the following:
\begin{displaymath}
\Omega_{\Pi_{\mathrm{an},\mathrm{con},I,A}/A}:=\wedge^{|I|} \Omega^1_{\Pi_{\mathrm{an},\mathrm{con},I,A}/A}	
\end{displaymath}
which is nothing but the $|I|$-th differential of the ring $\Pi_{\mathrm{an},\mathrm{con},I,A}$ with respect to the affinoid $A$. Here the differentials are assumed to be the continuous ones.	
\end{definition}

\indent Then for any element in the $|I|$-th differential we could define the corresponding residue of this chosen element as in the following:

\begin{definition} \mbox{\bf{(After KPX)}}
For any $f\in \Omega_{\Pi_{\mathrm{an},\mathrm{con},I,A}/A}$ of the general form of $f=\sum_{n_I}a_{n_1,...,n_I}T_1^{n_1}...T_I^{n_I}$, we define the residue of this differential of $|I|$-th order by the following formula:
\begin{displaymath}
\mathrm{res}(f):=\mathrm{res}(\sum_{n_I}a_{n_1,...,n_I}T_1^{n_1}...T_I^{n_I}):= a_{-\mathbf{1}}.	
\end{displaymath}
Here the symbol $\mathbf{1}$ is defined to be the index of $(1,...,1)$ in a uniform way.	
\end{definition}

\begin{proposition} \mbox{\bf{(After KPX \cite[Lemma 2.1.19]{KPX})}}
The residue map defined above in our setting is well-defined, and the differential module $\Omega_{\Pi_{\mathrm{an},\mathrm{con},I,A}/A}$ is free of rank one. We define the following differential pairing over the relative differential module $\Omega_{\Pi_{\mathrm{an},\mathrm{con},I,A}/A}$:
\begin{align}
h:{\Pi_{\mathrm{an},\mathrm{con},I,A}}\times {\Pi_{\mathrm{an},\mathrm{con},I,A}} &\rightarrow A\\
(f,g)	&\mapsto \mathrm{res}(f(T_I)g(T_I)dT_1\wedge...\wedge dT_I).
\end{align}
Then the pairing now induces the following canonical isomorphisms (where the topology depends on the choices of the variables):
\begin{align}
\mathrm{Hom}_{\mathrm{con}}(\Pi_{\mathrm{an},\mathrm{con},I,A},A)\overset{\sim}{\rightarrow}	\Pi_{\mathrm{an},\mathrm{con},I,A}\\
\mathrm{Hom}_{\mathrm{con}}(\Pi_{\mathrm{an},\mathrm{con},I,A}/\sum_{k}\Pi_{\mathrm{an},k,I,A} ,A)\overset{\sim}{\rightarrow}	\Pi_{\mathrm{an},\infty,I,A}.\\
\end{align}
Here the ring $\Pi_{\mathrm{an},k,I,A}$ is defined as
\begin{align}
\Pi_{\mathrm{an},k,I,A}:=\bigcup_{r_i>0,i\neq k, r_k=\infty,k=1,...,|I|}\Pi_{\mathrm{an},r_I,I,A}.	
\end{align}

\end{proposition}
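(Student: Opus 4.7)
The plan is to mimic the proof of \cite[Lemma 2.1.19]{KPX} in one variable, with the bookkeeping now carried out over the multi-index set $\mathbb{Z}^I$ rather than $\mathbb{Z}$. First I would verify that $\Omega^1_{\Pi_{\mathrm{an},\mathrm{con},I,A}/A}$ is free of rank $|I|$ with basis $dT_1,\dots,dT_{|I|}$: since $T_1,\dots,T_{|I|}$ topologically generate the relative Robba ring over $A$, any continuous $A$-linear derivation is determined by its values on these generators, while the partial derivatives $\partial/\partial T_k$ provide linearly independent duals. Taking the top wedge gives freeness of rank one for $\Omega_{\Pi_{\mathrm{an},\mathrm{con},I,A}/A}$ with basis $dT_1\wedge\cdots\wedge dT_{|I|}$, and under the identification $f\,dT_1\wedge\cdots\wedge dT_{|I|}\leftrightarrow f$ the residue is the $A$-linear projection onto the coefficient of $T_1^{-1}\cdots T_{|I|}^{-1}$, which is well-defined and continuous with respect to the Gauss norms $\|\cdot\|_{[s_I,r_I]}$.

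Next I would verify that for $f=\sum_{n_I}a_{n_I}T_I^{n_I}$ and $g=\sum_{m_I}b_{m_I}T_I^{m_I}$ lying in a common $\Pi_{[s_I,r_I],I,A}$, the sum $\mathrm{res}(fg\,dT_1\wedge\cdots\wedge dT_{|I|})=\sum_{n_I}a_{n_I}b_{-\mathbf{1}-n_I}$ converges in $A$ because the Gauss-type norms on $f$ and $g$ force geometric decay in each of the $2^{|I|}$ corner directions of $[s_I,r_I]$. This makes $h$ jointly continuous and gives an injection $g\mapsto h(\cdot,g)$ into $\mathrm{Hom}_{\mathrm{con}}(\Pi_{\mathrm{an},\mathrm{con},I,A},A)$: injectivity is immediate since $h(T_I^{n_I},g)$ already picks off each Laurent coefficient of $g$. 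For surjectivity I would, given $\lambda$, set
\begin{displaymath}
g_\lambda := \sum_{n_I\in\mathbb{Z}^I}\lambda(T_I^{-n_I-\mathbf{1}})\,T_I^{n_I},
\end{displaymath}
show that continuity of $\lambda$ on some $\Pi_{\mathrm{an},r_I,I,A}$ produces growth bounds on $\lambda(T_I^{-n_I-\mathbf{1}})$ that place $g_\lambda$ in a suitable $\Pi_{[s'_I,r'_I],I,A}$, and then verify $h(\cdot,g_\lambda)=\lambda$ by direct computation on monomials combined with density of the Laurent polynomial ring inside the Robba ring.

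For the second isomorphism, I would unwind the definition of $\Pi_{\mathrm{an},k,I,A}$: its elements are precisely the multi-Laurent series with no negative powers of $T_k$, since they extend analytically in the $T_k$-direction out to infinity. Hence $\sum_k\Pi_{\mathrm{an},k,I,A}$ topologically absorbs every monomial $T_I^{n_I}$ with at least one coordinate $n_\alpha\geq 0$, so that $\Pi_{\mathrm{an},\mathrm{con},I,A}/\sum_k\Pi_{\mathrm{an},k,I,A}$ is topologically generated by the monomials with all $n_\alpha<0$. A functional $\lambda$ vanishing on this subspace therefore has $\lambda(T_I^{-n_I-\mathbf{1}})=0$ unless $n_\alpha\geq 0$ for every $\alpha$, so $g_\lambda$ is supported on $\mathbb{Z}^I_{\geq 0}$ and the growth bounds on its coefficients become exactly the polydisc-convergence conditions defining $\Pi_{\mathrm{an},\infty,I,A}$.

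The main obstacle will be the multivariate convergence bookkeeping. In one variable each Laurent coefficient decays in a single direction, whereas here each coefficient must simultaneously satisfy the $2^{|I|}$ decay conditions coming from the corners of $[s_I,r_I]$. Extracting the correct bound on $\lambda(T_I^{-n_I-\mathbf{1}})$ from continuity of $\lambda$ requires testing $\lambda$ against functions that isolate each corner regime and piecing the bounds together, in the spirit of the $2^{|I|}$-fold decomposition of admissible coverings used in the proof of \cref{prop1}. Once these estimates are secured, the remainder of the argument is a faithful multi-index transcription of the one-variable case.
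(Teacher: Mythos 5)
Your proof of the first isomorphism follows essentially the same route as the paper: both identify $\Omega_{\Pi_{\mathrm{an},\mathrm{con},I,A}/A}$ as free of rank one on $dT_1\wedge\cdots\wedge dT_{|I|}$, define the residue as the coefficient at $T_I^{-\mathbf{1}}$, and exhibit the same explicit inverse $\mu_I\mapsto\sum_{n_I}\mu_I(T_I^{-\mathbf{1}-n_I})T_I^{n_I}$, with the convergence of the residue pairing and of $g_\lambda$ controlled by the multi-interval Gauss norms. (The paper disposes of well-definedness by passing to the multivariate convergent rings $\mathcal{E}_\Delta$ of \cite{CKZ18}, \cite{PZ19} via a descent argument; your direct continuity argument with respect to $\|\cdot\|_{[s_I,r_I]}$ achieves the same end.) Where you genuinely diverge is the second isomorphism: the paper simply defers to \cite[Proposition 5.5]{Cre1} and \cite[Lemma 8.5.1]{Ked3}, whereas you unwind $\Pi_{\mathrm{an},k,I,A}$ as the subring with no poles in the $T_k$-direction, identify the quotient $\Pi_{\mathrm{an},\mathrm{con},I,A}/\sum_k\Pi_{\mathrm{an},k,I,A}$ with the span of monomials having all exponents negative, and then read off that a vanishing functional forces $g_\lambda$ to be supported on $\mathbb{Z}^I_{\geq 0}$ with polydisc-convergence bounds. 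This is more self-contained and makes the combinatorics of the multi-index quotient explicit, at the cost of having to justify carefully the topological identification of the quotient (the space is an LF-space, so some care is needed to show the monomials with all-negative exponents really do form a topological complement to $\sum_k\Pi_{\mathrm{an},k,I,A}$); the paper's citation to Crew and Kedlaya bypasses that point. Both routes are sound, and yours is arguably preferable for exposition since the referenced statements are stated in somewhat different frameworks.
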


\begin{proof}
As in the one dimensional situation, from the very definitions, one could derive that the residue map is well-defined and the fact that the differential module is generated by $dT_I$. Consider our product construction and the descent process in the one dimensional situation we have that one could then check the issue of well-definedness over the corresponding multivariate convergent rings $\mathcal{E}_{\Delta}$ in \cite{CKZ18} and \cite{PZ19}. Then for the first statement in the isomorphisms, first one could see that one direction (from right to left) which is induced directly from the residue pairing. On the other hand, we consider the following map which is a generalization of the one dimensional situation:
\begin{align}
\mathrm{Hom}_{\mathrm{con}}(\Pi_{\mathrm{an},\mathrm{con},I,A},A) &\overset{}{\rightarrow}	\Pi_{\mathrm{an},\mathrm{con},I,A}\\\mu_I	&\mapsto \sum_{n_I}\mu_I(T_I^{-\mathbf{1}-n_I})T_1^{n_1}...T_I^{n_I}, 
\end{align}
which is easy to see that this gives an inverse mapping by the definitions of the residue pairings. For the second isomorphism, under our construction, see \cite[Proposition 5.5]{Cre} and the corresponding \cite[Lemma 8.5.1]{Ked3}.

\end{proof}

\indent Then as below we are going to contact with the multi Lie groups structures which is a higher dimensional generalization of the usual pictures in \cite[Definition 2.1.20]{KPX}. To achieve so we need to generalize the constructions in \cite[Definition 2.1.20]{KPX} in the following sense:

\begin{definition} \mbox{\bf{(After KPX \cite[Definition 2.1.20]{KPX})}} \label{definition1}
Uniformizing the notations as in our list of notations at the beginning, we use the notation $(C_\alpha,\ell_\alpha)_{\alpha\in I}$ to denote $I$ pairs consisting of $I$ commutative topological groups with finite torsion parts and with the free part uniformly isomorphic to $\mathbb{Z}_p$, and $I$ morphism from $\ell_\alpha:C_\alpha\rightarrow \mathbb{Q}_p$ for each $\alpha\in I$. Then based on these we consider the following generalization of the constructions in \cite[Definition 2.1.20]{KPX}. First we are going to choose now for each $\alpha\in I$ a generator taking the form of $c_\alpha$ of the free part of the $\alpha$-th topological group for all $\alpha\in I$. Then we generalize the corresponding definitions in the usual situation as in the following. First assume that we only have torsion free topological groups in our setting for each $\alpha\in I$. Then we will use the notations $\Pi_{\mathrm{an},r_{I},I,A}(C_I),\Pi_{[s_I,r_I],I,A}(C_I)$ respectively to denote the corresponding higher dimensional period rings defined above namely $\Pi_{\mathrm{an},r_{I},I,A},\Pi_{[s_I,r_I],I,A}$ formally modified by considering the replacement of the variables with $c_\alpha-1$ for each dimension $\alpha\in I$. As in the usual situations in this situation we do not have the issue of the dependence of the choices of the generators. Also we know since by the corresponding one dimensional result we will have that
\begin{displaymath}
\left\|f(...,(1+T_\alpha)^p-1,...)\right\|_{r_I}=\left\|f(T_1,...,T_I)\right\|_{...,pr_\alpha,...}
\end{displaymath} 
for each $\alpha\in I$ which implies that certainly for all the $\alpha\in I$ together. Therefore from this fact we know that for each $\alpha\in I$:
\begin{displaymath}
\Pi_{\mathrm{an},(...,pr_{\alpha},...),I,A}(...,pC_\alpha,...)\otimes_{\mathbb{Z}_p[(...,pC_\alpha,...)]}	\mathbb{Z}_p[C_I]\overset{\sim}{\rightarrow} \Pi_{\mathrm{an},r_{I},I,A}(C_I),
\end{displaymath}
with
\begin{displaymath}
\Pi_{\mathrm{an},(...,p[s_{\alpha},r_{\alpha}],...),I,A}(...,pC_\alpha,...)\otimes_{\mathbb{Z}_p[(...,pC_\alpha,...)]}	\mathbb{Z}_p[C_I]\overset{\sim}{\rightarrow} \Pi_{[s_I,r_{I}],I,A}(C_I).
\end{displaymath}
As in \cite[Definition 2.1.20]{KPX} one could then consider more general situation where $C_I$ may not be torsion free. In this case one considers simultaneously all small $r_I<\mathbf{1}$ or all small $s_I$ when $r_I$ are all infinity, and any $D_\alpha$ a torsion free open subgroup of $C_\alpha$ for each $\alpha\in I$, one has:
\begin{displaymath}
\Pi_{\mathrm{an},([C_1:D_1]r_1,...,[C_I:D_I]r_I),I,A}(D_I)\otimes_{\mathbb{Z}_p[D_I]}	\mathbb{Z}_p[C_I]\overset{\sim}{\rightarrow} \Pi_{\mathrm{an},r_{I},I,A}(C_I),
\end{displaymath}
with
\begin{displaymath}
\Pi_{\mathrm{an},(...,[C_\alpha:D_\alpha][s_{\alpha},r_{\alpha}],...),I,A}(...,D_\alpha,...)\otimes_{\mathbb{Z}_p[(...,D_\alpha,...)]}	\mathbb{Z}_p[C_I]\overset{\sim}{\rightarrow} \Pi_{[s_I,r_{I}],I,A}(C_I).
\end{displaymath}
Then we consider the situation for some general open subgroups $D_I'$ of the groups $D_I$ which are torsion free then for each $\alpha\in I$, and for each $r_\alpha\leq 1/\sharp D_\mathrm{tor}$, or for each $s_\alpha\leq 1/\sharp D_\mathrm{tor}$ for $r_I$ infinite, we define:
\begin{displaymath}
\Pi_{\mathrm{an},([D_1:D'_1]r_1,...,[D_I:D'_I]r_I),I,A}(D'_I)\otimes_{\mathbb{Z}_p[D'_I]}	\mathbb{Z}_p[D_I]\overset{\sim}{\rightarrow} \Pi_{\mathrm{an},r_{I},I,A}(D_I),
\end{displaymath}
with
\begin{displaymath}
\Pi_{\mathrm{an},(...,[D_\alpha:D'_\alpha][s_{\alpha},r_{\alpha}],...),I,A}(...,D'_\alpha,...)\otimes_{\mathbb{Z}_p[(...,D'_\alpha,...)]}	\mathbb{Z}_p[D_I]\overset{\sim}{\rightarrow} \Pi_{[s_I,r_{I}],I,A}(D_I).
\end{displaymath}
As in the one dimensional situation everything is well-defined with respect to the choices involved. Then based on this observation we start to consider the generalization of the differentials defined above as in the following. In our situation we consider the differential $\omega_{\ell_I}$ which is defined to be the wedge product $\omega_{\ell_1}\wedge...\wedge\omega_{\ell_I}$, where each $\omega_{\ell_\alpha}$ is defined as in the situation in \cite[Definition 2.1.20]{KPX}. Then we could then consider any element $f\in \Omega_{\Pi_{\mathrm{an},\mathrm{con},I,A}(C_I)/A}$, and define:
\begin{displaymath}
\mathrm{res}(f):=a_{-\mathbf{1}}.	
\end{displaymath}
Then we define pairing as above in more general setting as:
\begin{align}
h:{\Pi_{\mathrm{an},\mathrm{con},I,A}(C_I)}\times {\Pi_{\mathrm{an},\mathrm{con},I,A}(C_I)} &\rightarrow A\\
(f,g) &\mapsto \mathrm{res}(fg\omega_{\ell_I}).	
\end{align}
And we also have the following analog:
\begin{align}
\mathrm{Hom}_{\mathrm{con}}({\Pi_{\mathrm{an},\mathrm{con},I,A}(C_I)},A)\overset{\sim}{\longrightarrow}	{\Pi_{\mathrm{an},\mathrm{con},I,A}(C_I)}.
\end{align}
\end{definition}

\newpage

\section{Relative Frobenius Sheaves over Affinoids and Fr\'echet Rings}

\noindent Now we introduce main objects in our study and our consideration. We first generalize the setting in \cite{KPX} to the situation we have multi Hodge-Frobenius structure which is quite natural since we consider already some higher dimensional spaces. Then we generalize everything to the setting in \cite{KP1}, which is also very natural in the consideration of noncommutative Iwasawa theory in the style of \cite{KP1}. The latter will be around the Iwasawa cohomology of some $p$-adic Lie deformation of the usual $(\varphi,\Gamma)$-modules. Therefore in the second situation we will consider the usual Hodge structure but we consider the $p$-adic Lie extension, for instance one could consider the product of $\mathbb{Z}_p^\times$ with some power of $\mathbb{Z}_p^\times$ or $\mathbb{Z}_p$. This will be then highly relative since we will work over some relative Robba rings or sheaves taking the form of $\Pi_{\mathrm{an},\mathrm{con},I,\Pi_{\mathrm{an},\infty,A}(G)}$, where $\Pi_{\mathrm{an},\infty}(G)$ for instance could be Berthelot's $A_\infty$-algebra, namely the distribution over $G$.\\

\begin{setting} \label{setting3.1}
We will work over the ring $\Pi_{\mathrm{an},\infty,A}(G)$ which is just the ring defined in the previous section which is abelian and which is the corresponding completed tensor product of the corresponding rings. To be more precise we will consider the Frobenius modules over $\Pi_{\mathrm{an},\mathrm{con},I,\Pi_{\mathrm{an},\infty,J,A}(\Gamma_{K_J})}$. Note that here $J$ could be different from $I$, but we assume that we have the relation $I\subset J$, and for those $K_\alpha$ ($\alpha\in J\backslash I$) we have that $K_\alpha=\mathbb{Q}_p$. Now set $G$ be $\Gamma_{K_J}$ (see the discussion below for the definitions). By considering the philosophy of Burns-Flach-Fukaya-Kato, this is actually already some noncommutative type Hodge structures.	
\end{setting}

\begin{remark}
The noncommutative setting is considered in the 2017 thesis \cite{Zah1}. 	
\end{remark}

\indent Let $K_I$ be $I$ finite extension of the $p$-adic number field $\mathbb{Q}_p$. Now for each $\alpha\in I$, we consider the cyclotomic field $K_{\alpha}(\mu_{p^\infty})$, which gives rise to the the extension $K_{\alpha}(\mu_{p^\infty})/K_\alpha$ with the Galois group $G(K_{\alpha}(\mu_{p^\infty})/K_\alpha)$ which is just the corresponding $\Gamma_{K_\alpha}$ in our situation. Note that for each $K_\alpha$ we have the associated cyclotomic character $\chi_{\alpha}$ whose kernel will be denoted as usual by $H_{K_\alpha}$, for each $\alpha\in I$. And for each $\alpha$ we will use the notation $\widetilde{e}_{K_{\alpha,\infty}}$ to denote the ramification indexes along the towers, namely for each $\alpha$ this is the quotient $[K_{\alpha}(\mu_{p^\infty}):\mathbb{Q}_p(\mu_{p^\infty})]/[k_{K_{\alpha}(\mu_{p^\infty})}:k_{K_\alpha}]$. Then first step now is to add some Hodge-Frobenius structures over the period rings we established in the previous section. To do so we consider the following situation:

\begin{definition} \mbox{\bf{(After KPX \cite[Definition 2.2.2]{KPX})}}
For each $\alpha\in I$, we choose suitable uniformizer $\pi_{K_\alpha}$ in our consideration. Then we are going to use the notation $\Pi_{\mathrm{an},\mathrm{con},I,A}(\pi_{K_I})$ to denote the corresponding period ring constructed from $\Pi_{\mathrm{an},\mathrm{con},I,A}$ just by directly replacing the variables by the corresponding uniformizers as above. And similarly for other period rings we use the corresponding notations taking the same form namely $\Pi_{\mathrm{an},\mathrm{con},I,A}(\pi_{K_I})$, $\Pi_{\mathrm{an},r_{I,0},I,A}(\pi_{K_I})$, $\Pi_{\mathrm{an},[s_I,r_I],I,A}(\pi_{K_I})$. As in the one dimensional situation we consider the situation where the radii are all sufficiently small. Then one could define the multiple Frobenius actions from the multi Frobenius $\varphi_I$ over each the ring mentioned above. For suitable radii $r_I$ we define a $\varphi_I$-module to be a finite projective $\Pi_{\mathrm{an},r_{I},I,A}(\pi_{K_I})$-module with the requirement that for each $\alpha\in I$ we have $\varphi_\alpha^*M_{r_I}\overset{\sim}{\rightarrow}M_{...,r_\alpha/p,...}$ (after suitable base changes). Then we define $M:=M_{r_I}\otimes_{\Pi_{\mathrm{an},r_{I},I,A}(\pi_{K_I})}\Pi_{\mathrm{an},\mathrm{con},I,A}(\pi_{K_I})$ to define a $\varphi_I$-module over the full relative Robba ring in our situation. Furthermore we have the notion of $\varphi_I$-bundles in our situation which consists of a family of $\varphi_I$-modules $\{M_{[s_I,r_I]}\}$ where each module $M_{[s_I,r_I]}$ is defined to be finite projective over $\Pi_{\mathrm{an},[s_I,r_I],I,A}(\pi_{K_I})$ satisfying the action formula taking the form of $\varphi_\alpha^*M_{[s_I,r_I]}\overset{\sim}{\rightarrow}M_{...,[s_\alpha/p,r_\alpha/p],...}$ (after suitable base changes). Furthermore for each $\alpha$ we have the corresponding operator $\varphi_\alpha:M_{[s_I,r_I]}\rightarrow M_{...,[s_\alpha/p,r_\alpha/p],...}$ and we have the corresponding operator $\psi_\alpha$ which is defined to be $p^{-1}\varphi_\alpha^{-1}\circ\mathrm{Trace}_{M_{...,[s_\alpha/p,r_\alpha/p],...}/\varphi_\alpha(M_{[s_I,r_I]})}$. Certainly we have the corresponding operator $\psi_\alpha$ over the global section $M_{r_I}$. Note that here we require that the Hodge-Frobenius structures are commutative in the sense that all the Frobenius are commuting with each other, and they are semilinear.
\end{definition}

\indent So now based on the definitions above we can consider the corresponding objects over the ring $\Pi_{\mathrm{an},\infty,A}(G)$, which are actually complicated. The basic reason is that over the infinite level we will lose some control of the corresponding finiteness, also very significantly we lose the control of the behavior of the radius throughout the whole variation over the Stein space attached to $\Pi_{\mathrm{an},\infty,A}(G)$. Let $X(\Pi_{\mathrm{an},\infty,A}(G))$ be the corresponding Fr\'echet-Stein space attached to the group $G$, which gives rise to the following represetation:
\begin{align}
X(\Pi_{\mathrm{an},\infty,A}(G))=\varinjlim_k X(\Pi_{\mathrm{an},\infty,A}(G))_k,\\
\mathcal{O}_{X(\Pi_{\mathrm{an},\infty,A}(G))}= \varprojlim_k \mathcal{O}_{X(\Pi_{\mathrm{an},\infty,A}(G))_k	},\\
\Pi_{\mathrm{an},\infty,A}(G)=: \varprojlim_k \Pi_{\mathrm{an},\infty,A}(G)_k.
\end{align}

\begin{definition} 
Over $\square:=\Pi_{\mathrm{an},\mathrm{con},I,\Pi_{\mathrm{an},\infty,A}(G)}(\pi_{K_I})$, or $\Pi_{\mathrm{an},r_{I,0},I,\Pi_{\mathrm{an},\infty,A}(G)}(\pi_{K_I})$, or $\Pi_{\mathrm{an},[s_I,r_I],I,\Pi_{\mathrm{an},\infty,A}(G)}(\pi_{K_I})$ as in the above (with the notations for the radii as considered above) we have the notion of $\varphi_I$-modules over $\square$. Be careful that we will define a $\varphi_I$-module $M$ over $\square$ to be the following projective limit:
\begin{align}
M:=\varprojlim_k M_k	
\end{align}
where $M_k$ is a corresponding object over $\square_k$ as defined above as a corresponding $\varphi_I$-module $M$ (finite projective) over $\square_k$. Here $\square_k:=\Pi_{\mathrm{an},\mathrm{con},I,\Pi_{\mathrm{an},\infty,A}(G)_k}(\pi_{K_I})$, or $\Pi_{\mathrm{an},r_{I,0},I,\Pi_{\mathrm{an},\infty,A}(G)_k}(\pi_{K_I})$, or $\Pi_{\mathrm{an},[s_I,r_I],I,\Pi_{\mathrm{an},\infty,A}(G)_k}(\pi_{K_I})$ respectively. One defines the $\varphi_I$-bundles over $\Pi_{\mathrm{an},\mathrm{con},I,\Pi_{\mathrm{an},\infty,A}(G)}(\pi_{K_I})$ in the same fashion.
\end{definition}

\indent We also have the corresponding notions of families of the corresponding objects considered above in the Fr\'echet-Stein situation.

\begin{definition} 
Over $\square:=\Pi_{\mathrm{an},\mathrm{con},I,X(\Pi_{\mathrm{an},\infty,A}(G))}(\pi_{K_I})$, or $\Pi_{\mathrm{an},r_{I,0},I,X(\Pi_{\mathrm{an},\infty,A}(G))}(\pi_{K_I})$, or\\
 $\Pi_{\mathrm{an},[s_I,r_I],I,X(\Pi_{\mathrm{an},\infty,A}(G))}(\pi_{K_I})$ as in the above (with the notations for the radii as considered above) we have the notion of $\varphi_I$-modules over $\square$. Be careful that we will define a $\varphi_I$-module $M$ over $\square$ to be the following the sheaf taking each quasi-compact $Y\subset X(\Pi_{\mathrm{an},\infty,A}(G))$ to $M(Y)$ such that this is a corresponding object over $\square_Y$ as defined above as a corresponding $\varphi_I$-module $M$ (finite projective) over $\square_Y$. Here $\square_Y:=\Pi_{\mathrm{an},\mathrm{con},I,\mathcal{O}_{X(\Pi_{\mathrm{an},\infty,A}(G))}(Y)}(\pi_{K_I})$, or $\Pi_{\mathrm{an},r_{I,0},I,\mathcal{O}_{X(\Pi_{\mathrm{an},\infty,A}(G))}(Y)}(\pi_{K_I})$, or
 $\Pi_{\mathrm{an},[s_I,r_I],I,\mathcal{O}_{X(\Pi_{\mathrm{an},\infty,A}(G))}(Y)}(\pi_{K_I})$ respectively. One defines the families of $\varphi_I$-bundles over $\Pi_{\mathrm{an},\mathrm{con},I,X(\Pi_{\mathrm{an},\infty,A}(G))}(\pi_{K_I})$ in the same fashion. Here one can take the corresponding $Y$ as some member in the filtration:
\begin{displaymath}
X(\Pi_{\mathrm{an},\infty,A}(G))=\varinjlim_k X(\Pi_{\mathrm{an},\infty,A}(G))_k.	
\end{displaymath}
 
\end{definition}

\begin{proposition} \mbox{\bf{(After KPX \cite[Proposition 2.2.7]{KPX})}} \label{proposition3.5}
We have that natural functor from families of $\varphi_I$-modules (in the sense of the previous definition without the finite condition on the global section) over $\Pi_{\mathrm{an},r_{I,0},I,\Pi_{\mathrm{an},\infty,A}(G)}(\pi_{K_I})$ to the corresponding families of $\varphi_I$-bundles (in the sense of the previous definition without the finite condition on the global section) is an equivalence.	Here by a family of $\varphi_I$-module we mean a $\varphi_I$-module over $\Pi_{\mathrm{an},r_{I,0},I,X(\Pi_{\mathrm{an},\infty,A}(G))}(\pi_{K_I})$, which is the same to a family of $\varphi_I$-bundles over $\Pi_{\mathrm{an},r_{I,0},I,\Pi_{\mathrm{an},\infty,A}(G)}(\pi_{K_I})$.
\end{proposition}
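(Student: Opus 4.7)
The plan is to adapt the one-dimensional equivalence \cite[Proposition 2.2.7]{KPX} to the present multidimensional Fr\'echet-Stein setting by working level by level in the presentation $\Pi_{\mathrm{an},\infty,A}(G)=\varprojlim_k \Pi_{\mathrm{an},\infty,A}(G)_k$ and then assembling everything via coadmissibility. First I would write down the two candidate quasi-inverse functors. In one direction, send a $\varphi_I$-module $M$ to the family of base changes $\{M\otimes_{\Pi_{\mathrm{an},r_{I,0},I,\cdots}(\pi_{K_I})}\Pi_{\mathrm{an},[s_I,r_I],I,\cdots}(\pi_{K_I})\}$ equipped with the induced $\varphi_\alpha^*$-isomorphisms. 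In the other, send a $\varphi_I$-bundle $\{M_{[s_I,r_I]}\}$ to its global section $M:=\varprojlim_{s_I\to 0^+}M_{[s_I,r_{I,0}]}$, with the Frobenius propagated through the limit from the compatible Frobenius isomorphisms on the pieces.

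Next I would verify that the two compositions are naturally isomorphic to the identity. Going module $\to$ bundle $\to$ module, the comparison $M\otimes\Pi_{\mathrm{an},[s_I,r_I],I,\cdots}\overset{\sim}{\rightarrow}M_{[s_I,r_I]}$ from \cref{prop2.9}(II) together with the higher vanishing $R^1\varprojlim_{s_I\to 0^+}M_{[s_I,r_{I,0}]}=0$ from \cref{prop2.9}(III) delivers the required natural isomorphism. Going bundle $\to$ module $\to$ bundle, the flatness statement \cref{prop2.9}(VII) together with the density of the global section from \cref{prop2.9}(I) reconstructs each $M_{[s_I,r_I]}$ from $M$. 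The compatibility of these isomorphisms with the Frobenius structures follows from the semilinearity of $\varphi_I$ and the fact that each $\varphi_\alpha^*$ is a base change functor that commutes with both the formation of tensor products and the inverse limits in play.

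To incorporate the Fr\'echet-Stein coefficient ring $\Pi_{\mathrm{an},\infty,A}(G)=\varprojlim_k\Pi_{\mathrm{an},\infty,A}(G)_k$, I would first establish the equivalence level by level, where each $\Pi_{\mathrm{an},\infty,A}(G)_k$ is an affinoid Banach algebra over $\mathbb{Q}_p$ and the results of Section 2 together with the adaptation of \cite{KPX} apply directly. The global equivalence is then obtained by taking the inverse limit along $k$: the flatness and dense-image properties of the transition maps $\Pi_{\mathrm{an},\infty,A}(G)_{k+1}\to\Pi_{\mathrm{an},\infty,A}(G)_k$ coming from the Fr\'echet-Stein structure ensure that coadmissible families assembled at each level glue into a single coherent sheaf over $X(\Pi_{\mathrm{an},\infty,A}(G))$, and symmetrically on the $\varphi_I$-bundle side.

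The hard part will be the simultaneous control of two different Fr\'echet-Stein directions---the annulus radii $s_I\to 0^+$ and the coefficient level $k\to\infty$. One must justify that these two inverse limits commute, which reduces to a Mittag-Leffler–type argument combining the vanishing in \cref{prop2.9}(III) with the flatness of transitions applied uniformly in $k$. A related subtlety is that $\varphi_\alpha$ sends $r_\alpha\mapsto r_\alpha/p$, so the admissible covering witnessing the bundle structure must be stable enough under iteration of $\varphi_I$ that all compatibilities remain within the allowed radii; the refined decomposition technique from \cite[2.6.14--2.6.17]{KL2}, already invoked in the proof of \cref{prop1}, supplies the required flexibility to make this choice uniformly across $k$.
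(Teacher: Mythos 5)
Your overall architecture---set up the two candidate functors, verify the compositions, and work level by level in the Fr\'echet--Stein filtration $\Pi_{\mathrm{an},\infty,A}(G)=\varprojlim_k\Pi_{\mathrm{an},\infty,A}(G)_k$---matches the paper's intent, and the reduction to the affinoid case at each level $k$ is exactly the first step of the paper's proof. However, your treatment of the essential surjectivity direction (bundle $\to$ module) has a genuine gap: you invoke \cref{prop2.9}(I),(II),(III),(VII) to ``reconstruct each $M_{[s_I,r_I]}$ from $M$,'' but those properties apply to arbitrary coherent sheaves and do \emph{not} tell you that the global section of a $\varphi_I$-bundle is finite projective over the full Robba ring. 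Without that, the functor ``take global section'' does not land in $\varphi_I$-modules, and there is no equivalence to prove. The paper's proof explicitly identifies this as the only nontrivial point and resolves it by appealing to \cref{prop1} (the multidimensional analog of \cite[Proposition 2.1.13]{KPX}): the statement that a coadmissible module is finite projective precisely when the underlying coherent sheaf is a uniformly finitely presented vector bundle. The crucial observation, which your proposal does not make, is that it is the \emph{Frobenius structure itself} that produces the required uniformity---by applying $\varphi_\alpha$ (which translates $r_\alpha\mapsto r_\alpha/p$) one transports a finite generating set, together with its bound on the number of generators and relations, from a single interval to a full admissible family of intervals. You mention the $\varphi_\alpha$ radius-shift, but you treat it as a compatibility subtlety to be managed (``the admissible covering \dots must be stable enough under iteration'') rather than as the mechanism that supplies the hypothesis of \cref{prop1}. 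This is the missing idea.

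A secondary remark: the concern you raise about commuting the two inverse limits (radii $s_I\to0^+$ and coefficient level $k\to\infty$) is not one the paper needs to confront. Once the equivalence is established over each affinoid $\Pi_{\mathrm{an},\infty,A}(G)_k$, the definition of $\varphi_I$-modules and $\varphi_I$-bundles over the Fr\'echet--Stein ring is already sheaf-theoretic in the quasi-compact $Y\subset X(\Pi_{\mathrm{an},\infty,A}(G))$, so the global equivalence follows formally from the local one without any Mittag--Leffler argument for the interchange of limits. That part of your proposal is extra work you do not need to do, provided you first close the gap on finite projectivity of the global section at each level.
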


\begin{proof}
Here we need to consider the corresponding Fr\'echet sheaves as in the previous definition since in our situation the relative Robba rings are defined over some non-noetherian Stein space. The statement is obviously true for each $M_n$ as in the usual situation. First the faithfulness is straightforward, then for the surjectivity part, we only need to show that the global section is finite projective, which is a directly consequence of \cref{prop1} as long as in our situation one considers the simultaneous application of our multiple Frobenius to shift around to vary through all the intervals.	
\end{proof}

\begin{definition} \mbox{\bf{(After KPX \cite[Definition 2.2.12]{KPX})}}
Then we add the corresponding more Lie group action in our setting, namely the multi semiliear $\Gamma_{K_I}$-action. Again in this situation we require that all the actions of the $\Gamma_{K_I}$ are commuting with each other and with all the semilinear Frobenius actions defined above. We define the corresponding $(\varphi_I,\Gamma_I)$-modules over $\Pi_{\mathrm{an},r_{I,0},I,\Pi_{\mathrm{an},\infty,A}(G)}(\pi_{K_I})$ to be finite projective module with mutually commuting semilinear actions of $\varphi_I$ and $\Gamma_{K_I}$. For the latter action we require that to be continuous.
\end{definition}

\indent The continuity condition here could be made more explicit, as in \cite[Proposition 2.2.14]{KPX}. In our setting, we just need to generalize the result in \cite[Proposition 2.2.14]{KPX} to multiple actions from $\Gamma_{K_I}$.

\begin{lemma} \mbox{\bf{(After KPX \cite[Proposition 2.2.14]{KPX})}}
For suitable $r_{I,0}$, consider $M_{r_{I,0}}$ a module (carrying a semilinear action of $\Gamma_{K_I}$) of finite type over $\Pi_{\mathrm{an},r_{I,0},I,\Pi_{\mathrm{an},\infty,A}(G)}(\pi_{K_I})$ with the generating set $\mathbf{e}_1,...,\mathbf{e}_m$ (namely we look at a sheaf over $\Pi_{\mathrm{an},r_{I,0},I,X(\Pi_{\mathrm{an},\infty,A}(G))}(\pi_{K_I})$ with global section finitely generated). We will follow \cite[Proposition 2.2.14]{KPX} to differentiate the corresponding Banach module norm $|.|_{[s_I,r_{I,0}]}$ and the operator norm	$\left\|.\right\|_{[s_I,r_{I,0}]}$ when we consider the module over the polydisc with respect to the corresponding multi-interval (and the corresponding member in the Fr\'echet family). Then we have: I. The action of the group $\Gamma_{K_I}$ is continuous if and only if that for each $\gamma\in \Gamma_{K_I}$ and each element in the generating set we have $\lim_{\gamma\rightarrow 1} \gamma \mathbf{e}_j=\mathbf{e}_j$; The continuity of the action implies that: 
II. $\lim_{\gamma\rightarrow 1}\left\|\gamma-1\right\|_{[s_I,r_{I,0}]}=0$ for any $s_I$; 
III. Whenever we have the subgroups (with the notations in \cite[Proposition 2.2.14]{KPX}) $\Gamma_{n_I}\subset \Gamma_{K_I}$ ($\forall n_I\geq 0$), one can find separately $g_{\alpha,s_I}$ with the fact that $\left\|(\gamma'_{n_\alpha}-1)^{g_{\alpha,s_I}}\right\|_{[s_I,r_{I,0}]}< p^{-1/(p-1)}$ for each $\alpha\in I$.
\end{lemma}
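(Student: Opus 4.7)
The plan is to follow \cite[Proposition 2.2.14]{KPX} essentially verbatim on each individual Banach slice, then combine the resulting estimates using the Fr\'echet--Stein structure of $\Pi_{\mathrm{an},\infty,A}(G)$. To get started I would invoke \cref{proposition3.5} to replace the single module $M_{r_{I,0}}$ by the compatible family of Banach modules $\{M_{[s_I,r_{I,0}]}\}$ over the admissible intervals, fixing the generating set $\mathbf{e}_1,\dots,\mathbf{e}_m$ as the uniform generating set at each level guaranteed by \cref{prop1}. On each such slice the module norm $|\cdot|_{[s_I,r_{I,0}]}$ and operator norm $\|\cdot\|_{[s_I,r_{I,0}]}$ are defined intrinsically via these generators just as in KPX.

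For part (I) one direction is immediate. For the converse, writing an arbitrary element as $m = \sum a_j \mathbf{e}_j$, semilinearity gives
\begin{equation*}
\gamma m - m \;=\; \sum_{j} (\gamma a_j)(\gamma \mathbf{e}_j - \mathbf{e}_j) \;+\; \sum_{j} (\gamma a_j - a_j)\mathbf{e}_j,
\end{equation*}
and the two summands vanish in the limit using (respectively) the hypothesis $\gamma \mathbf{e}_j \to \mathbf{e}_j$ and the continuity of the $\Gamma_{K_I}$-action on the base ring $\Pi_{\mathrm{an},[s_I,r_{I,0}],I,\Pi_{\mathrm{an},\infty,A}(G)}(\pi_{K_I})$. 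The latter continuity reduces dimension-by-dimension to the standard one-dimensional assertion through the product presentation in \cref{definition1}, and reduces slice-by-slice to a Banach quotient of the Fr\'echet--Stein decomposition of $\Pi_{\mathrm{an},\infty,A}(G)$.

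For part (II), assuming continuity, I would bound $\|\gamma - 1\|_{[s_I,r_{I,0}]}$ from above by a combination of $\max_j |\gamma \mathbf{e}_j - \mathbf{e}_j|_{[s_I,r_{I,0}]}$ and $\max_j \|\gamma a_j - a_j\|_{[s_I,r_{I,0}]}$, transporting pointwise convergence on the generators to operator-norm convergence through the uniformly finite presentation. For part (III), I would exploit the explicit isomorphism from \cref{definition1} that translates $p$-fold iteration in the $\alpha$ direction into the radius rescaling $r_\alpha \mapsto p\,r_\alpha$. Since $\|\gamma'_{n_\alpha} - 1\|_{[s_I,r_{I,0}]}$ is a finite number on each slice, choosing $g_{\alpha,s_I}$ as a sufficiently high power of $p$ drives $(\gamma'_{n_\alpha})^{g_{\alpha,s_I}}$ deep into the pro-$p$ part of $\Gamma_{K_\alpha}$ so that the corresponding expansion of $(\gamma'_{n_\alpha} - 1)^{g_{\alpha,s_I}}$ lies below $p^{-1/(p-1)}$ in operator norm; commutativity of the $\Gamma_{K_\alpha}$ actions lets me run the argument one coordinate $\alpha$ at a time while freezing the others.

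The main obstacle will be the non-Banach nature of $\Pi_{\mathrm{an},\infty,A}(G)$, which prevents a single global operator norm from existing. I would address this by fixing one level $\Pi_{\mathrm{an},\infty,A}(G)_k$ in the Fr\'echet--Stein presentation at a time: on each such level the KPX arguments apply with only cosmetic modifications, and the compatibility of generators and of the $\Gamma_{K_I}$-action across the transition maps, guaranteed by \cref{prop2.9}, ensures that the three assertions survive passage to the inverse limit. A secondary technical point for (III) is keeping the various $\alpha$-components of $s_I$ correctly coupled under the rescaling, which I would handle by working one coordinate at a time using commutativity of the actions in different directions.
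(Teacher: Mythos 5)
Your treatment of parts I and II essentially matches the paper's: both use the semilinear Leibniz decomposition $(\gamma-1)(a_j \mathbf{e}_j) = \gamma(a_j)\,(\gamma-1)(\mathbf{e}_j) + ((\gamma-1)a_j)\,\mathbf{e}_j$, reduce coordinate by coordinate over $\alpha \in I$ and to the case $K_\alpha=\mathbb{Q}_p$ by induction, and reduce slice by slice through the Fr\'echet--Stein presentation $\Pi_{\mathrm{an},\infty,A}(G) = \varprojlim_k \Pi_{\mathrm{an},\infty,A}(G)_k$. One small mismatch: you reach for \cref{proposition3.5}, which is about Frobenius modules versus Frobenius bundles, whereas the present lemma assumes only a semilinear $\Gamma_{K_I}$-action with no Frobenius structure; the reduction you actually need is just the Fr\'echet--Stein one, which you also describe.

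Part III has a real gap. The assertion that choosing $g_{\alpha,s_I}$ a high power of $p$ drives $(\gamma'_{n_\alpha})^{g_{\alpha,s_I}}$ deep into the pro-$p$ part and that this alone forces $\left\|(\gamma'_{n_\alpha}-1)^{g_{\alpha,s_I}}\right\|_{[s_I,r_{I,0}]} < p^{-1/(p-1)}$ does not follow. If $\left\|\gamma'_{n_\alpha}-1\right\|_{[s_I,r_{I,0}]}$ exceeds $1$ --- which there is no a priori reason to exclude for a module norm built from arbitrary generators $\mathbf{e}_j$ --- then raising the power makes the estimate worse, not better, and the congruence $(\gamma-1)^{p^m} \equiv \gamma^{p^m}-1 \pmod p$ only becomes useful once one has controlled $\|\gamma'_{n_\alpha}\|$, i.e.\ once the module norm is invariant, or comparable to an invariant norm, under a small subgroup $\Gamma_{n_\alpha}$. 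This is precisely the step the paper carries out following KPX~2.2.14: it first establishes invariance of $|.|_{[s_I,r_{I,0}]}$ on the base ring under $\Gamma_{n_\alpha}$ via the explicit relations $\gamma_\alpha(\pi_\alpha)-\pi_\alpha = f(\pi_\alpha)\bigl((1+\pi_\alpha)^{ap^k}-1\bigr)$ together with the estimate $|\pi_1^{i_1}\cdots ((1+\pi_\alpha)^{ap^k}-1)\cdots\pi_I^{i_I}|_{[s_I,r_{I,0}]} \le |\pi_1^{i_1}\cdots \pi_\alpha\cdots\pi_I^{i_I}|_{[s_I,r_{I,0}]}$, then forms the averaged module norm $|.|^{-}_{[s_I,r_{I,0}]} = [\Gamma_{n_\alpha}:\Gamma_{n'_\alpha}]^{-1}\sum_{\widetilde{\gamma}_\alpha} |.|_{[s_I,r_{I,0}]}\circ\widetilde{\gamma}_\alpha$, which is equivalent to the original up to constants $C^{\pm 1}$ and for which $\left\|\gamma'_{n'_\alpha}-1\right\|^{-}$ can be made small by continuity. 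Passing back through the comparison $C^{-1}|.|^{-} \le |.| \le C|.|^{-}$ and raising to the power $p^{n'_\alpha-n_\alpha}$ then gives the bound on $\left\|(\gamma'_{n_\alpha}-1)^{g_{\alpha,s_I}}\right\|_{[s_I,r_{I,0}]}$. Your proposal omits the norm-invariance and averaging ingredient, and without it the operator-norm estimate in III is not justified.
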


\begin{proof}
First we need to work over some affinoid domain with respect to the Fr\'echet-Stein algebra in the coefficient, but this will be reduced to the corresponding affinoid situation. We then follow the strategy of \cite[Proposition 2.2.14]{KPX}, where we just need to consider each group in the product $\Gamma_{K_I}$. The first statement could be derived in the same way as in \cite[Proposition 2.2.14]{KPX}. For the second statement therefore we assume the limit identity in I, then prove the corresponding results on the identity in II. As in the proof in \cite[Proposition 2.2.14]{KPX}, for I and II, we reduce ourselves to the situation where the base field is $\mathbb{Q}_p$ for each $K_\alpha$ from the induction properties, and correspondingly we reduce us to the situation where all the uniformizers $\pi_I$ are just same as a product taking the form of $\pi^I$ as considered in the proof in \cite[Proposition 2.2.14]{KPX}. First is the following observation on the Newton-Leibniz style formula for each decomposition of elements with respect to given basis $\sum r_i e_i$ and for any $\gamma\in \Gamma_{K_I}$:
\begin{displaymath}
(\gamma-1)(r_i e_i)=\gamma(r_i)	(\gamma-1)(e_i)+(\gamma-1)r_i e_i.
\end{displaymath}
It suffices to look at the corresponding last term. We then further reduce ourselves to the situation where the module is just free of rank one over the base ring, which directly implies it suffices to consider the element in the ring taking the general form of $\pi_1^{n_1}...\pi_I^{n_I}$. Since we consider further the group element $\gamma_\alpha$ separately for each $\alpha\in I$, the result follows from then directly observation that $\gamma_\alpha(\pi_\beta)-\pi_\beta=f(\pi_\beta)((1+\pi_\beta)^{ap^k}-1)$ as well as $\gamma_\alpha(\pi_\beta^{-1})-\pi_\beta^{-1}=f'(\pi_\beta)((1+\pi_\beta)^{ap^k}-1)$ ($\alpha=\beta$ otherwise we have trivial action) where $f'$ actually depends on $k$ but could be controlled by some $g'$ which does not depend. Then from this for each $\beta,\alpha$ one gets the result. For the third statement we again argue by separately considering the corresponding group $\Gamma_\alpha$ where $\alpha\in I$. By first stage of reduction, we reduce ourselves to the situation where the base fields are all $\mathbb{Q}_p$ by induction and the uniformizer are all $\pi$ for each $\alpha \in I$. Since for each $\alpha$, as in \cite[Proposition 2.2.14]{KPX} the Banach module norm $|.|_{[s_I,r_{I,0}]}$ is invariant under the action of $\Gamma_{n_\alpha}$ for some $n_\alpha$. Note that this is based on the corresponding observation that:
\begin{align}
\gamma_\alpha(\pi^{i_1}_1...\pi_\alpha...\pi^{i_I}_I)-\pi^{i_1}_1...\pi_\alpha...\pi^{i_I}_I=\pi^{i_1}_1...f(\pi_\alpha)((1+\pi_\alpha)^{ap^k}-1)...\pi^{i_I}_I\\
\gamma_\alpha(\pi^{i_1}_1...\pi^{-1}_\alpha...\pi^{i_I}_I)-\pi^{i_1}_1...\pi_\alpha^{-1}...\pi^{i_I}_I=\pi^{i_1}_1...f'(\pi_\alpha)((1+\pi_\alpha)^{ap^k}-1)...\pi^{i_I}_I,	
\end{align}
and note that $|.|_{[s_I,r_{I,0}]}(\pi^{i_1}_1...((1+\pi_\alpha)^{ap^k}-1)...\pi^{i_I}_I)\leq |.|_{[s_I,r_{I,0}]}(\pi^{i_1}_1...\pi_\alpha...\pi^{i_I}_I)$. Then we consider the corresponding average which is invariant as in \cite[Proposition 2.2.14]{KPX} for some $n_\alpha'\geq n_\alpha$:
\begin{displaymath}
|.|^{-}_{[s_I,r_{I,0}]}:=\frac{1}{[\Gamma_{n_\alpha}:\Gamma_{n_\alpha}']}\sum_{\widetilde{\gamma}_\alpha\in \Gamma_{n_\alpha}/\Gamma_{n_\alpha}'}|.|_{[s_I,r_{I,0}]}\circ \widetilde{\gamma}_\alpha	
\end{displaymath}
which is equivalent to the original one:
\begin{displaymath}
C^{-1}|.|^{-}_{[s_I,r_{I,0}]}\leq |.|_{[s_I,r_{I,0}]} \leq C|.|^{-}_{[s_I,r_{I,0}]}	
\end{displaymath}
for some positive number $C>0$. First for the average norm we have that by taking suitable $n_\alpha'$ one could get $\left\|\gamma'_{n_\alpha'}-1\right\|^{-}_{[s_I,r_{I,0}]}$ is small enough, then take the corresponding $p^{n_\alpha'-n_\alpha}$-power of the element $\gamma'_{n_\alpha}-1$ one could further make this term smaller than $p^{-1/(p-1)}$ under the corresponding operator norm, namely the term $(\gamma'_{n_\alpha'}-1)^{p^{n_\alpha'-n_\alpha}}$ as in \cite[Proposition 2.2.14]{KPX}. Then by the inequality above between the original norms and the average ones, we have that by taking suitable power $g_{s_\alpha}$ one could make $\left\|(\gamma'_{n_\alpha}-1)^{g_{s_\alpha}}\right\|_{[s_I,r_{I,0}]}$ smaller than $p^{-1/(p-1)}$. To be more precise by raising the power $g_{n_\alpha}$ one will have $\left\|(\gamma'_{n_\alpha}-1)^{g_{s_\alpha}}\right\|_{[s_I,r_{I,0}]}\leq C\left\|(\gamma'_{n_\alpha}-1)^{g_{s_\alpha}}\right\|_{[s_I,r_{I,0}]}^{-}\leq CC_{g_{s_\alpha}}$ where $C_{g_{s_\alpha}}$ could be made sufficiently small along this process.
\end{proof}

\newpage

\section{Der Endlichkeitssatz}

\subsection{Finiteness through $p$-adic Functional Analaysis} \label{section4.1}

\noindent In this section we are going to study the cohomology of our relative Frobenius modules defined in the previous sections. We start from a $(\varphi_I,\Gamma_I)$-module over $\Pi_{\mathrm{an},\mathrm{con},I,A}(\pi_{K_I})$ as well a $(\psi_I,\Gamma_I)$-module over $\Pi_{\mathrm{an},\mathrm{con},I,A}(\pi_{K_I})$. We are going to use $M$ to denote such a module. First we are going to use the notation $\triangle_I$ to denote the $p$-torsion part of $\Gamma_{I}$ in the product form, which will be used in the definitions of the corresponding cohomologies as below.

\begin{assumption} \label{assumption1}
We choose to consider the story in which $I$ consists of two elements. We note that actually everything could be carried over to more general setting. We will then use the notation $\{1,2\}$ to denote our set which indicates the corresponding Frobenius actions and Lie group actions with respect to each index.	
\end{assumption}

\begin{definition} \mbox{\bf{(After KPX \cite[2.3]{KPX})}}
For any $(\varphi_I,\Gamma_I)$-module $M$ over $\Pi_{\mathrm{an},\mathrm{con},I,A}(\pi_{K_I})$ we define the complex $C^\bullet_{\Gamma_I}(M)$ of $M$ to be the total complex of the following complex through the induction:
\[
\xymatrix@R+0pc@C+0pc{
[ C^\bullet_{\Gamma_{I\backslash\{I\}}}(M) \ar[r]^{\Gamma_I} \ar[r] \ar[r]  & C^\bullet_{\Gamma_I\backslash\{I\}}(M)]
.}
\]
Then we define the corresponding double complex $C^\bullet_{\varphi_I}C^\bullet_{\Gamma_I}(M)$ again by taking the corresponding totalization of the following complex through induction:
\[
\xymatrix@R+0pc@C+0pc{
[C^\bullet_{\varphi_{I\backslash\{I\}}}C^\bullet_{\Gamma_{I}}(M) \ar[r]^{\varphi_I} \ar[r] \ar[r]  & C^\bullet_{\varphi_{I\backslash\{I\}}}C^\bullet_{\Gamma_I}(M)]
.}
\]
Then we define the complex $C^\bullet_{\varphi_I,\Gamma_I}(M)$ to be the totalization of the double complex defined above, which is called the complex of a $(\varphi_I,\Gamma_I)$-module $M$. Similarly we define the corresponding complex $C^\bullet_{\psi_I,\Gamma_I}(M)$ to be the totalization of the double complex define  d in the same way as above by replacing $\varphi_I$ by then the operators $\psi_I$. For later comparison we also need the corresponding $\psi_I$-cohomology, which is to say the complex $C^\bullet_{\psi_I}(M)$ which could be defined by the totalization of the following complex through induction:
\[
\xymatrix@R+0pc@C+0pc{
[ C^\bullet_{\psi_{I\backslash\{I\}}}(M) \ar[r]^{\psi_I} \ar[r] \ar[r]  & C^\bullet_{\psi_I\backslash\{I\}}(M)]
.}
\]
\end{definition}

\indent One can actually compare the two total complexes defined above in the following way, as in the situation where $I$ is a singleton. First for the $(\varphi_I,\Gamma_I)$-complex we have the following initial double complex:
\[
\xymatrix@R+6pc@C+0pc{
 &0\ar[d] \ar[d] \ar[d]  &0 \ar[d] \ar[d] \ar[d]  & 0 \ar[d] \ar[d] \ar[d] \\
0\ar[r] \ar[r] \ar[r] &M^{\triangle_I}\ar[d]^{(\gamma_1-1,\gamma_2-1)} \ar[d] \ar[d] \ar[r]^{(\varphi_1-1,\varphi_2-1)} \ar[r] \ar[r] &M^{\triangle_I}\oplus M^{\triangle_I}\ar[d] \ar[d] \ar[d]\ar[r]^{(\varphi_2-1)+(1-\varphi_1)} \ar[r] \ar[r]  & M^{\triangle_I}\ar[d] \ar[d] \ar[d] \ar[r] \ar[r] \ar[r] &0\\
0\ar[r] \ar[r] \ar[r] &M^{\triangle_I}\oplus M^{\triangle_I}\ar[d]^{(\gamma_2-1)+(1-\gamma_1)} \ar[d] \ar[d] \ar[r] \ar[r] \ar[r] &M^{\triangle_I}\oplus M^{\triangle_I}\oplus M^{\triangle_I}\oplus M^{\triangle_I}\ar[d] \ar[d] \ar[d]\ar[r] \ar[r] \ar[r]  & M^{\triangle_I}\oplus M^{\triangle_I}\ar[d] \ar[d] \ar[d] \ar[r] \ar[r] \ar[r] &0\\
0\ar[r] \ar[r] \ar[r] &M^{\triangle_I}\ar[d] \ar[d] \ar[d] \ar[r] \ar[r] \ar[r] &M^{\triangle_I}\oplus M^{\triangle_I}\ar[d] \ar[d] \ar[d]\ar[r] \ar[r] \ar[r]  & M^{\triangle_I}\ar[d] \ar[d] \ar[d] \ar[r] \ar[r] \ar[r] &0\\
&0 &0  & 0
.}
\]
Here we followed the corresponding convention as in \cite{KPX}. And then correspondingly we have the following double complex for the corresponding $(\psi_I,\Gamma_I)$-module:
\begin{center}
\[
\xymatrix@R+6pc@C+0pc{
 &0\ar[d] \ar[d] \ar[d]  &0 \ar[d] \ar[d] \ar[d]  & 0 \ar[d] \ar[d] \ar[d] \\
0\ar[r] \ar[r] \ar[r] &M^{\triangle_I}\ar[d]^{(\gamma_1-1,\gamma_2-1)} \ar[d] \ar[d] \ar[r]^{(\psi_1-1,\psi_2-1)} \ar[r] \ar[r] &M^{\triangle_I}\oplus M^{\triangle_I}\ar[d] \ar[d] \ar[d]\ar[r]^{(\psi_2-1)+(1-\psi_1)} \ar[r] \ar[r]  & M^{\triangle_I}\ar[d] \ar[d] \ar[d] \ar[r] \ar[r] \ar[r] &0\\
0\ar[r] \ar[r] \ar[r] &M^{\triangle_I}\oplus M^{\triangle_I}\ar[d]^{(\gamma_2-1)+(1-\gamma_1)} \ar[d] \ar[d] \ar[r] \ar[r] \ar[r] &M^{\triangle_I}\oplus M^{\triangle_I}\oplus M^{\triangle_I}\oplus M^{\triangle_I}\ar[d] \ar[d] \ar[d]\ar[r] \ar[r] \ar[r]  & M^{\triangle_I}\oplus M^{\triangle_I}\ar[d] \ar[d] \ar[d] \ar[r] \ar[r] \ar[r] &0\\
0\ar[r] \ar[r] \ar[r] &M^{\triangle_I}\ar[d] \ar[d] \ar[d] \ar[r] \ar[r] \ar[r] &M^{\triangle_I}\oplus M^{\triangle_I}\ar[d] \ar[d] \ar[d]\ar[r] \ar[r] \ar[r]  & M^{\triangle_I}\ar[d] \ar[d] \ar[d] \ar[r] \ar[r] \ar[r] &0\\
&0 &0  & 0
.}
\]
\end{center}

\indent In our situation in order to relate the two double complexes we consider the corresponding map induced from the following maps for the two first lines of two big complexes where one complex is given as above, while the other is the following one:

\begin{center}
\[
\xymatrix@R+6pc@C+0pc{
 &0\ar[d] \ar[d] \ar[d]  &0 \ar[d] \ar[d] \ar[d]  & 0 \ar[d] \ar[d] \ar[d] \\
0\ar[r] \ar[r] \ar[r] &M^{\triangle_I}\ar[d]^{(\gamma_1-1,\gamma_2-1)} \ar[d] \ar[d] \ar[r]^{(\varphi_1-1,\psi_2-1)} \ar[r] \ar[r] &M^{\triangle_I}\oplus M^{\triangle_I}\ar[d] \ar[d] \ar[d]\ar[r]^{(\psi_2-1)+(1-\varphi_1)} \ar[r] \ar[r]  & M^{\triangle_I}\ar[d] \ar[d] \ar[d] \ar[r] \ar[r] \ar[r] &0\\
0\ar[r] \ar[r] \ar[r] &M^{\triangle_I}\oplus M^{\triangle_I}\ar[d]^{(\gamma_2-1)+(1-\gamma_1)} \ar[d] \ar[d] \ar[r] \ar[r] \ar[r] &M^{\triangle_I}\oplus M^{\triangle_I}\oplus M^{\triangle_I}\oplus M^{\triangle_I}\ar[d] \ar[d] \ar[d]\ar[r] \ar[r] \ar[r]  & M^{\triangle_I}\oplus M^{\triangle_I}\ar[d] \ar[d] \ar[d] \ar[r] \ar[r] \ar[r] &0\\
0\ar[r] \ar[r] \ar[r] &M^{\triangle_I}\ar[d] \ar[d] \ar[d] \ar[r] \ar[r] \ar[r] &M^{\triangle_I}\oplus M^{\triangle_I}\ar[d] \ar[d] \ar[d]\ar[r] \ar[r] \ar[r]  & M^{\triangle_I}\ar[d] \ar[d] \ar[d] \ar[r] \ar[r] \ar[r] &0\\
&0 &0  & 0
}
\]
\end{center}

 (which is a direct generalization of the corresponding morphisms in one dimensional situation in \cite[Definition 2.3.3]{KPX}):

\[
\xymatrix@R+5pc@C+4pc{
0\ar[r] \ar[r] \ar[r] &M^{\triangle_I}\ar[d]^{\mathrm{Id}} \ar[d] \ar[d] \ar[r]^{(\varphi_1-1,\gamma_1-1)} \ar[r] \ar[r] &M^{\triangle_I}\oplus M^{\triangle_I}\ar[d]^{(-\psi_1,\mathrm{Id})} \ar[d] \ar[d]\ar[r]^{(\gamma_1-1)+(1-\varphi_1)} \ar[r] \ar[r]  & M^{\triangle_I}\ar[d]^{-\psi_1} \ar[d] \ar[d] \ar[r] \ar[r] \ar[r] &0\\
0\ar[r] \ar[r] \ar[r] &M^{\triangle_I} \ar[r]^{(\psi_1-1,\gamma_1-1)} \ar[r] \ar[r] &M^{\triangle_I}\oplus M^{\triangle_I}\ar[r]^{(\gamma_1-1)+(1-\psi_1)} \ar[r] \ar[r]  & M^{\triangle_I} \ar[r] \ar[r] \ar[r] &0
.}
\]

We will denote the corresponding morphism in between the two total complex by:

\[
\xymatrix@R+0pc@C+0pc{
\Psi_1: C^\bullet_{\varphi_1,\psi_2,\Gamma_1,\Gamma_2}(M) \ar[r] \ar[r] \ar[r]  & C^\bullet_{\psi_1,\psi_2,\Gamma_1,\Gamma_2}(M)
.}
\]

\begin{proposition} \mbox{\bf{(After KPX \cite[Proposition 2.3.6,Lemma 3.1.2]{KPX})}}
The morphism  
\[
\xymatrix@R+0pc@C+0pc{
\Psi_1': C^\bullet_{\varphi_1,\Gamma_1}(M) \ar[r] \ar[r] \ar[r]  & C^\bullet_{\psi_1,\Gamma_1}(M)
}
\]
is a quasi-isomorphism. This will imply that $\Psi_1$ is a quasi-isomorphism.	
\end{proposition}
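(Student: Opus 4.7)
The plan is to reduce both assertions to the one-variable case treated in \cite[Proposition~2.3.6, Lemma~3.1.2]{KPX}. For $\Psi_1'$ itself, I would transcribe the KPX argument essentially verbatim, viewing $\Pi_{\mathrm{an},\mathrm{con},I,A}(\pi_{K_I})$ as a one-variable Robba ring in the variable $\pi_{K_1}$, but now with the ``coefficient ring'' enlarged to include the remaining $|I|-1$ coordinates as parameters. The substance of KPX's argument is that the operator identity $\psi_1\circ\varphi_1=\mathrm{id}$, together with the decomposition of $M$ as a direct sum of $\varphi_1(M)$ and a $\psi_1$-complement, produces an explicit contracting homotopy which, combined with the continuous semilinear $\Gamma_{K_1}$-action, realizes $\Psi_1'$ as a quasi-isomorphism. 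The point requiring verification is that the partial Frobenius $\varphi_1$ commutes with $\varphi_2$ and with the full $\Gamma_{K_2}$-action, so that the resulting homotopy is genuinely a morphism of complexes in the second-direction data; this is built into \cref{definition1} and the definition of a $(\varphi_I,\Gamma_I)$-module.

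To deduce the statement for $\Psi_1$, I would view both $C^\bullet_{\varphi_1,\psi_2,\Gamma_1,\Gamma_2}(M)$ and $C^\bullet_{\psi_1,\psi_2,\Gamma_1,\Gamma_2}(M)$ as totalizations of bicomplexes, with the horizontal direction given by the one-variable Herr-style complex in the index-$1$ data $(\varphi_1\text{ or }\psi_1,\Gamma_{K_1})$ and the vertical direction given by the one-variable complex in the index-$2$ data $(\psi_2,\Gamma_{K_2})$. The commutativity of the Frobenii across the two indices, and of the two $\Gamma$-actions, is precisely what makes these diagrams genuine bicomplexes; indeed inspection of the two displayed double complexes above shows that $\Psi_1$ acts rowwise as $\Psi_1'$ (tensored with the second-direction coefficients) and as the identity along the vertical differentials. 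I would then invoke the standard principle that a morphism of bounded bicomplexes which is rowwise a quasi-isomorphism induces a quasi-isomorphism on totalizations. This can be seen either through the first-page comparison of the row-filtration spectral sequence, or more elementarily by an induction on the vertical length using the mapping-cone long exact sequence; since the vertical complex has length at most $2$, this second approach reduces to a short diagram chase.

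The main obstacle I anticipate is not the bicomplex formalism but rather the transcription step: while KPX's contracting homotopy for $\Psi_1'$ is algebraically transparent, one must check that it remains continuous and well-defined over $\Pi_{\mathrm{an},\mathrm{con},I,A}(\pi_{K_I})$ uniformly as the second-variable coordinates range over their annulus. This uniformity is ultimately guaranteed by the finite projectivity hypothesis on $M$ together with the compatibility of $\psi_1$ with the Banach norms $\|\cdot\|_{[s_I,r_I]}$ established in the preceding lemmas, in particular the control on $(\gamma-1)$-operators in the norms $\|\cdot\|_{[s_I,r_{I,0}]}$. Once this continuity is in hand, the argument is essentially formal, and the identification of $\Psi_1$ as the totalization of a rowwise quasi-isomorphism delivers the conclusion.
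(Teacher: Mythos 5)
Your overall architecture is reasonable — reduce to a one‑variable statement, then transport the result along the bicomplex/totalization formalism to get the conclusion for $\Psi_1$ — and your bicomplex reduction for the second claim is exactly right (and indeed more explicit than what the paper offers, since the paper treats that implication as immediate). However, your account of the one‑variable step itself is imprecise in a way that blurs the real content of the proof.

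What the paper actually proves (following \cite[Prop.~2.3.6, Lemma~3.1.2]{KPX}) is not a homotopy coming from $\psi_1\varphi_1=\mathrm{id}$. That identity only yields \emph{surjectivity} of $\Psi_1'$. The kernel of $\Psi_1'$ is then identified as the $\Gamma_1$‑complex of $M^{\triangle_I,\psi_1=0}$, and the whole proposition reduces to the non‑trivial claim that $\gamma_1-1$ acts invertibly on $M^{\triangle_I,\psi_1=0}$. This invertibility is the crux, and it is precisely where the multi‑variable nature of the problem shows up. The paper establishes it by decomposing $M_{r_{I,0}}^{\triangle,\psi_1=0}$, after base change to a small multi‑interval, into pieces of the form $(1+\pi_1)^{i_1}\varphi_1^{n_1}\varphi_2^{n_2}M_{[s_I,r_I]}^{\triangle}$ — note the presence of $\varphi_2^{n_2}$ — and then writes down an explicit geometric‑series inverse for $\gamma_{n_1}-1$ on each piece. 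Your proposal glosses over this lemma entirely.

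Your ``enlarge the coefficient ring'' framing is also more delicate than you acknowledge. Treating $\Pi_{\mathrm{an},\mathrm{con},I,A}(\pi_{K_I})$ as a one‑variable Robba ring in $\pi_{K_1}$ over the coefficient ring containing the second‑index data would make that coefficient ring a Fréchet--Stein (LF) algebra rather than an affinoid Banach algebra, and the KPX machinery you want to cite verbatim is set up over Banach $A$. You flag this as a ``uniformity/continuity'' concern, but you do not resolve it. The paper avoids the issue altogether by never leaving the multi‑interval Banach world: it uses $\varphi_2$ to shift both coordinates simultaneously into a fixed multi‑interval where the model $M_{[s_I,r_I]}$ is Banach, and runs the KPX geometric series argument there directly. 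If you want to keep your reduction‑to‑one‑variable framing, you would have to argue section by section over multi‑intervals (so that the ``coefficients'' in the $\pi_{K_2}$‑direction are Banach at each stage), and then patch; but at that point you are essentially reproducing the paper's multi‑variable decomposition anyway.
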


\begin{proof}
First the problem is now reduced to the checking of the injectivity of the morphism $\Psi_1'$ defined above. Note now that the kernel of the above morphism is now just the corresponding complex which consists of the invariance taking the form of $M^{\triangle_I,\psi_1=0}$, so it suffices to prove that the operator $\gamma_1-1$ in our situation bas the corresponding property of being invertible. Therefore we will follow the argument in \cite[Proposition 2.3.6]{KPX}, which will be just a direct generalization of the corresponding one dimensional result in \cite[Proposition 2.3.6]{KPX}. First we consider the reduction of the proof to the situation where all the finite extensions $K_I$ over $\mathbb{Q}_p$ are all just the fields $\mathbb{Q}_p$, which could be checked through the induction. Then we assume that we have a corresponding $\Pi_{\mathrm{an},r_{I,0},I,A}(\pi^I)$-module $M_{r_{I,0}}^{\triangle_{\mathbb{Q}_p}}$ and the corresponding invariance $M_{r_{I,0}}^{\triangle_{\mathbb{Q}_p},\psi_1=0}$.
Then we consider the suitable multi integers $n_I\geq 1$ which gives rise to (for suitable multi radii $s_I$ and $r_I$ smaller than the corresponding original multi radii $r_{I,0}$) the corresponding decomposition of the section 
\begin{displaymath}
M_{r_{I,0}}^{\triangle_{\mathbb{Q}_p},\psi_1=0}\otimes_{\Pi_{\mathrm{an},r_{I,0},I,A}(\pi^I)} \Pi_{\mathrm{an},[s_I/p^{n_I},r_I/p^{n_I}],I,A}(\pi^I)	
\end{displaymath}
into the corresponding summation of the corresponding components taking the form of:
\begin{displaymath}
(1+\pi_1)^{i_1}\varphi_1^{n_1}\varphi_2^{n_2}M_{[s_I,r_I]}^{\triangle_{\mathbb{Q}_p}},	
\end{displaymath}
for suitable integers $i_1$ in our generalized situation. Then it now suffices to show that for each $n_I$, the action of the corresponding operators $\gamma_{n_I}-1$ are invertible over each member in the decomposition as above, which is to say the action of each $\gamma_\alpha-1$ is invertible over each $(1+\pi_1)\varphi_1^{n_1}M_{[s_I,r_I]}^{\triangle_{\mathbb{Q}_p}}$, which implies that the $A[\gamma_{n_I}]$-module structure could be promoted by continuation to the corresponding $\Pi_{\mathrm{an},r_{I,0},I,A}(\Gamma_{n_I})$-module structure. Now we look at each member in the decomposition as above, and do the standard computation as in \cite[Lemma 3.1.2]{KPX} as below (which is generalization since we are working with higher dimensional situation) for each $m$ in each member participating in the decomposition as above:
\begin{align}
(\gamma_{n_1}-1)&(1+\pi_1)\varphi_1^{n_1}\varphi_2^{n_2}(m)\\
&=(\gamma_{n_1}-1)(1+\pi_1)\varphi_1^{n_1}\varphi_2^{n_2}(m)\\
&=(1+\pi_1)^{p^{n_1+1}}\varphi_1^{n_1}\varphi_2^{n_2}\gamma_{n_1}(m)-(1+\pi_1)\varphi_1^{n_1}\varphi_2^{n_2}(m)\\
&=(1+\pi_1)((1+\pi_1)^{p^{n_1}}\varphi_1^{n_1}\varphi_2^{n_2}\gamma_{n_1}-\varphi_1^{n_1}\varphi_2^{n_2})(m)\\
&=(1+\pi_1)\varphi_1^{n_1}((1+\pi_1)\varphi_2^{n_2}\gamma_{n_1}-\varphi_2^{n_2})(m)\\
&=(1+\pi_1)\varphi_1^{n_1}\varphi_2^{n_2}\pi_1(1+(1+\pi_1)/\pi_1(\gamma_{n_1}-1))(m)
\end{align}
which implies that the inverse could be taken to be:
\begin{align}
[(1+\pi_1)\varphi_1^{n_1}&\varphi_2^{n_2}\pi_1(1+(1+\pi_1)/\pi_1(\gamma_{n_1}-1))]^{-1}(m)	\\
&=\frac{1}{(1+\pi_1)\varphi_1^{n_1}\varphi_2^{n_2}\pi_1}\cdot \frac{1}{1+(1+\pi_1)/\pi_1(\gamma_{n_1}-1)}m\\
&=\frac{1}{(1+\pi_1)\varphi_1^{n_1}\varphi_2^{n_2}\pi_1}\cdot \sum_{k\geq 0}(-1)^k((1+\pi_1)/\pi_1(\gamma_{n_1}-1))^k m.
\end{align}
\end{proof}

\indent Now in the same fashion one can define the corresponding morphism:
\[
\xymatrix@R+0pc@C+0pc{
\Psi_2: C^\bullet_{\varphi_1,\varphi_2,\Gamma_I}(M) \ar[r] \ar[r] \ar[r]  & C^\bullet_{\varphi_1,\psi_2,\Gamma_I}(M)
}
\]

\begin{lemma}
This morphism $\Psi_2$ is also a quasi-isomorphism. Therefore by composition we have the following quasi-isomorphism:
\[
\xymatrix@R+0pc@C+0pc{
\Psi_I: C^\bullet_{\varphi_1,\varphi_2,\Gamma_I}(M) \ar[r] \ar[r] \ar[r]  & C^\bullet_{\psi_1,\psi_2,\Gamma_I}(M).
}
\]

\end{lemma}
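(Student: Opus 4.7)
The plan is to mirror the argument for $\Psi_1$ with the roles of the two indices interchanged. Define $\Psi_2$ by the same recipe used to build $\Psi_1$, but applied to the second-coordinate line of the double complex: take the identity map in degree zero and $-\psi_2$ in degree one along the $\varphi_2 \to \psi_2$ direction, while leaving the first-coordinate complex and the $\Gamma_I$-direction untouched. By the standard long exact sequence of a mapping cone, it suffices to show that the mapping cone of $\Psi_2$ is acyclic, and by the commuting $\varphi_1,\Gamma_I$ complex structure this reduces (via the usual spectral sequence, since all actions pairwise commute) to showing that the operator $\gamma_2 - 1$ is invertible on $M^{\triangle_I,\,\psi_2=0}$.

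The main calculation is then exactly symmetric to the one in the proof of the previous proposition. First, reduce by induction to the case where each $K_\alpha = \mathbb{Q}_p$ so that the relevant uniformiser is the common $\pi$. Next, for suitable multiradii $s_I \leq r_I$ chosen small enough compared with the base radii $r_{I,0}$, obtain a decomposition of the section
\[
M_{r_{I,0}}^{\triangle_I,\,\psi_2=0} \otimes_{\Pi_{\mathrm{an},r_{I,0},I,A}(\pi^I)} \Pi_{\mathrm{an},[s_I/p^{n_I},r_I/p^{n_I}],I,A}(\pi^I)
\]
into a direct sum of pieces of the form $(1+\pi_2)^{i_2}\,\varphi_1^{n_1}\varphi_2^{n_2} M_{[s_I,r_I]}^{\triangle_I}$, for suitable integers $i_2$ not divisible by $p$, obtained by swapping the roles of the first and second variables in the decomposition used for $\Psi_1$. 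On each such summand the same semilinear identity
\[
(\gamma_{n_2}-1)\bigl((1+\pi_2)^{i_2}\varphi_1^{n_1}\varphi_2^{n_2}(m)\bigr) = (1+\pi_2)^{i_2}\varphi_1^{n_1}\varphi_2^{n_2}\,\pi_2\Bigl(1 + \tfrac{1+\pi_2}{\pi_2}(\gamma_{n_2}-1)\Bigr)(m)
\]
holds; the factor in parentheses is invertible as a geometric series $\sum_{k\geq 0} (-1)^k\bigl(\tfrac{1+\pi_2}{\pi_2}(\gamma_{n_2}-1)\bigr)^k$, which converges because, by the third part of the continuity lemma from the previous section, $\|\gamma_{n_2}-1\|_{[s_I,r_I]}$ can be made arbitrarily small after raising to a suitable power, and the factor $\pi_2$ is invertible on the piece in question. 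This yields the required invertibility and hence the acyclicity of the cone.

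Once $\Psi_2$ is a quasi-isomorphism, the composition $\Psi_I := \Psi_1 \circ \Psi_2$ is also a quasi-isomorphism by the previous proposition. The main obstacle, such as it is, is bookkeeping rather than content: one must verify that the commutation of $\varphi_1$ with the $(\varphi_2,\psi_2,\gamma_2)$-direction is preserved by the homotopy-equivalence data, so that the spectral-sequence reduction really factors through the single-variable calculation. This is exactly the standing pairwise-commutativity assumption on the multi-Hodge--Frobenius structure, so no new ideas beyond those already used for $\Psi_1$ are required.
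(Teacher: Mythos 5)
Your proposal is correct and is exactly the argument the paper has in mind: the paper gives no written proof for this lemma, implicitly invoking symmetry with the preceding proposition for $\Psi_1$, and your plan supplies precisely that symmetric argument — interchange the roles of the two indices, reduce to the invertibility of $\gamma_2-1$ on $M^{\triangle_I,\psi_2=0}$ via the decomposition into $(1+\pi_2)^{i_2}\varphi_1^{n_1}\varphi_2^{n_2}$-translates, and run the geometric-series computation with $\gamma_{n_2}$ in place of $\gamma_{n_1}$.
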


\indent Then now we are at the position to consider the finiteness of the corresponding involved cohomologies. In our situation we consider the following derived categories:

\begin{definition} \mbox{\bf{(After KPX \cite[Notation 4.1.2]{KPX})}} \label{definition4.5}
Now we consider the following derived categories. The first one is the sub-derived category consisting of all the objects in $\mathbb{D}(A)$ which are quasi-isomorphic to those bounded above complexes of finite projective modules over the ring $A$. We denote this category (which is defined in the same way as in \cite[Notation 4.1.2]{KPX}) by $\mathbb{D}^-_\mathrm{perf}(A)$. Over the larger ring $\Pi_{\mathrm{an},\infty,I,A}(\Gamma_{K_I})$ we also have the sub-derived category of $\mathbb{D}(\Pi_{\mathrm{an},\infty,I,A}(\Gamma_{K_I}))$ consisting of all those objects in $\mathbb{D}(\Pi_{\mathrm{an},\infty,I,A}(\Gamma_{K_I}))$ which are quasi-isomorphic to those bounded above complexes of finite projective modules now over the ring $\Pi_{\mathrm{an},\infty,I,A}(\Gamma_{K_I})$. We denote this by $\mathbb{D}^-_\mathrm{perf}(\Pi_{\mathrm{an},\infty,I,A}(\Gamma_{K_I}))$. Similar we define the bounded derived categories $\mathbb{D}^\flat_\mathrm{perf}(A)$, $\mathbb{D}^\flat_\mathrm{perf}(\Pi_{\mathrm{an},\infty,I,A}(\Gamma_{K_I}))$. We then use the notation as following $D_\mathrm{perf}(A)$, $D_\mathrm{perf}(\Pi_{\mathrm{an},\infty,I,A}(\Gamma_{K_I}))$, $D^-_\mathrm{perf}(A)$, $D^-_\mathrm{perf}(\Pi_{\mathrm{an},\infty,I,A}(\Gamma_{K_I}))$, as well as $D^\flat_\mathrm{perf}(A)$, $D^\flat_\mathrm{perf}(\Pi_{\mathrm{an},\infty,I,A}(\Gamma_{K_I}))$ to denote the $(\infty, 1)$-enhancement of these derived categories.
\end{definition}

\indent Then we will investigate the complexes attached to any finite projective $(\varphi_I,\Gamma_I)$-module $M$ over $\Pi_{\mathrm{an},\mathrm{con},I,A}(\pi_{K_I})$:
\begin{displaymath}
C^\bullet_{\varphi_I,\Gamma_I}(M), C^\bullet_{\psi_I,\Gamma_I}(M),C^\bullet_{\psi_I}(M),	
\end{displaymath}
which are living inside the corresponding derived categories:
\begin{displaymath}
\mathbb{D}(A),\mathbb{D}(A),\mathbb{D}(\Pi_{\mathrm{an},\infty,I,A}(\Gamma_{K_I})).	
\end{displaymath}

\begin{proposition} \mbox{\bf{(After KPX \cite[Proposition 4.2.3]{KPX})}} \label{proposition4.4}
Now for any multiplicative character $\eta_I$ of the group $\Gamma_{K_I}$, we have the following isomorphism in the corresponding derived categories defined above:
\begin{align}
C^\bullet_{\psi_I}(M)\otimes^{\mathbb{L}}\Pi_{\mathrm{an},\infty,I,A}(\Gamma_{K_I})/\mathfrak{m}_\eta \Pi_{\mathrm{an},\infty,I,A}(\Gamma_{K_I})&\overset{\sim}{\longrightarrow} C^\bullet_{\psi_I,\Gamma_I}(M\otimes M_{\eta^{-1}})\\
	&\overset{\sim}{\longrightarrow}C^\bullet_{\varphi_I,\Gamma_I}(M\otimes M_{\eta^{-1}}).
\end{align}	
\end{proposition}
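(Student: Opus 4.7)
The plan is to split the stated chain of quasi-isomorphisms into two pieces and tackle each in turn. The rightmost quasi-isomorphism
\[
C^\bullet_{\psi_I,\Gamma_I}(M\otimes M_{\eta^{-1}})\overset{\sim}{\longrightarrow} C^\bullet_{\varphi_I,\Gamma_I}(M\otimes M_{\eta^{-1}})
\]
is immediate: since $M\otimes M_{\eta^{-1}}$ is again a finite projective $(\varphi_I,\Gamma_I)$-module over $\Pi_{\mathrm{an},\mathrm{con},I,A}(\pi_{K_I})$, the comparison morphism $\Psi_I$ constructed in the previous lemma applies verbatim. So the real content lies in the first quasi-isomorphism.

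For the first quasi-isomorphism, my plan is to run a Koszul-type argument. First I would note that the mutually commuting $\Gamma_{K_I}$-action on $M$ promotes $C^\bullet_{\psi_I}(M)$ into a complex of $\Pi_{\mathrm{an},\infty,I,A}(\Gamma_{K_I})$-modules; the promotion is legitimate thanks to the operator-norm estimates for $(\gamma_{n_\alpha}-1)^{g_{\alpha,s_I}}$ established in the previous lemma, which exhibit the continuity needed to extend group-ring actions to actions of the distribution algebra. Next I would build an explicit finite free resolution of $A\cong \Pi_{\mathrm{an},\infty,I,A}(\Gamma_{K_I})/\mathfrak{m}_\eta$ by the Koszul complex on the elements $(\gamma_\alpha-\eta(\gamma_\alpha))_{\alpha\in I}$, where $\gamma_\alpha$ is a topological generator of the free part of $\Gamma_{K_\alpha}$. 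Tensoring this Koszul resolution with $C^\bullet_{\psi_I}(M)$ then produces a double complex whose totalization computes the derived tensor product on the left-hand side.

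To identify that totalization with $C^\bullet_{\psi_I,\Gamma_I}(M\otimes M_{\eta^{-1}})$, I would observe that the operator $\gamma_\alpha-\eta(\gamma_\alpha)$ acting on $M$ corresponds, under the canonical isomorphism of underlying $A$-modules $M\simeq M\otimes M_{\eta^{-1}}$, to the operator $\gamma_\alpha-1$ on $M\otimes M_{\eta^{-1}}$. The torsion part $\triangle_I$ of $\Gamma_{K_I}$ is absorbed by first passing to $M^{\triangle_I}$, which is legitimate since $|\triangle_I|$ is invertible in $A$; the Koszul resolution is then formed over the distribution algebra of the free part and tensored against the semisimple group algebra of $\triangle_I$.

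The main obstacle I expect is verifying that the proposed Koszul complex really resolves $A$ in the Fréchet--Stein setting of $\Pi_{\mathrm{an},\infty,I,A}(\Gamma_{K_I})$: the $|I|$ elements $\gamma_\alpha-\eta(\gamma_\alpha)$ must form a Koszul-regular sequence inside this non-noetherian distribution algebra. My plan is to verify this level-by-level along the Fréchet--Stein presentation $\Pi_{\mathrm{an},\infty,I,A}(\Gamma_{K_I})=\varprojlim_k\Pi_{\mathrm{an},\infty,I,A}(\Gamma_{K_I})_k$, where on each affinoid slice regularity follows from the torsion-freeness and commutativity of the $\gamma_\alpha$ together with a standard weight/degree argument on the associated power-series parameters, and then to pass to the inverse limit using the $R^1\varprojlim$-vanishing recorded in \cref{prop2.9}(III). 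Once the resolution property is in hand, the remaining compatibility between the totalization of the double complex and $C^\bullet_{\psi_I,\Gamma_I}(M\otimes M_{\eta^{-1}})$ reduces to the same term-by-term matching that appears in the one-dimensional case of \cite[Proposition 4.2.3]{KPX}, performed here one coordinate $\alpha\in I$ at a time.
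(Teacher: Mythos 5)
The paper records this proposition without any proof: it is simply asserted as the multidimensional analogue of \cite[Proposition 4.2.3]{KPX} and the argument is not reproduced. Your Koszul-resolution plan is the natural generalization of the one-variable argument underlying the cited KPX result, and I believe it is correct. Splitting the chain into two arrows and obtaining the $\psi$-to-$\varphi$ comparison from the paper's $\Psi_I$-lemma is right; the identification of $\gamma_\alpha-\eta(\gamma_\alpha)$ on $M$ with a unit multiple of $\gamma_\alpha-1$ on $M\otimes M_{\eta^{-1}}$, together with absorbing the torsion $\triangle_I$ by semisimplicity (using $p>2$), are exactly the observations needed to match the totalization with $C^\bullet_{\psi_I,\Gamma_I}(M\otimes M_{\eta^{-1}})$.

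The one step you should spell out more carefully is the Koszul-regularity claim. After the change of variables $T_\alpha:=\gamma_\alpha-1$, each Banach slice $\Pi_{\mathrm{an},\infty,I,A}(\Gamma_{K_I})_k$ becomes a relative Tate algebra over $A$ and the elements become $T_\alpha-c_\alpha$ with $c_\alpha:=\eta(\gamma_\alpha)-1$. On a slice whose radii are too small to contain $c_\alpha$, the element $T_\alpha-c_\alpha$ is a unit and the Koszul complex is nullhomotopic; on slices containing all the $c_\alpha$, the sequence is regular by the usual leading-coefficient argument in Tate algebras. So on each slice the Koszul complex either resolves $A$ or is acyclic, and the coadmissibility/derived-inverse-limit machinery (the analogue of \cref{prop2.9}(III) stated for the distribution algebra rather than the Robba ring) then glues these into a resolution of $A$ globally over $\Pi_{\mathrm{an},\infty,I,A}(\Gamma_{K_I})$. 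With that clarification, your proof is complete and is the argument the paper implicitly delegates to KPX.
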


\indent Then we are going to study the finiteness of the cohomologies defined above in more details. The methods in \cite{KPX} requires some deep results on the slope filtrations for modules over the one-dimensional Robba rings, where allows \cite{KPX} to reduce the proof in some sense to the \'etale situation. It looks like we have no chance to apply the ideas in \cite{KPX}. Here we adopt the ideas in some development from Kedlaya-Liu in \cite{KL3}. To be more precise we are going to apply some ideas in \cite{KL3} around the application of some $p$-adic functional analysis.


\begin{proposition} \label{proposition4.7}
With the notation and setting up as above we have $h^\bullet(C_{\psi_I}(M))$ are coadmissible over $\Pi_{\mathrm{an},\infty,I,A}(\Gamma_{K_I})$.	
\end{proposition}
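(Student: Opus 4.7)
The plan is to establish coadmissibility by exploiting the Fr\'echet-Stein presentation of $\Pi_{\mathrm{an},\infty,I,A}(\Gamma_{K_I})$ and reducing the problem to uniform finite generation on each affinoid slice. I would first fix a decreasing family of multi-radii $\{r_{I,k}\}_{k\geq 1}$ producing the presentation
\begin{displaymath}
\Pi_{\mathrm{an},\infty,I,A}(\Gamma_{K_I}) \;=\; \varprojlim_{k}\, \Pi_{\mathrm{an},[r_{I,k},r_{I,0}],I,A}(\Gamma_{K_I}),
\end{displaymath}
and then for each $k$ consider the slice complex $C^\bullet_{\psi_I}(M)_k$ obtained by completing $C^\bullet_{\psi_I}(M)$ along the $k$-th affinoid piece. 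By the Schneider-Teitelbaum characterization recalled in \cref{prop2.9}, it suffices to show that the cohomology of each slice is finitely generated over $\Pi_{\mathrm{an},[r_{I,k},r_{I,0}],I,A}(\Gamma_{K_I})$ and that these cohomologies form a compatible inverse system under the flat transition maps.

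The core analytic input, following the Kedlaya-Liu program in \cite{KL3}, is that $\psi_\alpha$ acts as a completely continuous (compact) operator on each Banach piece $M_{[s_I,r_I]}$: indeed, $\psi_\alpha = p^{-1}\varphi_\alpha^{-1}\circ \mathrm{Tr}$, which on sections over a multi-interval factors through a strict inclusion of Banach modules induced by the $\varphi_\alpha$-contraction in the $\alpha$-direction. Thus each $\psi_\alpha - 1$ is the sum of a compact operator and an isomorphism, so its kernel and cokernel over a given affinoid slice are finitely generated. I would combine this across all $\alpha \in I$ by an induction on $|I|$, using the double-complex structure: at each inductive step, fix all but one direction, apply the compactness in that direction, and then propagate finiteness through the totalization by the bounded complex calculus of coadmissible modules afforded by \cref{prop2.9}.(IV) and (VI).

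To promote slice-level finiteness to genuine coadmissibility, I would verify the base-change compatibility of cohomology along the Fr\'echet-Stein transitions. Here \cref{proposition4.4} is the crucial specialization tool: after further base change to each $\Pi_{\mathrm{an},\infty,I,A}(\Gamma_{K_I})/\mathfrak{m}_{\eta_I}$, the $C^\bullet_{\psi_I}$-complex becomes the $(\varphi_I,\Gamma_I)$-Herr complex of a twisted $(\varphi_I,\Gamma_I)$-module, whose fibers are controlled. Combined with the flatness of the Fr\'echet-Stein transition maps and a standard spectral-sequence argument, the isomorphisms at maximal ideals, together with the slice-level finiteness, produce the required compatibility $H^i(C^\bullet_{\psi_I}(M)_{k+1}) \otimes_{k+1} \Pi_{\mathrm{an},[r_{I,k},r_{I,0}],I,A}(\Gamma_{K_I}) \overset{\sim}{\to} H^i(C^\bullet_{\psi_I}(M)_k)$.

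The main obstacle is the uniform finite generation step in the multi-dimensional setting: unlike in \cite{KPX}, there is no slope filtration available to reduce the argument to the \'etale situation, so I must run the compactness argument in all $|I|$ directions simultaneously, tracking how the $\psi_I$-compactness interacts with the semilinear $\Gamma_{K_I}$-action on every multi-interval and ensuring that the compactness estimates do not degenerate as the inner radii shrink. The technical heart will be a careful multi-variate version of the Banach open mapping and Schauder-style compactness estimates tailored to the rings $\Pi_{\mathrm{an},[s_I,r_I],I,A}(\pi_{K_I})$, mirroring \cite{KL3} but in the product framework established in Section 2.
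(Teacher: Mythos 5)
Your high-level instinct is right—this is a Kiehl/Kedlaya-Liu completely-continuous-operator argument over Fr\'echet-Stein slices—but the mechanism you describe for the compactness is not the one that actually produces finiteness, and this is where the proposal has a real gap. You claim that ``each $\psi_\alpha-1$ is the sum of a compact operator and an isomorphism, so its kernel and cokernel over a given affinoid slice are finitely generated.'' That would be a Fredholm-type argument on a single Banach module, but $\psi_\alpha-1$ is not Fredholm on any $M_{[s_I,r_I]}$: its kernel there is infinite-rank over $A$. The finite generation you need is over the distribution algebra slice $\Pi_{\mathrm{an},[t_I,\infty],I,A}(\Gamma_{K_I})$, and it is not visible from a pointwise Fredholm estimate on $\psi_\alpha-1$. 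In the paper's argument the roles are arranged differently: one first fixes the $\Gamma$-slice $[t_I,\infty]$ and treats $\Pi_{\mathrm{an},[t_I,\infty],I,A}(\Gamma_{K_I})$ as the Banach \emph{coefficient} ring, then regards $M$ as defined over the relative Robba ring $\Pi_{\mathrm{an},[s_I,r_{I,0}],I,\Pi_{\mathrm{an},[t_I,\infty],I,A}(\Gamma_{K_I})}(\pi_{K_I})$. The completely continuous maps are the \emph{restriction} maps between nested $\pi$-multi-intervals $[s'_I,r'_I]\subset[s''_I,r''_I]$, and the finiteness comes from Kiehl's Satz 5.2 (equivalently \cite[Lemma 1.10]{KL3}) applied to a morphism of complexes which is simultaneously a degreewise completely continuous map and a quasi-isomorphism.

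That last requirement—that the $C^\bullet_{\psi_I}$-complexes over different $\pi$-multi-intervals, and over the full Robba ring versus any multi-interval, compute the same cohomology—is the intermediate step your proposal never addresses, and it is exactly where most of the work in the actual proof lies. The paper establishes this by passing through intermediate multi-intervals one index at a time and identifying the cohomology in each single direction with a Yoneda $\mathrm{Ext}$ over a twisted polynomial ring in $\psi_1$, invoking the comparison results of \cite{KP1} and \cite{KL2} for $\varphi_I$-bundles. Without a replacement for this step, you cannot feed anything into the Kiehl machine, because you do not yet have a completely continuous \emph{quasi-isomorphism}. Finally, your plan to obtain the Fr\'echet-Stein compatibility by specializing at maximal ideals via \cref{proposition4.4} is a detour the paper does not take and is not obviously sufficient: knowing the fibers at maximal ideals does not by itself give the base-change isomorphisms over the affinoid transition maps; in the paper these follow directly because the finiteness argument is carried out uniformly over each $[t_I,\infty]$ slice, and the interval-independence statements already give the required compatibility.
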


\begin{proof}
This can be proved in the \'etale setting after \cite{PZ19} and \cite{CKZ18} in the absolute situation, by using Galois cohomology and the Hochschild-Serre spectral sequences directly. We adopt the Cartan-Serre method involving the completely continuous maps within the $p$-adic functional analysis. By simultaneous induction to the base cases for the field $\mathbb{Q}_p$ from all $K_I$, we can now assume that we are considering the situation where all $K_I$ are $\mathbb{Q}_p$. Then for the complex $C^\bullet_{\psi_I}(M)$, by definition one can then regard this complex as the double complex taking the following form (after considering the  totalization):
\[
\xymatrix@R+5pc@C+5pc{
M \ar[r]^{\psi_1-1} \ar[r] \ar[r] \ar[d]^{\psi_2-1} \ar[d] \ar[d] & M \ar[d] \ar[d] \ar[d]\\
M \ar[r] \ar[r] \ar[r]  & M. 
}
\]
\indent The boundedness of the complex can be observed from the definition and the ideas in \cite[Theorem 3.3, Theorem 3.5]{KL3}. For the finiteness we construct suitable completely continuous maps comparing the quasi-isomorphic complexes for $M$ as below. First we consider some model $M_{r_I}$ over some $\Pi_{\mathrm{an},r_{I,0},I,A}(\pi_{K_I})$. Then as in \cite[Theorem 3.3, Theorem 3.5]{KL3} we can regard the complex for $M_{r_I}$ as equivalently the  complex over $\Pi_{\mathrm{an},r_{I,0},I,\Pi_{\mathrm{an},\infty,I,A}(\Gamma_{K_I})}(\pi_{K_I})$. We are then reduced to considering the complex over $\Pi_{\mathrm{an},r_{I_0},I,\Pi_{\mathrm{an},[t_I,\infty],I,A}(\Gamma_{K_I})}(\pi_{K_I})$ for some $t_I$.\\
\indent Then we consider the complex for some section over some multi interval taking the form of $[s_I,r_I]$ which gives rise to the quasi-isomorphic complex now over 
\begin{center}
$\Pi_{\mathrm{an},[s_I,r_{I_0}],I,\Pi_{\mathrm{an},[t_I,\infty],I,A}(\Gamma_{K_I})}(\pi_{K_I})$ 
\end{center}
which gives rise to the complex:
\[
\xymatrix@R+5pc@C+2pc{
M_{[s_1,r_1]\times[s_2,r_2]} \ar[r]^{\psi_1-1} \ar[r] \ar[r] \ar[d]^{\psi_2-1} \ar[d] \ar[d] & M_{[ps_1,r_1]\times[s_2,r_2]} \ar[d] \ar[d] \ar[d]\\
M_{[s_1,r_1]\times[ps_2,r_2]}\ar[r] \ar[r] \ar[r]  & M_{[ps_1,r_1]\times[ps_2,r_2]}. 
}
\]
Here the quasi-isomorphism among different boxes (e.g. multi intervals) is not actually trivial. For the serious argument after the knowledge on the equivalence between the categories of $\varphi_I$-modules over the full Robba ring, $\varphi_I$-Frobenius bundles and $\varphi_I$-modules over the Robba ring with respect to some single multi interval, which can be realized in our context by using \cref{proposition3.5} as in \cite[Lemma 2.10]{KP1}, we refer to \cite[Lemma 5.2]{KP1} and \cite[Proposition 5.12, Theorem 5.7.11]{KL2} for the comparison on the cohomology groups. \\
\indent To show that the cohomology groups do not change when we switch from a multi-interval $[s_1',r_1']\times [s_2',r_2']$ to a multi-interval $[s_1'',r_1'']\times [s_2'',r_2'']$, we choose to look at an intermediate multi-interval $[s_1'',r_1'']\times [s_2',r_2']$ namely we consider the following in order rings over multi-intervals:
\begin{align}
&\Pi_{\mathrm{an},[s'_1,r'_1]\times [s'_2,r'_2],I,\Pi_{\mathrm{an},[t_I,\infty],I,A}(\Gamma_{K_I})}(\pi_{K_I})\\
&\Pi_{\mathrm{an},[s''_1,r''_1]\times [s'_2,r'_2],I,\Pi_{\mathrm{an},[t_I,\infty],I,A}(\Gamma_{K_I})}(\pi_{K_I})\\
&\Pi_{\mathrm{an},[s''_1,r''_1]\times [s''_2,r''_2],I,\Pi_{\mathrm{an},[t_I,\infty],I,A}(\Gamma_{K_I})}(\pi_{K_I})	
\end{align}
and compare the first two and the last two. To compare with respect to the first two multi-intervals we first consider the $\psi_1$-complex, the cohomology groups can be regarded as the Yoneda extension groups over the twisted polynomial ring in variable of $\psi_1$ with coefficients in the Robba rings, which shows that the $\psi_I$-cohomology groups do not change when we change the boxes. To be more precise, we form the spectral sequences for the two intervals in comparisom coming from the overall double complexes:
\begin{align}
&\mathrm{E}^{i,j}_{*,\Pi_{\mathrm{an},[s'_1,r'_1]\times [s'_2,r'_2],I,\Pi_{\mathrm{an},[t_I,\infty],I,A}(\Gamma_{K_I})}(\pi_{K_I})},\\
&\mathrm{E}^{k,l}_{*,\Pi_{\mathrm{an},[s''_1,r''_1]\times [s'_2,r'_2],I,\Pi_{\mathrm{an},[t_I,\infty],I,A}(\Gamma_{K_I})}(\pi_{K_I})}.
\end{align}
Here one considers the $0$-th page and the $1$-th page respectively, one can see directly that the isomorphism beyond the $1$-page:
\begin{align}
\mathrm{E}^{i,j}_{q,\Pi_{\mathrm{an},[s'_1,r'_1]\times [s'_2,r'_2],I,\Pi_{\mathrm{an},[t_I,\infty],I,A}(\Gamma_{K_I})}(\pi_{K_I})}\overset{\sim}{\longrightarrow}\mathrm{E}^{k,l}_{q,\Pi_{\mathrm{an},[s''_1,r''_1]\times [s'_2,r'_2],I,\Pi_{\mathrm{an},[t_I,\infty],I,A}(\Gamma_{K_I})}(\pi_{K_I})},~q>1.
\end{align}
This implies that:
\begin{align}
\mathrm{E}^{i,j}_{\infty,\Pi_{\mathrm{an},[s'_1,r'_1]\times [s'_2,r'_2],I,\Pi_{\mathrm{an},[t_I,\infty],I,A}(\Gamma_{K_I})}(\pi_{K_I})}\overset{\sim}{\longrightarrow}\mathrm{E}^{k,l}_{\infty,\Pi_{\mathrm{an},[s''_1,r''_1]\times [s'_2,r'_2],I,\Pi_{\mathrm{an},[t_I,\infty],I,A}(\Gamma_{K_I})}(\pi_{K_I})}.
\end{align}
This implies that the cohomologies of the original double complexes are actually preserved when we change the boxes. Then for the remaining similar comparisons in this articles in the following discussion we argue in the parallel way. \\
\indent To show that the cohomology groups do not change when we switch from Robba ring over some fixed multi-radii $r_{1}\times r_{2}$ to some interval $[s_1,r_1]\times [s_2,r_2]$, we consider the  intermediate Robba ring:
\begin{align}
\bigcap_{s_2>0} \Pi_{\mathrm{an},[s_1,r_1]\times [s_2,r_{2}],I,\Pi_{\mathrm{an},[t_I,\infty],I,A}(\Gamma_{K_I})}(\pi_{K_I})	
\end{align}
and then in order the following rings:
\begin{align}
&\bigcap_{s_1>0,s_2>0} \Pi_{\mathrm{an},[s_1,r_{1}]\times [s_2,r_{2}],I,\Pi_{\mathrm{an},[t_I,\infty],I,A}(\Gamma_{K_I})}(\pi_{K_I}),\\
&\bigcap_{s_2>0} \Pi_{\mathrm{an},[s_1,r_{1}]\times [s_2,r_{2}],I,\Pi_{\mathrm{an},[t_I,\infty],I,A}(\Gamma_{K_I})}(\pi_{K_I}),\\
& \Pi_{\mathrm{an},[s_1,r_{1}]\times [s_2,r_{2}],I,\Pi_{\mathrm{an},[t_I,\infty],I,A}(\Gamma_{K_I})}(\pi_{K_I}).
\end{align}
Note that finite projective modules over these are equivalent when carrying the  $\varphi_I$-structure. To compare with respect to the first two situations, we first consider the $\psi_1$-cohomology complex, then by using the Yoneda extension we have the  $\psi_1$-cohomology groups are the same, which implies further that the  $\psi_I$-cohomology groups are the same. To be more precise, we form the spectral sequences for the two intervals in comparisom coming from the overall double complexes:
\begin{align}
&\mathrm{E}^{i,j}_{*,\bigcap_{s_1>0,s_2>0} \Pi_{\mathrm{an},[s_1,r_{1}]\times [s_2,r_{2}],I,\Pi_{\mathrm{an},[t_I,\infty],I,A}(\Gamma_{K_I})}(\pi_{K_I})},\\
&\mathrm{E}^{k,l}_{*,\bigcap_{s_2>0} \Pi_{\mathrm{an},[s_1,r_{1}]\times [s_2,r_{2}],I,\Pi_{\mathrm{an},[t_I,\infty],I,A}(\Gamma_{K_I})}(\pi_{K_I})}.
\end{align}
Here one considers the $0$-th page and the $1$-th page respectively, one can see directly that the isomorphism beyond the $1$-page:
\begin{align}
\mathrm{E}^{i,j}_{q,\bigcap_{s_1>0,s_2>0} \Pi_{\mathrm{an},[s_1,r_{1}]\times [s_2,r_{2}],I,\Pi_{\mathrm{an},[t_I,\infty],I,A}(\Gamma_{K_I})}(\pi_{K_I})}\overset{\sim}{\longrightarrow}\\
\mathrm{E}^{k,l}_{q,\bigcap_{s_2>0} \Pi_{\mathrm{an},[s_1,r_{1}]\times [s_2,r_{2}],I,\Pi_{\mathrm{an},[t_I,\infty],I,A}(\Gamma_{K_I})}(\pi_{K_I})},~q>1.
\end{align}
This implies that:
\begin{align}
\mathrm{E}^{i,j}_{\infty,\bigcap_{s_1>0,s_2>0} \Pi_{\mathrm{an},[s_1,r_{1}]\times [s_2,r_{2}],I,\Pi_{\mathrm{an},[t_I,\infty],I,A}(\Gamma_{K_I})}(\pi_{K_I})}\overset{\sim}{\longrightarrow}\\
\mathrm{E}^{k,l}_{\infty,\bigcap_{s_2>0} \Pi_{\mathrm{an},[s_1,r_{1}]\times [s_2,r_{2}],I,\Pi_{\mathrm{an},[t_I,\infty],I,A}(\Gamma_{K_I})}(\pi_{K_I})}.
\end{align}
This implies that the cohomologies of the original double complexes are actually preserved when we change the boxes. Then for the remaining comparison we argue in the parallel way.
Then to finish, we choose two well-located multi intervals such that one embeds into another (which exists trivially in our situation which is a little bit different from the situation in \cite{KL3}):
\[
\xymatrix@R+6pc@C+3pc{
 0\ar[r] \ar[r] \ar[r]  &C^\bullet_{\psi_{I\backslash \{I\}}}(M_{...,[s'_I,r'_I]}) \ar[r]^{\psi_I-1} \ar[r] \ar[r] \ar[d]\ar[d]\ar[d]  & C^\bullet_{\psi_{I\backslash \{I\}}}(M_{...,[ps'_I,r'_I]})\ar[r] \ar[r] \ar[r] \ar[d]\ar[d]\ar[d]  &0\\
 0\ar[r] \ar[r] \ar[r]  &C^\bullet_{\psi_{I\backslash \{I\}}}(M_{...,[s''_I,r''_I]}) \ar[r]^{\psi_I-1} \ar[r] \ar[r]  & C^\bullet_{\psi_{I\backslash \{I\}}}(M_{...,[ps''_I,r''_I]})\ar[r] \ar[r] \ar[r] &0
,}
\]
or more explicit:
\[
\xymatrix@R+7pc@C+3pc{
M_{[s'_1,r'_1]\times[s'_2,r'_2]}\ar[r] \ar[r] \ar[r] \ar[d] \ar[d] \ar[d] & M_{[ps'_1,r'_1]\times[s'_2,r'_2]}\oplus M_{[s'_1,r'_1]\times[ps'_2,r'_2]}\ar[r] \ar[r] \ar[r]\ar[d] \ar[d] \ar[d] & M_{[ps'_1,r'_1]\times[ps'_2,r'_2]} \ar[d]\ar[d]\ar[d] \\
M_{[s''_1,r''_1]\times[s''_2,r''_2]} \ar[r] \ar[r] \ar[r]  & M_{[ps''_1,r''_1]\times[s''_2,r''_2]}\oplus M_{[s''_1,r''_1]\times[ps''_2,r''_2]}  \ar[r] \ar[r] \ar[r] & M_{[ps''_1,r''_1]\times[ps''_2,r''_2]}.\\
}
\]
Here the maps between objects over different multi intervals are induced exactly from the  inclusion of multi intervals. Then by the fundamental lemma for in \cite[Satz 5.2]{Kie1} and \cite[Lemma 1.10]{KL3} on the continuous completely maps  we get the finiteness when we restrict to $[t_I,\infty]$, since the map according to the embeddings of the multi intervals are quasi-isomorphisms. For the definition of continuous completely maps, we refer to \cite[Definition 1.3]{KL3}. The vertical homomorphisms here are actually continuous completely since we are actually embedding a smaller rigid multi-annuli to a larger one. Then the cohomology groups can be arranged to be a coherent sheaf over each section with respect to each such $[t_I,\infty]$, then the coadmissibility follows.
\end{proof}

\begin{proposition}
Based on the previous proposition we have:
\begin{displaymath}
C_{\psi_I}(M)\in  \mathbb{D}^-_\mathrm{perf}(\Pi_{\mathrm{an},\infty,I,A}(\Gamma_{K_I})),	
\end{displaymath}
for $A=\mathbb{Q}_p$.	
\end{proposition}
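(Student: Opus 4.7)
The plan is to upgrade the coadmissibility established in Proposition \ref{proposition4.7} to strict perfectness by working slice-by-slice against the Fr\'echet-Stein presentation $\Pi := \Pi_{\mathrm{an},\infty,I,\mathbb{Q}_p}(\Gamma_{K_I}) = \varprojlim_k \Pi_k$. First I note that $C^\bullet_{\psi_I}(M)$ is supported in degrees $0,\ldots,|I|$ by construction of the Koszul-type totalization, so it is automatically bounded above and the only remaining content is to exhibit a quasi-isomorphism with a bounded above complex of finite projective $\Pi$-modules.

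For each slice $\Pi_k$ I would invoke the Cartan-Serre completely-continuous argument already used in the proof of Proposition \ref{proposition4.7} (specifically Kiehl's Satz 5.2 and \cite[Lemma 1.10]{KL3}) to exhibit $h^\bullet(C^\bullet_{\psi_I}(M)\widehat{\otimes}_\Pi \Pi_k)$ as a coherent, hence finitely presented, module over the noetherian Banach algebra $\Pi_k$. Standard homological algebra over noetherian rings then produces a bounded above complex $P_k^\bullet$ of finite free $\Pi_k$-modules together with a quasi-isomorphism $P_k^\bullet \xrightarrow{\sim} C^\bullet_{\psi_I}(M)\widehat{\otimes}_\Pi \Pi_k$. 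Since $A=\mathbb{Q}_p$ is a field, the ranks of the $P_k^i$ can be controlled uniformly in $k$ by the rank of the cohomology at a fixed initial slice, which is essential for the subsequent limit construction.

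To assemble these slice-wise resolutions into a single complex over $\Pi$, I would proceed inductively in $k$, using the flatness and topological density of the transition maps $\Pi_{k+1}\to \Pi_k$ (from the Fr\'echet-Stein property) to lift the quasi-isomorphisms between $P_k^\bullet$ and $P_{k+1}^\bullet \otimes_{\Pi_{k+1}}\Pi_k$ to strict chain maps, then rigidifying the tower by a mapping-cylinder correction. Taking the inverse limit $P^\bullet := \varprojlim_k P_k^\bullet$ produces a bounded above complex of coadmissible $\Pi$-modules whose $k$-th slice recovers $P_k^\bullet$; by the characterization of coadmissible modules all of whose slices are uniformly finite projective (after Schneider-Teitelbaum), each $P^i$ is then finite projective over $\Pi$. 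The induced map $P^\bullet \to C^\bullet_{\psi_I}(M)$ is a quasi-isomorphism by the $R^1\varprojlim$-vanishing from \cref{prop2.9}.

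The main obstacle I anticipate is the rigidification step: each $P_k^\bullet$ is only determined up to quasi-isomorphism, while the inverse limit requires strict compatibility in the tower. I would handle this by inductively replacing $P_k^\bullet$ with a mapping-cylinder resolution against the pullback of $P_{k+1}^\bullet$, using projectivity to lift chain maps and flatness to compare them. This procedure depends crucially on the uniform bound on slice-wise ranks, which in turn uses that $A = \mathbb{Q}_p$ is a field and hence carries no extra flat dimension; without this assumption one would encounter genuine obstructions in forming the inverse limit, which matches the restriction $A=\mathbb{Q}_p$ in the proposition.
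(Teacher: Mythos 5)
Your approach is genuinely different from the paper's, and it has two gaps that the paper's shorter argument sidesteps entirely. The paper works in the category of coadmissible modules over $\Pi := \Pi_{\mathrm{an},\infty,I,\mathbb{Q}_p}(\Gamma_{K_I})$: it replaces $C^\bullet_{\psi_I}(M)$ by a quasi-isomorphic complex whose terms are coadmissible, observes that each term is projective in the coadmissible category (which can be tested slice-wise over the $\Pi_{\mathrm{an},[t_I,\infty],I,A}(\Gamma_{K_I})$), and then invokes Z\'abr\'adi's Theorem 3.10, which asserts that a projective coadmissible module over such a Fr\'echet--Stein algebra is automatically finitely generated. Finite projectivity follows at once, and no inverse limit of resolutions ever needs to be formed.

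Your proposal instead tries to build $P^\bullet = \varprojlim_k P_k^\bullet$ from slice-wise finite free resolutions, and this runs into two problems. First, the claim that the ranks of $P_k^i$ are ``controlled uniformly in $k$ by the rank of the cohomology at a fixed initial slice'' because $A=\mathbb{Q}_p$ is a field is unjustified: the resolutions $P_k^\bullet$ are merely chosen, and neither the Fr\'echet--Stein structure nor the flat base-change isomorphisms for the cohomology force a fixed choice at slice $k$ to have Betti numbers independent of $k$; a separate global bound would have to be established. Second, and more fundamentally, even granting uniform ranks and a successful rigidification, the limit $P^i = \varprojlim_k P_k^i$ is a priori only a coadmissible module all of whose slices are finite free, and the passage from there to ``$P^i$ is finite projective over $\Pi$'' is exactly the content of Z\'abr\'adi's Theorem 3.10, not a formal consequence of Schneider--Teitelbaum. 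So your argument does not avoid that theorem; it only hides it behind a misattribution. Meanwhile, the rigidification step that you flag as the main obstacle is a real one: replacing chosen resolutions inductively so that the tower becomes strict and Mittag-Leffler typically forces the ranks to grow, which undermines the uniform-bound hypothesis you are relying on. The paper's route avoids both issues by never constructing the limit of resolutions and instead arguing abstractly with projective coadmissible modules.
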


\begin{proof}
We have the corresponding coadmissibility from the previous proposition, then we work in the category of all the coadmissible modules over the ring $\Pi_{\mathrm{an},\infty,I,A}(\Gamma_{K_I})$. Then by applying the corresponding result \cite[Theorem 3.10]{Zab1} we can show that the complex is quasi-isomorphic to a corresponding complex being finite projective at each degree in the following. By base change to some $\Pi_{\mathrm{an},[t_I,\infty],I,A}(\Gamma_{K_I})$ for some $t_I$ we have that any such complex over $\Pi_{\mathrm{an},[t_I,\infty],I,A}(\Gamma_{K_I})$ is quasi-isomorphic to a complex being finitely generated at each degree. Therefore the corresponding original complex is quasi-isomorphic to a corresponding complex being coadmissible at each degree. Then in fact at each degree this latter complex is also projective (in the category of all the coadmissible modules over $\Pi_{\mathrm{an},\infty,I,A}(\Gamma_{K_I})$), since this will be further equivalent to that each section over $\Pi_{\mathrm{an},[t_I,\infty],I,A}(\Gamma_{K_I})$ is projective for some $t_I$, which is the case in our situation. Then we are done by \cite[Theorem 3.10]{Zab1} which implies that coadmissible and projective modules are finitely generated.


\end{proof}

\indent We then give a direct proof on the finiteness of the cohomology of $(\varphi_I,\Gamma_I)$-modules:

\begin{proposition} \label{proposition4.9}
With the notations above, we have that 
\begin{displaymath}
C^\bullet_{\varphi_I,\Gamma_{I}}(M)\in \mathbb{D}_\mathrm{perf}^-(A), C^\bullet_{\psi_I,\Gamma_{I}}(M)\in \mathbb{D}_\mathrm{perf}^-(A).
\end{displaymath}
\end{proposition}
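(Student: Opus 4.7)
The plan is to reduce the two assertions to a single one using the quasi-isomorphism $\Psi_I\colon C^\bullet_{\varphi_I,\Gamma_I}(M)\to C^\bullet_{\psi_I,\Gamma_I}(M)$ established just above, so it suffices to prove the $\psi$-version: $C^\bullet_{\psi_I,\Gamma_I}(M)\in \mathbb{D}^-_{\mathrm{perf}}(A)$. The key idea is to combine \cref{proposition4.7} with the base-change identification from \cref{proposition4.4}. Specialising that proposition to the trivial character $\eta=\mathbf{1}$, whose maximal ideal $\mathfrak{m}_{\mathbf{1}}$ is the augmentation ideal of $\Pi_{\mathrm{an},\infty,I,A}(\Gamma_{K_I})$ with quotient equal to $A$, yields
\[
C^\bullet_{\psi_I,\Gamma_I}(M)\;\simeq\;C^\bullet_{\psi_I}(M)\otimes^{\mathbb{L}}_{\Pi_{\mathrm{an},\infty,I,A}(\Gamma_{K_I})}A .
\]
So the finiteness over $A$ will follow once we promote the coadmissibility of $h^\bullet(C^\bullet_{\psi_I}(M))$ to perfectness over $A$ after derived base change along the augmentation.

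To carry this out, I would mimic the Cartan--Serre/Kiehl completely continuous map strategy of \cref{proposition4.7}, but now applied to the \emph{total} double complex including the $\Gamma_I$ directions. Concretely, fix a model $M_{r_{I,0}}$ over $\Pi_{\mathrm{an},r_{I,0},I,A}(\pi_{K_I})$; for each admissible multi-interval $[s_I,r_I]\subset(0,r_{I,0}]$ form the Koszul--type double complex whose columns are the $\psi$-type complex
\[
M_{[s_I,r_I]}\xrightarrow{(\psi_1-1,\psi_2-1)} M_{[ps_1,r_1]\times[s_2,r_2]}\oplus M_{[s_1,r_1]\times[ps_2,r_2]}\xrightarrow{} M_{[ps_I,r_I]}
\]
and whose rows are the Koszul complex on the finitely many topological generators $\gamma_\alpha-1$ of the torsion-free part of $\Gamma_{K_I}/\triangle_I$ (with a harmless preliminary $\triangle_I$-invariants step, which is a projector on a direct summand). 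As in the proof of \cref{proposition4.7}, the independence of the box $[s_I,r_I]$ is checked by comparing intermediate multi-intervals and using the Yoneda-extension reinterpretation for each $\psi_\alpha$ separately; this identifies the cohomology of $C^\bullet_{\psi_I,\Gamma_I}(M)$ with that of the box-level complex.

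Then I would choose two nested boxes $[s'_I,r'_I]\subset[s''_I,r''_I]$ so that the inclusion of the corresponding affinoids induces a quasi-isomorphism of Koszul--$\psi$ double complexes, while the downward restriction maps at each entry factor through completely continuous $A$-linear maps between Banach $A$-modules (here the $\gamma_\alpha-1$ contribution is harmless since it is $A$-linear and the Banach modules in question are orthonormalisable over $A$). Kiehl's fundamental lemma \cite[Satz 5.2]{Kie1} together with the version in \cite[Lemma 1.10]{KL3} then forces each cohomology group to be a finitely generated $A$-module. Since the complex is bounded (the Koszul directions have length $|I|$ and the $\psi_\alpha-1$ directions have length $1$ each, so the totalisation is concentrated in $[0,2|I|]$), finite generation of all cohomology over the Noetherian ring $A$, combined with flatness statements from \cref{prop2.9}, allows us to represent $C^\bullet_{\psi_I,\Gamma_I}(M)$ by a bounded-above complex of finite projective $A$-modules in the standard way, which places it in $\mathbb{D}^-_{\mathrm{perf}}(A)$. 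Transporting along $\Psi_I$ then gives the $(\varphi_I,\Gamma_I)$-statement.

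The main obstacle I expect is the simultaneous control of shrinking in both the $\psi_I$ and $\Gamma_I$ directions: one needs to ensure that the morphism between the two nested-box totalisations is genuinely a quasi-isomorphism of complexes of Banach $A$-modules with completely continuous transition maps at every entry. This is subtle because the $\gamma_\alpha-1$ operators do not preserve radii in a simple way, so the radius accounting must be interwoven with the Frobenius shifts $[s_\alpha,r_\alpha]\mapsto[ps_\alpha,r_\alpha]$; handling this rigorously is the technical heart of the argument and requires the box-independence step to be carried out for the \emph{full} $(\psi_I,\Gamma_I)$-complex rather than only for $C^\bullet_{\psi_I}(M)$.
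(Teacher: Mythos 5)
Your proposal follows essentially the same route as the paper's own proof of \cref{proposition4.9}: after reducing to the $\psi$-version via $\Psi_I$, you form the box-level total complex whose columns are the $\psi_\alpha-1$ directions and whose rows realise the $\Gamma_I$-direction (the paper writes this as $C^\bullet_{\mathrm{con}}(\Gamma_{K_I},M_{[s_I,r_I]})$, which after passing to $\triangle_I$-invariants is the Koszul complex on $\gamma_\alpha-1$), you check box-independence by intermediate multi-intervals and the Yoneda-extension reinterpretation exactly as in \cref{proposition4.7}, and you conclude from Kiehl's \cite[Satz 5.2]{Kie1} together with \cite[Lemma 1.10]{KL3} applied to two nested boxes. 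Your closing worry about $\gamma_\alpha-1$ shifting radii is unfounded: by the analogue of \cite[Proposition 2.2.14]{KPX} proved earlier, for suitable $r_{I,0}$ the $\Gamma_{K_I}$-action preserves each $M_{[s_I,r_I]}$ by bounded operators, so only the $\psi_\alpha$ directions shift boxes — precisely the configuration the paper's proof relies on.
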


\begin{proof}
This can be proved in the \'etale setting after \cite{PZ19} and \cite{CKZ18} in the absolute situation, by using Galois cohomology and the Hochschild-Serre spectral sequences directly. We use the Cartan-Serre method to prove this in the following fashion, modeled on the context of \cite{KL3}. By simultaneous induction to the base cases for the field $\mathbb{Q}_p$ from all $K_I$, we can now assume that we are considering the situation where all $K_I$ are $\mathbb{Q}_p$. We choose to prove the result for $(\psi_I,\Gamma_I)$-modules. In our situation we look at the continuous complexes coming from the  $\Gamma_I$-action, which gives us the complex $C^\bullet_{\mathrm{con}}(\Gamma_{K_I},M)$. Then the idea is to look at the following double complex of the complex $C^\bullet_{\mathrm{con}}(\Gamma_{K_I},M)$ (to be more precise we need to look at the totalization):
\[
\xymatrix@R+6pc@C+2pc{
C^\bullet_{\mathrm{con}}(\Gamma_{K_I},M_{[s_1,r_1]\times[s_2,r_2]}) \ar[r]^{\psi_1-1} \ar[r] \ar[r] \ar[d]^{\psi_2-1} \ar[d] \ar[d] & C^\bullet_{\mathrm{con}}(\Gamma_{K_I},M_{[ps_1,r_1]\times[s_2,r_2]}) \ar[d] \ar[d] \ar[d]\\
C^\bullet_{\mathrm{con}}(\Gamma_{K_I},M_{[s_1,r_1]\times[ps_2,r_2]})\ar[r] \ar[r] \ar[r]  &C^\bullet_{\mathrm{con}}(\Gamma_{K_I},M_{[ps_1,r_1]\times[ps_2,r_2]}). 
}
\]
Here the quasi-isomorphism among different boxes (e.g. multi intervals) is not actually trivial. For the serious argument after the knowledge on the equivalence between the categories of $\varphi_I$-modules over the full Robba ring, $\varphi_I$-Frobenius bundles and $\varphi_I$-modules over the Robba ring with respect to some single multi interval, which can be realized in our context by using \cref{proposition3.5} as in \cite[Lemma 2.10]{KP1}, we refer to \cite[Lemma 5.2]{KP1} and \cite[Proposition 5.12, Theorem 5.7.11]{KL2} for the comparison on the cohomology groups. \\
\indent To show that the cohomology groups do not change when we switch from a multi-interval $[s_1',r_1']\times [s_2',r_2']$ to a multi-interval $[s_1'',r_1'']\times [s_2'',r_2'']$, we choose to look at an intermediate multi-interval $[s_1'',r_1'']\times [s_2',r_2']$ namely we consider the following in order rings over multi-intervals:
\begin{align}
&\Pi_{\mathrm{an},[s'_1,r'_1]\times [s'_2,r'_2],I,A}(\pi_{K_I})\\
&\Pi_{\mathrm{an},[s''_1,r''_1]\times [s'_2,r'_2],I,A}(\pi_{K_I})\\
&\Pi_{\mathrm{an},[s''_1,r''_1]\times [s''_2,r''_2],I,A}(\pi_{K_I})	
\end{align}
and compare the first two and the last two. To compare with respect to the first two multi-intervals we first consider the $\psi_1$-complex, the cohomology groups can be regarded as the  Yoneda extension groups over the twisted polynomial ring in variable of $\psi_1$ with coefficients in the Robba rings, which shows that the $\psi_I$-cohomology groups do not change when we change the boxes. Here we perform the spectral sequential technique twice. First we consider a fixed $m$:
\[
\xymatrix@R+6pc@C+2pc{
C^m_{\mathrm{con}}(\Gamma_{K_I},M_{[s_1,r_1]\times[s_2,r_2]}) \ar[r]^{\psi_1-1} \ar[r] \ar[r] \ar[d]^{\psi_2-1} \ar[d] \ar[d] & C^m_{\mathrm{con}}(\Gamma_{K_I},M_{[ps_1,r_1]\times[s_2,r_2]}) \ar[d] \ar[d] \ar[d]\\
C^m_{\mathrm{con}}(\Gamma_{K_I},M_{[s_1,r_1]\times[ps_2,r_2]})\ar[r] \ar[r] \ar[r]  &C^m_{\mathrm{con}}(\Gamma_{K_I},M_{[ps_1,r_1]\times[ps_2,r_2]}). 
}
\]
Then we consider the corresponding spectral sequence attached to this layer $\square_m$. To be more precise, we form the spectral sequences for the two intervals in comparisom coming from the overall double complexes:
\begin{align}
&\mathrm{E}^{i,j}_{*,[s'_1,r'_1]\times [s'_2,r'_2]}(\square_m),\\
&\mathrm{E}^{k,l}_{*,[s''_1,r''_1]\times [s'_2,r'_2]}(\square_m).
\end{align}
Here one considers the $0$-th page and the $1$-th page respectively, one can see directly that the isomorphism beyond the $1$-page:
\begin{align}
\mathrm{E}^{i,j}_{q,[s'_1,r'_1]\times [s'_2,r'_2]}(\square_m)\overset{\sim}{\longrightarrow}\mathrm{E}^{k,l}_{q,[s''_1,r''_1]\times [s'_2,r'_2]}(\square_m),~q>1.
\end{align}
This implies that:
\begin{align}
\mathrm{E}^{i,j}_{\infty,[s'_1,r'_1]\times [s'_2,r'_2]}(\square_m)\overset{\sim}{\longrightarrow}\mathrm{E}^{k,l}_{\infty,[s''_1,r''_1]\times [s'_2,r'_2]}(\square_m).
\end{align}
This implies that the cohomologies of the double complexes for this layer $\square_m$ are actually preserved when we change the boxes. That is to say  the totalization of this layer $\mathbf{Total}(\square_m)$ will be quasi-isomorphic when we change boxes. Then we consider the double complex $\mathbf{Total}(\square_\bullet)$, then a second stage of the comparison of the $\infty$-pages of the spectral sequence will show eventually that the totalization $\mathbf{Total}\left(\mathbf{Total}(\square_\bullet)\right)$ of $\mathbf{Total}(\square_\bullet)$ will have the isomorphic cohomology groups when we change boxes. Then for the remaining comparison we argue in the parallel way. \\
\indent To show that the cohomology groups do not change when we switch from Robba ring over some fixed multi-radii $r_{1}\times r_{2}$ to some interval $[s_1,r_1]\times [s_2,r_2]$, we consider the intermediate Robba ring:
\begin{align}
\bigcap_{s_2>0} \Pi_{\mathrm{an},[s_1,r_1]\times [s_2,r_{2}],I,A}(\pi_{K_I})	
\end{align}
and then in order the following rings:
\begin{align}
&\Gamma_1:\bigcap_{s_1>0,s_2>0} \Pi_{\mathrm{an},[s_1,r_{1}]\times [s_2,r_{2}],I,A}(\pi_{K_I}),\\
&\Gamma_2:\bigcap_{s_2>0} \Pi_{\mathrm{an},[s_1,r_{1}]\times [s_2,r_{2}],I,A}(\pi_{K_I}),\\
&\Gamma_3: \Pi_{\mathrm{an},[s_1,r_{1}]\times [s_2,r_{2}],I,A}(\pi_{K_I}).
\end{align}
Note that finite projective modules over these are equivalent when carrying the  $\varphi_I$-structure. To compare with respect to the first two situations, we first consider the $\psi_1$-cohomology complex, then by using the  Yoneda extension we have the  $\psi_1$-cohomology groups are the same, which implies further that the  $\psi_I$-cohomology groups are the same. Here we perform the spectral sequential technique twice. First we consider a fixed $m$:
\[
\xymatrix@R+6pc@C+2pc{
C^m_{\mathrm{con}}(\Gamma_{K_I},M) \ar[r]^{\psi_1-1} \ar[r] \ar[r] \ar[d]^{\psi_2-1} \ar[d] \ar[d] & C^m_{\mathrm{con}}(\Gamma_{K_I},M) \ar[d] \ar[d] \ar[d]\\
C^m_{\mathrm{con}}(\Gamma_{K_I},M)\ar[r] \ar[r] \ar[r]  &C^m_{\mathrm{con}}(\Gamma_{K_I},M). 
}
\]
Then we consider the corresponding spectral sequence attached to this layer $\square_m$. To be more precise, we form the spectral sequences for the two intervals in comparisom coming from the overall double complexes:
\begin{align}
\mathrm{E}^{i,j}_{*,\Gamma_1}(\square_m),\\
\mathrm{E}^{k,l}_{*,\Gamma_2}(\square_m).
\end{align}
Here one considers the $0$-th page and the $1$-th page respectively, one can see directly that the isomorphism beyond the $1$-page:
\begin{align}
\mathrm{E}^{i,j}_{q,\Gamma_1}(\square_m)\overset{\sim}{\longrightarrow}\mathrm{E}^{k,l}_{q,\Gamma_2}(\square_m),~q>1.
\end{align}
This implies that:
\begin{align}
\mathrm{E}^{i,j}_{\infty,\Gamma_1}(\square_m)\overset{\sim}{\longrightarrow}\mathrm{E}^{k,l}_{\infty,\Gamma_2}(\square_m).
\end{align}
This implies that the cohomologies of the double complexes for this layer $\square_m$ are actually preserved when we change the boxes. That is to say the totalization of this layer $\mathbf{Total}(\square_m)$ will be quasi-isomorphic when we change boxes. Then we consider the double complex $\mathbf{Total}(\square_\bullet)$, then a second stage of the comparison of the $\infty$-pages of the spectral sequence will show eventually that the totalization $\mathbf{Total}\left(\mathbf{Total}(\square_\bullet)\right)$ of $\mathbf{Total}(\square_\bullet)$ will have the isomorphic cohomology groups when we change boxes. Then for the remaining comparison we argue in the parallel way.\\
\indent Then in order to apply the  fundamental lemma namely \cite[Lemma 1.10]{KL3}, we need to choose suitable intervals in our multi setting where one embeds into the other which gives rise to the following diagram:
\[
\xymatrix@R+6pc@C+0pc{
 0\ar[r] \ar[r] \ar[r]  &C^\bullet_{\psi_{I\backslash \{I\}}}(C^\bullet_{\mathrm{con}}(\Gamma_{K_I},M_{...,[s'_I,r'_I]})) \ar[r]^{\psi_I-1} \ar[r] \ar[r] \ar[d]\ar[d]\ar[d]  & C^\bullet_{\psi_{I\backslash \{I\}}}(C^\bullet_{\mathrm{con}}(\Gamma_{K_I},M_{...,[ps'_I,r'_I]}))\ar[r] \ar[r] \ar[r] \ar[d]\ar[d]\ar[d]  &0\\
 0\ar[r] \ar[r] \ar[r]  &C^\bullet_{\psi_{I\backslash \{I\}}}(C^\bullet_{\mathrm{con}}(\Gamma_{K_I},M_{...,[s''_I,r''_I]})) \ar[r]^{\psi_I-1} \ar[r] \ar[r]  & C^\bullet_{\psi_{I\backslash \{I\}}}(C^\bullet_{\mathrm{con}}(\Gamma_{K_I},M_{...,[ps''_I,r''_I]}))\ar[r] \ar[r] \ar[r] &0
.}
\]
To be more explicit this gives rise to the following diagram:
\[\small
\xymatrix@R+7pc@C+0pc{
C^\bullet_{\Gamma_I}(M_{[s'_1,r'_1]\times[s'_2,r'_2]}) \ar[r] \ar[r] \ar[r] \ar[d] \ar[d] \ar[d] & C^\bullet_{\Gamma_I}(M_{[ps'_1,r'_1]\times[s'_2,r'_2]})\oplus C^\bullet_{\Gamma_I}(M_{[s'_1,r'_1]\times[ps'_2,r'_2]})\ar[r] \ar[r] \ar[r]\ar[d] \ar[d] \ar[d] & C^\bullet_{\Gamma_I}(M_{[ps'_1,r'_1]\times[ps'_2,r'_2]})\ar[d]\ar[d]\ar[d] \\
C^\bullet_{\Gamma_I}(M_{[s''_1,r''_1]\times[s''_2,r''_2]}) \ar[r] \ar[r] \ar[r] & C^\bullet_{\Gamma_I}(M_{[ps''_1,r''_1]\times[s''_2,r''_2]})\oplus C^\bullet_{\Gamma_I}(M_{[s''_1,r''_1]\times[ps''_2,r''_2]})\ar[r] \ar[r] \ar[r] & C^\bullet_{\Gamma_I}(M_{[ps''_1,r''_1]\times[ps''_2,r''_2]}).\\
}
\]
Here the  maps between objects over different multi intervals are induced exactly from the  inclusion of multi intervals. Then we consider the  fundamental lemma \cite[Satz 5.2]{Kie1} and \cite[Lemma 1.10]{KL3} which implies the  finiteness, after one further takes the totalizations of the horizontal complexes in this final diagram. And by our basic construction the result follows. Over $A$, any bounded complex ($C^\bullet$) in $\mathbb{D}^\flat(A)$ will be automatically perfect whenever we have the perfectness of the cohomology groups $h^\bullet(C^\bullet)$ which is the case in our setting as proved above.\end{proof}

\indent Now we consider a finite projective $(\varphi_I,\Gamma_I)$-module $M$ over the ring $\Pi_{\mathrm{an},\mathrm{con},I,\Pi_{\mathrm{an},\infty,A}(G)}$, namely we have:
\begin{displaymath}
M=\varprojlim_k M_k,	
\end{displaymath}
where each object $M_k$ satisfies the corresponding framework in the above discussion.

\begin{definition} \mbox{\bf{(After KPX \cite[2.3]{KPX})}}
For any $(\varphi_I,\Gamma_I)$-module $M$ over $\Pi_{\mathrm{an},\mathrm{con},I,\Pi_{\mathrm{an},\infty,A}(G)}$ we define the complex $C^\bullet_{\Gamma_I}(M)$ of $M$ to be the total complex of the following complex through the induction:
\[
\xymatrix@R+0pc@C+0pc{
[ C^\bullet_{\Gamma_{I\backslash\{I\}}}(M) \ar[r]^{\Gamma_I} \ar[r] \ar[r]  & C^\bullet_{\Gamma_I\backslash\{I\}}(M)]
.}
\]
Then we define the corresponding double complex $C^\bullet_{\varphi_I}C^\bullet_{\Gamma_I}(M)$ again by taking the corresponding totalization of the following complex through induction:
\[
\xymatrix@R+0pc@C+0pc{
[C^\bullet_{\varphi_{I\backslash\{I\}}}C^\bullet_{\Gamma_{I}}(M) \ar[r]^{\varphi_I} \ar[r] \ar[r]  & C^\bullet_{\varphi_{I\backslash\{I\}}}C^\bullet_{\Gamma_I}(M)]
.}
\]
Then we define the complex $C^\bullet_{\varphi_I,\Gamma_I}(M)$ to be the totalization of the double complex defined above, which is called the complex of a $(\varphi_I,\Gamma_I)$-module $M$. Similarly we define the corresponding complex $C^\bullet_{\psi_I,\Gamma_I}(M)$ to be the totalization of the double complex define  d in the same way as above by replacing $\varphi_I$ by then the operators $\psi_I$. For later comparison we also need the corresponding $\psi_I$-cohomology, which is to say the complex $C^\bullet_{\psi_I}(M)$ which could be defined by the totalization of the following complex through induction:
\[
\xymatrix@R+0pc@C+0pc{
[ C^\bullet_{\psi_{I\backslash\{I\}}}(M) \ar[r]^{\psi_I} \ar[r] \ar[r]  & C^\bullet_{\psi_I\backslash\{I\}}(M)]
.}
\]
\end{definition}

\begin{proposition} 
With the notation and setting up as above we have $h^\bullet(C_{\psi_I}(M))$ are coadmissible over $\Pi_{\mathrm{an},\infty,A \widehat{\otimes} \Pi_{\mathrm{an},\infty}(\Gamma_{K_I})}(G)$. 	
\end{proposition}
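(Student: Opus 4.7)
The proof will run the Cartan-Serre/Kiehl argument of \cref{proposition4.7} one level at a time in the Fr\'echet-Stein presentation $\Pi_{\mathrm{an},\infty,A}(G) = \varprojlim_k \Pi_{\mathrm{an},\infty,A}(G)_k$, and then assemble the resulting levelwise data into a coadmissible sheaf over the doubly Fr\'echet-Stein algebra $\Pi_{\mathrm{an},\infty,A}(G) \widehat{\otimes} \Pi_{\mathrm{an},\infty}(\Gamma_{K_I})$.

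First, by simultaneous induction on $K_\alpha/\mathbb{Q}_p$ as in the proofs of \cref{proposition4.7} and \cref{proposition4.9}, I would reduce to the case $K_\alpha = \mathbb{Q}_p$ for all $\alpha \in I$. Writing $M = \varprojlim_k M_k$ with each $M_k$ a finite projective $(\varphi_I,\Gamma_I)$-module over $\Pi_{\mathrm{an},\mathrm{con},I,\Pi_{\mathrm{an},\infty,A}(G)_k}(\pi_{K_I})$, I would fix a radius $r_{I,0}$ and a model $M_{k,r_{I,0}}$ via \cref{proposition3.5}. The key observation is that the argument of \cref{proposition4.7} is completely insensitive to the precise coefficient ring, so long as the coefficient ring is of a Fr\'echet-Stein type compatible with taking completed tensor products; here $\Pi_{\mathrm{an},\infty,A}(G)_k$ plays exactly the role played by $A$ in the absolute situation.

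Second, at each level $k$ I would apply the argument of \cref{proposition4.7} verbatim, replacing the coefficient ring $A$ there by $\Pi_{\mathrm{an},\infty,A}(G)_k$. This yields the double Koszul complex in the two multi-intervals $[s_I,r_I]$ and $[t_I,\infty]$ (for the $\Gamma_{K_I}$-distribution side), the multi-interval comparison via the Yoneda extension interpretation of $\psi_\alpha$-cohomology, and the nested-pair diagram to which the Kiehl fundamental lemma \cite[Satz 5.2]{Kie1}, \cite[Lemma 1.10]{KL3} applies. The outcome is that $h^\bullet(C_{\psi_I}(M_k))$ is coadmissible over $\Pi_{\mathrm{an},\infty,I,\Pi_{\mathrm{an},\infty,A}(G)_k}(\Gamma_{K_I})$, that is, over the restriction of the product Fr\'echet-Stein algebra along the $k$-th level on the $G$-side.

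Third, I would pass to the inverse limit over $k$. The algebra $\Pi_{\mathrm{an},\infty,A}(G) \widehat{\otimes} \Pi_{\mathrm{an},\infty,I,A}(\Gamma_{K_I})$ is itself Fr\'echet-Stein with presentation obtained by taking products of the two individual quasi-Stein atlases, and the levelwise coadmissible cohomology modules constructed above form a compatible system along the transition maps (which are flat with dense image by \cref{prop2.9} and the construction of the Fr\'echet-Stein algebra). The vanishing of $R^1\varprojlim$ in the Fr\'echet-Stein formalism after Schneider-Teitelbaum \cite{ST1} (cf.\ \cref{prop2.9}.III) then ensures that the assembled limit is the global section of a coadmissible sheaf on the product quasi-Stein space. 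The main obstacle is bookkeeping the finiteness jointly across the two Fr\'echet-Stein directions: one must check that the Cartan-Serre comparison maps produced at level $k$ are compatible with the transition $\Pi_{\mathrm{an},\infty,A}(G)_{k+1} \to \Pi_{\mathrm{an},\infty,A}(G)_k$, so that the finitely generated sections glue coherently. Resolving this is essentially an iteration of the strategy of \cite[Lemma 2.10, Lemma 5.2]{KP1}, now in the multi-radius setting established in Sections 2--3 of this paper, and ultimately reduces to the flatness and density statements already at hand.
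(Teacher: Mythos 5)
Your proposal follows essentially the same route as the paper's own proof: reduce to $K_\alpha = \mathbb{Q}_p$ by induction, exploit the Fr\'echet-Stein presentation $\Pi_{\mathrm{an},\infty,A}(G) = \varprojlim_k \Pi_{\mathrm{an},\infty,A}(G)_k$ to apply \cref{proposition4.7} levelwise with $\Pi_{\mathrm{an},\infty,A}(G)_k$ in place of $A$, and then assemble the levelwise coadmissible cohomology into a coherent sheaf over the product Fr\'echet-Stein algebra. Your third step is actually somewhat more careful than the paper's terse "therefore," since you explicitly flag the compatibility of the Cartan-Serre comparison maps across the transition $\Pi_{\mathrm{an},\infty,A}(G)_{k+1} \to \Pi_{\mathrm{an},\infty,A}(G)_k$ and the role of $R^1\varprojlim$-vanishing (\cref{prop2.9}.III) in glueing the levelwise sections; the paper leaves this implicit.
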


\begin{proof}
By simultaneous induction to the base cases for the field $\mathbb{Q}_p$ from all $K_I$, we can now assume that we are considering the situation where all $K_I$ are $\mathbb{Q}_p$. Then for the complex $C^\bullet_{\psi_I}(M)$, by definition one could then regard this complex as the double complex taking the following form (after considering the corresponding totalization):
\[
\xymatrix@R+5pc@C+5pc{
M \ar[r]^{\psi_1-1} \ar[r] \ar[r] \ar[d]^{\psi_2-1} \ar[d] \ar[d] & M \ar[d] \ar[d] \ar[d]\\
M \ar[r] \ar[r] \ar[r]  & M. 
}
\]


\indent The boundedness of the complex could be observed from the definition and the corresponding ideas in \cite[Theorem 3.3, Theorem 3.5]{KL3}. For the finiteness we construct suitable completely continuous maps comparing the corresponding quasi-isomorphic complexes for $M$ as below. Write $\Pi_{\mathrm{an},\infty,A}(G)$ as the following form:
\begin{displaymath}
\Pi_{\mathrm{an},\infty,A }(G):=\varprojlim_k \Pi_{\mathrm{an},\infty,A}(G)_k,	
\end{displaymath}
as a Fr\'echet-Stein algebra. Then take some $\Pi_{\mathrm{an},\infty,A}(G)_k$ in the family with the corresponding $M_k$. By our result above for $M_k$ \cref{proposition4.7} we have that $h^\bullet(C_{\psi_I}(M_k))$ are coadmissible over $\Pi_{\mathrm{an},\infty,\Pi_{\mathrm{an},\infty,A}(G)_k}(\Gamma_{K_I})$. Therefore we have the the corresponding cohomology groups $h^\bullet(C_{\psi_I}(M))$ are actually coming from (the global sections of) some coherent sheaves over $\Pi_{\mathrm{an},\infty,\Pi_{\mathrm{an},\infty,A}(G)}(\Gamma_{K_I})$.

\end{proof}

\begin{proposition}
Based on the previous proposition we have:
\begin{displaymath}
C_{\psi_I}(M)\in  \mathbb{D}^-_\mathrm{perf}(\Pi_{\mathrm{an},\infty,A \widehat{\otimes} \Pi_{\mathrm{an},\infty}(\Gamma_{K_I})}(G)),	
\end{displaymath}
for $A=\mathbb{Q}_p$.	
\end{proposition}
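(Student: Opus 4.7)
The plan is to follow the same strategy as in the analogous perfectness statement proved for the ring $\Pi_{\mathrm{an},\infty,I,A}(\Gamma_{K_I})$ immediately after \cref{proposition4.7}, and to propagate through the Fréchet--Stein tower defining $\Pi_{\mathrm{an},\infty,A}(G)$. First I would record that for $A=\mathbb{Q}_p$ the coefficient ring $\Pi_{\mathrm{an},\infty,A\widehat{\otimes}\Pi_{\mathrm{an},\infty}(\Gamma_{K_I})}(G)$ is itself Fréchet--Stein, with a presentation of the form
\[
\varprojlim_{k,t_I}\bigl(\Pi_{\mathrm{an},\infty,A}(G)_k\,\widehat{\otimes}\,\Pi_{\mathrm{an},[t_I,\infty],I}(\Gamma_{K_I})\bigr),
\]
each factor of which is a noetherian Banach algebra by the Fulton-type argument of Section~2, and each transition map in either direction being flat with dense image by the product construction.

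Given the coadmissibility of $h^\bullet(C_{\psi_I}(M))$ from the previous proposition, together with the explicit bounded double complex computing $C_{\psi_I}(M)$, the complex sits in the bounded derived category of coadmissible modules over this Fréchet--Stein algebra. Invoking \cite[Theorem 3.10]{Zab1} then produces a quasi-isomorphic complex all of whose terms are coadmissible. Pulling back to any fixed level $\Pi_{\mathrm{an},\infty,A}(G)_k\widehat{\otimes}\Pi_{\mathrm{an},[t_I,\infty],I}(\Gamma_{K_I})$, we are in the setting already handled for the truncations $M_k$: by \cref{proposition4.7} applied with coefficients in the noetherian Banach algebra $\Pi_{\mathrm{an},\infty,A}(G)_k$, the pulled-back complex is perfect over $\Pi_{\mathrm{an},\infty,\Pi_{\mathrm{an},\infty,A}(G)_k}(\Gamma_{K_I})$, hence finitely presented and flat at each degree. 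Transferring this projectivity back through the inverse limit defining our Fréchet--Stein coefficient ring shows that the coadmissible representative is projective at each degree, and a second application of \cite[Theorem 3.10]{Zab1} upgrades projective coadmissible modules to finitely generated ones, placing $C_{\psi_I}(M)$ in $\mathbb{D}^-_{\mathrm{perf}}$ of the completed tensor product ring.

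The main obstacle I expect is the compatibility of the two Fréchet--Stein presentations under the completed tensor product $\widehat{\otimes}$: one must verify that flatness of the transition maps in the $G$-direction survives tensoring with the tower in the $\Gamma_{K_I}$-direction (and symmetrically), so that the global coadmissible complex is genuinely determined by its compatible family of finite projective sections. This is precisely where the hypothesis $A=\mathbb{Q}_p$ becomes essential, as it renders both tensor factors Fréchet--Stein over a field and so lets the base-change machinery of \cite{ST1} apply symmetrically. Once this compatibility is in hand, everything else is a formal bookkeeping argument on inverse limits, modelled on the proof that immediately follows \cref{proposition4.7}.
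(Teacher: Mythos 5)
Your proposal follows essentially the same strategy as the paper's proof: both rely on the coadmissibility furnished by the previous proposition, apply \cite[Theorem 3.10]{Zab1} to obtain a quasi-isomorphic complex with coadmissible terms, verify projectivity degree by degree through base change to a stage of the Fréchet--Stein tower $\varprojlim_k \Pi_{\mathrm{an},\infty,A \widehat{\otimes} \Pi_{\mathrm{an},\infty}(\Gamma_{K_I})}(G)_k$, and then invoke \cite[Theorem 3.10]{Zab1} a second time to promote projective coadmissible modules to finitely generated ones. Your extra remarks on the compatibility of the two Fréchet--Stein presentations and on where $A=\mathbb{Q}_p$ enters are helpful elaborations on points the paper leaves implicit, but they do not constitute a genuinely different route.
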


\begin{proof}
We have the corresponding coadmissibility from the previous proposition, then we work in the category of all the coadmissible modules over the ring $\Pi_{\mathrm{an},\infty,I,A}(\Gamma_{K_I})$. Then by applying the corresponding result \cite[Theorem 3.10]{Zab1} we can show that the complex is quasi-isomorphic to a corresponding complex being finite projective at each degree in the following. Set:
\begin{align}
\Pi_{\mathrm{an},\infty,A \widehat{\otimes} \Pi_{\mathrm{an},\infty}(\Gamma_{K_I})}(G):=\varprojlim_k \Pi_{\mathrm{an},\infty,A \widehat{\otimes} \Pi_{\mathrm{an},\infty}(\Gamma_{K_I})}(G)_k.	
\end{align}

By base change to some $\Pi_{\mathrm{an},\infty,A \widehat{\otimes} \Pi_{\mathrm{an},\infty}(\Gamma_{K_I})}(G)_k$ for some $k$ we have that any such complex over $ \Pi_{\mathrm{an},\infty,A \widehat{\otimes} \Pi_{\mathrm{an},\infty}(\Gamma_{K_I})}(G)_k$ is quasi-isomorphic to a complex being finitely generated at each degree. Therefore the corresponding original complex is quasi-isomorphic to a corresponding complex being coadmissible at each degree. Then in fact at each degree this latter complex is also projective (in the category of all the coadmissible modules over $\varprojlim_k \Pi_{\mathrm{an},\infty,A \widehat{\otimes} \Pi_{\mathrm{an},\infty}(\Gamma_{K_I})}(G)_k$), since this will be further equivalent to that each section over $\varprojlim_k \Pi_{\mathrm{an},\infty,A \widehat{\otimes} \Pi_{\mathrm{an},\infty}(\Gamma_{K_I})}(G)_k$ is projective for some $k$, which is the case in our situation. Then we are done by \cite[Theorem 3.10]{Zab1} which implies that coadmissible and projective modules are finitely generated.


\end{proof}

\indent We then give a direct proof on the finiteness of the cohomology of $(\varphi_I,\Gamma_I)$-modules:

\begin{proposition}
With the notations above, we have that 
\begin{displaymath}
C^\bullet_{\varphi_I,\Gamma_{I}}(M)\in \mathbb{D}_\mathrm{perf}^-(\Pi_{\mathrm{an},\infty,A }(G)), C^\bullet_{\psi_I,\Gamma_{I}}(M)\in \mathbb{D}_\mathrm{perf}^-(\Pi_{\mathrm{an},\infty,A }(G)).
\end{displaymath}
Here again we assume that the corresponding ring $A$ is $\mathbb{Q}_p$.
\end{proposition}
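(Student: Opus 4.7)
The plan is to mirror the strategy used in the preceding coadmissibility proposition for $C^\bullet_{\psi_I}(M)$, now applied to the Herr complex $C^\bullet_{\varphi_I,\Gamma_I}(M)$, and to push the perfectness statement from \cref{proposition4.9} through the Fr\'echet--Stein presentation
\[
\Pi_{\mathrm{an},\infty,A}(G) = \varprojlim_k \Pi_{\mathrm{an},\infty,A}(G)_k, \qquad M = \varprojlim_k M_k.
\]
First I would reduce to the case where each $K_\alpha = \mathbb{Q}_p$ by simultaneous induction on the cyclotomic tower in each coordinate, so that the uniformizers may all be taken to be the same $\pi$. This reduction is identical to the one carried out in the proof of \cref{proposition4.9}.

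Next, at each finite level $k$, the module $M_k$ is a finite projective $(\varphi_I,\Gamma_I)$-module over $\Pi_{\mathrm{an},\mathrm{con},I,\Pi_{\mathrm{an},\infty,A}(G)_k}(\pi_{K_I})$ in the sense defined earlier, with the Banach algebra $\Pi_{\mathrm{an},\infty,A}(G)_k$ playing the role of the affinoid coefficient ring of \cref{proposition4.9}. Applying \cref{proposition4.9} verbatim in this setting yields
\[
C^\bullet_{\varphi_I,\Gamma_I}(M_k) \in \mathbb{D}^-_{\mathrm{perf}}(\Pi_{\mathrm{an},\infty,A}(G)_k),
\]
and likewise for $C^\bullet_{\psi_I,\Gamma_I}(M_k)$. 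The crucial Cartan--Serre input, namely the fundamental lemma of \cite{Kie1} and \cite[Lemma~1.10]{KL3} on completely continuous maps, together with the multi-interval independence argument established in the proof of \cref{proposition4.9} via the comparisons of \cite[Lemma~5.2]{KP1} and \cite[Proposition~5.12, Theorem~5.7.11]{KL2}, is stable under taking the coefficient ring to be any Banach $\mathbb{Q}_p$-algebra, so no new ingredients are required at a fixed level~$k$.

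The passage to the limit then proceeds exactly as in the immediately preceding coadmissibility proposition: the compatible system of perfect complexes $\{C^\bullet_{\varphi_I,\Gamma_I}(M_k)\}_k$ assembles into a complex over $\Pi_{\mathrm{an},\infty,A}(G)$ whose cohomology groups come from coherent sheaves over the Fr\'echet--Stein space, using the higher $\varprojlim^1$-vanishing and flatness of the transition maps recorded in \cref{prop2.9}. Then \cite[Theorem~3.10]{Zab1} upgrades coadmissibility plus levelwise projectivity to finitely generated projectivity, so the limit complex lies in $\mathbb{D}^-_{\mathrm{perf}}(\Pi_{\mathrm{an},\infty,A}(G))$. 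The identical argument with $\psi_I$ replacing $\varphi_I$ in the Herr complex gives the second assertion.

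The main obstacle, I expect, is the compatibility of base change with the Herr totalization across the Fr\'echet--Stein levels: one must verify that the natural map
\[
C^\bullet_{\varphi_I,\Gamma_I}(M_k) \otimes^{\mathbb{L}}_{\Pi_{\mathrm{an},\infty,A}(G)_k} \Pi_{\mathrm{an},\infty,A}(G)_{k-1} \longrightarrow C^\bullet_{\varphi_I,\Gamma_I}(M_{k-1})
\]
is a quasi-isomorphism compatibly with the equivalences between $\varphi_I$-modules and $\varphi_I$-bundles furnished by \cref{proposition3.5}. Once this compatibility is secured, which should follow from levelwise finite projectivity together with flatness of the transition maps in the Fr\'echet--Stein presentation of the coefficient ring, the gluing, the coadmissibility statement, and the final application of Z\'abr\'adi's finiteness criterion become essentially formal.
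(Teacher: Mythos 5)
Your proposal follows essentially the same route as the paper: reduce to the case where all $K_\alpha = \mathbb{Q}_p$, apply \cref{proposition4.9} level-by-level over the Banach members $\Pi_{\mathrm{an},\infty,A}(G)_k$ of the Fr\'echet--Stein presentation (treating each as the affinoid coefficient ring), and conclude via Z\'abr\'adi's criterion \cite[Theorem 3.10]{Zab1} that coadmissible plus levelwise projective implies finitely generated projective. The only notable difference is that you explicitly flag the base-change compatibility of the Herr totalization across Fr\'echet--Stein levels as a step to verify, which the paper leaves implicit.
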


\begin{proof}
We choose to prove the corresponding result for $(\psi_I,\Gamma_I)$-modules. As above we only have to consider the corresponding situation where $K_I$ are all $\mathbb{Q}_p$ by induction in our current context. Write $\Pi_{\mathrm{an},\infty,A }(G)$ as the limit:
\begin{displaymath}
\Pi_{\mathrm{an},\infty,A }(G)=\varprojlim_k \Pi_{\mathrm{an},\infty,A }(G)_k.	
\end{displaymath}
Working over the ring $\Pi_{\mathrm{an},\infty,A }(G)_k$ we have the corresponding finiteness by the result from \cref{proposition4.9}, which will show that $C^\bullet_{\psi_I,\Gamma_{I}}(M)$ are all coadmissible modules over $\varprojlim_k \Pi_{\mathrm{an},\infty,A }(G)_k$. By base change to some $\Pi_{\mathrm{an},\infty,A }(G)_k$ for some $k$ we have that any such complex over $ \Pi_{\mathrm{an},\infty,A }(G)_k$ is quasi-isomorphic to a complex being finitely generated at each degree. Therefore the corresponding original complex is quasi-isomorphic to a corresponding complex being coadmissible at each degree. Then in fact at each degree this latter complex is also projective (in the category of all the coadmissible modules over $\varprojlim_k \Pi_{\mathrm{an},\infty,A }(G)_k$), since this will be further equivalent to that each section over $\varprojlim_k \Pi_{\mathrm{an},\infty,A }(G)_k$ is projective for some $k$, which is the case in our situation. Then we are done by \cite[Theorem 3.10]{Zab1} which implies that coadmissible and projective modules are finitely generated.\\ 
\end{proof}

\subsection{Application to Triangulation over Rigid Analytic Spaces} \label{triangulation}

\noindent We are now going to establish some application of the finiteness of the corresponding complexes with respect to the action $(\varphi_I,\Gamma_I)$. We will generalize the interesting globalization of the triangulation established in \cite{KPX} to our generalized setting. This has its own interests since it will be very natural to ask about the behavior of some triangulation in our context over the corresponding interesting rigid families of the generalized Hodge structures.\\

\indent Since we only proved the corresponding finiteness in the situation where the set $I$ consists of two elements, therefore in this section we are going to make the same assumption. Then we consider the corresponding finite free rank one $(\varphi_I,\Gamma_I)$-modules coming from the corresponding algebraic characters from the product of $\Gamma_{K_I}$, and moreover from the product of the groups $K_I^\times$. Actually in the one dimensional situation this consideration finishes the corresponding construction of all the finite free rank one $(\varphi_I,\Gamma_I)$-modules in a reasonably well-defined way. We follow largely the corresponding notations in the one dimensional situation. We will assume that over some strong (since we do not have a well-established classifications as mentioned above) pointwisely we have the corresponding strong triangulation (meaning that we have the triangulations by using the parameters strictly coming from the corresponding continuous characters of the group $\Gamma_{K_I}^\times$), then we would like to study the corresponding behavior of the corresponding global triangulations. This means that in order to have some coherent triangulations and spreading effect, one only needs to check things pointwisely.\\

\begin{setting}
In this section we are going to work over a general rigid analytic space $X$ defined over suitable finite extension of $L$, where we assume that $L$ is large enough with respect to the fields $K_I$ generalizing the corresponding situation in \cite[Chapter 6]{KPX}.	
\end{setting}

\indent We start from the following geometric setting:

\begin{definition} \mbox{\bf{(After KPX \cite[Definition 6.2.1]{KPX})}}
First we are going to consider the corresponding sheaves $\Pi_{\mathrm{an},\mathrm{con},I,X}(\pi_{K_I})$ with respect to the analytic space $X$ over $L$, which is defined to be glueing the rings of analytic functions over each affinoid $\mathrm{Max}A$ for the space $X$, namely glueing the rings having the form of $\Pi_{\mathrm{an},\mathrm{con},I,A}(\pi_{K_I})$ with respect to this affinoid. Also as in \cite[Definition 6.2.1]{KPX} one can define the corresponding sheaf with respect to some specific radii which is to say the sheaves taking the form of $\Pi_{\mathrm{an},r_{I},I,X}(\pi_{K_I})$. Then we define the corresponding $(\varphi_I,\Gamma_I)$-modules over these two kinds of sheaves by considering the corresponding families of finite locally free sheaves of modules over $\Pi_{\mathrm{an},\mathrm{con},I,X}(\pi_{K_I})$ or $\Pi_{\mathrm{an},r_{I},I,X}(\pi_{K_I})$ in the well-defined and compatible sense as in the one-dimensional situation in \cite[Definition 6.2.1]{KPX}.

\end{definition}

\begin{setting}
We choose to now consider the following situation where all the fields $K_I$ are assumed to be $\mathbb{Q}_p$. Therefore now the $p$-adic Lie groups we are considering take the following form namely the product of $\mathbb{Q}_p^\times$.	
\end{setting}

\indent Under this assumption namely where all the fields $K_I$ are $\mathbb{Q}_p$ one could directly consider the corresponding construction as generalized below from the one dimensional situation considered in \cite[Notation 6.2.2]{KPX}:

\begin{definition} \mbox{\bf{(After KPX \cite[Notation 6.2.2]{KPX})}}
Consider the product of the $p$-adic Lie group $\mathbb{Q}_p^\times$ which is to say $\prod_{\alpha\in I}\mathbb{Q}_p^\times$. We are going to define the corresponding free rank one $(\varphi_I,\Gamma_I)$-modules of the character types attached to any character taking the form of $\delta_I:\prod_{\alpha\in I}\mathbb{Q}_p^\times\rightarrow \Gamma(X,\mathcal{O}_X)^\times$, to be the free of rank one module $\Pi_{\mathrm{an},\mathrm{con},I,X}(\pi_{K_I})\mathbf{e}$ such that we have for each $\alpha\in I$, the Frobenius $\varphi_\alpha$ acts via $\delta_I(p_\alpha)$ while the group $\Gamma_\alpha$ acts via $\delta_I(\chi_\alpha(\gamma_\alpha))$. Note that when we have that the set $I$ is a singleton the corresponding definition recovers the one defined in \cite[Notation 6.2.2]{KPX}. We will use the notation $\Pi_{\mathrm{an},\mathrm{con},I,X}(\pi_{K_I})(\delta_I)$ to denote such free rank one object in the general setting.	
\end{definition}

\indent On the other hand actually we have no direct classification literally as in the one dimensional situation on the free rank one objects at least up to this definition. Therefore we have to at this moment to fix our objects considered, which is to say that we will focus on all the free rank one objects of character types as defined above, then to study the corresponding geometrization and the variation of the corresponding triangulation. We start from the following basic definition which is a direct generalization of the corresponding one in the one dimensional situation.

\begin{definition} \mbox{\bf{(After KPX \cite[Definition 6.3.1]{KPX})}}
We define the $triangulated$ $(\varphi_I,\Gamma_I)$-modules over the higher dimensional Robba rings over a rigid analytic space $X$ taking the form of $\Pi_{\mathrm{an},\mathrm{con},I,X}(\pi_{K_I})$. Such a module $M$ is defined to be a usual $(\varphi_I,\Gamma_I)$-module over $\Pi_{\mathrm{an},\mathrm{con},I,X}(\pi_{K_I})$ carrying a triangulation with $n$-parameters $\delta_1,...,\delta_n$ which are $n$ characters (we will assume that these are continuous ones) of the group $\prod_{I}\mathbb{Q}_p^\times$ with the values in the corresponding group $\Gamma(X,\mathcal{O}_X)^\times$, which means that we have the following filtration on $M$ by the corresponding sub $(\varphi_I,\Gamma_I)$-modules over the sheaf $\Pi_{\mathrm{an},\mathrm{con},I,X}(\pi_{K_I})$:
\begin{displaymath}
0=M_0\subset M_1\subset...\subset M_n=M	
\end{displaymath}
where $n$ is assumed in our situation to be the rank of $M$, moreover we have that the corresponding each graded piece $M_{i}/M_{i-1}$ for $i=1,...,n$ is a rank one $(\varphi_I,\Gamma_I)$-module with the corresponding parameter $\delta_i$ defined above with some twisted line bundle $\mathbb{L}_i,i=1,...,n$:
\begin{displaymath}
M_{i}/M_{i-1}\overset{\sim}{\longrightarrow}\Pi_{\mathrm{an},\mathrm{con},I,X}(\pi_{K_I})(\delta_i)\otimes \mathbb{L}_i,i=1,...,n.	
\end{displaymath}
\end{definition}

\begin{definition} \mbox{\bf{(After KPX \cite[Definition 6.3.1]{KPX})}}
As in \cite[Definition 6.3.1]{KPX} we generalize the corresponding $strictly~triangulated~modules$ to the higher dimensional situation by defining them to be over $X=\mathrm{Max}(L)$ the triangulated modules as above in a manner of building from each $i$-th filtration to the $i+1$-th uniquely through the corresponding parameters.
\end{definition}

\indent We are going to consider the corresponding integrated behavior of a dense subset of pointwise triangulations. First recall that a subspace $U$ is $Zariski~dense$ if for each member in an admissible covering of the space $X$ the space $U$ is dense inside (see \cite[Chapter 6.3]{KPX}). Then we can now consider the corresponding densely pointwise strictly triangulated families:

\begin{definition} \mbox{\bf{(After KPX \cite[Definition 6.3.2]{KPX})}}
We are going to call a $(\varphi_I,\Gamma_I)$-module over the sheaf $\Pi_{\mathrm{an},\mathrm{con},I,X}(\pi_{K_I})$ a $densely~pointwise~strictly~triangulated~family$ if there exists a Zariski-dense subspace $X_{dpt}$ inside $X$ such that the corresponding the restriction to $X_{dpt}$ of the sheaf $M$ is pointwise strictly triangulated in the sense defined above.	
\end{definition}

\begin{lemma}\mbox{\bf{(After KPX \cite[6.3.3]{KPX})}}
In our situation, we have the cohomology groups $H^\bullet_{\varphi_I,\Gamma_I}(M)$ are coherent sheaves over the rigid analytic space $X$ for $M$ any $(\varphi_I,\Gamma_I)$-modules over the Frobenius sheaves $\Pi_{\mathrm{an},\mathrm{con},I,X}(\pi_{K_I})$ with the corresponding non-vanishing throughout the degrees in $[0,2I]$.	
\end{lemma}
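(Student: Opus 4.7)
The plan is to reduce the global statement on the rigid analytic space $X$ to the affinoid case already handled in \cref{proposition4.9}. First I would choose an admissible covering of $X$ by affinoids $\{\mathrm{Max}(A_\iota)\}_\iota$. Over each $\mathrm{Max}(A_\iota)$ the sheaf $\Pi_{\mathrm{an},\mathrm{con},I,X}(\pi_{K_I})$ restricts to the ring $\Pi_{\mathrm{an},\mathrm{con},I,A_\iota}(\pi_{K_I})$ of the previous sections, and the restricted $(\varphi_I,\Gamma_I)$-module $M|_{\mathrm{Max}(A_\iota)}$ is a finite projective $(\varphi_I,\Gamma_I)$-module in the sense of Section 3 (by definition of the global notion together with the local freeness and compatibility axioms in the preceding definition). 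Hence \cref{proposition4.9} applies and gives $C^\bullet_{\varphi_I,\Gamma_I}(M|_{\mathrm{Max}(A_\iota)}) \in \mathbb{D}^-_{\mathrm{perf}}(A_\iota)$, which in particular means that each cohomology group $H^i_{\varphi_I,\Gamma_I}(M|_{\mathrm{Max}(A_\iota)})$ is a finitely generated $A_\iota$-module.

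The next step is to check that the formation of the Herr complex commutes with affinoid localization along the covering. Given an inclusion of affinoids $\mathrm{Max}(A_\iota') \hookrightarrow \mathrm{Max}(A_\iota)$, I would exploit the flatness of the relative Robba rings over the coefficient affinoid (contained in \cref{prop2.9}(VII) and the surrounding discussion) together with the fact that each module in the totalized double complex is of the form $M^{\triangle_I}$, which remains finite projective after the base change $A_\iota \rightsquigarrow A_\iota'$. This flat base-change guarantees that the cohomology groups satisfy $H^i_{\varphi_I,\Gamma_I}(M|_{\mathrm{Max}(A_\iota)}) \otimes_{A_\iota} A_\iota' \overset{\sim}{\rightarrow} H^i_{\varphi_I,\Gamma_I}(M|_{\mathrm{Max}(A_\iota')})$. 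Combined with the affinoid finite generation above, this is exactly the data needed to glue into a coherent sheaf of $\mathcal{O}_X$-modules by the usual Kiehl/Tate patching argument.

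For the vanishing range, I would work directly from the definition of $C^\bullet_{\varphi_I,\Gamma_I}(M)$ as the totalization of the double complex built by the iterative construction in the definition prior to \cref{proposition4.4}: the $\Gamma_I$-part is built by $|I|$ successive mapping cones, each adding one to the possible degree, and so is the $\varphi_I$-part. Consequently the totalized complex has nonzero terms only in degrees $0, 1, \dots, 2|I|$, which yields the non-vanishing range $[0, 2|I|]$.

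The main obstacle, as I see it, is not the gluing itself (which is formal once flat base change is established) but verifying cleanly that finite projectivity is preserved under the restriction from $\Pi_{\mathrm{an},\mathrm{con},I,X}(\pi_{K_I})$ to its sections over a sub-affinoid, and that the commuting semilinear $(\varphi_I,\Gamma_I)$-structures restrict compatibly. This requires invoking the sheaf-theoretic setup from the definition of triangulated and non-triangulated families together with \cref{proposition3.5} to pass between $\varphi_I$-modules and $\varphi_I$-bundles so that the finiteness of the Herr complex at the level of global sections matches the locally computed complex; with this in place, the coherence and the degree bound follow immediately.
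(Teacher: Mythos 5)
Your proposal matches the paper's argument: the paper's own proof is a one-line reference to \cite[6.3.3]{KPX} together with the local finiteness results of \cref{section4.1}, and what you have written is precisely the unpacking of that reference --- restrict to an admissible affinoid cover, apply \cref{proposition4.9} to get finite generation of each $H^i_{\varphi_I,\Gamma_I}(M|_{\mathrm{Max}(A_\iota)})$ over $A_\iota$, verify flat base change for the Herr complex along affinoid localizations, and glue by Kiehl's theorem, while the degree bound $[0,2|I|]$ falls out of counting the mapping cones in the totalization. One small imprecision: for the base-change step the relevant flatness is the flatness of $\Pi_{\mathrm{an},\mathrm{con},I,A}$ over $A$ established in the lemma after \cref{prop2.9} (the analogue of KPX 2.1.5), together with the flatness of the affinoid restriction map $A_\iota \to A_\iota'$ itself, rather than \cref{prop2.9}(VII) as cited; but the intended mechanism is the same and the argument goes through.
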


\begin{proof}
The proof of this result is parallel to the corresponding result in the one dimensional situation, see \cite[6.3.3]{KPX}. Essentially one has this result as a direct consequence of the results in the local setting in \cref{section4.1}.
\end{proof}

%
%

\indent We then consider our main results in this section, which is a direct generalization of the results of \cite{KPX} (chapter 6) to the situation where we have higher dimensional Hodge-Frobenius structure. 


\begin{proposition} \mbox{\bf{(After KPX \cite[Theorem 6.3.9]{KPX})}}
Suppose that we have a $(\varphi_I,\Gamma_I)$-module $M$ over the ring $\Pi_{\mathrm{an},\mathrm{con},I,X}(\pi_{K_I})$, where we assume that this module is of rank $k$. Suppose that we have a continuous character from the group $\prod_I \mathbb{Q}_p^\times$ into the group $\Gamma(X,\mathcal{O}_X)^\times$. Suppose that there exists a subset $X_{pdt}\subset X$ over which we have that the cohomology group $H^0_{\varphi_I,\Gamma_I}(M_x^\vee(\delta_x))$ is of dimension one (which is to say for each $x\in X_{pdt}$), with the further assumption that the image of the corresponding ring $\Pi_{\mathrm{an},\mathrm{con},I,X}(\pi_{K_I})$ is saturated under this assumption (namely under the corresponding consideration of the basis). Assume this subset is Zariski-dense. Assume now the space $X$ is reduced, then one could be able to find a morphism from another blow-up $b:Y\rightarrow X$ which is birational and proper and a corresponding map $f:b^*M\rightarrow \Pi_{\mathrm{an},\mathrm{con},I,X}(\pi_{K_I})(\delta)\otimes_{\mathcal{O}_X}\mathbb{L}$ (with some well-defined line bundle $\mathbb{L}$ as in the situation of \cite[Theorem 6.3.9]{KPX}). 


\end{proposition}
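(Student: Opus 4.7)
The strategy follows \cite[Theorem 6.3.9]{KPX}, with the higher-dimensional coherence of cohomology established in the preceding lemma taking the place of the one-variable coherence used there. First I would translate the problem into the existence of a global section of a coherent sheaf. Setting
\[
\mathcal{H} := H^0_{\varphi_I,\Gamma_I}\bigl(M^\vee \otimes \Pi_{\mathrm{an},\mathrm{con},I,X}(\pi_{K_I})(\delta)\bigr),
\]
a $(\varphi_I,\Gamma_I)$-equivariant map $b^*M \to \Pi_{\mathrm{an},\mathrm{con},I,X}(\pi_{K_I})(\delta) \otimes \mathbb{L}$ is the same as a global section of $b^*\mathcal{H} \otimes \mathbb{L}$, and by the preceding lemma $\mathcal{H}$ is a coherent sheaf on $X$.

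The second step is a cohomology-and-base-change argument to show that $\mathcal{H}$ has generic rank one on a Zariski-dense open. Applying the perfectness of $C^\bullet_{\varphi_I,\Gamma_I}$ from Proposition \ref{proposition4.9} together with standard Grothendieck-type base change for perfect complexes, one obtains for each $x \in X$ a natural comparison map $\mathcal{H} \otimes_{\mathcal{O}_X} k(x) \to H^0_{\varphi_I,\Gamma_I}(M_x^\vee(\delta_x))$. The pointwise hypothesis forces the right-hand side to be one-dimensional for $x \in X_{pdt}$, and the saturation assumption ensures that the comparison hits the distinguished saturated line, so $\mathcal{H}$ attains generic rank one along the Zariski-dense $X_{pdt}$.

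The third step is a Raynaud--Gruson flattening blow-up, performed exactly as in \cite[Theorem 6.3.9]{KPX}. Since $X$ is reduced and $\mathcal{H}$ is coherent of generic rank one on a Zariski-dense open, there exists a proper birational morphism $b: Y \to X$ such that the torsion-free quotient of $b^*\mathcal{H}$ is an invertible sheaf $\mathbb{L}^{-1}$ on $Y$. The resulting tautological surjection $b^*\mathcal{H} \otimes \mathbb{L} \twoheadrightarrow \mathcal{O}_Y$ lifts, possibly after a further blow-up to trivialize the obstruction in $H^1$ of the torsion kernel, to a global section of $b^*\mathcal{H} \otimes \mathbb{L}$, which under the adjunction corresponds to the required $(\varphi_I,\Gamma_I)$-equivariant morphism $b^*M \to \Pi_{\mathrm{an},\mathrm{con},I,X}(\pi_{K_I})(\delta) \otimes \mathbb{L}$; nontriviality pointwise over $X_{pdt}$ is built into the construction of the blow-up.

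The main obstacle is the base-change step: $C^\bullet_{\varphi_I,\Gamma_I}$ is a totalization of an iterated double complex in our multi-Frobenius setting, and one must verify that Proposition \ref{proposition4.9} delivers enough perfectness to make the fiberwise comparison well behaved, so that the saturation hypothesis genuinely translates into a rank-one statement about $\mathcal{H}$ itself rather than only about its fiberwise $H^0$. Once this is secured, the blow-up step is formal, since the $(\varphi_I,\Gamma_I)$-action operates purely on the coefficient sheaf and commutes with base change along $b$.
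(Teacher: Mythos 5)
Your proposal follows essentially the same route the paper takes: translate to the coherent sheaf $H^0_{\varphi_I,\Gamma_I}(M^\vee(\delta))$, invoke coherence from \cref{section4.1}, use a base-change comparison together with the Zariski density of $X_{pdt}$ to pin down generic rank one, and then produce the line bundle and morphism by a Raynaud--Gruson style flattening blow-up. The paper orders the steps slightly differently (normalizing first, then using the flattening of \cite[6.3.6]{KPX} to make $H^0$ flat and $H^{\geq 1}$ of Tor-dimension at most $1$ before writing down the explicit four-term base change sequence whose last $\mathrm{Tor}_1$ term is killed on a Zariski-open dense locus), but both arguments rest on the same inputs and arrive at the morphism $f_0^*M \to \Pi_{\mathrm{an},\mathrm{con},I,X}(\pi_{K_I})(\delta)\otimes\mathbb{L}$ by dualizing a rank-one coherent subsheaf, so I regard your sketch as correct and in the same spirit.
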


\begin{proof}

Indeed, we can adapt the corresponding strategy of \cite[Theorem 6.3.9]{KPX} to prove this. First by taking the well-established (see \cite[Theorem 6.3.9]{KPX} the geometry actually) normalization we are reduced to assume that the space $X$ is normal and connected, and consequently the ranks of the corresponding coherent sheaves preserve. Then we consider the corresponding local coherence of the corresponding cohomology complex $C^\bullet_{\varphi_I,\Gamma_I}(M)$ (for all $\alpha\in I$), and the corresponding local base change results (namely the direct analog of the theorem 4.4.3 of \cite{KPX}), together with the 6.3.6 of \cite{KPX} we have the corresponding groups $H^\bullet_{\varphi_I,\Gamma_I}(M)$ are flat for $\bullet=0$ and having Tor dimension less than or equal to 1 above, after we replace $M$ by some $M_0$ which is the pullback of $M^\vee(\delta)$ along some $f_0:Y_0\rightarrow X$ which is birational and proper. We then glue along the corresponding local construction above as in \cite[Theorem 6.3.9]{KPX}. The first task for us then will be the corresponding construction of the morphism over this $Y_0$ which will be later be promoted to be the desired $Y$ by taking the corresponding blow-up. The construction goes as in the following way which is modeled on \cite[Theorem 6.3.9]{KPX}. One first considers the corresponding subset $Y_1'^c\subset Y_0$ that consists of all the points at which the corresponding forth term in the following exact sequence (by considering the corresponding base change spectral sequence)
\[
\xymatrix@R+0pc@C+0pc{
0\ar[r] \ar[r] \ar[r]  &H^0_{\varphi_I,\Gamma_I}(M_0)\otimes\kappa_y \ar[r] \ar[r] \ar[r]  &H^0_{\varphi_I,\Gamma_I}(M_{0,y}) \ar[r] \ar[r] \ar[r] &\mathrm{Tor}_1(H^1_{\varphi_I,\Gamma_I}(M_0),\kappa_y) \ar[r] \ar[r] \ar[r] &0
.}
\]
vanishes in our situation. Note that this is dense and open in the Zariski topology. Over the subset $Y_1'$ the resulting exact sequence degenerates just to be:
\[
\xymatrix@R+0pc@C+0pc{
0\ar[r] \ar[r] \ar[r]  &H^0_{\varphi_I,\Gamma_I}(M_0)\otimes\kappa_y \ar[r] \ar[r] \ar[r]  &H^0_{\varphi_I,\Gamma_I}(M_{0,y}) \ar[r] \ar[r] \ar[r] &\mathrm{Tor}_1(H^1_{\varphi_I,\Gamma_I}(M_0),\kappa_y) \ar[r] \ar[r] \ar[r] &0
,}
\]
where the forth term does not vanish. Then we note that over the subset $Y_1'^c$ we have that the corresponding resulting cohomology group $H^0_{\varphi_I,\Gamma_I}(M_0)$ is then of dimensional one, which implies that by the topological property of the subspace it is coherent of rank one throughout the whole space $Y_0$. We then have the chance to extract the desired morphism in our situation as below: the corresponding line bundle $\mathbb{L}$ is chosen to be the corresponding dual line bundle. Then we have the result morphism $\Pi_{\mathrm{an},\mathrm{con},I,X}(\pi_{K_I})\otimes \mathbb{L}^\vee\rightarrow M_0$ and consequently we have the corresponding desired morphism which is to say $f_0^*M\rightarrow \Pi_{\mathrm{an},\mathrm{con},I,X}(\pi_{K_I})(\delta)\otimes \mathbb{L}$. \\
\indent Then up to this point, one can then consider 6.3.6 of \cite{KPX} to extract the desired blow-up $Y\rightarrow Y_0$ and define the desired data through the pullback along blow-up, which is to say the corresponding $\mathbb{L}$, $f$ (see \cite[Theorem 6.3.9]{KPX}). 

\end{proof}

\begin{remark}
The corresponding morphism established by the previous proposition is actually expected to satisfy the corresponding properties parallel to the corresponding situation in \cite[Theorem 6.3.9, Corollary 6.3.10]{KPX}. Namely in the context mentioned in the proof of the previous proposition we are now at the position to (thanks to the previous proposition) conjecture: \\
\indent A. We have the corresponding subset $Y_1^c\subset Y$ of all the points $y$ where the corresponding map $f_y:M_y\rightarrow \Pi_{\mathrm{an},\mathrm{con},I,\kappa_y}(\pi_{K_I})(\delta_y)\otimes_{\mathcal{O}_X}\mathbb{L}_y$ is surjective (as the original context of \cite{KPX} we need to assume that the corresponding element generates the zero-th cohomology of $M_y^\vee(\delta_y)$ as well), is open and dense in the Zariski topology; B. The corresponding kernel $\mathbf{Ker}f$ is of rank just $k-1$.

\end{remark}

\newpage

\section{Noncommutative Coefficients}

\subsection{Frobenius Modules over Noncommutative Rings} \label{section5.1}

\noindent We now consider the coefficient where $A$ is a noetherian noncommutative Banach affinoid algebra over $\mathbb{Q}_p$ namely quotient of noncommutative Tate algebra with free variables. The finiteness even in the univariate situation with noncommutative coefficients is actually not known in \cite{Zah1} which is key ingredient in the noncommutative local $p$-adic Tamagawa number conjecture in \cite{Zah1}. We make the corresponding discussion in our situation.


\begin{example} \label{example5.1}
A nontrivial example of such noetherian noncommutative Banach affinoid algebra could be the corresponding noncommutative analog of the commutative Tate algebra $\mathbb{T}_2$ considered in \cite[Section 3]{So1}. To be more precise one considers first the corresponding twisted polynomial ring $\mathbb{Q}_p[X_1,X_2]'$ such that $X_1X_2=aX_2X_1$ for some nonzero element $a\in \mathbb{Q}_p$, then takes the corresponding completion with respect to the Gauss norm $\|.\|$ which is defined by:
\begin{displaymath}
\|\sum_{i_1,i_2}a_{i_1,i_2}X_1^{i_1}X_2^{i_2}\|:=\sup_{i_1,i_2} \{|a_{i_1,i_2}|_p\}.	
\end{displaymath}
Note that as in \cite[Proposition 3.1]{So1} the corresponding categories of finite left $A$-modules and the correspnding Banach ones are actually equivalent, under the forgetful functor. 	
\end{example}

\indent We then consider the families of the corresponding Frobenius modules with coefficients in noncommutative affinoid algebra $A$ considered as above.

\begin{setting}
For the Robba rings with respect to some closed interval $[s_I,r_I]\subset (0,\infty]^{|I|}$ we defined above namely $\Pi_{\mathrm{an},[s_I,r_I],I,\mathbb{Q}_p}$, we take the completed tensor product with the corresponding ring $A$ in this section to define the corresponding noncommutative version of the Robba rings. We keep all the notations compatible with the ones in the commutative setting in the previous discussion in the previous sections.	
\end{setting}

\begin{definition} \mbox{\bf{(After KPX \cite[Definition 2.2.2]{KPX})}}
We keep all the notations (except the coefficient $A$ which is now noncommutative) compatible with the ones in the commutative setting in the previous discussion in the previous sections. For each $\alpha\in I$, we choose suitable uniformizer $\pi_{K_\alpha}$ in our consideration. Then we are going to use the notation $\Pi_{\mathrm{an},\mathrm{con},I,A}(\pi_{K_I})$ to denote the corresponding period ring constructed from $\Pi_{\mathrm{an},\mathrm{con},I,A}$ just by directly replacing the variables by the corresponding uniformizers as above. And similarly for other period rings we use the corresponding notations taking the same form namely $\Pi_{\mathrm{an},\mathrm{con},I,A}(\pi_{K_I})$, $\Pi_{\mathrm{an},r_{I,0},I,A}(\pi_{K_I})$, $\Pi_{\mathrm{an},[s_I,r_I],I,A}(\pi_{K_I})$. As in the one dimensional situation we consider the situation where the radii are all sufficiently small. Then one could define the multiple Frobenius actions from the multi Frobenius $\varphi_I$ over each the ring mentioned above. For suitable radii $r_I$ we define a $\varphi_I$-module to be a finite projective left $\Pi_{\mathrm{an},r_{I},I,A}(\pi_{K_I})$-module with the requirement that for each $\alpha\in I$ we have $\varphi_\alpha^*M_{r_I}\overset{\sim}{\rightarrow}M_{...,r_\alpha/p,...}$ (after suitable base changes). Then we define $M:=\Pi_{\mathrm{an},\mathrm{con},I,A}(\pi_{K_I})\otimes_{\Pi_{\mathrm{an},r_{I},I,A}(\pi_{K_I})}M_{r_I}$ to define a $\varphi_I$-module over the full relative Robba ring in our situation. Furthermore we have the notion of $\varphi_I$-bundles in our situation which consists of a family of $\varphi_I$-modules $\{M_{[s_I,r_I]}\}$ where each module $M_{[s_I,r_I]}$ is defined to be finite projective over $\Pi_{\mathrm{an},[s_I,r_I],I,A}(\pi_{K_I})$ satisfying the action formula taking the form of $\varphi_\alpha^*M_{[s_I,r_I]}\overset{\sim}{\rightarrow}M_{...,[s_\alpha/p,r_\alpha/p],...}$ (after suitable base changes). Furthermore for each $\alpha$ we have the corresponding operator $\varphi_\alpha:M_{[s_I,r_I]}\rightarrow M_{...,[s_\alpha/p,r_\alpha/p],...}$ and we have the corresponding operator $\psi_\alpha$ which is defined to be $p^{-1}\varphi_\alpha^{-1}\circ\mathrm{Trace}_{M_{...,[s_\alpha/p,r_\alpha/p],...}/\varphi_\alpha(M_{[s_I,r_I]})}$. Certainly we have the corresponding operator $\psi_\alpha$ over the global section $M_{r_I}$. Note that here we require that the Hodge-Frobenius structures are commutative in the sense that all the Frobenius are commuting with each other, and they are semilinear.
\end{definition}

\begin{definition} \mbox{\bf{(After KPX \cite[Definition 2.2.2]{KPX})}} \label{definition6.4}
We keep all the notations (except the coefficient $A$ which is now noncommutative) compatible with the ones in the commutative setting in the previous discussion in the previous sections. For each $\alpha\in I$, we choose suitable uniformizer $\pi_{K_\alpha}$ in our consideration. Then we are going to use the notation $\Pi_{\mathrm{an},\mathrm{con},I,A}(\pi_{K_I})$ to denote the corresponding period ring constructed from $\Pi_{\mathrm{an},\mathrm{con},I,A}$ just by directly replacing the variables by the corresponding uniformizers as above. And similarly for other period rings we use the corresponding notations taking the same form namely $\Pi_{\mathrm{an},\mathrm{con},I,A}(\pi_{K_I})$, $\Pi_{\mathrm{an},r_{I,0},I,A}(\pi_{K_I})$, $\Pi_{\mathrm{an},[s_I,r_I],I,A}(\pi_{K_I})$. As in the one dimensional situation we consider the situation where the radii are all sufficiently small. Then one could define the multiple Frobenius actions from the multi Frobenius $\varphi_I$ over each the ring mentioned above. For suitable radii $r_I$ we define a coherent $\varphi_I$-module to be a finitely presented left $\Pi_{\mathrm{an},r_{I},I,A}(\pi_{K_I})$-module with the requirement that for each $\alpha\in I$ we have $\varphi_\alpha^*M_{r_I}\overset{\sim}{\rightarrow}M_{...,r_\alpha/p,...}$ (after suitable base changes). Then we define $M:=\Pi_{\mathrm{an},\mathrm{con},I,A}(\pi_{K_I})\otimes_{\Pi_{\mathrm{an},r_{I},I,A}(\pi_{K_I})}M_{r_I}$ to define a coherent $\varphi_I$-module over the full relative Robba ring in our situation. Furthermore we have the notion of coherent $\varphi_I$-bundles in our situation which consists of a family of coherent $\varphi_I$-modules $\{M_{[s_I,r_I]}\}$ where each module $M_{[s_I,r_I]}$ is defined to be finitely presented over $\Pi_{\mathrm{an},[s_I,r_I],I,A}(\pi_{K_I})$ satisfying the action formula taking the form of $\varphi_\alpha^*M_{[s_I,r_I]}\overset{\sim}{\rightarrow}M_{...,[s_\alpha/p,r_\alpha/p],...}$ (after suitable base changes). Furthermore for each $\alpha$ we have the corresponding operator $\varphi_\alpha:M_{[s_I,r_I]}\rightarrow M_{...,[s_\alpha/p,r_\alpha/p],...}$ and we have the corresponding operator $\psi_\alpha$ which is defined to be $p^{-1}\varphi_\alpha^{-1}\circ\mathrm{Trace}_{M_{...,[s_\alpha/p,r_\alpha/p],...}/\varphi_\alpha(M_{[s_I,r_I]})}$. Certainly we have the corresponding operator $\psi_\alpha$ over the global section $M_{r_I}$. Note that here we require that the Hodge-Frobenius structures are commutative in the sense that all the Frobenius are commuting with each other, and they are semilinear.
\end{definition}

\begin{definition} \mbox{\bf{(After KPX \cite[Definition 2.2.12]{KPX})}}
Then we add the corresponding more Lie group action in our setting, namely the multi semiliear $\Gamma_{K_I}$-action. Again in this situation we require that all the actions of the $\Gamma_{K_I}$ are commuting with each other and with all the semilinear Frobenius actions defined above. We define the corresponding $(\varphi_I,\Gamma_I)$-modules over $\Pi_{\mathrm{an},r_{I,0},I,A}(\pi_{K_I})$ to be finite projective module with mutually commuting semilinear actions of $\varphi_I$ and $\Gamma_{K_I}$. For the latter action we require that to be continuous. We can also define the corresponding actions for coherent Frobenius modules.
\end{definition}


\begin{definition} \mbox{\bf{(After KPX \cite[2.3]{KPX})}}
For any $(\varphi_I,\Gamma_I)$-module $M$ over $\Pi_{\mathrm{an},\mathrm{con},I,A}(\pi_{K_I})$ we define the complex $C^\bullet_{\Gamma_I}(M)$ of $M$ to be the total complex of the following complex through the induction:
\[
\xymatrix@R+0pc@C+0pc{
[ C^\bullet_{\Gamma_{I\backslash\{I\}}}(M) \ar[r]^{\Gamma_I} \ar[r] \ar[r]  & C^\bullet_{\Gamma_I\backslash\{I\}}(M)]
.}
\]
Then we define the corresponding double complex $C^\bullet_{\varphi_I}C^\bullet_{\Gamma_I}(M)$ again by taking the corresponding totalization of the following complex through induction:
\[
\xymatrix@R+0pc@C+0pc{
[C^\bullet_{\varphi_{I\backslash\{I\}}}C^\bullet_{\Gamma_{I}}(M) \ar[r]^{\varphi_I} \ar[r] \ar[r]  & C^\bullet_{\varphi_{I\backslash\{I\}}}C^\bullet_{\Gamma_I}(M)]
.}
\]
Then we define the complex $C^\bullet_{\varphi_I,\Gamma_I}(M)$ to be the totalization of the double complex defined above, which is called the complex of a $(\varphi_I,\Gamma_I)$-module $M$. Similarly we define the corresponding complex $C^\bullet_{\psi_I,\Gamma_I}(M)$ to be the totalization of the double complex define  d in the same way as above by replacing $\varphi_I$ by then the operators $\psi_I$. For later comparison we also need the corresponding $\psi_I$-cohomology, which is to say the complex $C^\bullet_{\psi_I}(M)$ which could be defined by the totalization of the following complex through induction:
\[
\xymatrix@R+0pc@C+0pc{
[ C^\bullet_{\psi_{I\backslash\{I\}}}(M) \ar[r]^{\psi_I} \ar[r] \ar[r]  & C^\bullet_{\psi_I\backslash\{I\}}(M)]
.}
\]
\end{definition}

\indent In order to proceed we now make the following assumption:

\begin{assumption} \label{assumption6.12}
We assume that the ring $A$ in our situation satisfies the following assumption. For any closed interval $[s_I,r_I]$ we assume that the Robba ring $\Pi_{\mathrm{an},[s_I,r_I],I,A}$ is left and right noetherian, namely noetherian in the noncommutative situation. In the following we will discuss some samples.
\end{assumption}

\begin{example} 
First note we point out that the corresponding Tate algebra with several free variables is actually not noetherian, even the corresponding polynomial rings with several free variables are not noetherian at all. For instance in the situation where we have two variables, as in \cite[Introduction]{K1} and \cite[Exercise 1E]{GW1} we know that the ring $k[x,y]^\mathrm{nc}$ noncommutative polynomial ring contains specific corresponding ideal generated by $xy^k$ for each nonnegative integer power $k\geq 0$. Even in the commutative setting, suppose we have a commutative noetherian Banach algebra $B$, it is also definitely not trivial that the affinoid algebra:
\begin{displaymath}
B\{T_1/a_1,...,T_m/a_m,b_1/T_1,...b_m/T_m\}	
\end{displaymath}
is noetherian or not. Actually there does exist some counterexample which makes this not true in general as one might not expect, as in \cite[8.3]{FGK}. 
\end{example}

\begin{example}
One can actually produce another examplification of the \cref{assumption6.12}, namely we consider the following situation. We look at the corresponding (generalized version of) twisted polynomial rings in the sense of \cref{example5.1} (see \cite[Section 3]{So1}). For instance we take a such ring with three variables:
\begin{align}
\mathbb{Q}_p[X_1,X_2,X_3]'	
\end{align}
with:
\begin{align}
X_iX_j=a_{i,j}X_jX_i,i,j\in\{1,2,3\}	
\end{align}
such that $a_{3,1}=a_{1,3}=1$, $a_{3,2}=a_{2,3}=1$ and $a_{2,1}=a_{1,2}\neq 1$. Then after taking the corresponding completion under the Gauss norm we will have the corresponding noncommutative affinoid which is noetherian and satisfies the corresponding \cref{assumption6.12} by suitable rational localization.
\end{example}

Now consider the corresponding rings in the style considered in \cite[Proposition 4.1]{Zab1}:
\begin{displaymath}
D_{[s,r]}(\mathbb{Z}_p,\mathbb{Q}_p), D_{[\rho_1,\rho_2]}(G,K)	
\end{displaymath}
with $G$ a pro-$p$ group as in \cite[Chapter 4]{Zab1} (which is to say we will also assume $G$ to be uniform). The corresponding ring $D_{[\rho_1,\rho_2]}(G,K)$ could be defined through suitable microlocalization with respect to some noncommutative polyannulus with respect to the radii $\rho_1,\rho_2$ which is direct noncommutative generalization of the usual Robba ring we considered in this paper including obviously the corresponding multivariate ones. With the notation in \cite[Proposition 4.1]{Zab1} namely for sufficiently large $s,r,\rho_1,\rho_2\in p^\mathbb{Q}$ and $s,r,\rho_1,\rho_2<1$ we have:

\begin{theorem} \mbox{\bf{(Z\'abr\'adi \cite[Proposition 4.1]{Zab1})}}
The noncommutative Robba rings $D_{[\rho_1,\rho_2]}(G,K)$ is noetherian.	
\end{theorem}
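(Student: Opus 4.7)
The plan is to prove noetherianity by the standard filtered-ring method: exhibit $D_{[\rho_1,\rho_2]}(G,K)$ as a complete filtered algebra whose associated graded is noetherian, and then invoke the classical theorem that a complete filtered ring inherits noetherianity from its associated graded.

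First I would fix an ordered topological generating system $h_1,\ldots,h_d$ of the uniform pro-$p$ group $G$ and set $b_i=h_i-1$ in the Iwasawa algebra. Because $G$ is uniform, every element of $D(G,K)$ admits a unique expansion $\sum_\alpha c_\alpha b^\alpha$ with $c_\alpha\in K$, and $D_{[\rho_1,\rho_2]}(G,K)$ consists of those expansions simultaneously convergent under the two Gauss-type norms $\|\cdot\|_{\rho_1}$ and $\|\cdot\|_{\rho_2}$. The crucial structural input, which is essentially the uniformity of $G$, is that the commutator $[b_i,b_j]$ lies in a strictly smaller piece of the natural filtration than $b_ib_j$; consequently, with respect to the sup-norm $\|\cdot\|_{[\rho_1,\rho_2]}:=\max(\|\cdot\|_{\rho_1},\|\cdot\|_{\rho_2})$ the associated graded of $D_{[\rho_1,\rho_2]}(G,K)$ becomes a commutative Laurent polynomial ring in $d$ variables over the graded coefficient ring of $K$. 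This graded ring is noetherian as a localization of a polynomial ring in finitely many variables over a noetherian base, and the Huishi Li--Van Oystaeyen lifting theorem for complete filtered rings then yields left and right noetherianity of $D_{[\rho_1,\rho_2]}(G,K)$.

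The hardest step is the annulus aspect. For a single disc $D_\rho(G,K)$ the completeness and commutative-graded property are the familiar Schneider--Teitelbaum setup; the sup-norm on the annulus combines two Banach norms that control convergence in complementary directions, and one has to check that Cauchy sequences in the resulting bi-filtration converge inside $D_{[\rho_1,\rho_2]}(G,K)$ itself rather than merely in some larger ambient algebra, and that no exotic relations appear in the graded that could destroy noetherianity. A more robust variant, which I would fall back on if the direct filtration argument becomes delicate, is to present $D_{[\rho_1,\rho_2]}(G,K)$ as a rational microlocalization of $D_{\rho_2}(G,K)$ along a multiplicatively closed set generated by suitable noncommutative analogues of the $b_i$, and to transport noetherianity along this localization; this reduces the claim to the noetherianity of the single-radius Banach algebras already established by Schneider--Teitelbaum and systematically used in Z\'abr\'adi's framework.
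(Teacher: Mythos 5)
The paper does not actually prove this statement: it is cited verbatim as Z\'abr\'adi's result, and what the paper proves is the subsequent Proposition on the completed tensor product $D_{[s,r]}(\mathbb{Z}_p,\mathbb{Q}_p)\widehat{\otimes}D_{[\rho_1,\rho_2]}(G,K)$, whose proof reproduces the method of the cited result. Measured against that sketch, your overall strategy (complete filtration, pass to the associated graded, conclude noetherianity via the Li--Van Oystaeyen lifting criterion \cite[Proposition I.7.1.2]{LVO}) is exactly the route Z\'abr\'adi takes and the paper echoes, and your fallback --- realizing $D_{[\rho_1,\rho_2]}(G,K)$ as a microlocalization of the single-radius Banach algebra --- is in fact closer to the actual construction in \cite{Zab1} than you present it: the annulus algebra is \emph{defined} by algebraic microlocalization, and the LVO framework is set up precisely to transport noetherianity through that process, so this is not merely a backup option but the main line.

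Where your sketch is imprecise is in the identification of the associated graded. You assert it is ``a commutative Laurent polynomial ring in $d$ variables over the graded coefficient ring of $K$,'' noetherian ``as a localization of a polynomial ring.'' The graded ring attached to the microlocalized annulus algebra is not the full Laurent polynomial ring: as the paper records when it runs the analogous computation for the tensor product, one obtains a subring of the shape $k[\pi,\pi^{-1},\,t_1,\,t_1t_2^{-1},\ldots,t_1t_d^{-1},\,t_2t_1^{-1},\ldots,t_dt_1^{-1}]$, generated by the normalized principal symbols $t_i=\sigma(b_i)/\pi^{l}$ together with certain ratios, not by $t_i^{\pm1}$ independently. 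This is still a finitely generated commutative $k$-algebra and hence noetherian by Hilbert's basis theorem, so your conclusion survives, but the argument ``localization of a polynomial ring'' is not the right justification; one should instead appeal to finite generation of the graded subring (as Z\'abr\'adi does, via a projection to the Laurent ring followed by a lifting of ideal generators). Your caution about whether the bi-filtration is complete and gives no ``exotic relations'' is the genuinely delicate point, and it is resolved in \cite{Zab1} precisely by setting up the filtration through microlocalization rather than by directly filtering the sup-norm algebra.
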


We now take the corresponding completed tensor product we have then:
\begin{displaymath}
D_{[s,r]}(\mathbb{Z}_p,\mathbb{Q}_p)\widehat{\otimes}_{\mathbb{Q}_p}D_{[\rho_1,\rho_2]}(G,K)	
\end{displaymath}
which corresponds directly to our interested situation, namely the noncommutative deformation of the usual Robba ring in one variable.

\begin{proposition}
For suitable radii $s,r,\rho_1,\rho_2\in p^\mathbb{Q}$ such that $s,r,\rho_1,\rho_2<1$ and:
\begin{displaymath}
D_{[s,r]}(\mathbb{Z}_p,\mathbb{Q}_p), D_{[\rho_1,\rho_2]}(G,K)	
\end{displaymath}
are noetherian as in \cite[Proposition 4.1]{Zab1}. Then we have that the corresponding completed tensor product 
\begin{displaymath}
D_{[s,r]}(\mathbb{Z}_p,\mathbb{Q}_p)\widehat{\otimes}_{\mathbb{Q}_p}D_{[\rho_1,\rho_2]}(G,K)	
\end{displaymath}
is also noetherian.

\end{proposition}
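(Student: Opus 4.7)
My plan is to avoid proving a general statement about completed tensor products of noetherian noncommutative Banach algebras (which would be false in the desired generality) and instead identify the completed tensor product on the left-hand side with a single microlocalized distribution algebra of a product uniform pro-$p$ group, and then invoke \cite[Proposition 4.1]{Zab1} once, applied to that product group.

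First I would observe that $\mathbb{Z}_p\times G$ is itself uniform pro-$p$ whenever $G$ is, with an ordered basis obtained by concatenating an ordered basis of $G$ with a topological generator of $\mathbb{Z}_p$; so the framework of \cite{Zab1} applies to it directly. The heart of the argument, and the step I expect to require the most work, is the construction of a topological isomorphism
\begin{displaymath}
D_{[s,r]}(\mathbb{Z}_p,\mathbb{Q}_p)\,\widehat{\otimes}_{\mathbb{Q}_p}\,D_{[\rho_1,\rho_2]}(G,K) \;\cong\; D_{[\tau]}(\mathbb{Z}_p\times G, K),
\end{displaymath}
where $\tau=(s,r,\rho_1,\rho_2)$ is the combined multiradius. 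This refines the classical isomorphism $D(H_1\times H_2,K)\cong D(H_1,K)\widehat{\otimes}_K D(H_2,K)$ for compact $p$-adic Lie groups by tracking the microlocalization data on each factor. Granting the identification, \cite[Proposition 4.1]{Zab1} applied to $\mathbb{Z}_p\times G$ delivers the noetherianness of the right-hand side, hence of the left.

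The main obstacle is verifying this isomorphism in the microlocalized Fr\'echet setting. One has to show that (i) the Ore conditions used to define each microlocalization extend to Ore conditions for the product, which is essentially formal because the relevant multiplicative systems live in separate tensor factors and commute with everything in the other factor; and (ii) the Fr\'echet topology defining the completed tensor product coincides with the topology induced from microlocalizing the product Iwasawa algebra $K[[\mathbb{Z}_p\times G]]\cong K[[\mathbb{Z}_p]]\,\widehat{\otimes}_K\,K[[G]]$. Point (ii) would be addressed by a Mahler-style norm comparison parallel to \cite[Proposition 4.1]{Zab1}: one expands elements of either side in ordered monomials in the basis of $\mathbb{Z}_p\times G$ and checks that the Gauss norms attached to the multiradius $\tau$ on each side agree termwise.

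As a fallback in case the direct identification above proves technically delicate, I would fall back on a filtered-ring strategy mirroring \cite[Proposition 4.1]{Zab1} directly for the product: equip both factors with their natural degree filtrations coming from the microlocalization so that the associated gradeds are affinoid Tate algebras in the radii $[s,r]$ and $[\rho_1,\rho_2]$, equip the completed tensor product with the product filtration, verify that $\mathrm{gr}(A\widehat{\otimes}_{\mathbb{Q}_p} B)\cong \mathrm{gr}(A)\widehat{\otimes}_{\mathbb{Q}_p}\mathrm{gr}(B)$, note that the right-hand side is an affinoid Tate algebra over $K$ and hence noetherian, and finally lift noetherianness from the associated graded to the filtered ring by the standard Rees-ring argument. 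The main obstacle is the same one in disguise, namely verifying the compatibility of $\mathrm{gr}$ with the completed tensor product norm.
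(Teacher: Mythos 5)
Your primary strategy --- identifying $D_{[s,r]}(\mathbb{Z}_p,\mathbb{Q}_p)\,\widehat{\otimes}_{\mathbb{Q}_p}\,D_{[\rho_1,\rho_2]}(G,K)$ with a single microlocalized distribution algebra $D_{[\tau]}(\mathbb{Z}_p\times G,K)$ --- has a gap that is worth flagging. The rings $D_{[\rho_1,\rho_2]}(G,K)$ in Z\'abr\'adi's construction are microlocalizations with respect to a \emph{single} pair of radii $\rho_1,\rho_2$ applied uniformly across an ordered basis of $G$; there is no multi-radius variant in which the $\mathbb{Z}_p$-coordinate is governed by $[s,r]$ and the $G$-coordinates by $[\rho_1,\rho_2]$. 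Since the proposition allows $s,r$ and $\rho_1,\rho_2$ to be unrelated, your identification would only cover the special case $s=\rho_1$, $r=\rho_2$, and even in that case you would need to verify that the Gauss norms on the two sides agree, which you acknowledge as the delicate point. The ``$\tau=(s,r,\rho_1,\rho_2)$'' you write down is not an object that \cite[Proposition 4.1]{Zab1} produces a ring from.

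Your fallback, however, is essentially the proof the paper gives. The paper applies the Li--Van Oystaeyen noetherianness criterion (\cite[Proposition I.7.1.2]{LVO}) to the filtration induced by the tensor-product norm, reduces to the associated graded of the dense ordinary tensor product, and then computes that graded ring. One discrepancy: you assert an isomorphism $\mathrm{gr}(A\widehat{\otimes}_{\mathbb{Q}_p}B)\cong\mathrm{gr}(A)\widehat{\otimes}_{\mathbb{Q}_p}\mathrm{gr}(B)$, but this does not hold in general; what one has (and what the paper invokes, via \cite[I.6.13]{LVO}) is only a \emph{surjection}
\begin{displaymath}
\mathrm{gr}\bigl(D_{[s,r]}(\mathbb{Z}_p,\mathbb{Q}_p)\bigr)\otimes_{\mathrm{gr}\,\mathbb{Q}_p}\mathrm{gr}\bigl(D_{[\rho_1,\rho_2]}(G,K)\bigr)\longrightarrow \mathrm{gr}_{\|\cdot\|_\otimes}\bigl(D_{[s,r]}(\mathbb{Z}_p,\mathbb{Q}_p)\otimes_{\mathbb{Q}_p}D_{[\rho_1,\rho_2]}(G,K)\bigr)\longrightarrow 0,
\end{displaymath}
which is enough because noetherianness passes along surjections. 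You should also be aware that the associated graded on the source is not an affinoid Tate algebra but a (commutative) Laurent polynomial ring over a Laurent polynomial base, following the explicit symbol computation in \cite[Proposition 4.1]{Zab1}; the noetherianness conclusion is the same. So: discard the primary route, replace the graded isomorphism claim by the LVO surjection, and your fallback becomes the paper's argument.
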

	
\begin{remark}
Before the proof, let us mention that this actually provides a very typical example for situation required in \cref{assumption6.12}.	
\end{remark}

\begin{proof}
One can follow the proof of \cite[Proposition 4.1]{Zab1} to do so by applying the criterion in \cite[Proposition I.7.1.2]{LVO}, namely we look at the the graded ring of 
\begin{displaymath}
gr^\bullet_{\|.\|_{\otimes}} \left(D_{[s,r]}(\mathbb{Z}_p,\mathbb{Q}_p)\widehat{\otimes}_{\mathbb{Q}_p}D_{[\rho_1,\rho_2]}(G,K)\right),	
\end{displaymath}
defined by using the bounds of the norm $\|.\|_{\otimes}$, and if this is noetherian then we will have by \cite[Proposition I.7.1.2]{LVO} that the ring $D_{[s,r]}(\mathbb{Z}_p,\mathbb{Q}_p)\widehat{\otimes}D_{[\rho_1,\rho_2]}(G,K)$ is also noetherian. 
It suffices to look at the corresponding graded ring of the dense subring of tensor product (see \cite[Lemma 4.3]{ST1}) 
\begin{displaymath}
gr^\bullet_{\|.\|_{\otimes}} \left(D_{[s,r]}(\mathbb{Z}_p,\mathbb{Q}_p)\otimes_{\mathbb{Q}_p} D_{[\rho_1,\rho_2]}(G,K)\right).	
\end{displaymath}
We do not have the corresponding isomorphism in mind to split the corresponding components to get a product of two single graded rings but we have the corresponding surjective map (see \cite[I.6.13]{LVO}):
\begin{displaymath}
gr^\bullet \left(D_{[s,r]}(\mathbb{Z}_p,\mathbb{Q}_p))\otimes_{gr^\bullet \mathbb{Q}_p} gr^\bullet( D_{[\rho_1,\rho_2]}(G,K)\right)\rightarrow gr_{\|.\|_{\otimes}}^\bullet \left(D_{[s,r]}(\mathbb{Z}_p,\mathbb{Q}_p)\otimes_{\mathbb{Q}_p} D_{[\rho_1,\rho_2]}(G,K)\right)\rightarrow 0.	
\end{displaymath}
This is defined by:
\begin{displaymath}
\sigma_k(x)\otimes \sigma_l(y)\mapsto \sigma_{k+l}(x\otimes y),	
\end{displaymath}
where the principal symbol $\sigma_k(x)$ is defined to be $x+F^{k+} D_{[s,r]}(\mathbb{Z}_p,\mathbb{Q}_p)$ if we we have $x\in F^k D_{[s,r]}(\mathbb{Z}_p,\mathbb{Q}_p) \backslash F^{k+} D_{[s,r]}(\mathbb{Z}_p,\mathbb{Q}_p)$, in the corresponding algebraic microlocalization. The construction for $\sigma_l(y)$ and $ \sigma_{k+l}(x\otimes y)$ is the same. Therefore we look at then the corresponding product:
\begin{displaymath}
gr^\bullet \left(D_{[s,r]}(\mathbb{Z}_p,\mathbb{Q}_p)\right)\otimes gr^\bullet \left( D_{[\rho_1,\rho_2]}(G,K)\right)	
\end{displaymath}
The corresponding elements of $gr^\bullet \left( D_{[\rho_1,\rho_2]}(G,K)\right)$ are those Laurent polynomials over $gr^\bullet K=k[\pi_2,\pi_2^{-1}]$ with variables:
\begin{displaymath}
t_1,...,t_d	
\end{displaymath}
where $t_i=\sigma(b_i)/\pi_2^l$. The corresponding elements of $gr^\bullet \left(D_{[s,r]}(\mathbb{Z}_p,\mathbb{Q}_p) \right)$ are those Laurent polynomials over $gr^\bullet \mathbb{Q}_p=\mathbb{F}_p[\pi_1,\pi_1^{-1}]$ with variable:
\begin{displaymath}
t=\sigma(b)/\pi_1^{k}.
\end{displaymath}
We followed the notations in \cite[Proposition 4.1]{Zab1}, where $b$ is the corresponding local coordinate for $D_{[s,r]}(\mathbb{Z}_p,\mathbb{Q}_p)$ while $\sigma$ represents the corresponding principal symbol in the corresponding nonarchimedean microlocalization, while we have that $b_1,...,b_d$ are local coordinates for $D_{[\rho_1,\rho_2]}(G,K)$. The idea is that this ring is actually noetherian then we have the ring
\begin{displaymath}
gr^\bullet \left(D_{[s,r]}(\mathbb{Z}_p,\mathbb{Q}_p)\otimes D_{[\rho_1,\rho_2]}(G,K)\right)	
\end{displaymath}
is also noetherian. Then one can endow 
\begin{displaymath}
D_{[s,r]}(\mathbb{Z}_p,\mathbb{Q}_p)\widehat{\otimes} D_{[\rho_1,\rho_2]}(G,K)
\end{displaymath}
with the corresponding filtration which will imply that the ring 
\begin{displaymath}
D_{[s,r]}(\mathbb{Z}_p,\mathbb{Q}_p)\widehat{\otimes}  D_{[\rho_1,\rho_2]}(G,K)
\end{displaymath}
is noetherian as well by \cite[Proposition I.7.1.2]{LVO}. Indeed as in \cite[Proposition 4.1]{Zab1} what we could do is to consider the corresponding projection to the corresponding Laurent polynomial ring over $k$ in the variables $\pi_2,t,t_1,...,t_d$ as in the proof of \cite[Proposition 4.1]{Zab1}:
\begin{displaymath}
\mathbb{F}_p[\pi_1,\pi_1^{-1},t,t^{-1}]\otimes_{\mathbb{F}_p[\pi_1,\pi_1^{-1}]} k[\pi_2,\pi_2^{-1},t_1,t_1t_2^{-1},...,t_1t_d^{-1},t_2t_1^{-1},...,t_dt_1^{-1}]	
\end{displaymath}
which is noetherian, then the corresponding ring
\begin{displaymath}
gr^\bullet \left(D_{[s,r]}(\mathbb{Z}_p,\mathbb{Q}_p))\otimes_{gr^\bullet\mathbb{Q}_p} gr^\bullet( D_{[\rho_1,\rho_2]}(G,K)\right)	
\end{displaymath}
is noetherian since one can then lift the generators for the corresponding projection of any ideal through this construction as in the same way of \cite[Proposition 4.1]{Zab1}.
\end{proof}


\begin{setting}
Now under the corresponding noetherian assumption in \cref{assumption6.12}, we actually have that the corresponding multivariate Robba ring $\Pi_{\mathrm{an},r_{I,0},I,A}$	is Fr\'echet-Stein in the sense of \cite[Chapter 3]{ST1}. 
\end{setting}

\begin{definition} \mbox{\bf{(After KPX \cite[Definition 2.1.3]{KPX})}}
We define coherent sheaves of left modules and the global sections over the ring $\Pi_{\mathrm{an},r_{I_0},I,A}$ in the following way. First a coherent sheaf $\mathcal{M}$ over the relative multidimensional Robba ring $\Pi_{\mathrm{an},r_{I_0},I,A}$ is defined to be a family $(M_{[s_I,r_I]})_{[s_I,r_I]\subset (0,r_{I,0}]}$of left modules of finite type over each relative $\Pi_{[s_I,r_I],I,A}$ satisfying the following two conditions as in the more classical situation:
\begin{displaymath}
\Pi_{[s_I'',r_I''],I,A}\otimes_{\Pi_{[s'_I,r'_I],I,A}}M_{[s_I',r_I']}\overset{\sim}{\rightarrow} M_{[s''_I,r''_I]}	
\end{displaymath}
for any multi radii satisfying $0<s_I'\leq s_I''\leq r_I''\leq r_I'\leq r_{I,0}$, with furthermore 
\begin{displaymath}
\Pi_{[s'''_I,r'''_I],I,A}\otimes_{\Pi_{[s''_I,r''_I],I,A}}(\Pi_{[s''_I,r''_I],I,A}	\otimes_{\Pi_{[s'_I,r'_I],I,A}}M_{[s'_I,r'_I]}) \overset{\sim}{\rightarrow}\Pi_{[s'''_I,r'''_I],I,A},
\end{displaymath}
for any multi radii satisfying the following condition:
\begin{displaymath}
0<s_I'\leq s_I''\leq s_I'''\leq r_I'''\leq r_I''\leq r_I'\leq r_{I,0}.	
\end{displaymath}
Then we define the corresponding global section of the coherent sheaf $\mathcal{M}$, which is usually denoted by $M$ which is defined to be the following inverse limit:
\begin{displaymath}
M:=\varprojlim_{s_I\rightarrow 0_I^+} M_{[s_I,r_{I,0}]}.	
\end{displaymath}
We are going to follow \cite[Definition 2.1.3]{KPX} to call any module defined over $\Pi_{\mathrm{an},r_{I_0},I,A}$ coadmissible it comes from a global section of a coherent sheaf $\mathcal{M}$ in the sense defined above. 
\end{definition}

\begin{remark}
Since we are in the framework of \cite[Chapter 3]{ST1}, so now by \cite[Chapter 3, Theorem]{ST1} for $r_{I,0}$ finite (namely for all $\alpha\in I$ the radius $r_{\alpha,0}$ is finite) we have that the corresponding global section in the previous definition is dense in each member participating in the coherent sheaf over some multi interval. We also have the same result for the situation where $r_{I,0}=\infty^{|I|}$ after inverting the corresponding variables in the multivariate Robba rings as in the commutative situation we considered in \cref{prop2.9}. Also we have the higher vanishing of the corresponding derived inverse limit on coherent sheaves in the sense defined above.	
\end{remark}

\begin{definition} \mbox{\bf{(After KPX \cite[Definition 2.1.9]{KPX})}} 
We first generalize the notion of admissible covering in our setting, which is higher dimensional generalization of the situation established in \cite[Definition 2.1.9]{KPX}. For any covering $\{[s_I,r_I]\}$ of $(0,r_{I,0}]$, we are going to specify those admissible coverings if the given covering admits refinement finite locally (and each corresponding member in the covering has the corresponding interior part which is assumed to be not empty with respect to each $\alpha\in I$). Then it is very natural in our setting that we have the corresponding notations of $(m,n)$-finitely presentedness and $n$-finitely generatedness for any $m,n$ positive integers. Then based on these definitions we have the following generalization of the corresponding uniform finiteness. First we are going to call a coherent sheaf $\{M_{s_I,r_I}\}_{[s_I,r_I]}$ over $\Pi_{\mathrm{an},r_{I,0},I,A}$ uniformly $(m,n)$-finitely presented if there exists an admissible covering $\{[s_I,r_I]\}$ and a pair of positive integers $(m,n)$ such that each module $M_{[s_I,r_I]}$ defined over $\Pi_{\mathrm{an},[s_I,r_I],I,A}$ is $(m,n)$-finitely presented. Also we have the notion of uniformly $n$-finitely generated for any positive integer $n$ by defining that to mean under the existence of an admissible covering we have that each member $M_{[s_I,r_I]}$ in the family defined over $\Pi_{\mathrm{an},[s_I,r_I],I,A}$ is $n$-finitely generated. 
\end{definition}

\begin{lemma}\mbox{\bf{(After KPX \cite[Lemma 2.1.11]{KPX})}} Suppose that for a coherent sheaf $\{M_{s_I,r_I}\}_{[s_I,r_I]}$ defined above and for an admissible covering $\{[s_I,r_I]\}$ defined above in our noncommutative setting we have that any module $M_{s_I,r_I}$ is finitely generated uniformly by finite many elements of global section. Then the global section will be also finitely generated by these set of elements as well. 
	
\end{lemma}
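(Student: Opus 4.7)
The plan is to mimic the commutative argument from \cite[Lemma 2.1.11]{KPX} (cf.\ the earlier proposition in the present paper), with care taken for the fact that $\Pi_{\mathrm{an},[s_I,r_I],I,A}$ is now only left noetherian and the modules $M_{[s_I,r_I]}$ are now left modules. Let $\mathbf{e}_1,\dots,\mathbf{e}_n \in M$ be the $n$ elements of the global section which, by hypothesis, generate each $M_{[s_I,r_I]}$ over $\Pi_{\mathrm{an},[s_I,r_I],I,A}$ as $[s_I,r_I]$ ranges over the given admissible covering. Use these to construct the map of left $\Pi_{\mathrm{an},r_{I,0},I,A}$-modules
\[
\Phi \colon \bigl(\Pi_{\mathrm{an},r_{I,0},I,A}\bigr)^{n} \longrightarrow M, \qquad (a_1,\dots,a_n) \longmapsto \sum_{i=1}^{n} a_i \mathbf{e}_i,
\]
and aim to show $\Phi$ is surjective.

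The core of the argument is to pass to the inverse limit over the covering. For each $[s_I,r_I]$ appearing in the admissible covering, base-change gives
\[
\Phi_{[s_I,r_I]} \colon \bigl(\Pi_{\mathrm{an},[s_I,r_I],I,A}\bigr)^{n} \longrightarrow M_{[s_I,r_I]},
\]
and $\Phi_{[s_I,r_I]}$ is surjective precisely by the hypothesis that $\mathbf{e}_1,\dots,\mathbf{e}_n$ generate $M_{[s_I,r_I]}$. Let $K_{[s_I,r_I]} := \ker \Phi_{[s_I,r_I]}$, a finitely generated left module over $\Pi_{\mathrm{an},[s_I,r_I],I,A}$ (using left noetherianity in the relevant multivariate Robba ring, supplied by Assumption \ref{assumption6.12}). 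The family $\{K_{[s_I,r_I]}\}$ forms a coherent sheaf over $\Pi_{\mathrm{an},r_{I,0},I,A}$, so by the Fréchet–Stein formalism of Schneider–Teitelbaum \cite{ST1} (invoked in the noncommutative setting via the remark following the definition of coadmissible modules), we have $R^1\varprojlim_{s_I\to 0_I^+} K_{[s_I,r_I]} = 0$. Taking the inverse limit of the short exact sequences
\[
0 \longrightarrow K_{[s_I,r_I]} \longrightarrow \bigl(\Pi_{\mathrm{an},[s_I,r_I],I,A}\bigr)^{n} \xrightarrow{\ \Phi_{[s_I,r_I]}\ } M_{[s_I,r_I]} \longrightarrow 0
\]
then yields surjectivity of $\Phi$, which is precisely the claim that $\mathbf{e}_1,\dots,\mathbf{e}_n$ generate $M$.

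The main obstacle is not the surjectivity mechanism per se, but verifying that the higher-vanishing of $R^1\varprojlim$ for coherent sheaves (and the resulting exactness of the limit functor on sequences of coherent sheaves over the Fréchet–Stein algebra $\Pi_{\mathrm{an},r_{I,0},I,A}$) transfers from the commutative situation of Proposition \ref{prop2.9}(III) to the present noncommutative one. This is guaranteed once one knows that $\Pi_{\mathrm{an},r_{I,0},I,A}$ is Fréchet–Stein as a noncommutative ring in the sense of \cite[Chapter 3]{ST1}, which in turn is exactly the content of Assumption \ref{assumption6.12} together with the covering $\{[s_{I,n},r_{I,0}]\}_{n\ge 1}$ already used earlier. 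Once this is granted, the argument is otherwise formally identical to the commutative proof, the only notational care being to keep the tensor products on the left (as in Definition \ref{definition6.4}) and to interpret the free module $(\Pi_{\mathrm{an},r_{I,0},I,A})^n$ as a free left module of rank $n$.
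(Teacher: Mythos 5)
Your proof is correct and takes essentially the same approach as the paper's: construct the map $\Phi$ from a free module of rank $n$ to $M$ using the given elements, observe that its base change $\Phi_{[s_I,r_I]}$ to each $\Pi_{\mathrm{an},[s_I,r_I],I,A}$ is surjective by hypothesis, and pass to the inverse limit using the vanishing of $R^1\varprojlim$ for coherent sheaves coming from the Fr\'echet--Stein structure. Your version spells out what the paper leaves implicit — naming the kernel sheaf $\{K_{[s_I,r_I]}\}$, noting it is coherent because of flatness of the transition maps in the Fr\'echet--Stein structure, and writing the resulting short exact sequences — but the mechanism and the appeal to Assumption \ref{assumption6.12} plus Schneider--Teitelbaum formalism are identical.
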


\begin{proof}
As in \cite[Lemma 2.1.11]{KPX}, we use the set of elements to present a map from the free module generated by them to the global section, then after base change to each $\Pi_{\mathrm{an},[s_I,r_I],I,A}$ we do have that the map is corresponding presentation over $\Pi_{\mathrm{an},[s_I,r_I],I,A}$, then this will give rise to the corresponding presentation for the global section since we have the vanishing of the derived inverse limit functor.	
\end{proof}

\begin{lemma} \mbox{\bf{(After KPX \cite[Lemma 2.1.12]{KPX})}} \label{lemma6.16}
Let $H$ be a noncommutative Banach ring, and consider some finite module $M$ over $H$ generated by $\{f_1,...,f_n\}$ and any another set $\{f'_1,...,f'_n\}$ of elements such that $f_i-f_i'$ could be made sufficiently small uniformly for each $i=1,...,n$. Then we have that the set $\{f'_1,...,f'_n\}$ could also finitely generate the module $M$.	
\end{lemma}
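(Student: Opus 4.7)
The plan is to mimic the classical perturbation argument for generating sets of finitely generated modules over Banach rings, implemented here on the left: I will equip $M$ with the quotient norm coming from the surjection $\phi\colon H^n\twoheadrightarrow M$, $e_i\mapsto f_i$, namely $\|m\|_M:=\inf\bigl\{\max_i\|h_i\|\;:\;\sum_i h_i f_i=m\bigr\}$, and quantify the assumption ``$f_i-f_i'$ sufficiently small uniformly'' as $\|f_i-f_i'\|_M<\delta$ for all $i$, where $\delta$ is chosen below in terms of $n$. With this setup every $m\in M$ admits, for any fixed $\epsilon_0>0$, a lift $(h_1^{(0)},\dots,h_n^{(0)})\in H^n$ with $\max_i\|h_i^{(0)}\|\leq(1+\epsilon_0)\|m\|_M$, and the task becomes showing that $\phi'\colon H^n\to M$, $e_i\mapsto f_i'$, is surjective.

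The key rewriting is $m=\sum_i h_i^{(0)}f_i=\sum_i h_i^{(0)}f_i'+m_1$, where $m_1:=\sum_i h_i^{(0)}(f_i-f_i')$. Submultiplicativity of $\|\cdot\|$ and uniform smallness give $\|m_1\|_M\leq n(1+\epsilon_0)\delta\,\|m\|_M$, so by choosing $\delta$ so that $\lambda:=n(1+\epsilon_0)\delta<1$ one gets a strict contraction. I then iterate: having produced $m_k\in M$ with $\|m_k\|_M\leq\lambda^k\|m\|_M$, pick a lift $(h_1^{(k)},\dots,h_n^{(k)})$ with $\max_i\|h_i^{(k)}\|\leq(1+\epsilon_0)\|m_k\|_M\leq(1+\epsilon_0)\lambda^k\|m\|_M$, and set $m_{k+1}:=m_k-\sum_i h_i^{(k)}f_i'=\sum_i h_i^{(k)}(f_i-f_i')$, which again satisfies the bound $\|m_{k+1}\|_M\leq\lambda\|m_k\|_M$.

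The partial sums $H_i^{(N)}:=\sum_{k=0}^{N}h_i^{(k)}$ then form Cauchy sequences in $H$, because the norms of their tails are dominated by a convergent geometric series with ratio $\lambda<1$; here the fact that $H$ is a Banach ring enters decisively, giving limits $H_i^{(\infty)}\in H$. Telescoping and passing to the limit in $M$ (legitimate once one knows $\|m-\sum_{k=0}^{N}\sum_i h_i^{(k)}f_i'\|_M=\|m_{N+1}\|_M\to 0$) yields $m=\sum_i H_i^{(\infty)}f_i'$, so $m$ lies in the left $H$-submodule generated by $f_1',\dots,f_n'$. Since $m\in M$ was arbitrary, this proves the lemma.

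The step that requires genuine care is making precise the norm on $M$ and the existence of almost-norm-minimizing lifts, since we are in a noncommutative setting and $M$ is only assumed ``finite'' rather than explicitly Banach: I would equip $M$ with the quotient topology coming from $\phi$ (which automatically turns $M$ into a Banach left $H$-module when $H^n$ is endowed with the max norm, given that finitely generated submodules of Banach left $H$-modules carry a canonical Banach structure in this setting, compatible with the one used elsewhere in the paper, cf.\ the noetherian Banach framework of \cref{example5.1}), and interpret the clause ``sufficiently small'' precisely as $\|f_i-f_i'\|_M<\delta$ with $\delta<1/n$ after absorbing the slack $1+\epsilon_0$. Modulo this topological bookkeeping — which is the only real obstacle, and is standard once the quotient norm has been installed — the geometric series argument above carries through verbatim in the noncommutative case, because we only multiply $H$-scalars on the left of the $f_i'$ and $(f_i-f_i')$.
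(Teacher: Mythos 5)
Your argument is correct, but it runs the classical perturbation idea in a somewhat different register from the paper's. The paper's proof (following KPX Lemma 2.1.12) works at the matrix level: it writes $f_i - f_i' = \sum_k G_{i,k} f_k$, invokes the open mapping theorem for the surjection $H^n \twoheadrightarrow M$ to guarantee that $G$ can be chosen with uniformly small entries, and then inverts $1+G$ in $M_n(H)$ by the Neumann series, so that each $f_i$ is a left $H$-combination of the $f_j'$ and the latter generate. You avoid the matrix packaging entirely: you equip $M$ with the quotient norm and run a contraction iteration directly on an arbitrary element $m\in M$, summing the coefficient series inside the Banach ring $H$. These are two expressions of the same geometric series; the paper's version inverts a single $n\times n$ matrix once and for all and is more economical, while yours is a bare-hands, element-by-element implementation of the same Neumann expansion. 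Both arguments turn on the same topological input, namely that the quotient seminorm on $M$ is separated (equivalently that $\ker\phi$ is closed in $H^n$), which is what the paper's open-mapping appeal and your reference to \cref{example5.1} are each supplying; your phrase ``automatically turns $M$ into a Banach module'' is slightly too casual --- the real content is that $\ker\phi$ is finitely generated over the (noetherian) Banach ring $H$ and hence closed --- but the appeal to the ambient noetherian Banach framework makes the step acceptable. A very minor remark: in the nonarchimedean setting the factor $n$ in $\lambda := n(1+\epsilon_0)\delta$ is superfluous because the max norm already controls sums without it, though including it does no harm.
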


\begin{proof}
As in \cite[Lemma 2.1.12]{KPX}, one looks at the corresponding matrix $G_{i,j}$ defined by $f_i-f_i'=\sum_{k}G_{i,k}f_k$. Then by using the open mapping theorem, one can control the norm of each entry $G_{i,j}$ to make them small enough below a uniform small bound, which makes the corresponding matrix $1+G$ invertible.	
\end{proof}

\begin{proposition} \mbox{\bf{(After KPX \cite[Proposition 2.1.13]{KPX})}} \label{prop1}
Assume now $I$ consists of exactly one index, consider any arbitrary coherent sheaf $\{M_{[s_I,r_I]}\}$ over $\Pi_{\mathrm{an},r_{I,0},I,A}$ with the global section $M$. Then we have the following corresponding statements:\\
I.	The coherent sheaf $\{M_{[s_I,r_I]}\}$ is uniformly finitely generated iff the global section $M$ is finitely generated;\\
II. The coherent sheaf $\{M_{[s_I,r_I]}\}$ is uniformly finitely presented iff the global section $M$ is finitely presented.
\end{proposition}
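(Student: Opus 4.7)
The plan is to follow closely the one-dimensional strategy of \cite[Proposition 2.1.13]{KPX}, adapting it to noncommutative coefficients. Since $I$ is a singleton here, one need not invoke the more delicate $2^{|I|}$-decomposition of \cite[2.6.14--2.6.17]{KL2} used in the commutative multidimensional argument; the classical two-family trick suffices, and the Fr\'echet--Stein structure (as registered in the setting before \cref{definition6.4} under \cref{assumption6.12}) provides the analytic input previously supplied by \cite{ST1}.

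First I would dispose of the easy directions. If $M$ is finitely generated over $\Pi_{\mathrm{an},r_{I,0},I,A}$ by $\{f_1,\ldots,f_n\}$, then directly from the definition of a coherent sheaf each restriction $\{f_1,\ldots,f_n\}$ generates $M_{[s_I,r_I]}$, giving uniform finite generation with bound $n$; similarly, a finite presentation $\Pi_{\mathrm{an},r_{I,0},I,A}^{m}\to\Pi_{\mathrm{an},r_{I,0},I,A}^{n}\to M\to 0$ base-changes to compatible presentations of the same $(m,n)$-type at every member of the covering. This handles the ``$\Leftarrow$'' implications of both I and II.

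For the substantive ``$\Rightarrow$'' direction of I, I would use the density of the global section in each $M_{[s_I,r_I]}$ combined with the perturbation \cref{lemma6.16}. Fix an admissible cover $\{[s_{I,i},r_{I,i}]\}_{i\geq 0}$ of $(0,r_{I,0}]$ which is uniformly $n$-finitely generated; since $|I|=1$, refine it into two subfamilies of pairwise disjoint intervals by an even/odd index selection. Start with a generating set $\mathbf{e}_{0,1},\ldots,\mathbf{e}_{0,n}$ for $M_{[s_{I,0},r_{I,0}]}$, lifted into the global section by density. Inductively at stage $i$, produce an element $x_i$ of $\Pi_{\mathrm{an},r_{I,0},I,A}$ that is a unit times something topologically nilpotent over the previously treated intervals and conversely represents a unit over $[s_{I,i},r_{I,i}]$ (such $x_i$ exists by the polyannulus function theory and density); then set $\mathbf{e}_{i+1,j}:=\mathbf{e}_{i,j}+x_i^{N_i}\widetilde{\mathbf{f}}_{i,j}$ where $\widetilde{\mathbf{f}}_{i,j}$ are global lifts of local generators of $M_{[s_{I,i},r_{I,i}]}$ and $N_i$ is chosen large enough to force Fr\'echet convergence of the sequence in each $M_{[s_{I,k},r_{I,k}]}$. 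The limits $\mathbf{e}_{\infty,1},\ldots,\mathbf{e}_{\infty,n}$ then lie in $M$, and \cref{lemma6.16} applied at each level ensures they generate $M_{[s_{I,k},r_{I,k}]}$; the preceding Lemma (generators over all members imply generators of the global section) concludes that they generate $M$.

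For II, assume uniform $(m,n)$-finite presentation. Part I already yields a surjection $\Pi_{\mathrm{an},r_{I,0},I,A}^{n}\to M$; the kernel sheaf, by \cref{assumption6.12} and the uniform bound on relations at each interval, is itself uniformly finitely generated, hence by I its global section is finitely generated, producing the desired finite presentation of $M$. The main obstacle is verifying that all the Fr\'echet--Stein analytic tools used in the commutative \cref{prop1}---density of the global section, vanishing of $R^{1}\varprojlim$, and the open mapping theorem underlying \cref{lemma6.16}---continue to hold for noncommutative coefficients; this is exactly where the noetherian hypothesis of \cref{assumption6.12} and the Fr\'echet--Stein formalism of \cite[Chapter 3]{ST1} (which admits noncommutative generalization under noetherianness of each Banach quotient) are essential. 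A secondary bookkeeping point is maintaining left module conventions consistently throughout all the base changes, but this has already been fixed in \cref{definition6.4}.
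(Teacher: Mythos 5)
Your proposal is correct in substance, but the hard direction of I is handled by a genuinely different construction than what the paper does in the noncommutative Section~5. The paper follows \cite[Proposition 2.1.13]{KPX} very literally: after splitting the admissible covering into even/odd subfamilies (which, since $|I|=1$, is exactly the $2^{|I|}=2$ family decomposition of \cite[2.6.14--2.6.17]{KL2}), it forms the global generators as a \emph{single convergent weighted series}
$\mathbf{e}_j=\sum_{i\geq 0}a_{\delta_i,j}T^{\lambda_{I,\delta_i,j}}\mathbf{e}_{\delta_i,j}$,
with scalar monomial weights $a_{\delta_i,j}T^{\lambda_{I,\delta_i,j}}$ chosen inductively to satisfy a short list of explicit norm inequalities, and then invokes the perturbation \cref{lemma6.16} to see that $\mathbf{e}_j$ still generates each $M_{[s_{I,\delta_i},r_{I,\delta_i}]}$. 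You instead run the iterative cutoff construction $\mathbf{e}_{i+1,j}:=\mathbf{e}_{i,j}+x_i^{N_i}\widetilde{\mathbf{f}}_{i,j}$ with $x_i$ a ``partition-of-unity--like'' function built from density: this is precisely the style of argument the paper uses in its \emph{commutative} Section~2 version of the proposition (the KL2 2.6.17 iteration), not in the noncommutative Section~5 version. The two routes are essentially equivalent (the paper's series is the telescoping limit of your iteration with the explicit choice $x_i^{N_i}=a_{\delta_i,j}T^{\lambda_{I,\delta_i,j}}$), but yours trades the explicit monomial estimates for an abstract existence statement about the cutoff $x_i$; the paper's route is more concrete and gives a visibly scalar (hence central) perturbation coefficient, which is a minor but genuine convenience in the noncommutative setting since it guarantees the perturbation term sits where \cref{lemma6.16} can digest it without left/right bookkeeping. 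You should make explicit that your $x_i$ can be taken in $\Pi_{\mathrm{an},r_{I,0},I,\mathbb{Q}_p}$ (hence central in $\Pi_{\mathrm{an},r_{I,0},I,A}$) for the same reason.

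One small imprecision to flag: your ``easy'' direction of I is not quite ``directly from the definition of a coherent sheaf''; it uses the noncommutative analog of \cref{prop2.9}(II), i.e.\ that $M_{[s_I,r_I]}$ is recovered as the base change $\Pi_{[s_I,r_I],I,A}\otimes_{\Pi_{\mathrm{an},r_{I,0},I,A}}M$, which is a Fr\'echet--Stein theorem rather than a definition. This is implicitly what the paper also relies on when calling that direction ``straightforward,'' so it is not a gap, but the provenance is worth stating. Your reduction of II to I matches the paper's, which packages the kernel-of-surjection argument as an appeal to the noncommutative analog of \cref{lemma2.15}, citing \cite[Lemma 4.54]{L}; your phrasing via the uniformly finitely generated kernel sheaf is the unpacked version of the same step.
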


\begin{proof}
One could derive the the second statement by using the analog of \cref{lemma2.15} (see \cite[Lemma 4.54]{L}) and the first statement. Therefore it suffices for us to prove the first statement. Indeed one could see that one direction in the statement is straightforward, therefore it suffices now to consider the other direction. So now we assume that the coherent sheaf $\{M_{[s_I,r_I]}\}$ is uniformly finitely generated for some $n$ for instance. Then we are going to show that one could find corresponding generators for the global section $M$ from the given uniform finitely generatedness. 

Now by definition we consider an admissible covering taking the form of $\{[s_I,r_I]\}$. In \cite[Proposition 2.1.13]{KPX} the corresponding argument goes by using special decomposition of the covering in some refined way, mainly by reorganizing the initial admissible covering into two single ones, where each of them will be made into the one consisting of those well-established intervals with no overlap. We can also use instead the corresponding upgraded nice decomposition of the given admissible covering in our setting, which is extensively discussed in \cite[2.6.14-2.6.17]{KL2}. Namely as in \cite[2.6.14-2.6.17]{KL2} there is a chance for us to extract $2^{|I|}$ families $\{\{[s_{I,\delta_i},r_{I,\delta_i}]\}_{i=0,1,...}\}_{\{1_i\},\{2_i\},...,\{N_i\},}$ ($N=2^{|I|}$) of intervals, where there is a chance to have the situation where for each such family the corresponding intervals in it could be made to be disjoint in pairs.

Then for a chosen family $\{[s_{I,\delta_i},r_{I,\delta_i}]\}_{i=0,1,...}$, we now for each $i$ involved we consider the corresponding generating set $\mathbf{e}_{\delta_i,1},...,\mathbf{e}_{\delta_i,n}$ where $i=0,1,...$ and $n\in \{1,2,...,n\}$. Note that these should be generated over the integral rings, which is not the case when we work with the infinite radii. In the latter situation one should first invert $T_I$ then consider some large powers which could eliminate the different denominators. Then by the previous results we know that one could actually arrange these generators to be global sections by the density of the global sections in the sections over any chosen interval. Now we consider the following step where we for each $j=1,...,n$ form the following sum with coefficients to be chosen:
\begin{displaymath}
\mathbf{e}_j:=\sum_{i=0}^\infty a_{\delta_i,j}T_I^{\lambda_{I,\delta_i,j}}\mathbf{e}_{\delta_i,j}.
\end{displaymath}
Note that in our situation actually $I$ is a singleton, so we can actually regard $\delta_i$ as $2i$.

To check the suitable convergence in this situation we consider the following estimate with bounds given as below to be chosen from the conditions:
\begin{align}
\forall i''<\delta_i-1, \left\|a_{\delta_i,j}T^{\lambda_{I,\delta_i,j}}\mathbf{e}_{\delta_i,j}\right\|_{[s_{I,i''},r_{I,i''}]}\leq p^{-i},\\
\forall i'<i, \left\|a_{\delta_i,j}T^{\lambda_{I,\delta_i,j}}\mathbf{e}_{\delta_i,j}/(a_{\delta_{i'},j}T^{\lambda_{I,\delta_{i'},j}})\right\|_{[s_{I,\delta_{i'}},r_{I,\delta_{i'}}]} \leq \varepsilon_{\delta_{i'},j}\\
\forall i'<i, \left\|a_{\delta_{i'},j}T^{\lambda_{I,\delta_{i'},j}}\mathbf{e}_{\delta_{i'},j}/(a_{\delta_i,j}T^{\lambda_{I,\delta_i,j}})\right\|_{[s_{I,\delta_{i}},r_{I,\delta_{i}}]} \leq \varepsilon_{\delta_i,j}.	
\end{align}
As in the commutative situation in \cite[Proposition 2.1.13]{KPX} in order to get the desired quantity expressed in the estimate above we seek the following situation from what we presented above:
\begin{align}
\forall i''<\delta_i-1, \left\|a_{\delta_i,j}T^{\lambda_{I,\delta_i,j}}\mathbf{e}_{\delta_i,j}\right\|_{[s_{I,i''},r_{I,i''}]}\leq p^{-i},\\
\forall i'<i, \left\|a_{\delta_i,j}T^{\lambda_{I,\delta_i,j}}\mathbf{e}_{\delta_i,j}\right\|_{[s_{I,\delta_{i'}},r_{I,\delta_{i'}}]} \leq \varepsilon_{\delta_{i'},j} \left\|a_{\delta_{i'},j}T^{\lambda_{I,\delta_{i'},j}}\right\|_{[s_{I,\delta_{i'}},r_{I,\delta_{i'}}]}\\
\forall i'<i, \left\|a_{\delta_{i'},j}T^{\lambda_{I,\delta_{i'},j}}\mathbf{e}_{\delta_{i'},j}\right\|_{[s_{I,\delta_i},r_{I,\delta_i}]} \leq \varepsilon_{\delta_i,j} \left\|a_{\delta_i,j}T^{\lambda_{I,\delta_i,j}}\right\|_{[s_{I,\delta_i},r_{I,\delta_i}]}.	
\end{align} 	
Then what we can achieve up to now is to consider sufficiently large powers on the both sides in order to make the inequalities hold, which is possible due to our initial choice on the intervals, where the first statement could also be proved in the similar way (also see \cite[Proposition 2.1.13]{KPX}). To be more precise, one chooses the corresponding coefficients and the powers of the monomials by induction, where the initial coefficient for $i=0$ is set to be 1 with order 0. Then one enlarges the corresponding difference between the power of the monomials on the both sides of the inequalities above while evaluating the first and second inequalities at the left most radii and evaluating the last one at right most radii. Then actually this will prove the statement since up to some powers the difference between $\mathbf{e}_{\delta_i,j}$ and $\mathbf{e}_j$ will have sufficiently small norm, which could give rise to suitable finitely generatedness through the \cref{lemma6.16}. Finally one could then finish the whole proof in the odd situation in the same fashion.

\end{proof}

\begin{proposition} \mbox{\bf{(After KPX \cite[Proposition 2.2.7]{KPX})}}
In the situation where $I$ consists of exactly one element. We have that natural functor from coherent $\varphi_I$-modules (in the sense of the previous \cref{definition6.4}) over $\Pi_{\mathrm{an},r_{I,0},I,A}(\pi_{K_I})$ to the corresponding coherent $\varphi_I$-bundles (in the sense of the previous \cref{definition6.4}) is an equivalence.	
\end{proposition}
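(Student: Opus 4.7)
The plan is to mirror the argument in \cite[Proposition 2.2.7]{KPX} and its reproduction in \cref{proposition3.5}, but with the noncommutative noetherian Fr\'echet-Stein theory developed above replacing the commutative theory, and with \emph{finitely presented} in place of \emph{finite projective} since we are working with coherent $\varphi_I$-modules in the sense of \cref{definition6.4}. Faithfulness is immediate: if $f\colon M\to N$ is a morphism of coherent $\varphi_I$-modules over $\Pi_{\mathrm{an},r_{I,0},I,A}(\pi_{K_I})$ whose restriction to the associated coherent bundle vanishes, then $f$ vanishes after base change to every $\Pi_{[s_I,r_I],I,A}(\pi_{K_I})$, hence vanishes on the inverse limit of the $M_{[s_I,r_{I,0}]}$, which by the noncommutative Fr\'echet-Stein property recalled in the remark after \cref{assumption6.12} recovers $M$. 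The same density statement plus the vanishing of the derived inverse limit gives fullness: a compatible family of morphisms $\{f_{[s_I,r_I]}\}$ between bundles assembles into a morphism of inverse limits, which inherits compatibility with the Frobenius actions from its components.

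For essential surjectivity, suppose we are given a $\varphi_I$-bundle $\{M_{[s_I,r_I]}\}$ over $\Pi_{\mathrm{an},r_{I,0},I,A}(\pi_{K_I})$ with Frobenius isomorphisms $\varphi_\alpha^\ast M_{[s_I,r_I]}\overset{\sim}{\to} M_{\ldots,[s_\alpha/p,r_\alpha/p],\ldots}$. I define $M:=\varprojlim_{s_I\to 0_I^+} M_{[s_I,r_{I,0}]}$ as usual and wish to show that $M$ is finitely presented over $\Pi_{\mathrm{an},r_{I,0},I,A}(\pi_{K_I})$ and that $M\otimes_{\Pi_{\mathrm{an},r_{I,0},I,A}(\pi_{K_I})}\Pi_{[s_I,r_I],I,A}(\pi_{K_I})\cong M_{[s_I,r_I]}$. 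The key device, exactly as in \cref{proposition3.5}, is that the Frobenius operators let us translate finite presentation at a single multi-interval into \emph{uniform} finite presentation across an admissible covering: fixing a presentation of $M_{[s_{I,0},r_{I,0}]}$ by, say, $m$ generators and $n$ relations, the isomorphisms $\varphi_\alpha^\ast M_{[s_I,r_I]}\cong M_{\ldots,[s_\alpha/p,r_\alpha/p],\ldots}$ combined with semilinear base change produce presentations with the same numerical invariants on each translate, and since the translates form an admissible covering of $(0,r_{I,0}]$ in the sense of the uniform finiteness definition preceding \cref{prop1} (noncommutative version), we obtain uniform $(m,n)$-finite presentedness of the bundle.

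Now the noncommutative version of \cref{prop1} established just above (the case $|I|=1$, which is exactly our hypothesis) turns uniform finite presentedness of the bundle into finite presentedness of the global section $M$ over $\Pi_{\mathrm{an},r_{I,0},I,A}(\pi_{K_I})$, together with the comparison isomorphism $M\otimes\Pi_{[s_I,r_I],I,A}(\pi_{K_I})\overset{\sim}{\to}M_{[s_I,r_I]}$ recovered from the higher vanishing of the derived inverse limit. The Frobenius isomorphisms $\varphi_\alpha^\ast M_{[s_I,r_I]}\cong M_{\ldots,[s_\alpha/p,r_\alpha/p],\ldots}$ are compatible with the transition maps of the inverse system, so they descend to give the required semilinear isomorphisms $\varphi_\alpha^\ast M\cong M$ at the level of global sections; this upgrades $M$ to a coherent $\varphi_I$-module in the sense of \cref{definition6.4}. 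The functor so constructed is inverse to the restriction functor up to natural isomorphism, again by the density and vanishing statements cited above.

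The main obstacle, and the one that forces the restriction to $|I|=1$, is precisely the appeal to the noncommutative analogue of \cref{prop1}: the passage from uniformly finite presentedness of a coherent sheaf to finite presentedness of its global section relied on the single-variable decomposition argument in the proof given above, where the admissible covering can be refined into two families of pairwise-disjoint intervals. In the multivariate noncommutative setting the $2^{|I|}$-family refinement of \cite[2.6.14--2.6.17]{KL2} must be combined with a noncommutative bump-function construction analogous to the one used in the proof of \cref{prop1} in the commutative case; making this work uniformly with respect to the noncommutative Gauss norm and the Frobenius translates is the non-trivial point that I expect to require separate treatment.
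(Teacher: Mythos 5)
Your proposal is correct and takes essentially the same route as the paper: the paper's proof is the one-line instruction ``See \cref{proposition3.5}, by using the previous proposition,'' and your write-up simply unpacks that instruction --- transport finite presentation through the Frobenius isomorphisms to get uniform finite presentedness of the bundle, then apply the noncommutative analogue of \cref{prop1} (which in the paper is only established for $|I|=1$, exactly matching your diagnosis of why the hypothesis is needed) to conclude that the global section is finitely presented, with the comparison and faithfulness/fullness statements following from the Fr\'echet--Stein formalism and derived-limit vanishing.
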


\begin{proof}
See \cref{proposition3.5}, by using the previous proposition.	
\end{proof}

\begin{definition} \mbox{\bf{(After KPX \cite[Notation 4.1.2]{KPX})}}
Now we consider the following derived categories. All modules will be assumed to be left. The first one is the sub-derived category consisting of all the objects in $\mathbb{D}_\mathrm{left}(A)$ which are quasi-isomorphic to those bounded above complexes of finite projective modules over the ring $A$. We denote this category (which is defined in the same way as in \cite[Notation 4.1.2]{KPX}) by $\mathbb{D}^-_\mathrm{perf,left}(A)$. Over the larger ring $\Pi_{\mathrm{an},\infty,I,A}(\Gamma_{K_I})$ we also have the sub-derived category of $\mathbb{D}_\mathrm{left}(\Pi_{\mathrm{an},\infty,I,A}(\Gamma_{K_I}))$ consisting of all those objects in $\mathbb{D}_\mathrm{left}(\Pi_{\mathrm{an},\infty,I,A}(\Gamma_{K_I}))$ which are quasi-isomorphic to those bounded above complexes of finite projective modules now over the ring $\Pi_{\mathrm{an},\infty,I,A}(\Gamma_{K_I})$. We denote this by $\mathbb{D}^-_\mathrm{perf,left}(\Pi_{\mathrm{an},\infty,I,A}(\Gamma_{K_I}))$. Similar we define the bounded derived categories $\mathbb{D}^\flat_\mathrm{perf,left}(A)$, $\mathbb{D}^\flat_\mathrm{perf,left}(\Pi_{\mathrm{an},\infty,I,A}(\Gamma_{K_I}))$. We then use the notation as following $D_\mathrm{perf,left}(A)$, $D_\mathrm{perf,left}(\Pi_{\mathrm{an},\infty,I,A}(\Gamma_{K_I}))$, $D^-_\mathrm{perf,left}(A)$, $D^-_\mathrm{perf,left}(\Pi_{\mathrm{an},\infty,I,A}(\Gamma_{K_I}))$, as well as $D^\flat_\mathrm{perf,left}(A)$, $D^\flat_\mathrm{perf,left}(\Pi_{\mathrm{an},\infty,I,A}(\Gamma_{K_I}))$ to denote the $(\infty, 1)$-enhancement of these derived categories.
\end{definition}

\indent Then we will investigate the complexes attached to any finite projective or coherent $(\varphi_I,\Gamma_I)$-module $M$ over $\Pi_{\mathrm{an},\mathrm{con},I,A}(\pi_{K_I})$:
\begin{displaymath}
C^\bullet_{\varphi_I,\Gamma_I}(M), C^\bullet_{\psi_I,\Gamma_I}(M),C^\bullet_{\psi_I}(M),	
\end{displaymath}
which are living inside the corresponding derived categories:
\begin{displaymath}
\mathbb{D}_\mathrm{left}(A),\mathbb{D}_\mathrm{left}(A),\mathbb{D}_\mathrm{left}(\Pi_{\mathrm{an},\infty,I,A}(\Gamma_{K_I})).	
\end{displaymath}

\begin{conjecture} \label{conjecture5.22}
With the notation and setting up as above we have $h^\bullet(C_{\psi_I}(M))$ are coadmissible over $\Pi_{\mathrm{an},\infty,I,A}(\Gamma_{K_I})$.	
\end{conjecture}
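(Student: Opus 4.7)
The plan is to adapt the strategy of \cref{proposition4.7} to the noncommutative coefficient situation, exploiting \cref{assumption6.12} to guarantee noetherianity of each Robba ring over a fixed multi-interval. First I would reduce by simultaneous induction over the Galois tower (as in the proof of \cref{proposition4.7}) to the case where every $K_\alpha = \mathbb{Q}_p$, so that the complex $C^\bullet_{\psi_I}(M)$ is built from the operators $\psi_1-1, \psi_2-1$ acting on a finite projective left $\Pi_{\mathrm{an},\mathrm{con},I,A}(\pi_{K_I})$-module. The boundedness is immediate from the definition, so the substance is finiteness at each degree.

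Next I would reinterpret $C^\bullet_{\psi_I}(M)$, after picking a model $M_{r_I}$ over $\Pi_{\mathrm{an},r_{I,0},I,A}(\pi_{K_I})$, as a complex over $\Pi_{\mathrm{an},r_{I,0},I,\Pi_{\mathrm{an},\infty,I,A}(\Gamma_{K_I})}(\pi_{K_I})$, using the noncommutative analog of \cref{proposition3.5} together with the Fréchet–Stein structure on $\Pi_{\mathrm{an},\infty,I,A}(\Gamma_{K_I})$ (which uses \cref{assumption6.12} to supply noetherian transition maps). By truncating to each affinoid $\Pi_{\mathrm{an},[t_I,\infty],I,A}(\Gamma_{K_I})$ in the Fréchet–Stein presentation, the problem is reduced to proving finite generation of cohomology of the complex restricted to each such layer.

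Then, for each layer, I would set up the restriction diagram over a pair of nested multi-intervals $[s'_I,r'_I] \subset [s''_I,r''_I]$ exactly as in the diagrams displayed in the proof of \cref{proposition4.7}, producing maps of $\psi_I$-complexes
\[
C^\bullet_{\psi_I}(M_{[s'_I,r'_I]}) \longrightarrow C^\bullet_{\psi_I}(M_{[s''_I,r''_I]}).
\]
Invariance of cohomology under changing the multi-interval would be checked, as in \cref{proposition4.7}, via a Yoneda-extension argument over the twisted polynomial ring in $\psi_1$ (and then in $\psi_2$), now carried out over left modules using the noetherianity assumed in \cref{assumption6.12}. By the density and interpolation results recorded for the noncommutative Fréchet–Stein framework, these quasi-isomorphisms glue along the Fréchet presentation.

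The main obstacle will be the Cartan–Serre–Kiehl input: the key fundamental lemma (\cite[Satz 5.2]{Kie1}, \cite[Lemma 1.10]{KL3}) concerning completely continuous perturbations must be applied in the noncommutative Banach setting, where it is not literally stated in the sources we cite. The expected resolution is that the argument only requires Banach left-module structure plus a completely continuous map, both of which are available under \cref{assumption6.12}, so one obtains that the inclusion-induced map on cohomology has kernel and cokernel of finite type. Organizing these finite-type pieces over all $[t_I,\infty]$ yields a coherent sheaf in the sense of the noncommutative Fréchet–Stein theory, which is precisely the coadmissibility over $\Pi_{\mathrm{an},\infty,I,A}(\Gamma_{K_I})$ asserted by the statement.
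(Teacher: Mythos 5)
The statement you are attempting to prove is explicitly labelled a \emph{conjecture} in the paper; the paper offers no proof, only the subsequent Remark which says that the Cartan--Serre approach of \cite[Lemma 1.10]{KL3} ``may possibly provide proof of the conjectures above.'' Your proposal is therefore not being measured against a proof in the source, and you should be aware that what you have written is a heuristic transcription of that remark, not a closed argument.

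That said, let me engage with the substance. You correctly transpose the scaffolding from the commutative \cref{proposition4.7}: reduction to $K_\alpha=\mathbb{Q}_p$, reinterpretation of $C^\bullet_{\psi_I}(M)$ over $\Pi_{\mathrm{an},r_{I,0},I,\Pi_{\mathrm{an},[t_I,\infty],I,A}(\Gamma_{K_I})}(\pi_{K_I})$, restriction to nested multi-intervals, and the Yoneda-extension comparison for interval independence. But two steps fail or are unavailable in the noncommutative setting as the paper has developed it. First, you invoke ``the noncommutative analog of \cref{proposition3.5}''; in Section 5 the module/bundle equivalence and the uniform-finiteness criterion are only established under the hypothesis that $I$ is a singleton (see the propositions there, which explicitly read ``Assume now $I$ consists of exactly one index''). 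The paper's whole higher-dimensional finiteness machinery in the commutative case depends on the decomposition of admissible coverings in \cite[2.6.14--2.6.17]{KL2}, and the noncommutative version of that argument for $|I|\geq 2$ is not carried out; so the very equivalence you want to cite does not exist yet in the generality you need. Second, and more fundamentally, the Kiehl/Kedlaya--Liu fundamental lemma on completely continuous maps is stated and proved over commutative Banach (and affinoid) algebras, where the Fredholm theory of nuclear operators and the spectral-theoretic input are available. Over a noncommutative Banach ring $A$ satisfying \cref{assumption6.12} there is no reference, and you cannot simply declare that ``the argument only requires Banach left-module structure plus a completely continuous map'' --- the finiteness of kernel and cokernel of $1-u$ for $u$ completely continuous is precisely what needs to be reproved, and the standard arguments (e.g.\ the resolvent/Riesz theory) do not manifestly carry over. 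This is the exact point the paper flags as conjectural, and your proposal identifies the obstacle honestly but does not discharge it.

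In short: your plan matches the direction the paper hints at, but it contains the same open gap the paper admits to, plus an additional gap (the $|I|\geq 2$ module/bundle equivalence in the noncommutative setting) that you pass over without comment. As written, this is a research programme, not a proof, which is consistent with the paper's own treatment of the statement as a conjecture.
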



\begin{conjecture} \label{conjecture5.23}
With the notations above, we have that 
\begin{displaymath}
C^\bullet_{\varphi_I,\Gamma_{I}}(M)\in \mathbb{D}_\mathrm{perf,left}^-(A), C^\bullet_{\psi_I,\Gamma_{I}}(M)\in \mathbb{D}_\mathrm{perf,left}^-(A).
\end{displaymath}
\end{conjecture}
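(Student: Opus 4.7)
The plan is to mimic the strategy of Proposition 4.9 in the commutative case, but now insisting on Assumption 6.12 (left and right noetherianity of each $\Pi_{\mathrm{an},[s_I,r_I],I,A}$) at every step where the commutative argument used noetherianity, coherence, or flatness of completed tensor products. I would first establish the statement for the $\psi_I$-version $C^\bullet_{\psi_I,\Gamma_I}(M)$; the $\varphi_I$-version then follows from an analogue of the quasi-isomorphism $\Psi_I$ constructed in Section~4.1, whose proof needs only the invertibility computation $(\gamma_{n_\alpha}-1)$ on the twisted summands $(1+\pi_1)^{i_1}\varphi_1^{n_1}\varphi_2^{n_2}M_{[s_I,r_I]}^{\triangle_{\mathbb{Q}_p}}$ and therefore passes through to left modules over noncommutative $A$. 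A simultaneous induction in the fields $K_\alpha$ reduces everything to the case $K_\alpha=\mathbb{Q}_p$ for all $\alpha\in I$.

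Given a model $M_{[s_I,r_I]}$ that is finite projective (or coherent, in the sense of Definition 6.4) over some $\Pi_{\mathrm{an},[s_I,r_I],I,A}(\pi^I)$, I would form the continuous $\Gamma_{K_I}$-cochain complex $C^\bullet_{\mathrm{con}}(\Gamma_{K_I},M_{[s_I,r_I]})$ and then apply the $\psi_I$-totalisation as a double complex indexed by corners of the multi-interval, exactly as in the displayed diagram of Proposition 4.9. The key intermediate step is to show that the cohomology is independent of the multi-interval chosen; this is done by the Yoneda extension identification, writing $\psi_\alpha$-cohomology as $\mathrm{Ext}$-groups over a twisted polynomial ring with coefficients in the Robba ring, and then comparing the intermediate rings
\begin{displaymath}
\Pi_{\mathrm{an},[s'_1,r'_1]\times[s'_2,r'_2],I,A}(\pi_{K_I}),\quad \Pi_{\mathrm{an},[s''_1,r''_1]\times[s'_2,r'_2],I,A}(\pi_{K_I}),\quad \Pi_{\mathrm{an},[s''_1,r''_1]\times[s''_2,r''_2],I,A}(\pi_{K_I}).
\end{displaymath}
This is where Assumption 6.12 is used repeatedly, in the form of \cite[Proposition I.7.1.2]{LVO} for the graded rings, to make the Yoneda identifications valid for left modules over the noncommutative sections.

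With the cohomology groups under control across multi-intervals, I would pick two nested admissible boxes $[s'_I,r'_I]\subset [s''_I,r''_I]$ and obtain a diagram of complexes whose horizontal transition maps are completely continuous Banach module morphisms between left modules over $A$. At this point I would invoke the noncommutative version of Kiehl's fundamental finiteness lemma \cite[Satz 5.2]{Kie1} (together with the sharpened Cartan--Serre formulation of \cite[Lemma 1.10]{KL3}) to conclude that the cohomology at each degree is a finitely generated left $A$-module. Boundedness is immediate from the double complex structure. Finally, to upgrade from finite cohomology to perfectness in $\mathbb{D}^-_{\mathrm{perf,left}}(A)$, I would truncate the bounded-above complex and use that $A$ is left noetherian to resolve each cohomology group by finite projective left $A$-modules, assembling these into a perfect representative, exactly as in the standard truncation argument.

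The main obstacle I anticipate is the validity of the Cartan--Serre--Kiehl finiteness machinery for completely continuous morphisms of left Banach modules over a noncommutative noetherian affinoid $A$; in the commutative setting this is classical, but to the author's knowledge the noncommutative analogue has not been recorded in the literature in the exact generality we need, and a careful reworking of \cite[Satz 5.2]{Kie1} with left-module conventions is required. A secondary, more technical, obstacle is establishing the Yoneda--$\mathrm{Ext}$ comparison across multi-intervals uniformly in the noncommutative noetherian base; this presumably follows from \cite[Proposition I.7.1.2]{LVO} applied to an appropriate filtration on the twisted polynomial rings, but the verification is delicate and is precisely the kind of input this paper leaves as Conjecture 5.23.
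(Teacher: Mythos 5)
This statement is labeled a \emph{conjecture} in the paper, not a theorem, and the paper supplies no proof of it. The only guidance the paper gives is the remark immediately following \cref{conjecture5.24}: the author conjectures that the Cartan--Serre approach of \cite[Lemma 1.10]{KL3} holds in the noncommutative setting, and that this would yield proofs of \cref{conjecture5.22}, \cref{conjecture5.23}, \cref{conjecture5.24}. Your proposal reconstructs exactly that envisioned strategy — reduce to $K_\alpha = \mathbb{Q}_p$, pass from $\varphi_I$ to $\psi_I$ via the comparison morphism, show interval-independence of cohomology, apply a Kiehl-type finiteness lemma to a nest of boxes, then upgrade to perfectness using left noetherianity of $A$ — and you correctly identify the noncommutative Cartan--Serre/Kiehl machinery as the missing ingredient. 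Your assessment of the gap is exactly the one the paper itself records, so you have not proved the conjecture (nor does the paper claim to), but you have faithfully located where the argument breaks.

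One further caution beyond what you raise: the reduction from $C^\bullet_{\varphi_I,\Gamma_I}$ to $C^\bullet_{\psi_I,\Gamma_I}$ in the commutative case relied on the explicit decomposition of $M^{\triangle_I,\psi_1=0}$ into summands $(1+\pi_1)^{i_1}\varphi_1^{n_1}\varphi_2^{n_2}M_{[s_I,r_I]}^{\triangle}$ and the convergence of the geometric series inverting $1+(1+\pi_1)/\pi_1(\gamma_{n_1}-1)$. Over a noncommutative $A$, the decomposition itself is a module-level statement and should survive, but the norm estimates guaranteeing convergence of that series (and the independence of the operator norm from the left/right module structure) need to be re-checked against the left-module conventions and the noncommutative Gauss norm of $\Pi_{\mathrm{an},[s_I,r_I],I,A}$; the paper does not address this, and it is a second place where the conjectural status is genuine rather than merely presentational.
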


\begin{conjecture} \label{conjecture5.24}
Let $A_\infty(H)$ be the Fr\'echet-Stein algebra attached to any compact $p$-adic Lie group $H$ which could be written as inverse limit of noncommutative noetherian Banach affinoid algebras over $\mathbb{Q}_p$. Suppose $\mathcal{F}$ is a family of finite projective left modules over the Robba ring $\Pi_{\mathrm{an},\mathrm{con},I,A_\infty(H)}(\pi_{K_I})$, where we use the same notation to denote the global section and we assume that the global section is finite projective, carrying mutually commuting actions of $(\varphi_I,\Gamma_{K_I})$. Then we have that $C^\bullet_{\psi_I}(\mathcal{F})$ lives in $\mathbb{D}^\flat_{\mathrm{perf,left}}(\Pi_{\mathrm{an},\infty,I,A_\infty(H)}(\Gamma_{K_I}))$ and $C^\bullet_{\varphi_I,\Gamma_I}(\mathcal{F})$ lives in $\mathbb{D}^\flat_{\mathrm{perf,left}}(A_\infty(H))$. 
\end{conjecture}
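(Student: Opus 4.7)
The plan is to mirror the strategy used in the commutative Fr\'echet-Stein setting (compare Propositions \ref{proposition4.7} and \ref{proposition4.9} together with the Fr\'echet-Stein deformation results at the end of Section \ref{section4.1}), lifting it to the noncommutative coefficient ring $A_\infty(H)$ by exploiting its presentation as an inverse limit of noncommutative noetherian Banach affinoid algebras over $\mathbb{Q}_p$. Write $A_\infty(H) = \varprojlim_k A_\infty(H)_k$ as a Fr\'echet-Stein algebra, so that $\mathcal{F}$ corresponds to a compatible system $\{\mathcal{F}_k\}$ of finite projective left $\Pi_{\mathrm{an},\mathrm{con},I,A_\infty(H)_k}(\pi_{K_I})$-modules carrying the mutually commuting $(\varphi_I,\Gamma_{K_I})$-actions. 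The approach is to prove finiteness level by level at each $k$ using Conjectures \ref{conjecture5.22} and \ref{conjecture5.23}, then take the inverse limit to obtain coadmissibility over the ambient Fr\'echet-Stein algebras, and finally upgrade coadmissibility to perfectness.

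At fixed level $k$, the first step is to reduce via simultaneous induction to the case $K_\alpha=\mathbb{Q}_p$ for all $\alpha\in I$, so that the multi-Frobenius and multi-$\Gamma$ structures take their cleanest form. I would then set up the Cartan-Serre comparison diagram used in Proposition \ref{proposition4.9}, working with models $\mathcal{F}_{k,[s_I,r_I]}$ over closed multi-intervals and comparing two nested intervals $[s_I',r_I']\subset [s_I'',r_I'']$ so that the induced map of complexes
\[
C^\bullet_{\mathrm{con}}(\Gamma_{K_I}, \mathcal{F}_{k,[s_I'',r_I'']}) \longrightarrow C^\bullet_{\mathrm{con}}(\Gamma_{K_I}, \mathcal{F}_{k,[s_I',r_I']})
\]
is a quasi-isomorphism whose transition maps are completely continuous. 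Under Assumption \ref{assumption6.12}, each relative Robba ring $\Pi_{\mathrm{an},[s_I,r_I],I,A_\infty(H)_k}$ is left and right noetherian, which is exactly what is needed to run the noncommutative analog of the Kiehl--Satz 5.2 / KL3 Lemma 1.10 fundamental lemma and conclude finiteness of $C^\bullet_{\varphi_I,\Gamma_I}(\mathcal{F}_k)$ over $A_\infty(H)_k$ as well as coadmissibility of $C^\bullet_{\psi_I}(\mathcal{F}_k)$ over $\Pi_{\mathrm{an},\infty,I,A_\infty(H)_k}(\Gamma_{K_I})$. The Yoneda-extension arguments used in the commutative case to show independence of the cohomology from the choice of multi-interval carry over verbatim, provided one only works with left modules throughout.

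The second step passes from levelwise finiteness to Fr\'echet-Stein coadmissibility following the Fr\'echet-Stein deformation scheme at the end of Section \ref{section4.1}: compatibility of the $\mathcal{F}_k$ yields compatibility of the cohomology complexes, and the levelwise coadmissibility assembles into coherent sheaves over the quasi-Stein spaces attached to $\Pi_{\mathrm{an},\infty,I,A_\infty(H)}(\Gamma_{K_I})$ and $A_\infty(H)$. To upgrade from coadmissibility to an object of $\mathbb{D}^\flat_{\mathrm{perf,left}}$, I would appeal to a noncommutative analog of the structure theorem used in the paper (the version of \cite[Theorem 3.10]{Zab1} exploited in Proposition \ref{proposition4.9}): every coadmissible projective module over such a Fr\'echet-Stein algebra is finitely generated, and together with boundedness of the Herr-type double complex this yields perfectness.

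The main obstacle is that Conjectures \ref{conjecture5.22} and \ref{conjecture5.23} are themselves conjectural and form the indispensable input at each level $k$. Concretely, establishing a genuine noncommutative Cartan-Serre/Kiehl theorem for the multi-dimensional relative Robba rings $\Pi_{\mathrm{an},[s_I,r_I],I,A_\infty(H)_k}(\pi_{K_I})$ requires verifying that the transition maps in the comparison diagram are completely continuous in the appropriate sense for left modules over noncommutative Banach rings, and that the fundamental lemma applies under the noetherianity of the associated graded rings (in the spirit of the proof for $D_{[s,r]}(\mathbb{Z}_p,\mathbb{Q}_p)\widehat{\otimes}_{\mathbb{Q}_p} D_{[\rho_1,\rho_2]}(G,K)$ recorded earlier in this section). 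A secondary obstacle is the need for a noncommutative variant of Proposition \ref{proposition3.5}: one has to confirm that the equivalence between $\varphi_I$-modules and $\varphi_I$-bundles persists with noncommutative coefficients, since this equivalence is what licenses the free movement between multi-intervals in the Yoneda extension argument used to show that the cohomology groups are insensitive to the chosen polyannulus.
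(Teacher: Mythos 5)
The statement you are addressing is labeled a conjecture in the paper, and the paper offers no proof; the only guidance is the remark immediately following \cref{conjecture5.24}, which says that a noncommutative Cartan--Serre argument in the spirit of \cite{Kie1} and \cite[Lemma 1.10]{KL3} \emph{may possibly} provide a proof. Your plan is consistent with that remark and with the commutative blueprint of \cref{proposition4.7} and \cref{proposition4.9}: reduce to \cref{conjecture5.22} and \cref{conjecture5.23} applied level by level in the Fr\'echet--Stein presentation $A_\infty(H)=\varprojlim_k A_\infty(H)_k$, take limits to get coadmissibility, and then invoke a noncommutative analog of \cite[Theorem 3.10]{Zab1} to upgrade to perfectness. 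You are appropriately transparent that this is a conditional reduction rather than a proof, and you have correctly located the hard open kernel of the problem: a Kiehl-type finiteness lemma for completely continuous maps between complexes of left modules over the noncommutative multivariate relative Robba rings, and a noncommutative version of \cref{proposition3.5} licensing passage between multi-intervals.

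One gap you do not flag explicitly: the levelwise application of \cref{conjecture5.22} and \cref{conjecture5.23} implicitly requires that each $A_\infty(H)_k$ satisfy \cref{assumption6.12}, i.e.\ that $\Pi_{\mathrm{an},[s_I,r_I],I,A_\infty(H)_k}$ is two-sided noetherian for every closed multi-interval. This is \emph{not} a consequence of noetherianity of $A_\infty(H)_k$ itself, as the paper's cautionary examples (noncommutative polynomial rings in free variables; the commutative counterexample of \cite[8.3]{FGK}) make clear, and even the worked case $D_{[s,r]}(\mathbb{Z}_p,\mathbb{Q}_p)\widehat\otimes_{\mathbb{Q}_p} D_{[\rho_1,\rho_2]}(G,K)$ required a separate associated-graded argument via \cite[Proposition I.7.1.2]{LVO}. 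Any attempt to make your strategy precise should either add this noetherianity as a standing hypothesis or establish it for the specific distribution algebras in view. A secondary point worth noting is that the commutative results you cite as your model (\cref{proposition4.7}, \cref{proposition4.9}) establish membership in $\mathbb{D}^-_{\mathrm{perf}}$, while \cref{conjecture5.24} claims $\mathbb{D}^\flat_{\mathrm{perf,left}}$; your closing appeal to boundedness of the Herr-type complex gestures at the right thing, but the truncation step deserves its own argument in the noncommutative setting, since the paper itself does not spell it out even in the commutative case.
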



\begin{remark}
We conjecture that the corresponding Cartan-Serre approach holds in the noncommutative setting as in \cite[Lemma1.10]{KL3}, which may possibly provide proof of the conjectures above.	
\end{remark}

\newpage

\subsection*{Acknowledgements} 

The inspiration we acquired from \cite{CKZ18} is obvious to the readers. We are also quite inspired by the deep philosophy of Burns-Flach-Fukaya-Kato. We would like to thank Professor Z\'abr\'adi for helpful discussion. We would like to thank Professor Kedlaya for explaining us the method of Cartan-Serre which is a key suggestion to this project, and for suggestion around the triangulation as well as helpful discussion along the preparation which directly leads to the current presentation. During the preparation, under Professor Kedlaya the author was supported by NSF Grant DMS-1844206.

\newpage

\bibliographystyle{ams}

\end{document}